\allowdisplaybreaks \allowdisplaybreaks[4]
\newcommand{\R}{\ensuremath{\mathbb{R}}}
\newcommand{\al}{\alpha}
\newcommand{\be}{\beta}
\newtheorem {theorem} {Theorem} 
\newtheorem {proposition} [theorem] {Proposition}
\newtheorem {lemma} [theorem] {Lemma}
\newtheorem {remark} {Remark}
\newtheorem {example} {Example}
\begin{document}
	
	\title[Nonexistence of periodic orbits of  planar dynamical systems]
	{New criterions  on nonexistence of periodic orbits of  planar dynamical systems and their applications} 
	
	\author[H. Chen et al. ]
	{
 	Hebai Chen$^{1}$, Hao Yang$^{1}$,  Rui Zhang$^{1}$, Xiang Zhang$^{2}$
	 }
	
	\address{$^1$
		School of Mathematics and Statistics, HNP-LAMA, Central South University,
		Changsha, Hunan 410083, P. R. China
	}

  \address{$^2$ School of Mathematical Sciences,  MOE-LSC, Shanghai
 Jiao Tong University, Shanghai, 200240, P.R. China}
	\email{chen\_hebai@csu.edu.cn (H. Chen),
		y\_ang\_hao@163.com (H. Yang), zhang\_rui@csu.edu.cn (R. Zhang),
		 xzhang@sjtu.edu.cn (X. Zhang)
	}
	\subjclass[2010]{34C07, 34A05, 34A34, 34C14}
	
	\keywords{Limit cycle, non--existence, nilpotent equilibrium, Andreev topological classification,  global phase portrait.}

	\begin{abstract}
Characterizing existence or not of periodic orbit is a classical problem and it has both theoretical importance and many real applications. Here, several new criterions on nonexistence of periodic orbits of the planar dynamical system
	  $\dot x=y,~\dot y=-g(x)-f(x,y)y$ are obtained in this paper, and by examples showing that these criterions are applicable, but the known ones are invalid to them.
Based on these criterions, we further characterize the local topological structures of its equilibrium, which also show that one of the classical results by A.F. Andreev [Amer. Math. Soc. Transl. 8 (1958), 183--207] on local topological classification of the degenerate equilibrium is incomplete.
 Finally, as another application of these results, we classify the global phase portraits of a planar differential system, which comes from the third question in the list of the 33 questions posed by A. Gasull and {also from} a mechanical oscillator under suitable restriction to its parameters.
	\end{abstract}
	
	\maketitle

	\section{ Introduction }

As we all know,
 Hilbert's 23 problems
 were posed
  by the famous mathematian D. Hilbert at the International Congress of Mathematicians in 1900, see \cite{Hilbert},
  where the second
	half of Hilbert's 16th problem is to study the maximum number and their relative position of limit cycles of planar polynomial differential systems.
	Up to now, Hilbert's 16th problem is still unsolved.
	On the other hand,
	to study    dynamics of a planar dynamical system, it is usually very important to characterize existence of its limit cycles.
	
	For the aforementioned reasons, {the study of limit cycles of planar dynamical systems has been attracting many famous mathematicians working on it},
	see for instance Dulac \cite{Dulac},
	 Itenberg and Shustin \cite{IS},
	Lins et al \cite{LMP}, Roussaries \cite{Roussarie}, Smale \cite{Smale},  Ye \cite{Ye}and Zhang et al \cite{ZDHD}, and the references therein. Notice that
	most of the known results on nonlocal limit cycles are for existence and uniqueness.
In order to prove existence of limit cycles, one of the essential tools is the  Poincar\'e-Bendixson annulus  theorem.
  	On uniqueness of the limit cycle, most of the results were limited to  Li\'enard systems and generalized Li\'enard systems,  such as Levinson et al \cite{LS},  Liou and Cheng \cite{LC},
  	Wang and Kooij \cite{WK},
  	 Xiao and Zhang \cite{XZ},  Zeng \cite{Zeng}, Zeng et al \cite{ZZG}  and   Zhang et al \cite{ZDHD}, and so on.
	However, for  nonexistence of the limit cycle of planar differential equations, the theoretical results are far less than those for existence of limit cycles.
At present, Poincar\'e's method of tangential curves \cite[Theorem 1.6 of Chapter 4]{ZDHD} and the Dulac criterion \cite[Theorem 1.7 of Chapter 4]{ZDHD} are common tools to study nonexistence of limit cycles, but it is not an easy task to find the Poincar\'e function $F(x,y)$ or  Dulac function $B(x,y)$ in applications. Besides, most of the   results on nonexistence of limit cycles are focused on Li\'enard and generalized Li\'enard systems, which can be found in \cite{CC,CT,DR,Sugie} and their references.
	See Appendix, where we list some known results for comparing with ours and their applications.
Besides, there are also a few theoretical results on nonexistence of limit cycles for general planar differential systems.
	
The goal of this paper is to provide certain new criterions on nonexistence of limit cycles of the  planar differential system
	\begin{equation}
	\label{1}
	\left\{\begin{aligned}
	\dot{x}&=y,\\
	\dot{y}&=-g(x)-f(x,y)y,
	\end{aligned}
	\right.
	\end{equation}
	where $x\in(\alpha,\beta)$, $y\in\R$, $\alpha<0$, $\beta>0$.
	Notice that system \eqref{1} has been widely adopted to model real world problems in applied science and engineering,
	see \cite{Ame1982,BBN,NB} and the references therein.
	
	The organization  of this paper is as follows. In section \ref{mainresults}, we state our main results, which are new criterions on  nonexistence of periodic orbits of system \eqref{1}, and characterization on local topological structures of the related system at its equilibrium. Here we complete the local classification of a degenerate equilibrium {(nilpotent one)}, which is a {classical} result but incomplete as will be shown. It was initially proved by Andreev \cite{Andreev} in 1958, and then stated and proved in \cite{ZDHD} {and so on}. As we have seen, Andreev's result was also repeatedly stated in many monographs and papers for classifying topological structures of planar differential systems.
	 Section	\ref{pmr}  is the
	proofs of our main results. Section \ref{ap} is partly an application of our theoretical results, where we characterize all global topological phase portraits (Theorems \ref{theo} and \ref{thm8}) of a system under certain restriction of parameters, which comes from the first half of the third question in the list of the 33 questions posed by Gasull \cite{Gasull} and also from a mechanical oscillator.
	
	\section{Main results}
	\label{mainresults}
	In this section, we state our main results of this paper.
	The first {one provides} a criterion on nonexistence of periodic orbits.
	
	\begin{theorem}
		\label{thm1}
		Assume that $g(x)=-g(-x)$ for all $0\leqslant x<\min\{-\alpha,\beta\}$, and that the following conditions hold:
		\begin{itemize}
			\item[\bf (\romannumeral1)]  $xg(x)>0$ for all $(\alpha,0)\cup(0,\beta)$;
			\item[\bf (\romannumeral2)]{ $g(x)$ is Lipschitzian continuous for $x\in(\alpha,0)\cup(0,\beta)$, and $f(x,y)$ is Lipschitzian continuous for $(x,y)\in(\alpha,\beta)\times \R$;}
			\item[\bf (\romannumeral3)] either $f(x,y)\geqslant-f(-x,y)$ or   $f(x,y)\leqslant-f(-x,y)$  for all $0\leqslant x<\min\{-\alpha,\beta\}$ and $y\in\mathbb{R}$;
			\item[\bf (\romannumeral4)] $f(x,y)\not\equiv-f(-x,y)$ for $x\in(0,\zeta)$ and $y\in\mathbb R$, where $0<\zeta\ll1$.
		\end{itemize}	
		Then, system \eqref{1} has no closed orbits in the  strip $\alpha<x<\beta$.
	\end{theorem}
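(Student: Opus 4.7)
The plan is to argue by contradiction: suppose that system~\eqref{1} admits a closed orbit $\Gamma\subset\{\alpha<x<\beta\}$ and aim for a contradiction. By condition~(i) and the oddness of $g$, the origin is the only equilibrium in the strip, so any such $\Gamma$ must encircle it; furthermore, since $\dot x=y$, the orbit crosses the $y$-axis transversally at exactly two points $(0,y_+)$ and $(0,y_-)$ with $y_+>0>y_-$, splitting $\Gamma$ into a right half-arc $\Gamma^+$ (from $(0,y_+)$ to $(0,y_-)$) and a left half-arc $\Gamma^-$ (from $(0,y_-)$ back to $(0,y_+)$).

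The central device will be the involution $(x,y,t)\mapsto(-x,y,-t)$. Because $g$ is odd, this involution conjugates system~\eqref{1} to the \emph{companion} system
\[
\dot x=y,\qquad \dot y=-g(x)-\hat f(x,y)\,y,\qquad \hat f(x,y):=-f(-x,y).
\]
Consequently, the reflected-then-time-reversed image of $\Gamma^-$, which I denote by $\gamma'$, is a trajectory of the companion system lying in the right half-plane and joining $(0,y_+)$ to $(0,y_-)$. I then introduce the energy
\[
H(x,y)=\tfrac12 y^2+G(x),\qquad G(x):=\int_0^x g(s)\,ds,
\]
which is even in $x$ because $g$ is odd. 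Along the original flow $\dot H=-f(x,y)\,y^2$ and along the companion flow $\dot H=-\hat f(x,y)\,y^2$; integrating each along the respective right half-plane arc and using $y\,dt=dx$ yields
\[
\int_{\Gamma^+} f(x,y)\,y\,dx \;=\; \int_{\gamma'} \hat f(x,y)\,y\,dx \;=\; \tfrac12\bigl(y_+^2-y_-^2\bigr).
\]

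The crux is then to exploit condition~(iii). Working in the case $f(x,y)\ge -f(-x,y)$ for $x\ge 0$ (the other case is symmetric), we have $f\ge\hat f$ throughout the right half-plane. An ODE-comparison argument on the upper part $y>0$ of the right half-plane, starting from the common initial point $(0,y_+)$, will show that $\Gamma^+$'s upper arc lies pointwise below $\gamma'$'s upper arc; condition~(iv) together with Lipschitz regularity~(ii) will force the comparison to be strict on an open set near the $y$-axis through which both arcs must pass (they share the horizontal tangent $(y_+,0)$ at $(0,y_+)$). The main obstacle I anticipate is closing the argument across the lower arc $y<0$: because $\Gamma^+$ and $\gamma'$ attain $y=0$ at different values of $x$, the upper-arc pointwise comparison does not translate directly into a comparison of the endpoints on the negative $y$-axis. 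To sidestep this, I plan to combine the strict pointwise inequality obtained on the upper arc with the displayed energy identity via a dichotomy: either the two arcs differ, in which case the pointwise gap $f>\hat f$ contradicts the equality of the two $\int fy\,dx$-type integrals, or they coincide, forcing $\Gamma$ to be $\sigma$-symmetric. In the latter case $\Gamma$ is simultaneously a trajectory of both systems, so $fy=\hat fy$ and hence $f(x,y)+f(-x,y)=0$ along $\Gamma$ wherever $y\ne 0$; a local computation near the horizontal tangent at $(0,y_+)$ then contradicts~(iv), completing the proof.
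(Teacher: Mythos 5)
Your setup --- reflecting the left half-arc through the involution $(x,y,t)\mapsto(-x,y,-t)$ and comparing the resulting companion arc $\gamma'$ with $\Gamma^+$ using $f\geqslant\hat f$ --- is exactly the paper's strategy, and your upper-arc comparison (with $\gamma'$ above $\Gamma^+$) is correct. But the step you propose to close the argument does not work. The two quantities $\int_{\Gamma^+}f\,y\,dx$ and $\int_{\gamma'}\hat f\,y\,dx$ each equal $\tfrac12(y_+^2-y_-^2)$ \emph{automatically}, because each is just $-\int dH$ along an arc with the same endpoints on the $y$--axis; their equality is an identity, not a constraint, so there is nothing for the ``pointwise gap $f>\hat f$'' to contradict. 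More to the point, a pointwise inequality between the integrands $f$ and $\hat f$ as functions on the half-plane yields no inequality between integrals taken over two \emph{different} curves: you would need to control how $\int \hat f\,y\,dx$ changes when you deform $\gamma'$ to $\Gamma^+$, which is precisely the information you do not have. So the first branch of your dichotomy is a non sequitur, and since that branch is the generic one, the proof does not close.

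The paper resolves the $y=0$ crossing issue differently: it runs the same Gronwall-type comparison a \emph{second} time, starting from the other common endpoint $(0,y_-)$ along the two lower arcs (as graphs over $x$). That comparison again gives ``companion arc above $\Gamma$'s arc'', which for the lower arcs means the companion arc reaches $y=0$ at a \emph{smaller} abscissa than $\Gamma^+$ does, while your upper-arc comparison forces it to reach $y=0$ at a \emph{larger} abscissa. Since $\dot y=-g(x)<0$ on the positive $x$--axis, the single companion arc joining $(0,y_+)$ to $(0,y_-)$ crosses that axis exactly once, so it cannot cross both to the right and to the left of $\Gamma$'s crossing point --- that is the contradiction. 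Two further cautions: (a) the strictness of the comparison cannot come from the comparison theorem alone (the paper's Remark 4 makes this explicit); you need the variation-of-constants identity
$\varphi(x)=\int_0^x\bigl(f(s,y_2(s))+f(-s,y_2(s))\bigr)\exp\bigl(\int_s^xM\,d\xi\bigr)ds$
to convert condition {\bf(iv)} into $\varphi>0$; and (b) your symmetric branch's appeal to {\bf(iv)} ``along $\Gamma$'' has the same curve-versus-open-set subtlety, though the paper's own proof shares that issue.
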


	{
	The second one {is} another criterion on nonexistence of periodic orbits.
	\begin{theorem}
	\label{thm1b}
	Assume that the conditions {\bf (i)},
	{\bf (ii)}, {\bf (iv)} and the following one hold:
	\begin{itemize}
		\item[\bf (\romannumeral3$'$)] either $f(x,y)\geqslant-f(x,-y)$ or   $f(x,y)\leqslant-f(x,-y)$  for all $(x,y)\in(\alpha,\beta)\times \mathbb R$.
	\end{itemize}	
	Then, system \eqref{1} has no closed orbits in the  strip $\alpha<x<\beta$.
\end{theorem}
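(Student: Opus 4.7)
My plan is to argue by contradiction: assume system \eqref{1} admits a closed orbit $\Gamma$ in the strip, and derive an impossibility via an ODE comparison with a reflected Li\'enard system. By condition (i), the origin is the unique equilibrium of \eqref{1} in the strip, so $\Gamma$ must encircle it; since $\dot x = y$ vanishes only on the $x$-axis, the orbit meets $\{y=0\}$ transversally (vertically) at exactly two points $(a,0)$ and $(b,0)$ with $\alpha<a<0<b<\beta$. I parametrize the upper arc of $\Gamma$ as $y=y_+(x)$ and the lower arc as $y=-u(x)$ for $x\in[a,b]$, with $y_+,u>0$ on $(a,b)$ and vanishing at the endpoints. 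Using the energy $H(x,y)=y^2/2+\int_0^x g(s)\,ds$, which satisfies $\dot H=-f(x,y)y^2$ along solutions of \eqref{1}, integration on each arc followed by addition yields the key identity
\begin{equation}\label{plan-star}
\int_a^b \bigl[f(x,y_+(x))\,y_+(x)+f(x,-u(x))\,u(x)\bigr]\,dx = 0.
\end{equation}

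The central technical step is to prove $y_+\equiv u$ on $[a,b]$, i.e., that $\Gamma$ is symmetric about the $x$-axis. To this end I introduce the reflected system
\[
(1'):\quad \dot x=y,\qquad \dot y=-g(x)+f(x,-y)y,
\]
obtained from \eqref{1} by the change of variables $(y,t)\mapsto(-y,-t)$. A direct check shows that the $(1')$-orbit through $(a,0)$ is, as a point set, precisely the reflection of $\Gamma$ across the $x$-axis, so its upper arc is $y=u(x)$ and its lower arc is $y=-y_+(x)$. Writing the ODE for $z=y^2/2$ along each system, the pointwise difference of the two right-hand sides equals $-y\bigl[f(x,y)+f(x,-y)\bigr]$; condition (iii') (WLOG $f(x,y)+f(x,-y)\geq 0$) then makes the \eqref{1}-derivative no larger than the $(1')$-derivative for $y>0$, and no smaller for $y<0$. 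The classical ODE comparison theorem, applied on the upper arcs forward in $x$ from $(a,0)$, gives $y_+(x)\leq u(x)$ on $[a,b]$; applied on the lower arcs backward in $x$ from $(b,0)$ (for instance via the substitution $\tau=b-x$), it gives $u(x)\leq y_+(x)$ on $[a,b]$. Together these force $y_+\equiv u$.

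Substituting $y_+\equiv u$ into \eqref{plan-star} collapses the identity to
\[
\int_a^b y_+(x)\bigl[f(x,y_+(x))+f(x,-y_+(x))\bigr]\,dx=0.
\]
The integrand is pointwise nonnegative by (iii') while $y_+(x)>0$ on $(a,b)$, so $f(x,y_+(x))+f(x,-y_+(x))\equiv 0$ on $(a,b)$. Since $0\in(a,b)$, this forced identity along the orbit curve near $x=0$ contradicts the nondegeneracy in condition (iv) once the above comparison is sharpened to the strict version on the subset where $f(x,y)+f(x,-y)>0$. I expect the chief obstacle to be the two-sided ODE comparison yielding orbit symmetry: one must set up the $z$-equation with the half-plane sign conventions, justify the backward-in-$x$ comparison on the lower arc, and use the Lipschitz hypothesis (ii) to apply the comparison theorem uniformly; once symmetry is in hand, the remainder of the argument is routine.
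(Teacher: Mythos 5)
Your proposal is sound in outline but takes a genuinely different route from the paper's. The paper compares the arc of $\Gamma$ with the arc of the reflected system $\dot x=-y$, $\dot y=g(x)-f(x,-y)y$ issuing from the \emph{same} point $A$ on the negative $x$--axis, and obtains a \emph{strict} separation $\varphi(0)>0$ directly from an explicit variation--of--constants identity of the form $\varphi(x)=\int\bigl(f(s,y_2(s))+f(s,-y_2(s))\bigr)\exp\bigl(\int_s^x M(\xi)d\xi\bigr)ds$; the strictness is the whole point, and the paper even remarks that the off--the--shelf comparison theorem only yields $\varphi\geqslant 0$. You instead apply the \emph{non--strict} comparison twice --- forward in $x$ on the upper arcs from $(a,0)$ and backward in $x$ on the lower arcs from $(b,0)$ --- to squeeze $y_+\equiv u$, i.e.\ to force the putative closed orbit to be symmetric about the $x$--axis, and you then extract the contradiction not from the comparison but from the exact energy balance $\oint dH=0$, which collapses to $\int_a^b y_+(x)\bigl[f(x,y_+(x))+f(x,-y_+(x))\bigr]dx=0$ and hence to pointwise vanishing of a nonnegative integrand. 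This is an attractive reorganization: it needs only the weak form of the comparison theorem (which is all one can honestly prove here), it uses the closedness of the orbit twice instead of once, and it isolates exactly where condition {\bf (iv)} must enter.

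Two points need attention before this is a complete proof. First, at the common initial points $(a,0)$ and $(b,0)$ the right--hand side of your $z$--equation, $-g(x)\mp f(x,\pm\sqrt{2z})\sqrt{2z}$, is continuous but not Lipschitz in $z$ at $z=0$, so the comparison theorem cannot be quoted directly from the degenerate initial condition $z(a)=0$; you should note that $y_+,u\sim\sqrt{|x-a|}$ near the endpoint, so the effective Lipschitz coefficient blows up only like $|x-a|^{-1/2}$, which is integrable, and the Gronwall step still closes (the paper's own kernel $M(x)$ has the analogous singularity). Second, your endgame is not finished as written: the energy identity gives $f(x,y_+(x))+f(x,-y_+(x))\equiv 0$ only \emph{along the curve} $y=y_+(x)$, whereas {\bf (iv)} asserts non--vanishing of the symmetrized $f$ on an open set, and your sentence about ``sharpening the comparison to the strict version'' does not bridge the two --- if the symmetrized $f$ vanishes identically on the orbit there is nothing to sharpen. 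To be fair, the paper's proof requires exactly the same along--the--orbit non--vanishing of $f(s,y_2(s))+f(s,-y_2(s))$ to conclude $\varphi(0)>0$, so this is a weakness in how {\bf (iv)} is formulated rather than a defect peculiar to your strategy; but you should state explicitly that the contradiction requires $f(x,y)+f(x,-y)$ to be nonzero at some point of the orbit itself.
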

}

	\begin{remark}
	{   When $g(x)=x$ and $f(x,y)=\hat f(y)$, after the change of variables $(x,y,t)\to(y,x,-t)$,
	system \eqref{1} is transformed to
			\begin{equation}
	\notag
		\left\{\begin{aligned}
		\dot{x}&=y+\hat f(x)x,\\
		\dot{y}&=-x,
		\end{aligned}
		\right.
		\end{equation}
which is in the Li\'enard form. By  {\rm Theorem \ref{thm1b}}
one can directly obtain the results in \cite[Proposition 1]{LMP} $($see {\rm Theorem \ref{LMP}}  in {\rm  Appendix A}$)$.
In this sense,}
{\rm Theorem \ref{thm1b}} is an extended version of \cite[Proposition 1]{LMP}.
\end{remark}

The next is the third criterion on nonexistence of periodic orbits.
	
	\begin{theorem}
		\label{thm2}
		Assume that $g(-x)\not\equiv-g(x)$ for $0\leqslant x<\min\{-\alpha,\beta\}$,
		 	 \begin{equation*}
			g(x)=
x^mh(x)	,	\ {\rm if} \ x\in(-\epsilon, \epsilon),
		\end{equation*}
	 	where  $h(0)>0$,
  $m=p/q\geqslant 1$, $p,q$ are odd and $\epsilon>0$ is small.
	 In addition to the conditions {\bf{(\romannumeral1)}} and  {\bf{(\romannumeral2)}} of {\rm Theorem \ref{thm1}}, suppose that for all $\hat{x}<0<x$ satisfying
	\begin{equation}
	\label{c}
		\int_{\text{0}}^{x}{g(s)}ds=\int_{\text{0}}^{\hat{x}}{g(s)}ds,
	\end{equation}
the following hold
		\begin{itemize}
			\item[\bf(\romannumeral5)]
		either	$f(x,y)/g(x)\geqslant f(\hat{x},y)/g(\hat{x})$ or $f(x,y)/g(x)\leqslant f(\hat{x},y)/g(\hat{x})$  for $0\leqslant x<\min\{-\alpha,\beta\}$ and $y\in\mathbb R$;
			\item[\bf(\romannumeral6)]$f(x,y)/g(x)\not\equiv f(\hat{x},y)/g(\hat{x})$ for $x\in (0,\zeta)$  and $y\in\mathbb R$, where $0<\zeta\ll1$.
		\end{itemize}
		Then, system \eqref{1} has no closed orbits in the  strip $\alpha<x<\beta$.
	\end{theorem}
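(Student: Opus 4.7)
The plan is to suppose, for contradiction, that a closed orbit $\Gamma$ of \eqref{1} exists in the strip $\alpha<x<\beta$, and to derive an incompatibility between its right half ($x>0$) and its left half ($x<0$) after the latter has been transported into $x>0$ via the Filippov-type correspondence $\hat x(x)$ determined by \eqref{c}, i.e.\ $G(\hat x(x))=G(x)$ with $\hat x(x)\leq 0$, where $G(x):=\int_0^x g(s)\,ds$. Hypothesis (i) guarantees that $(0,0)$ is the unique equilibrium in the strip, so $\Gamma$ must encircle the origin; let $(0,y_t),(0,y_b)$ (with $y_t>0>y_b$) and $(x_R,0),(x_L,0)$ (with $x_L<0<x_R$) be the axis crossings of $\Gamma$, and set $u_R:=G(x_R)$, $u_L:=G(x_L)$. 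I would then write the four arcs of $\Gamma$ as graphs $y=U_R(x), y=L_R(x)$ on $[0,x_R]$ and $y=U_L(x), y=L_L(x)$ on $[x_L,0]$, each satisfying the orbital equation $dy/dx=-g(x)/y-f(x,y)$.

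Next, I would define $\tilde U_L(x):=U_L(\hat x(x))$ and $\tilde L_L(x):=L_L(\hat x(x))$ in order to place both left arcs in the right half-plane. Implicit differentiation of $G(\hat x)=G(x)$ gives $\hat x'(x)=g(x)/g(\hat x(x))$, and then the chain rule yields
\[
\frac{d\tilde y_L}{dx}=-\frac{g(x)}{\tilde y_L}-\frac{g(x)}{g(\hat x(x))}\,f(\hat x(x),\tilde y_L),
\]
so that at any point where $y_R(x)=\tilde y_L(x)=v$ one obtains
\[
\frac{d\tilde y_L}{dx}-\frac{dy_R}{dx}=g(x)\left[\frac{f(x,v)}{g(x)}-\frac{f(\hat x(x),v)}{g(\hat x(x))}\right].
\]

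Assuming WLOG the first alternative in (v) (the second is handled by the symmetric argument), the bracket is nonnegative and $g(x)>0$ for $x>0$, so by the standard comparison theorem for ODEs---applied with the common initial values $U_R(0)=\tilde U_L(0)=y_t$ and $L_R(0)=\tilde L_L(0)=y_b$---one obtains $\tilde U_L(x)\geq U_R(x)$ and $\tilde L_L(x)\geq L_R(x)$ on their common domains, and hypothesis (vi) upgrades these to strict inequalities for $x>0$. Evaluating the strict upper inequality at $x=x_R$ gives $0=U_R(x_R)<\tilde U_L(x_R)=U_L(\hat x(x_R))$, which forces $\hat x(x_R)\in(x_L,0)$ strictly, i.e.\ $u_R<u_L$. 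This in turn ensures that $\tilde L_L$ is defined on all of $[0,x_R]$, and evaluating the lower inequality at $x=x_R$ gives $\tilde L_L(x_R)\geq L_R(x_R)=0$; but on the other hand $\tilde L_L(x_R)=L_L(\hat x(x_R))<0$ because $\hat x(x_R)\in(x_L,0)$ is an interior point of the lower-left arc. This is the desired contradiction.

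The main technical hurdle I anticipate is handling the singular point $x=0$, where $G'(0)=g(0)=0$ prevents the direct use of the implicit function theorem; one must verify the regularity of $\hat x(x)$ and check that the limits $g(x)/g(\hat x(x))\to (-1)^m=-1$ and $\hat x(x)\sim -x$ are attained as $x\to 0^+$, so that the two comparison ODEs and their boundary data match cleanly at the starting point. The structural assumption $g(x)=x^m h(x)$ with $h(0)>0$ and $p,q$ odd is exactly what is required for these regularity and limit computations, and the combination of $g(-x)\not\equiv-g(x)$ with (vi) is what produces the strict (rather than merely weak) version of the comparison on which the contradiction rests.
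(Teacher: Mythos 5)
Your overall strategy is sound and is, in substance, the paper's own proof with its change of variables unwound: the paper proves Theorem \ref{thm2} by applying the transformation \eqref{tr}, $u=((m+1)G(x))^{1/(m+1)}\mathrm{sgn}(x)$, $d\tau=(g(x)/u^m)\,dt$, which turns \eqref{1} into the system \eqref{bh} with odd restoring term $-u^m$, checking that (v)--(vi) become conditions (iii)--(iv) of Theorem \ref{thm1} for the new friction term $F(u,y)=f(x(u),y)u^m/g(x(u))$, and then invoking Theorem \ref{thm1}. Your correspondence $\hat x(x)$ defined by $G(\hat x)=G(x)$ is exactly the map $x(u)\mapsto x(-u)$ in those coordinates, and your identity $\frac{d\tilde y_L}{dx}-\frac{dy_R}{dx}=g(x)\bigl[f(x,v)/g(x)-f(\hat x,v)/g(\hat x)\bigr]$ is the same computation by which the paper verifies $F(u,y)\geqslant -F(-u,y)$. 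The regularity issues you flag at $x=0$ are real, are exactly what the hypotheses $g=x^mh(x)$, $h(0)>0$, $p,q$ odd are for, and are handled in the paper by checking that $F(u,y)$ is Lipschitzian, using $\lim_{u\to0}u^m/g(x(u))=h(0)^{-1/(m+1)}$.

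The one genuine gap is the sentence ``by the standard comparison theorem \dots and hypothesis (vi) upgrades these to strict inequalities for $x>0$.'' The strict inequality is the entire content of the argument: with only $\tilde U_L\geqslant U_R$ and $\tilde L_L\geqslant L_R$ the transported left half could coincide with the right half, which is precisely what happens in the symmetric case $f(x,y)/g(x)\equiv f(\hat x,y)/g(\hat x)$, where closed orbits do occur (Theorem \ref{lem2}); and the paper explicitly remarks, right after the proof of Theorem \ref{thm1}, that the comparison theorem yields only the weak inequality. The strictness is obtained there not by ``upgrading'' the comparison but by a variation-of-constants/Gronwall identity giving the closed form
\[
\varphi(x)=\int_0^x\bigl(f(s,y_2(s))+f(-s,y_2(s))\bigr)\exp\Bigl(\int_s^xM(\xi)\,d\xi\Bigr)\,ds,
\]
whose integrand is nonnegative and not identically zero near $s=0$ by the non-symmetry hypothesis; you would need to reproduce this estimate, with the integrand replaced by $g(s)\bigl[f(s,\tilde y_L(s))/g(s)-f(\hat x(s),\tilde y_L(s))/g(\hat x(s))\bigr]$, for your contradiction to close. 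A minor further point: you evaluate $\tilde U_L$ at $x=x_R$ before knowing that $x_R$ lies in its domain; the clean statement is that the comparison holds on the common interval of definition, and the ordering of the two landing points on the $x$--axis is read off from which arc reaches $y=0$ first.
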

	
\begin{remark}
	In {\rm Theorem \ref{thm2}},  if $g(x)\equiv-g(-x)$ then $
		\int_{\text{0}}^{x}{g(s)}ds=\int_{\text{0}}^{\hat{x}}{g(s)}ds
		$ holds with $\hat{x}=-x$. Consequently, $f(x,y)/g(x)\geqslant f(\hat{x},y)/g(\hat{x})$ in {\bf(\romannumeral5)}  implies $f(x,y)\geqslant-f(-x,y)$, which is {\bf(\romannumeral3)}. In this sense, {\rm Theorem \ref{thm1}} is a special case of   {\rm Theorem \ref{thm2}}.
Here, stating {\rm Theorem \ref{thm1}} separately has two reasons: one is for its easy application, and second is for distributing the technical parts of the proofs. As it is easy to see, system \eqref{1} is an extension of the  Li\'enard system
		$$
		\dot{x}=y,\,\,\dot{y}=-g(x)-f(x)y,
		$$
in which $g(x)$ is usually an odd function, like $g(x)=x$, or $g(x)=x+x^3$, etc, and so {\rm Theorem \ref{thm1}} can be conveniently applied to it.
      	\end{remark}
	
The fouth one is an extension of Theorem \ref{thm1}, which admits existence of other equilibria than the origin.

	\begin{theorem}\label{thm3}
		In case $g(x)=-g(-x)$ for all $0\leqslant x<\min\{-\alpha,\beta\}$, suppose
the conditions {\bf(\romannumeral2)},  {\bf(\romannumeral3)},  {\bf(\romannumeral4)} of {\rm Theorem \ref{thm1}}
hold.
Then,  system \eqref{1} has no closed orbits   surrounding the origin $O$ in the  strip $\alpha<x<\beta$.
	\end{theorem}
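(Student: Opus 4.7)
The proof strategy parallels that of Theorem~\ref{thm1}: in that earlier result, condition~(i) was used only to guarantee that the origin is the unique equilibrium of~\eqref{1} in the strip, which in turn forces every closed orbit to surround the origin. In the present setting I simply assume from the outset that the hypothetical closed orbit surrounds the origin, and then run essentially the same argument as in Theorem~\ref{thm1}. So suppose for a contradiction that $\Gamma$ is a closed orbit of~\eqref{1} surrounding the origin.

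Since $g$ is odd, $G(x) := \int_0^x g(s)\, ds$ is even, and for $V(x, y) := y^2/2 + G(x)$ one computes, along solutions of~\eqref{1}, that $\dot V = -f(x, y)\, y^2$. Closedness of $\Gamma$ then yields $\oint_\Gamma f(x, y) y^2\, dt = 0$. Because $\Gamma$ surrounds the origin it crosses the $y$-axis transversely at two points $P_\pm = (0, y_\pm)$ with $y_+ > 0 > y_-$; let $\Gamma_R$ (resp.\ $\Gamma_L$) denote the arc of $\Gamma$ in $\{x \geq 0\}$ (resp.\ $\{x \leq 0\}$), oriented from $P_+$ to $P_-$ (resp.\ $P_-$ to $P_+$). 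The key step is to transport $\Gamma_L$ into the right half-plane via the involution $(x, y, t) \mapsto (-x, y, -t)$: using $g(x) = -g(-x)$, the image $\widetilde\Gamma$ is an arc from $P_+$ to $P_-$ in $\{x \geq 0\}$ that is an orbit arc of the auxiliary system obtained from~\eqref{1} by replacing $f(x, y)$ with $\tilde f(x, y) := -f(-x, y)$. Changing variables in the integral along $\Gamma_L$ converts the closed-orbit identity into
\[
\int_{\Gamma_R} f(x, y)\, y^2\, dt \;+\; \int_{\widetilde\Gamma} f(-x, y)\, y^2\, ds \;=\; 0.
\]

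Without loss of generality take case~A of~(iii), namely $f(x, y) \geq -f(-x, y)$ for $0 \leq x < \min\{-\alpha, \beta\}$, so that $f(x, y) \geq \tilde f(x, y)$ on $\{x \geq 0\}$. I then compare the two arcs $\Gamma_R$ and $\widetilde\Gamma$, both running from $P_+$ to $P_-$ in the right half-plane. Writing each as the concatenation of an upper graph $y = \phi_+(x) > 0$ and a lower graph $y = \phi_-(x) < 0$, and using the energy identity $\tfrac{d}{dx} V(x, \phi_\pm(x)) = -f(x, \phi_\pm)\, \phi_\pm$ (with $\tilde f$ in place of $f$ for $\widetilde\Gamma$), a standard scalar ODE comparison principle driven by $f \geq \tilde f$ orders the two arcs in terms of $V$. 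Condition~(iv) should upgrade this to a strict ordering, which forces one integrand in the displayed identity above to strictly dominate the other and contradicts the equality. The main obstacle I foresee is exactly this propagation-of-strictness step: condition~(iv) only asserts non-identical vanishing of $f(x, y) + f(-x, y)$ in the narrow strip $(0, \zeta) \times \mathbb{R}$, so the Lipschitz regularity from~(ii) must be invoked carefully to push the strict inequality along the arcs and deliver a strict terminal difference at $P_-$.
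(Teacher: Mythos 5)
Your overall strategy is the same as the paper's: reflect the left half of the hypothetical closed orbit into the right half--plane via $(x,y,t)\mapsto(-x,y,-t)$, which (using $g(-x)=-g(-x)\cdot(-1)=-g(x)$, i.e.\ oddness of $g$) produces an orbit arc of the auxiliary system with $\tilde f(x,y)=-f(-x,y)$, and then compare it with the right arc of $\Gamma$. You are also right that condition {\bf(i)} of Theorem \ref{thm1} is only needed to force every closed orbit to surround $O$, which is why it can be dropped here. However, there are two problems with how you propose to close the argument. First, the displayed energy identity is vacuous: along $\Gamma_R$ one has $\int_{\Gamma_R}f\,y^2\,dt=V(P_+)-V(P_-)$, and along $\widetilde\Gamma$ one has $\int_{\widetilde\Gamma}\tilde f\,y^2\,ds=V(P_+)-V(\widetilde P_-)$, so as long as both arcs are assumed to join $P_+$ to $P_-$ the two integrals are automatically equal and no inequality between $f$ and $\tilde f$ can contradict the identity. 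The only possible source of contradiction is geometric: one must show that $\Gamma_R$ and $\widetilde\Gamma$, issuing from the same point, are \emph{strictly} separated thereafter, so that they cannot share the second endpoint. Once that is done the integral identity is not needed at all.

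Second, that strict separation is exactly the step you leave open (``the main obstacle I foresee''), and the ``standard scalar ODE comparison principle'' you invoke does not deliver it: from $f\geqslant\tilde f$ one only gets a non--strict ordering of the two graphs, and condition {\bf(iv)} (non--identical vanishing of $f(x,y)+f(-x,y)$ on the thin strip $(0,\zeta)\times\R$) cannot be fed into a bare comparison theorem to upgrade $\geqslant$ to $>$. The paper makes this point explicitly in the remark following the proof of Theorem \ref{thm1}. Its resolution is to write the difference $\varphi_1(x)=y_4(x)-y_3(x)$ of the two graphs as a solution of a linear integral equation and solve it by variation of constants, obtaining
\begin{equation*}
\varphi_1(x)=\int_{0}^{x}\bigl(f(s,y_4(s))+f(-s,y_4(s))\bigr)\exp\Bigl(\int_{s}^{x}M_1(\xi)\,d\xi\Bigr)ds,
\end{equation*}
whose integrand is nonnegative with a strictly positive exponential weight, so that {\bf(iv)} immediately yields $\varphi_1(x)>0$ and hence the strict separation. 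Without this (or an equivalent quantitative device), your proposal does not constitute a proof; the missing step is precisely the heart of the argument.
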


	In Theorems \ref{thm1}, {\ref{thm1b}} and \ref{thm2},
 system \eqref{1} has a unique equilibrium $O$  and has no closed orbits when the corresponding conditions hold.
 Naturally, we want to further characterize the qualitative properties of $O$ and the phase portrait of system \eqref{1}.
 To avoid much degeneracy, suppose that both $g(x)$ and $f(x,y)$ are  analytic functions  in a small neighborhood $S_\delta(O)$ of the origin $O$, except a few exception.

   If the condition {\bf{(\romannumeral1)}}  of Theorem \ref{thm1}  holds, then
$g'(0)\geqslant0$.  Otherwise,  if $g'(0)<0$, there exists an $x_*>0$ such that $g(x_*)<g(0)=0$,  which contradicts  $xg(x)>0$ for $x\neq0$.
Set $a:=g'(0)$, $b:=f(0,0)$. It is easy to check $b\geqslant0$ for $f(x,y)/g(x)\geqslant f(\hat{x},y)/g(\hat{x})$, $b\leqslant0$ for $f(x,y)/g(x)\leqslant f(\hat{x},y)/g(\hat{x})$, and $a\geqslant0$,
where $\hat x\leqslant0\leqslant x$ and $\int_{\text{0}}^{x}{g(s)}ds=\int_{\text{0}}^{\hat{x}}{g(s)}ds$.

In our next result, we characterize local phase portraits of system \eqref{1} with $f(x,y)/g(x)\geqslant f(\hat{x},y)/g(\hat{x})$ and
$\int_{\text{0}}^{x}{g(s)}ds=\int_{\text{0}}^{\hat{x}}{g(s)}ds$ for $\hat x\leqslant0\leqslant x$.
The case $f(x,y)/g(x)\leqslant f(\hat{x},y)/g(\hat{x})$ and
$\int_{\text{0}}^{x}{g(s)}ds=\int_{\text{0}}^{\hat{x}}{g(s)}ds$ for $\hat x\leqslant0\leqslant x$ can be treated via the transformation $(x, \hat x, y,t)\to(\hat x, x, y,-t)$.
So without loss of generality, we consider only the case $b\geqslant0$.

For our consideration,  the region
\[
\mathscr{G}:=\{(a,b)\in\mathbb{R}^2: a\geqslant 0, b\geqslant 0\}
\]
will be separated in the six subregions
\begin{equation}\notag	\begin{aligned}
\mathscr{G}_1&:=\{(a,b)\in\mathscr{G}: a>0, b>0, b^2-4a>0 \},\\
\mathscr{G}_2&:=\{(a,b)\in\mathscr{G}: a>0, b>0, b^2-4a<0 \},\\
\mathscr{G}_3&:=\{(a,b)\in\mathscr{G}: a>0, b>0, b^2-4a=0\},\\
\mathscr{G}_4&:=\{(a,b)\in\mathscr{G}: a>0, b=0\},\\
\mathscr{G}_5&:=\{(a,b)\in\mathscr{G}:  a=0, b>0\},\\
\mathscr{G}_6&:=\{(a,b)\in\mathscr{G}:  a=b=0\}.
\end{aligned}		
\end{equation}
When $a=b=0$, in   $S_\delta (O)$,  $g(x)$ and $f(x,y)$ can be written in the forms
\begin{equation}\label{eab}
	g(x)=a_kx^k+O(x^{k+1}),\quad f(x,y)=b_nx^n+O(x^{n+1})+yp(x,y),
\end{equation}	
	where $a_k\ne0$, $k>1$, $b_n\in \mathbb{R}$,  and $p(x,y)$ is an analytic function {in  $S_\delta (O)$}. By the condition {\bf{(\romannumeral1)}} of Theorem \ref{thm1}, we have
\begin{equation}\notag
\begin{aligned}
x(a_kx^k+O(x^{k+1}))>0\ \ \ \ \mbox{\rm i.e.} \ \ \ \
a_kx^{k+1}+O(x^{k+2})>0,\,\,\,\,\,x\in S_\delta (O) ,
\end{aligned}
\end{equation}
which implies that
\begin{equation}\label{eab1}
 a_k>0 \quad \mbox{\rm and } \quad k \ \ \mbox{\rm is odd}.
 \end{equation}
Without loss of generality, we only consider $b_n\geqslant0$. The case $b_n<0$ can be done via the transformation $(x,y,t, b_n)\to(x,-y,-t,-b_n)$. Next we further divide $\mathscr{G}_6$ into the three subregions
	\begin{equation}\notag	\begin{aligned}
\mathscr{G}_{61}:=&\{(a,b)\in\mathscr{G}_6: b_n=0\}\cup\{(a,b)\in\mathscr{G}_6: b_n>0, n>(k-1)/2\}\\
&\cup\{(a,b)\in\mathscr{G}_6: b_n>0, n=(k-1)/2, b_n^2-2(k+1)a_k<0\},\\
\mathscr{G}_{62}:=&\{(a,b)\in\mathscr{G}_6: b_n>0, n<(k-1)/2, n \text{ even}\}\\
&\cup\{(a,b)\in\mathscr{G}_6: b_n>0, n=(k-1)/2, b_n^2-2(k+1)a_k\geqslant0, n \text{ even} \},\\
\mathscr{G}_{63}:=&\{(a,b)\in\mathscr{G}_6: b_n>0, n<(k-1)/2, n \text{ odd}\}\\
&\cup\{(a,b)\in\mathscr{G}_6: b_n>0, n=(k-1)/2,  b_n^2-2(k+1)a_k\geqslant0, n~ \text{odd} \}.\\
\end{aligned}		
\end{equation}	

Having the above preparation, we can state our next {results}
on  {global} structure of system \eqref{1}.

	{
	\begin{theorem}
		\label{lem2}	For system \eqref{1},
		 suppose that
		 \begin{itemize}
		 		\item
	  $\alpha=-\infty$ and $\beta=+\infty$,   $g(x)$ is analytic in $|x|<\epsilon$ $(\epsilon>0$ is small $)$ and $f(x,y)$ is  analytic in $S_\delta(O)$, 		
			\item $xg(x)>0$ for all $x\neq0$, $g(x)$ and $f(x,y)$ are Lipschitzian continuous for $(x,y)\in\R^2$,
			\item either $f(x,y)\equiv-f(-x,y)$ for $g(x)=-g(-x)$ in $(x,y)\in(0,+\infty)\times \R$, or  $f(x,y)/g(x)\equiv f(\hat{x},y)/g(\hat{x})$ for  $g(x)\not\equiv-g(-x)$ in $(x,y)\in(0,+\infty)\times \R$, where $\hat{x}<0<x$ satisfying  equation \eqref{c}.
		\end{itemize}
{The following statements hold.
 \begin{itemize}
 \item The origin $O$ of   system \eqref{1} is a center, or $S_\delta(O)$  consists of
		one elliptic sector and one hyperbolic sector, or  $S_\delta(O)$ consists of
		one elliptic sector, one hyperbolic sector and two parabolic sectors.  {\rm Figure \ref{tu2}} illustrates these local structures at $O$.
		\begin{figure}[hpt]
			\centering
			\subfigure[{ $(a,b)\in\mathscr{G}_{63}$}]{
				\includegraphics[width=0.3\textwidth]{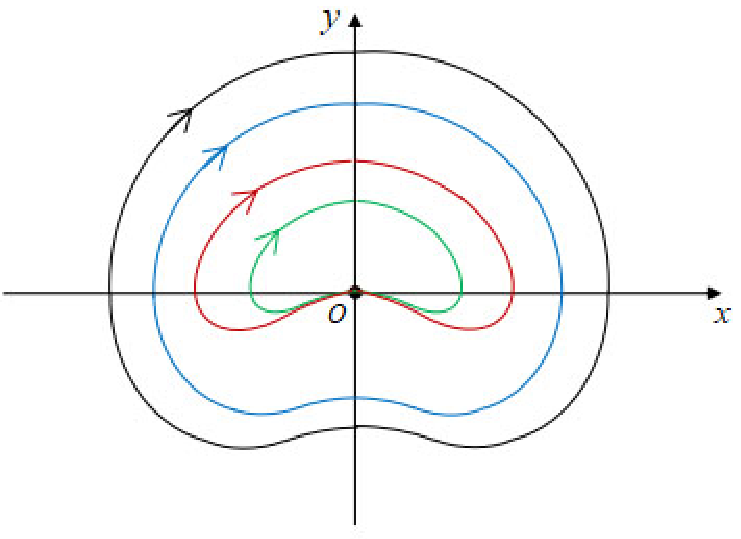}}
			\quad
			\subfigure[{ $(a,b)\in\mathscr{G}_{63}$}]{
				\includegraphics[width=0.3\textwidth]{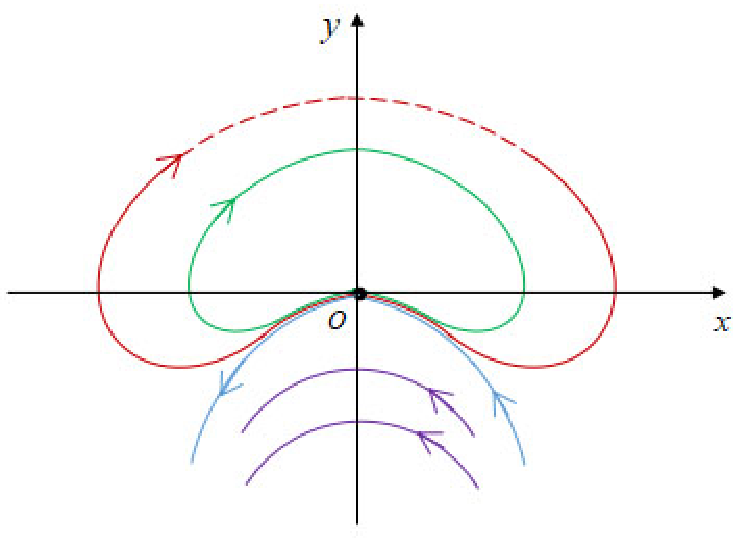}}
			\quad
			
			\subfigure[{ $(a,b)\in\mathscr{G}_{63}$}]{
				\includegraphics[width=0.3\textwidth]{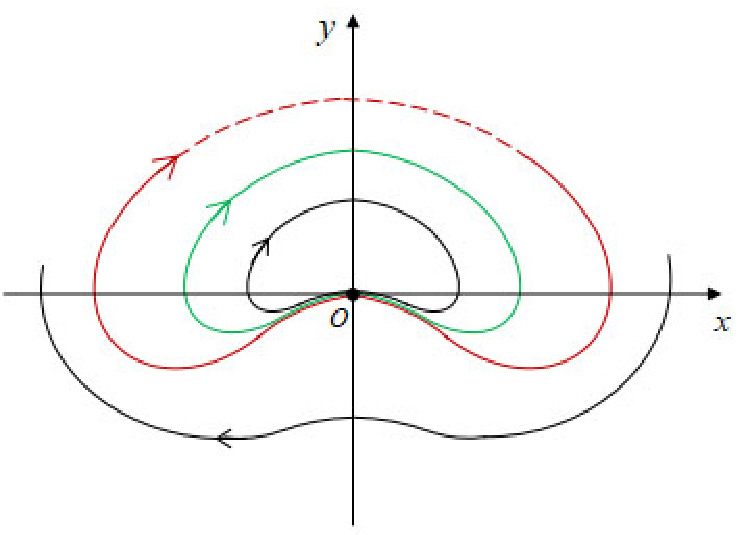}}
			\quad
			\subfigure[{$(a,b)\in \mathscr{G}_4\cup\mathscr{G}_{61}$ }]{
				\includegraphics[width=0.3\textwidth]{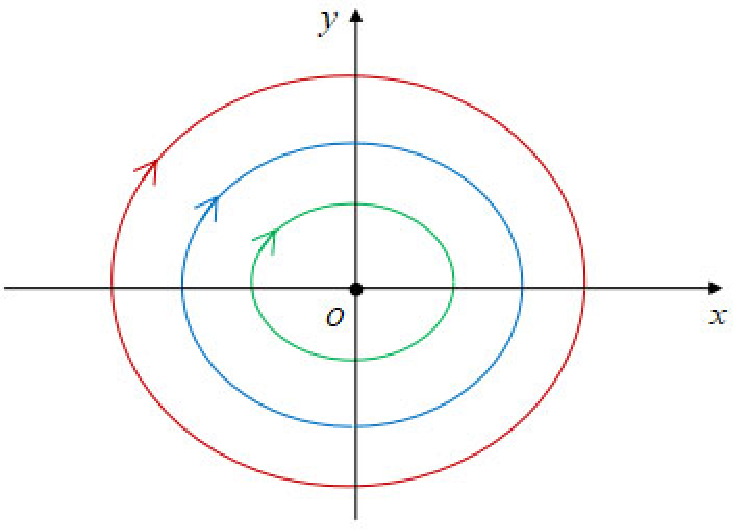}}
			\caption{Phase portraits of system \eqref{1} when $f(x,y)/g(x)\equiv f(\hat{x},y)/g(\hat{x})$. (The elliptic sector is bounded in (a), and unbounded in (b) and (c).) 
			} \label{tu2}		
		\end{figure}
\item When the elliptic sector is bounded, there has no parabolic sectors at $O$. In other words, two parabolic sectors appearing in a neighborhood of $O$ happens only in the unbounded case of the elliptic sector. These two situations can be realized by concrete examples.
\end{itemize}
}
\end{theorem}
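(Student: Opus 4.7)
The plan is to exploit the time-reversibility of system \eqref{1} implied by the symmetry hypothesis. When $f(x,y)\equiv-f(-x,y)$ and $g$ is odd, the involution $R\colon(x,y,t)\mapsto(-x,y,-t)$ preserves the system; under the alternative condition $f(x,y)/g(x)\equiv f(\hat x,y)/g(\hat x)$, the same reversibility holds after the coordinate change $u=\operatorname{sgn}(x)\sqrt{2F(x)}$ with $F(x)=\int_{0}^{x}g(s)\,ds$, which converts $x\mapsto\hat x$ into $u\mapsto-u$. In either case the phase portrait in $S_\delta(O)$ is symmetric across the fixed axis with time reversed, and the reflection principle ``an orbit leaving the fixed axis returns to it and its $R$-image closes the loop'' will be the central tool.

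First I would reduce the parameter region. The symmetry forces $f(0,y)\equiv 0$, hence $b=f(0,0)=0$, so $(a,b)\in\mathscr G_4\cup\mathscr G_6$; writing $f(x,y)=b_n x^n+O(x^{n+1})+y\,p(x,y)$, the odd-in-$x$ condition on $f$ rules out even $n$, confining us to $\mathscr G_4\cup\mathscr G_{61}\cup\mathscr G_{63}$. For $(a,b)\in\mathscr G_4$ the linearization has eigenvalues $\pm i\sqrt a$ and every orbit near $O$ crosses the fixed axis transversally; the reflection argument then closes up each such orbit, so $O$ is a nonlinear center, giving configuration (d) of Figure \ref{tu2}. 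For $(a,b)\in\mathscr G_{61}$ the leading restoring term $a_k x^k$ dominates the dissipative term $b_n x^n y$ in the scaling dictated by the energy $H(x,y)=y^2/2+F(x)$; since $\dot H=-f(x,y)y^2$ is odd in $x$, $H$ is conserved to leading order along $R$-symmetric arcs and nearby orbits still return to the fixed axis, producing again a center.

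The heart of the proof is the case $(a,b)\in\mathscr G_{63}$, where $O$ is a genuine nilpotent equilibrium. Here I would apply a quasihomogeneous blow-up of weights $(1,(k+1)/2)$—or equivalently invoke the corrected Andreev-type classification established elsewhere in this paper—to resolve $O$ into a finite collection of semi-hyperbolic singularities on the exceptional divisor. The reversibility $R$ lifts to an involution pairing these singularities and their separatrices; a Poincar\'e index count on the resolved picture, together with the $R$-pairing of sectors, leaves exactly the three configurations (a), (b), (c) of Figure \ref{tu2}: one hyperbolic and one elliptic sector, possibly flanked by a symmetric pair of parabolic sectors. For the bounded-versus-unbounded dichotomy, a bounded elliptic sector is enclosed by a homoclinic loop $\Gamma$, which by reversibility is $R$-symmetric; its interior is foliated by homoclinic orbits and its exterior is swept by the hyperbolic sector alone, leaving no room for parabolic sectors. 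Conversely, when the elliptic separatrices extend to $\partial S_\delta(O)$, the complement splits into the hyperbolic sector and two $R$-related parabolic strips. Concrete polynomial realizations of both scenarios can be constructed in the spirit of the examples in Section \ref{ap}. The main obstacle is the blow-up analysis in $\mathscr G_{63}$ together with the careful bookkeeping of sectors under the involution $R$.
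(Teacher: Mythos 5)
Your overall strategy coincides with the paper's: exploit the reflection $R:(x,y,t)\mapsto(-x,y,-t)$ (after symmetrizing the generalized case by a change of the $x$--variable; note you also need the accompanying time rescaling, as in the paper's transformation \eqref{tr}, to keep the system in the form \eqref{1}), deduce $b=0$ and $n$ odd so that only $\mathscr{G}_4\cup\mathscr{G}_{61}\cup\mathscr{G}_{63}$ can occur, settle the center cases by the reflection principle, and in $\mathscr{G}_{63}$ establish one elliptic and one hyperbolic sector and then count parabolic sectors using the symmetry (their number is even, and at most two because $\theta=0,\pi$ are the only exceptional directions). Your bounded--case argument --- the maximal $R$--symmetric homoclinic loop leaves no room for parabolic sectors --- is exactly the paper's. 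Where you differ is in $\mathscr{G}_{63}$: the paper does not blow up, but combines the exceptional--direction analysis of Lemmas 7.3--7.4 of Zhang et al.\ with the explicit energy $H=\frac{a_k}{k+1}x^{k+1}+\frac{y^2}{2}$, whose monotonicity in $x>0$ together with reflection closes homoclinic loops; your quasihomogeneous blow--up could work, but the conclusion that it ``leaves exactly the three configurations'' is asserted rather than derived.

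There are two genuine gaps. First, the sentence ``Conversely, when the elliptic separatrices extend to $\partial S_\delta(O)$, the complement splits into the hyperbolic sector and two $R$--related parabolic strips'' claims that an unbounded elliptic sector always forces two parabolic sectors. The parity--plus--at--most--two argument only yields zero or two, and the paper explicitly states (Remark \ref{rem3} and the end of its proof) that it can neither exhibit an unbounded example with zero parabolic sectors nor prove that two must occur; you give no argument excluding zero. This overclaim does not contradict the stated trichotomy, but it is unproven and should be removed or weakened. Second, the theorem requires that both situations ``be realized by concrete examples,'' and ``can be constructed in the spirit of the examples in Section \ref{ap}'' is not a proof: the two--parabolic--sector realization is the delicate part, and the paper must integrate the reduced system $\dot x=y-x^2$, $\dot y=-2xy$ explicitly to obtain the global phase portrait of system \eqref{ub} exhibiting an unbounded elliptic sector with two parabolic sectors. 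Without such a construction the second bullet of the theorem is not established. Finally, your $\mathscr{G}_{61}$ step is only heuristic as written (``conserved to leading order''); monodromy of the origin there should be quoted from the Andreev/Zhang et al.\ classification rather than inferred from dominance of the restoring term.
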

}

\begin{theorem}
	\label{thm4}
	For system \eqref{1}, suppose that
	 \begin{itemize}
	 			 		\item
	 	$\alpha=-\infty$ and $\beta=+\infty$,   $g(x)$ is analytic in $|x|<\epsilon$ $(\epsilon>0$ is small $)$, and $f(x,y)$ is  analytic in $S_\delta(O)$,
		\item $xg(x)>0$ for all $x\neq0$, $g(x)$ and $f(x,y)$ are Lipschitzian continuous for $(x,y)\in\R^2$,
		\item either $f(x,y)\geqslant-f(-x,y)$ for $g(x)=-g(-x)$ in $(x,y)\in(0,+\infty)\times \R$, or  $f(x,y)/g(x)\geqslant f(\hat{x},y)/g(\hat{x})$ for  $g(x)\not\equiv-g(-x)$ in $(x,y)\in(0,+\infty)\times \R$, where $\hat{x}<0<x$ satisfying  equation \eqref{c},
		\item $f(x,y)\not\equiv-f(-x,y)$ as $g(x)=-g(-x)$ for $(x,y)\in(0,\zeta)\times\R$ and $f(x,y)/g(x)\not\equiv f(\hat{x},y)/g(\hat{x})$ as $g(x)\not\equiv-g(-x)$ for $(x,y)\in(0,\zeta)\times\R$,  where $0<\zeta\ll1$.
	\end{itemize}
	Then, the qualitative property of the unique equilibrium $O$ is as that shown in {\rm Table
		\ref{QPO}} and system \eqref{1} could have { only  possibly seven} local  phase portraits, as those shown in {\rm Figures \ref{tu2}(b) and (c)} and {\rm Figure \ref{1pps2}}.
		
	{
	\begin{table}[htp]
	\renewcommand\arraystretch{2.2}
	\setlength{\tabcolsep}{1mm}{
		\caption{	\label{QPO} The qualitative property of $O$ of system \eqref{1}.}
		
		\begin{tabular}{c|c|c|c}
			\hline
			$(a,b)$ & Type of $O$ & \multicolumn{2}{c}{Geometric configurations}
			\\
			
			\hline		
			$ \mathscr{G}_{1}\cup \mathscr{G}_{5}$ & stable node & \multicolumn{2}{c}
			{\begin{tabular}[c]{@{}c@{}}
					in $S_\delta(O)$ except one pair of orbits  approaching   \vspace{-0.5cm}
					\\
				$O$	in one of the directions $\theta_2$ and $\theta_4$, \vspace{-0.5cm}
					\\
					all other orbits approaching  $O$ in one of \vspace{-0.5cm}
					\\ the directions $\theta_1$ and $\theta_3$, see  {\rm Figure \ref{1pps2}(a)}.
			\end{tabular}}
			\\
			
			\hline		
			$\mathscr{G}_{2}\cup \mathscr{G}_{4}\cup \mathscr{G}_{61}$ & stable focus & \multicolumn{2}{c}
			{in $S_\delta(O)$  all  orbits rotate clockwise, see  {\rm Figure \ref{1pps2}(b)}.}
			\\
			
			\hline		
			$\mathscr{G}_{3}\cup \mathscr{G}_{62}$ &
			\begin{tabular}[c]{@{}c@{}}
				stable \vspace{-0.5cm}
				\\
				improper node
			\end{tabular}
			& \multicolumn{2}{c}
			{\begin{tabular}[c]{@{}c@{}}
					in $S_\delta(O)$  all orbits approaching  $O$ \vspace{-0.5cm}
					\\ 					
					along one of the directions $\theta_5$ and $\theta_6$, see  {\rm Figure \ref{1pps2}(c)}.
			\end{tabular}}
			\\
			
			\hline		
			\multirow{4}{*}
			{\begin{tabular}[c]{@{}c@{}}\\ \\ \\ $\mathscr{G}_{63}$\end{tabular}} &
			\multirow{4}{*}
			{\begin{tabular}[c]{@{}c@{}}\\ \\ \\ a degenerate \vspace{-0.5cm}
					\\
					one\end{tabular}} &
			\begin{tabular}[c]{@{}c@{}}
				elliptic sector \vspace{-0.5cm}
				\\
				is bounded \end{tabular} &
			\begin{tabular}[c]{@{}c@{}}
				$S_\delta(O)$ consist  of one elliptic sector, \vspace{-0.5cm}
				\\
				one hyperbolic sector and   \vspace{-0.5cm}
				\\
				one parabolic sector, see  {\rm Figure \ref{1pps2}(d)}.
			\end{tabular}
			\\
			
			\cline{3-4}
			&  &
			\multirow{3}{*}
			{\begin{tabular}[c]{@{}c@{}}\\ \\
					elliptic sector \vspace{-0.5cm}
					\\
					is unbounded
			\end{tabular}} &
			\begin{tabular}[c]{@{}c@{}}
				$S_\delta(O)$ consist  of one elliptic sector,  \vspace{-0.5cm}
				\\
				one hyperbolic sector and
				\vspace{-0.5cm}
				\\
				zero  parabolic sector, see  {\rm Figure \ref{tu2}(c)}.
			\end{tabular}
			\\
			
			\cline{4-4}
			&  &  &
			\begin{tabular}[c]{@{}c@{}}
				$S_\delta(O)$ consist  of one elliptic sector,  \vspace{-0.5cm}
				\\
				one hyperbolic sector and \vspace{-0.5cm}
				\\
				one  parabolic sector, see  {\rm Figure \ref{1pps2}(e)}.
			\end{tabular}
			\\
			
			\cline{4-4}
			&  &  &
			\begin{tabular}[c]{@{}c@{}}
				$S_\delta(O)$ consist  of one elliptic sector,  \vspace{-0.5cm}
				\\
				one hyperbolic sector and \vspace{-0.5cm}
				\\
				two  parabolic sectors, see  {\rm Figure \ref{tu2}(b)}.
				
			\end{tabular}
			\\
			\hline
	\end{tabular}}
	\begin{center}
		Remark : $\theta_1 =\pi -\arctan \left(b-\sqrt{b^2-4a})/2\right),
		\theta_2=\pi -\arctan \left(b+\sqrt{b^2-4a})/2\right) ,
		\theta_3 =2\pi -\arctan \left(b-\sqrt{b^2-4a})/2\right),
		\theta_4=2\pi -\arctan \left(b+\sqrt{b^2-4a})/2\right), \theta_5=\pi-\arctan \left(b/2\right), \theta_6=2\pi-\arctan \left(b/2\right)$.
		
	\end{center}
\end{table}
}
			\begin{figure}[hpt]
				\centering
				\subfigure[{ $(a,b)\in\mathscr{G}_1\cup\mathscr{G}_5$ }]{
					\includegraphics[width=0.3\textwidth]{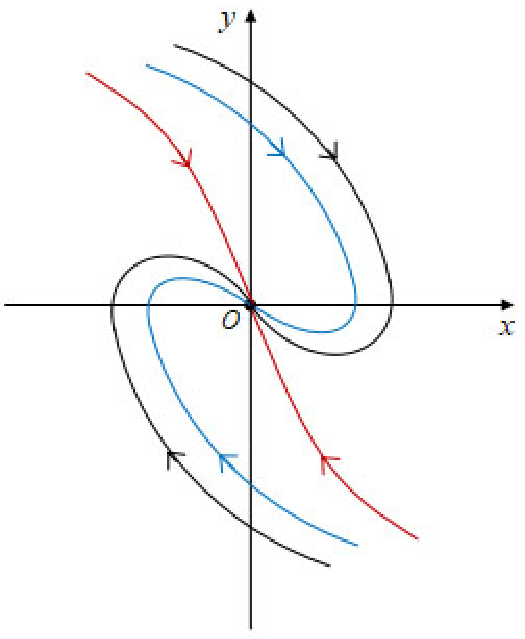}}
				\quad
				\subfigure[{ $(a,b)\in\mathscr{G}_2\cup\mathscr{G}_4\cup\mathscr{G}_{61}$ }]{
					\includegraphics[width=0.3\textwidth]{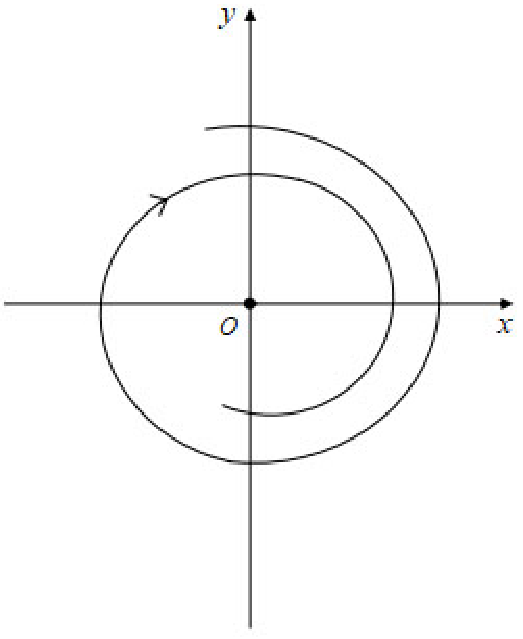}}
				\quad
				
				\subfigure[{$(a,b)\in\mathscr{G}_3\cup\mathscr{G}_{62}$ }]{
					\includegraphics[width=0.3\textwidth]{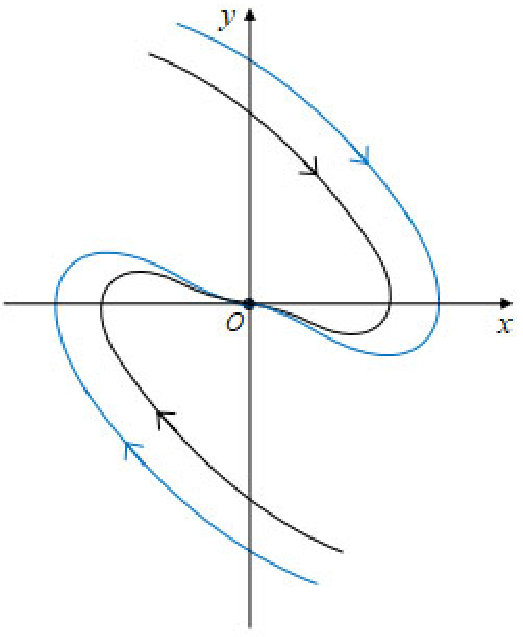}}
				\quad	
				\subfigure[{  $(a,b)\in\mathscr{G}_{63}$}]{
					\includegraphics[width=0.3\textwidth]{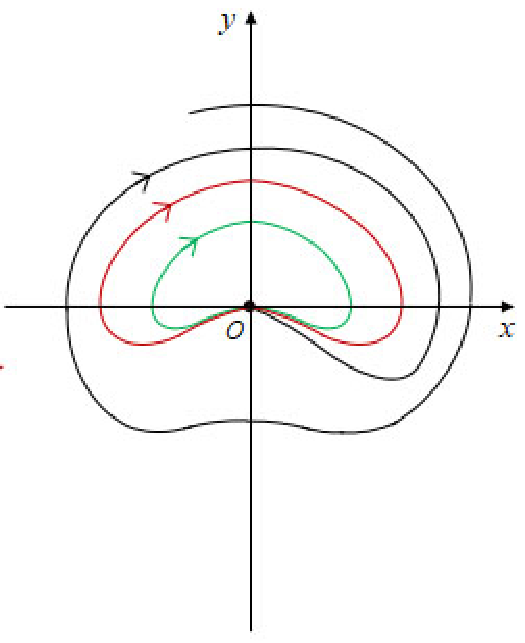}}
				\quad	
				\subfigure[{  $(a,b)\in\mathscr{G}_{63}$}]{
					\includegraphics[width=0.3\textwidth]{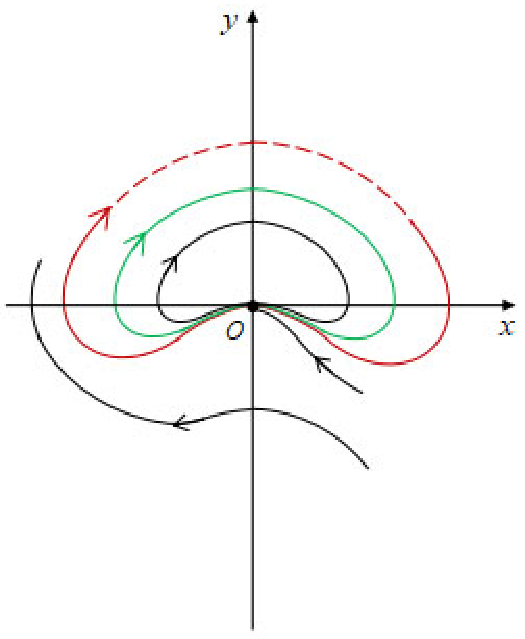}}
				\caption{Phase portraits of system \eqref{1} { when $f(x,y)/g(x)\geqslant f(\hat{x},y)/g(\hat{x})$ in $x>0$ and $y\in\R$. (The elliptic sector is bounded in (d), and unbounded in (e).)} }
				\label{1pps2}
			\end{figure}
	\end{theorem}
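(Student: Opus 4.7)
The plan is to proceed by cases according to which subregion of $\mathscr{G}$ contains $(a,b)=(g'(0),f(0,0))$, combining in each case a standard linearization or blow-up analysis with the absence of closed orbits guaranteed by Theorems \ref{thm1}, \ref{thm1b} and \ref{thm2}. Uniqueness of $O$ in the strip $\alpha<x<\beta$ is immediate: $\dot y=0$ forces $g(x)=0$, and $xg(x)>0$ then forces $x=0$ and hence $y=0$. The Jacobian
\[
J_O=\begin{pmatrix}0 & 1\\ -a & -b\end{pmatrix}
\]
has eigenvalues $\lambda_{1,2}=(-b\pm\sqrt{b^2-4a})/2$ with eigenvectors $(1,\lambda_{1,2})^{T}$, and a direct computation identifies the rays carried by these eigenvectors with the angles $\theta_{1},\dots,\theta_{6}$ listed in Table \ref{QPO}.

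For the hyperbolic strata $\mathscr{G}_{1},\mathscr{G}_{2},\mathscr{G}_{3}$ and the semi-hyperbolic stratum $\mathscr{G}_{5}$, the type and the geometric configuration in Table \ref{QPO} follow from Hartman--Grobman together with the invariant-manifold and centre-manifold theorems. In $\mathscr{G}_{1}$ the strong- and weak-stable directions produce the distinguished pair $\theta_{2},\theta_{4}$ versus the one-parameter family $\theta_{1},\theta_{3}$; in $\mathscr{G}_{2}$ the clockwise orientation of the focus follows from the sign of the off-diagonal entry $-a$; in $\mathscr{G}_{3}$ the repeated eigenvector gives the improper-node directions $\theta_{5},\theta_{6}$; in $\mathscr{G}_{5}$ the centre-manifold reduction along the $x$-axis gives a one-dimensional flow whose leading term is $-g(x)/b$, which is attracting by $xg(x)>0$, yielding a stable node with a single transverse orbit pair along $\theta_{2},\theta_{4}$.

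For the non-hyperbolic strata $\mathscr{G}_{4}\cup\mathscr{G}_{61}$ (linear centre or quasi-homogeneous nilpotent centre), the leading part of the system would by itself produce a centre, but Theorems \ref{thm1}, \ref{thm1b} and \ref{thm2} apply under hypotheses (iii)--(iv) (respectively (iii$'$)--(iv) or (v)--(vi)) and exclude closed orbits near $O$. Thus the Poincar\'e first-return map on a transverse section is strictly monotone, ruling out a centre, while the direction of the strict inequality in (iv) or (vi) fixes the sense of this monotonicity and forces $O$ to be a stable focus. The stratum $\mathscr{G}_{62}$ is then handled by the classical Andreev normal-form analysis: the expansions \eqref{eab}, \eqref{eab1} together with the parity conditions defining $\mathscr{G}_{62}$ place the nilpotent singularity in the topological-node class of Andreev's classification, which together with the nonexistence of closed orbits yields a stable improper node along $\theta_{5},\theta_{6}$.

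The main obstacle is the region $\mathscr{G}_{63}$, where the nilpotent origin carries one elliptic and one hyperbolic sector together with a variable number of parabolic sectors. The plan is to desingularise $O$ by a quasi-homogeneous blow-up adapted to the exponents $(k,n)$ in \eqref{eab}; after blowing up one obtains a finite chain of hyperbolic and semi-hyperbolic points on the exceptional divisor whose separatrices, blown back down, organise $S_\delta(O)$ into sectors. Invoking Theorem \ref{thm1} (or \ref{thm2}) to preclude recurrent separatrix connections cuts the a priori combinatorial list down to exactly the configurations shown in Figures \ref{tu2}(b), \ref{tu2}(c), \ref{1pps2}(d) and \ref{1pps2}(e); the key step, and precisely the point at which Andreev's classical statement turns out to be incomplete, is to show that whether the elliptic sector is bounded or unbounded controls whether $0$, $1$, or $2$ parabolic sectors accompany it, as recorded in the last block of Table \ref{QPO}. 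Carrying out this sector enumeration rigorously, with matching examples to realise each case, is the principal technical task.
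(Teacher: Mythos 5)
Your treatment of the strata $\mathscr{G}_1,\dots,\mathscr{G}_5$, $\mathscr{G}_{61}$ and $\mathscr{G}_{62}$ is sound and essentially parallels the paper: eigenvalue/normal--sector analysis at $O$, plus the nonexistence of closed orbits to upgrade ``center or focus'' to ``stable focus''. (Note that the paper pins down the stability in $\mathscr{G}_4\cup\mathscr{G}_{61}$ by explicitly comparing the return arcs of equations \eqref{3} and \eqref{4} in the half--plane $x\geqslant0$; that comparison is the concrete content of the ``strictly monotone return map'' you invoke, so you would need to carry it out rather than merely assert it.) The genuine gap is in $\mathscr{G}_{63}$, which is the only part of the theorem that goes beyond Andreev. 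A quasi--homogeneous blow--up of the nilpotent point yields only the local data: two exceptional directions $\theta=0,\pi$, infinitely many orbits entering along $\theta=0$ in forward time and along $\theta=\pi$ in backward time, hence one elliptic plus one hyperbolic sector and \emph{at most} two parabolic ones. All four configurations in the last block of Table \ref{QPO} share this identical blow--up picture, so the blow--up cannot ``cut the combinatorial list down''; deciding how many parabolic sectors actually occur is a global question about which of the orbits entering along $\theta=0$ are homoclinic, and Theorem \ref{thm1} (nonexistence of closed orbits) does not answer it. Your proposal names this as ``the key step'' and ``the principal technical task'' but supplies no argument for it.

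What the paper does at exactly this point, and what is absent from your plan, is the following. For the bounded elliptic sector it introduces the auxiliary vector field \eqref{new} obtained by symmetrizing $f$ about the $y$--axis, compares the maximal homoclinic orbits of \eqref{1} and \eqref{new} through the same integral identity and Gronwall--type estimate used in the proof of Theorem \ref{thm1}, and concludes that the orbits meeting the positive $x$--axis between the two intersection points $K$ and $Q$ form exactly one parabolic sector, locally in the fourth quadrant, with none in the third. For the unbounded case it exhibits system \eqref{ub}, integrates the reduced system \eqref{ubo} explicitly, and uses a rotated--vector--field and structural--stability argument to show that two parabolic sectors genuinely occur; the paper openly concedes that zero or one parabolic sector with an unbounded elliptic sector is neither realized nor excluded. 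Without a substitute for the symmetrization--and--comparison step your proof does not establish the ``bounded $\Rightarrow$ exactly one parabolic sector'' row of Table \ref{QPO}, and without the explicit example it does not establish that the two--parabolic--sector configuration is realizable; hence the enumeration of the seven possible local phase portraits is not proved.
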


	\begin{remark}\label{rem1}
	For system \eqref{AA} in {\rm Appendix B}, Andreev \cite{Andreev} proved that when  $b_n\ne0$ and $n<(k-1)/2$, or $b_n\ne0$, $n=(k-1)/2$ and $b_n^2-2(k+1)a_k\geqslant0$ with $n$ and $k$  odd,
any small neighborhood  $S_\delta(O)$ of system \eqref{AA} at $O$ consists of
	only one elliptic sector and  one hyperbolic sector.
This result has also been collected in many Monographs and papers for their applications on local classification of equilibria, see for instance \cite[Theorem 7.2 of Chapter 2]{ZDHD}, 	\cite[Theorem 3.5 of Chapter 3]{DLA} and so on.
	However, as we show, this classification is incomplete. In fact, beside	the elliptic and hyperbolic sectors, there may include zero { $($see {\rm system \eqref{eg3}} with {\rm Figure~\ref{fEHsymmetry}}$)$, or one $($see {\rm Example \ref{eg2}} with {\rm Figure~\ref{fEHasymOneparabolic}}$)$, or two parabolic sectors $($see {\rm Example \ref{eg1}} with {\rm Figure~\ref{gpps34}}$)$} in $S_\delta(O)$.
	{		Andreev \cite{Andreev} proved the existence of the elliptic sector without discussing its boundedness. In fact, our results indicate that even in the bounded case, the existence of a parabolic sector strongly depends on symmetry of the vector field with respect to some line passing the equilibrium $($no parabolic sector in symmetric case, and a unique one in nonsymmetric case$)$.
}
\end{remark}

	\begin{remark}\label{rem3}
{ If the elliptic sector is bounded and the vector field is not symmetry, $S_\delta(O)$ includes exactly one parabolic sector.
	However, when the elliptic sector is unbounded, we have only proved that $S_\delta(O)$ contains at most two parabolic sectors, and provide examples which do have two parabolic sectors. But we cannot find examples with zero or one parabolic sector in $S_\delta(O)$, and also cannot prove that $S_\delta(O)$ must have exactly two parabolic sectors. This will remain for further study.}
\end{remark}

	When $(a,b)\in\mathscr{G}_{63}$,
the next two examples illustrate  existence of two parabolic sectors together with one {  unbounded} elliptic and one hyperbolic sectors, and also of one parabolic sector together with one  { bounded} elliptic and one hyperbolic sectors in a neighborhood of a nilpotent equilibrium. {
}
{
\begin{example}\label{eg1}
The planar differential system 	
\begin{equation}\label{ub}	
	\begin{aligned}
	\dot{x}=y,\quad
	\dot{y}=-2x^3-4xy+\varepsilon\widetilde{g}(r)x^2y,
	\end{aligned}
\end{equation}
with $\varepsilon\geqslant0$ sufficiently small, $r^2=x^2+y^2$,  and 	
\[
\widetilde{g}(r)=\left\{ \begin{aligned}
& 1,~&&{\rm if}~0\leqslant r\leqslant1,
\\
& 0,~&&{\rm if}~r\geqslant2,
\end{aligned} \right.
\]
decreasing and smooth for $r\geqslant0$, has the global phase portrait as that shown in {\rm Figure \ref{gpps34}}.
	\begin{figure}[hpt]
 		\centering
{ 			\includegraphics[width=0.35\textwidth]{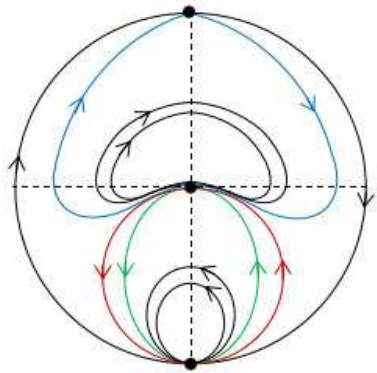}}
 		\caption{The global phase portraits for system \eqref{ub}  with $\varepsilon\geqslant 0$.  }
 		\label{gpps34}
 	\end{figure}
\end{example}
}

Notice that system \eqref{ub} satisfies all conditions of {\rm Theorem \ref{thm4}} and $(a,b) \in \mathscr{G}_{63}$.
{ The proof of the conclusion in this example will be included in the proof of Theorem \ref{thm4}.}

\begin{example}\label{eg2}
Beside the elliptic sector $($bounded by assumption$)$ and hyperbolic sector system \eqref{1} has only  one parabolic sector in $S_\delta(O)$. {\rm Figure \ref{fEHasymOneparabolic}}
illustrates this asymmetric case with one parabolic sector for system  $\dot x= y, \ \dot y=  -2 x^5+x^6 - x  y$ in a neighborhood of the origin by Matlab with the initial points $(0,0.31),\ (0,0.11),\ (0,-0.3)$.
\begin{figure}[hpt]
	\centering
	{			\includegraphics[width=0.35\textwidth]{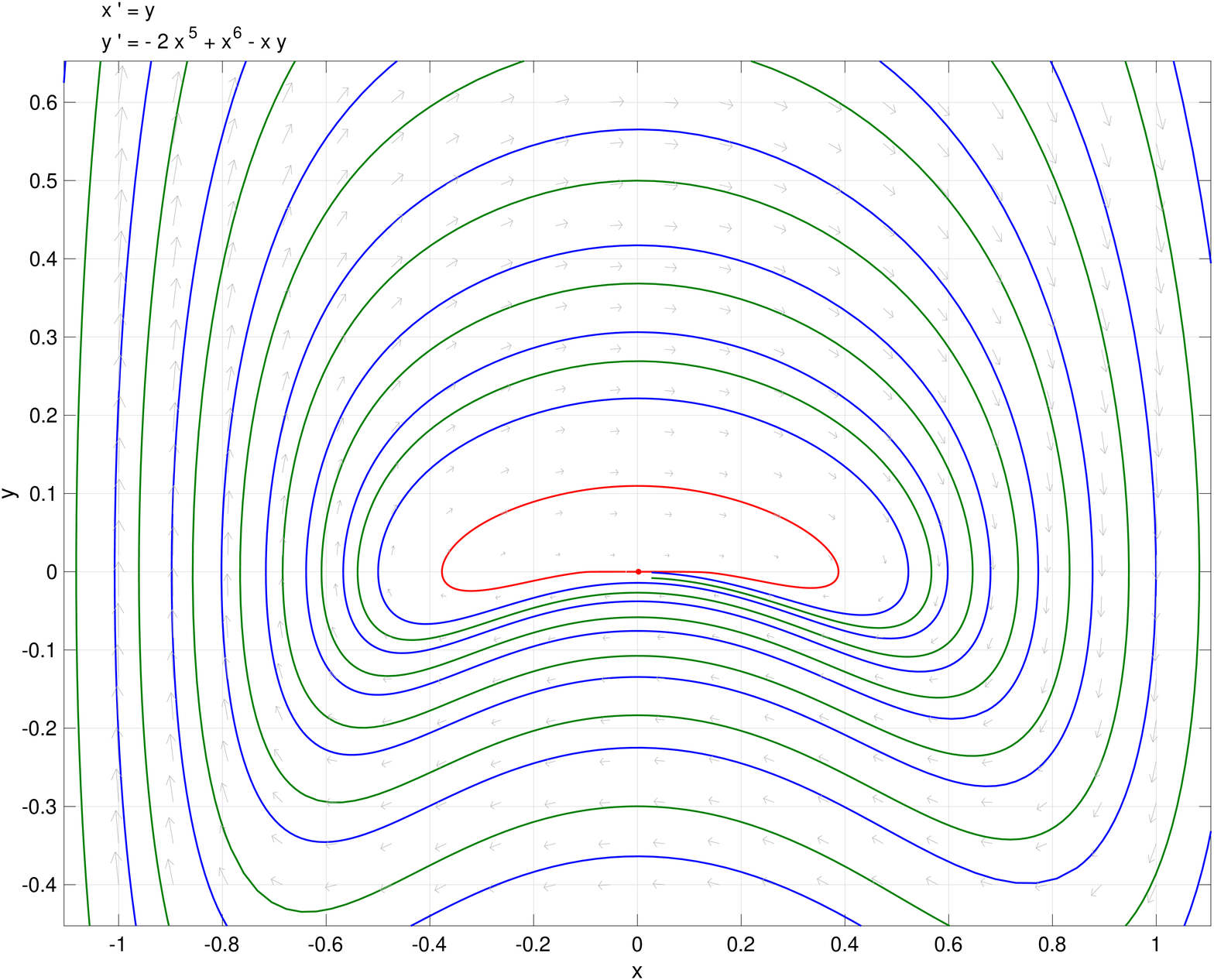}}
	\caption{Asymmetric case with only one elliptic, one hyperbolic sector and one parabolic sectors near $O$.} \label{fEHasymOneparabolic}
\end{figure}
\end{example}


As shown in Table
\ref{QPO} of Theorem \ref{thm4}, the qualitative property of $O$ is related to the boundedness of the elliptic sector when $(a,b)\in\mathscr{G}_{63}$.  The following proposition provides a criterion on existence of the bounded elliptic sector (if exists) for the Li\'enard differential system \eqref{1} with $f(x,y)$ in $x$ only. Taking $f(x,y)=\widetilde{f}(x)$, with the transformation $(x,y)\to(x,y-F(x))$, system \eqref{1} can be written in the classical Li\'enard system
\begin{equation}
	\label{lienard}
\dot{x}=y-F(x),\,\,\dot{y}=-g(x),
\end{equation}
 where $F'(x)=\widetilde{f}(x)$.

\begin{proposition}
	\label{P7}
For system \eqref{lienard}, suppose that
  \begin{itemize}
  \item $xg(x)>0$ for all $x\neq0$, and $\int_{0}^{\infty}g(x)dx=+\infty$,
   \item $g(x)$ and $F(x)$ are Lipschitzian continuous in $\mathbb{R}$, and are analytic in   $|x|<\epsilon$ for some small $\epsilon>0$ is small,
 \item system \eqref{lienard} has an elliptic sector at $O$, i.e. $(a,b)\in\mathscr{G}_{63}$.
 \end{itemize}
 If there exists an $x_0\ne 0$ such that $F(x_0)=0$, the  elliptic sector is bounded.
\end{proposition}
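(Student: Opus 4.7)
\textbf{Plan of proof for Proposition~\ref{P7}.}

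The plan is to prove that, under the hypothesis $F(x_0)=0$ with $x_0\neq 0$, the orbit $\gamma_0$ of \eqref{lienard} through $(x_0,0)$ is a homoclinic loop at $O$, and that the elliptic sector coincides with the interior of this loop (hence is bounded). Two ingredients will do most of the work: the coercive Hamiltonian-like function
\[
H(x,y)=\tfrac{1}{2}y^{2}+G(x),\qquad G(x)=\int_{0}^{x}g(s)\,ds,
\]
whose derivative along solutions is $\dot H=-g(x)F(x)$, and the tangency of $\gamma_0$ to the vertical line $x=x_0$ at $(x_0,0)$, since there $\dot x=y-F(x_0)=0$ and $\ddot x=\dot y - F'(x)\dot x=-g(x_0)<0$.

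I would first reduce, via the symmetries $(x,y,t)\mapsto(-x,-y,-t)$ and $(y,t)\mapsto(-y,-t)$ (the latter replaces $F$ by $-F$ while preserving $g$), to the case where $x_0>0$ is the smallest positive zero of $F$ and $F\geqslant 0$ on $[0,x_0]$. Then $\dot H\leqslant 0$ throughout the vertical strip $\{0\leqslant x\leqslant x_0\}$. Because $(a,b)\in\mathscr{G}_{63}$, the linear part of \eqref{lienard} at $O$ is nilpotent with kernel spanned by the $x$-axis, so by Theorem~\ref{thm4} the bounding separatrix $S$ of the elliptic sector emanates from $O$ tangent to the $x$-axis.

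Tracing $\gamma_0$ in both time directions, the right-side analysis is direct: in the strip, $\dot H\leqslant 0$ keeps $\gamma_0$ inside the compact region $\{H\leqslant G(x_0),\,x\geqslant 0\}$ (in particular $|y|\leqslant\sqrt{2G(x_0)}$), while the sign $\dot y=-g(x)<0$ together with the local second-order behavior $\ddot x|_{(x_0,0)}=-g(x_0)<0$ forces $\gamma_0$ to remain to the left of $\{x=x_0\}$ and to reach the $y$-axis at a bounded height. On the left of the $y$-axis, the hypothesis gives no sign for $F$; here one argues via the Lipschitz continuity of $F$ and an $L^1$-bound on $\dot H$ over a single excursion (combined with the local sector structure at $O$ from Theorem~\ref{thm4}) that $\gamma_0$ remains in a compact set. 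A Poincar\'e--Bendixson argument, exploiting the absence of equilibria other than $O$ in this compact set and the asymptotic structure at $O$, then forces $\gamma_0$ to limit to $O$ in both time directions, producing a homoclinic loop. By uniqueness at $(x_0,0)$ and by its tangency to the $x$-axis at $O$, $\gamma_0$ must coincide with $S$; the elliptic sector is trapped inside $\gamma_0$ and is therefore bounded.

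The main obstacle is the left-half analysis, where $\dot H$ may change sign. The key to controlling $\gamma_0$ in $\{x<0\}$ is the already-established boundedness of $y$ (and hence of $H$) at the moment $\gamma_0$ re-enters $\{x=0\}$ from the right; Lipschitzness of $F$ then bounds the possible growth of $H$ along one left-excursion by a quantity depending only on the bounded $x$-range, and the local normal-form description at $O$ given by Theorem~\ref{thm4} precludes indefinite spiralling of $\gamma_0$ around $O$.
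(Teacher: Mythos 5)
Your central claim --- that the orbit $\gamma_0$ through $(x_0,0)$ is a homoclinic loop at $O$ whose interior is the elliptic sector --- is not only unproven but false, and the paper's proof establishes the opposite. Since $(a,b)\in\mathscr{G}_{63}$ one has $F>0$ on a punctured neighbourhood of $0$, and the sign of $dy/dx=g(x)/(F(x)-y)$ near $O$ shows that every orbit tending to $O$ does so inside the first quadrant (forward, in the region $0<y<F(x)$) or the second quadrant (backward); hence the homoclinic orbits making up the elliptic sector lie entirely in $y\geqslant 0$. The orbit $\gamma_0$, by contrast, leaves $(x_0,0)$ downward into $y<0$ (there $\dot x=0$, $\dot y=-g(x_0)<0$), so it is not one of these loops, and it cannot close into a homoclinic loop either: for its forward branch to return to $O$ it would have to re-enter the pinched region $\{0<x<x_0,\ 0<y<F(x)\}$, whose only entrance is the arc of $y=F(x)$ crossed downward, and this forces either a crossing with the backward branch $\widehat{BA}$ of $\gamma_0$ or an upward crossing of the positive $x$-axis, which is impossible since $\dot y<0$ there. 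This is exactly the paper's claim that $\varphi(A,I)$ cannot form a homoclinic orbit. Consequently the final step ``$\gamma_0$ must coincide with $S$'' cannot be rescued by tangency to the $x$-axis at $O$: infinitely many orbits approach $O$ tangent to the $x$-axis.

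The second genuine gap is the left-half-plane analysis, which is circular: you bound the growth of $H$ over a left excursion ``by a quantity depending only on the bounded $x$-range'', but the boundedness of the leftward $x$-range is precisely what is at stake (nothing in the hypotheses prevents $\gamma_0$ itself from escaping to $x=-\infty$, and the proposition does not require $\gamma_0$ to be bounded). Moreover, the Poincar\'e--Bendixson step needs nonexistence of periodic orbits, which the hypotheses of Proposition \ref{P7} do not provide (they do not include the asymmetry conditions of Theorem \ref{thm1}). The paper's route avoids all of this: it uses only the two arcs of $\gamma_0$ in $x\geqslant 0$ (from $A$ up to the positive $y$-axis and down to the negative $y$-axis) as a barrier bounding the highest and rightmost points of every homoclinic loop of the elliptic sector, and then treats the left side by a dichotomy --- either $F$ has a negative zero, whose orbit supplies a left barrier, or $F>0$ on $(-\infty,0)$, in which case $dE/dt=-g(x)F(x)>0$ in the second quadrant forces $E$ to increase from the leftmost point $Q$ of a homoclinic loop to its highest point $P$ on the $y$-axis, contradicting $E(Q)\to+\infty$ if the leftmost points were unbounded. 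Some such two-sided barrier-plus-energy argument is needed in place of the attempt to close $\gamma_0$ into a loop.
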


{Notice that Ding \cite{Ding}
	proved the boundedness of the maximal elliptic sector of the following  Li\'enard system
	\begin{eqnarray}
	\dot x=y+\frac{1}{2}x^2-\frac{1}{3}x^3, \  \dot y=-kx^3,
	\label{Lie}
	\end{eqnarray}
	for $k\in(0,1/8)$.}
Applying Proposition \ref{P7}, one can conclude that the elliptic sector of system \eqref{Lie} at the origin is bounded for all $k>0$. This generalizes the result of Ding \cite{Ding}.

	
	\section{Proofs of main results}	
	\label{pmr}

This section is the proofs of {our} main results.

	\subsection{Proof of Theorem \ref{thm1}}
	\label{sec1}
	
	We consider only the case $f(x,y)\geqslant-f(-x,y)$ for $x\geqslant0$ of the condition {\bf (\romannumeral3)},
	since the case    $f(x,y)\leqslant-f(-x,y)$ for $x\geqslant0$ can be transformed to the former by $(x,y,t)\to (-x,y,-t)$.     With the transformation $(x,y)\to(-x, y)$,  system \eqref{1} is changed
	into
	\begin{equation}
	\label{2}
	\left\{\begin{aligned}
	\dot{x}&=-y,
	\\
	\dot{y}&=g(x)-f(-x,y)y.
	\end{aligned}
	\right.
	\end{equation}
It is clear that systems \eqref{1} and \eqref{2} can be rewritten as
	\begin{equation}
	\label{3}
	\frac{dy}{dx}=-\frac{g(x)}{y}-f(x,y),
	\end{equation}
	and
	\begin{equation}
	\label{4}
	\frac{dy}{dx}=-\frac{g(x)}{y}+f(-x,y),
	\end{equation}
	respectively.
	
Assume by contrary that system \eqref{1} has a closed orbit $\Gamma$ in the strip $ \alpha<x<\beta$.  It is obvious that $\Gamma$
	must intersect the negative and positive $y$--axes, saying at $A$ and $B$, respectively, and the positive $x$--axis, saying at $C$. Observe that the orbit arc, denoted by $\widehat{AB}$, of $\Gamma$ in $x\geqslant0$ satisfies equation \eqref{3}. Because of $\dot{x}=y$ there, the orbit arc $\widehat{AC}$ can be represented as $y=y_1(x)$.
Consider the  orbit arc $\widehat{AD}$ of equation \eqref{4} starting from $A$ and ending at $D$ a point on the positive $x$--axis, and represent it by $y=y_2(x)$. See Figure \ref{neco}.
	\begin{figure*}[htp]
		\centering
		\includegraphics[ height=4.5cm,width=4.5cm]{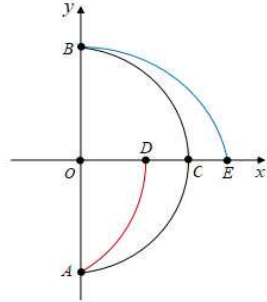}
		\caption{Location of the orbit arcs for showing nonexistence of a closed orbit.}
		\label{neco}
	\end{figure*}
	
	Let $\varphi(x)=y_2(x)-y_1(x)$. Direct calculations show that
	\begin{equation}
	\label{5}
	\begin{aligned}
	\varphi(x)&=y_2(x)-y_1(x)
	\\
	&=\left.(y_2(s)-y_1(s))\right|^x_0
	\\
	&=\int_0^x\left(-\frac{g(s)}{y_2(s)}+f(-s,y_2(s))\right)ds-\int_{0}^{x}\left(-\frac{g(s)}{y_1(s)}-f(s,y_1(s))\right)ds
	\\
	&=\int_{0}^{x}\left(g(s)\frac{y_2(s)-y_1(s)}{y_2(s)y_1(s)}\right)ds+\int_{0}^{x}\left(f(s,y_1(s))+f(-s,y_2(s))\right)ds
	\\
	&=\int_{0}^{x}\left(g(s)\frac{y_2(s)-y_1(s)}{y_2(s)y_1(s)}+f(s,y_1(s))-f(s,y_2(s))\right)ds
	\\
	&\ \ +\int_{0}^{x}\left(f(s,y_2(s))+f(-s,y_2(s))\right)ds
	\\
	&=\int_{0}^{x}M(s)\left(y_2(s)-y_1(s)\right)ds+N(x),
	\end{aligned}
	\end{equation}
	where 
\begin{equation*}
 M(x)=	\left\{\begin{aligned}
	&\frac{g(x)}{y_2(x)y_1(x)}-\frac{f(x,y_2(x))-f(x,y_1(x))}{y_2(x)-y_1(x)},\,\,\,&{\rm if }\,y_1(x)\ne y_2(x),\\
&	\frac{g(x)}{y_2(x)y_1(x)},\,\,\,&{\rm if }\,y_1(x)=y_2(x),
\end{aligned}
\right.
\end{equation*}
and $$ N(x)=\int_{0}^{x}f(s,y_2(s))+f(-s,y_2(s))ds.
	$$
By \eqref{5}, it follows that
	\begin{equation}
	\label{6}
	\begin{aligned}
	\varphi(x)=\int_{0}^{x}M(s)\varphi(s)ds+N(x).
	\end{aligned}
	\end{equation}
	Letting $H(x)=\int_0^xM(s)\varphi(s)ds$,  we obtain
	\begin{equation}
	\label{7}
	\begin{aligned}
	\frac{d H(x)}{dx}=M(x)\varphi(x)=M(x)H(x)+M(x)N(x).
	\end{aligned}
	\end{equation}
	Using  variation of constants formula to solve \eqref{7} yields
	\begin{equation}
	\label{8}
	\begin{aligned}
	H(x)&=\exp\left(\int_{0}^{x}M(s)ds\right)\cdot\int_{0}^{x}\left[M(s)N(s)\exp\left(-\int_{0}^{s}M(\xi)d\xi\right)\right]ds
	\\
	&=\int_{0}^{x}M(s)N(s)\exp\left(\int_{s}^{x}M(\xi)d\xi\right)ds.
	\end{aligned}
	\end{equation}
	Combining the formulas \eqref{6} through \eqref{8}, one has
	\begin{equation}
	\notag
	\begin{aligned}
	\varphi(x)&=H(x)+N(x)
	\\
	&=\int_{0}^{x}M(s)N(s)\exp\left(\int_{s}^{x}M(\xi)d\xi\right)ds+N(x)
	\\
	&=N(0)\exp\left(\int_{0}^{x}M(s)ds\right)+\int_{0}^{x}N'(s)\exp\left(\int_{s}^{x}M(\xi)d\xi\right)ds
	\\
	&=\int_{0}^{x}N'(s)\exp\left(\int_{s}^{x}M(\xi)d\xi\right)ds
	\\
	&=\int_{0}^{x}\left(f(s,y_2(s))+f(-s,y_2(s))\right)\exp\left(\int_{s}^{x}M(\xi)d\xi\right)ds.
	\end{aligned}
	\end{equation}
	By the conditions $f(x,y)\geqslant-f(-x,y)$ for ~$0<x<\min\{-\alpha,
	\beta\}$, $y\in\mathbb R$,
	and $f(x,y)\not\equiv -f(-x,y)$ for  $x\in(-\zeta,\zeta)$,  $y\in\mathbb R$,  $\forall~   \zeta>0$, it follows that
	\begin{equation}
	\notag
	\begin{aligned}
	\varphi(x)=\int_{0}^{x}\left(f(s,y_2(s))+f(-s,y_2(s))\right)\exp\left(\int_{s}^{x}M(\xi)d\xi\right)ds>0.
	\end{aligned}
	\end{equation}
Consequently, $y_2(x)>y_1(x)$. That is, the point $D$ must be located on the left hand--side of the point $C$. Similar arguments verify that the orbit arc $\widehat{BE}$ of equation \eqref{4} starting from $B$ and ending at $E$ on the positive $x$--axis will have $E$ being on the right hand-side of $C$. This implies that system \eqref{4} does not have an orbit arc  linking the points $A$ and $B$, and so has no a closed orbit in the strip $\alpha<x< \beta$.

It completes the proof of Theorem \ref{thm1}. $\hfill\square$

	\begin{remark}
		In the proof of {\rm Theorem \ref{thm1}}, we cannot apply directly the comparison theorem $($see  \cite[Theorem 6.1 of Chapter 1]{Hale}$)$ to obtain $\varphi(x)>0$,
	since we only obtain	$\varphi(x)\geqslant0$ by that theorem.
	That is the reason why we need a more accurate estimate.		
	\end{remark}

		\subsection{Proof of Theorem \ref{thm1b}}

 	\begin{figure*}[htp]
 	\centering
 	\includegraphics[ height=4cm,width=5cm]{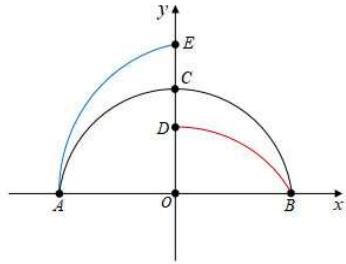}
 	\caption{Location of the orbit arcs for showing nonexistence of a closed orbit.}
 	\label{neco2}
 \end{figure*}
	
	We consider only the case $f(x,y)\geqslant-f(x,-y)$ for $y\geqslant0$ of the condition {\bf (\romannumeral3$'$)},
	since the case    $f(x,y)\leqslant-f(x,-y)$ for $x\geqslant0$ can be transformed to the former by $(x,y,t)\to (x,-y,-t)$.     With the transformation $(x,y)\to(x, -y)$,  system \eqref{1} is changed
	into
	\begin{equation}
	\label{2b}
	\left\{\begin{aligned}
	\dot{x}&=-y,
	\\
	\dot{y}&=g(x)-f(x,-y)y.
	\end{aligned}
	\right.
	\end{equation}
	It is clear that systems \eqref{1} and \eqref{2b} can be rewritten as
\eqref{3}
	and
	\begin{equation}
	\label{4b}
	\frac{dy}{dx}=-\frac{g(x)}{y}+f(x,-y),
	\end{equation}
	respectively.
	
	Assume by contrary that system \eqref{1} has a closed orbit $\Gamma$ in the strip $ \alpha<x<\beta$.  It is obvious that $\Gamma$
	must intersect the negative and positive $x$--axes, saying at $A$ and $B$, respectively, and the positive $y$--axis, saying at $C$. Observe that the orbit arc, denoted by $\widehat{AB}$, of $\Gamma$ in $y\geqslant0$ satisfies  equation \eqref{3}. Because of $\dot{x}=y$ there, the orbit arc $\widehat{AC}$ can be represented as $y=y_1(x)$.
	Consider the  orbit arc $\widehat{AE}$ of equation \eqref{4b} starting from $A$ and ending at $E$ a point on the positive $y$--axis, and represent it by $y=y_2(x)$. See Figure \ref{neco2}.
	
		Let $\varphi(x)=y_2(x)-y_1(x)$.  Then, we can  show $\varphi(0)>0$.
	The remainder of the proof of this result is quiet similar to that given earlier for Theorem \ref{thm1} and so is omitted.
	\subsection{ Proof of Theorem \ref{thm2}}
	
With the transformation 	
\begin{equation}\label{tr}
u=\left({(m+1)G(x)}\right)^{\frac{1}{m+1}}\text{sgn}(x), \quad d\tau=\frac{g(x)}{u^m}dt,	
\end{equation}
system \eqref{1} is changed to
\begin{equation}\label{bh}
\left\{\begin{aligned}
\frac{du}{d\tau }&=y,\\
\frac{dy}{d\tau }&=-u^m-\frac{f(x(u),y)}{g(x(u))}u^my=:-u^m-F(u,y)y,
\end{aligned}
\right.
\end{equation}
where $G(x)=\int_0^{x}g(s)ds$.
{	By \eqref{tr} and $xg(x)>0$ for $x\in(\alpha,0)\cup(0,\beta)$,
	it follows that
$u=\left({(m+1)G(x)}\right)^{1/(m+1)}\text{sgn}(x)$ has  the inverse function  $x=x(u)$ and $x(0)=0$.
Moreover, $x=x(u)$ is increasing.
}


 It is easy to obtain that
 \[
\lim_{u\to0} \frac{u^m}{g(x(u))}=\frac{1}{\sqrt[m+1]{h(0)}}.
 \]
 One can check that {
 $F(u,y)$ is Lipschitzian continuous} for $u\in \left(u(\al),u(\be)\right)$ and $y\in\mathbb R$, i.e., the condition {\bf{(\romannumeral2)}}  of   {\rm Theorem \ref{thm1}} holds. Furthermore,
$$
\frac{f(x,y)}{g(x)}\geqslant(\text{resp.}\leqslant)\frac{f(\hat{x},y)}{g(\hat{x})}
$$
implies
$$
F(u,y)\geqslant(\text{resp.}\leqslant)-F(-u,y),
$$
where $\hat x\leqslant0\leqslant x$ satisfies $\int_{\text{0}}^{x}{g(s)}ds=\int_{\text{0}}^{\hat{x}}{g(s)}ds$. That is, the condition {\bf{(\romannumeral3)}}  of   {\rm Theorem \ref{thm1}} holds.
Moreover, it is easy to check that the conditions {\bf{(\romannumeral1)}} and  {\bf{(\romannumeral4)}}   of   {\rm Theorem \ref{thm1}} hold, too.
By {\rm Theorem \ref{thm1}}, system \eqref{bh} has no a closed orbit surrounding the origin.
 $\hfill\square$

	\subsection{Proof of Theorem \ref{thm3}}
	
As proved in Theorem \ref{thm1}, by contrary assume that  system \eqref{1} has a closed orbit $\Gamma'$ surrounding the origin in the strip $ \alpha<x<\beta$. Let   $A'$ be the intersection point of $\Gamma'$ with the negative  $y$--axis, and let $\widehat{A'C'}$ and $\widehat{A'D'}$, represented  by $y=y_3(x)$ and $y=y_4(x)$, be the orbit arcs of systems \eqref{3} and \eqref{4} starting from $A'$ and ending at respectively $C'$ and  $D'$, which  are  points on the $x$--axis.  Set $\varphi_1(x)=y_4(x)-y_3(x)$. By  a similar calculation as that in the proof of Theorem \ref{thm1},   we can obtain that
		\begin{equation}
		\label{phi1}
		\begin{aligned}
		\varphi_1(x)=\int_{0}^{x}\left(f(s,y_4(s))+f(-s,y_4(s))\right)\exp\left(\int_{s}^{x}M_1(\xi)d\xi\right)ds,
		\end{aligned}
		\end{equation}
	where 	
		{		\begin{equation*}
			M_1(x)=	\left\{\begin{aligned}
				&\frac{g(x)}{y_3(x)y_4(x)}-\frac{f(x,y_4(x))-f(x,y_3(x))}{y_4(x)-y_3(x)},\,\,\,{\rm if }\,y_3(x)\ne y_4(x),\\
				&	\frac{g(x)}{y_3(x)y_4(x)},\,\,\,\qquad \qquad \qquad \qquad \qquad \qquad {\rm if }\,y_3(x)=y_4(x).
			\end{aligned}
			\right.
		\end{equation*}
	}
	Our main goal is to obtain the sign of $\varphi_1(x)$. { Since $\exp\left(\int_{s}^{x}M_1(\xi)d\xi\right)>0
		$, the sign of $\varphi_1(x)$ in \eqref{phi1}    } is only associated with $f(x,y)+f(-x,y)$.
	Thus, $\varphi_1(x)>(\text{resp.} <)\ 0$ when $f(x,y)\geqslant (\text{resp.} \leqslant)-f(-x,y)$ with the equality not identically satisfied. Hence, system \eqref{1} has no a closed orbit surrounding the origin $O$.
	 $\hfill\square$
	
We remark that Theorem \ref{thm3} ensures that system \eqref{1} has no a closed orbit surrounding the origin $O$, but does not provide any information on existence of closed orbits around other equilibria in the  strip $\alpha<x<\beta$. The next example is an illustration on nonexistence of closed orbit around the origin and existence of closed orbit around other equilibrium.
	
	\begin{example}	
	Consider the system
		\begin{equation}\label{mm}
		\left\{\begin{aligned}
		\frac{dx}{dt }&=y,\\
		\frac{dy}{dt }&=-g(x)+\mu_1y+\mu_2xy+x^2y,
		\end{aligned}
		\right.
		\end{equation}
		where  
		\[
		g(x)=\left\{ \begin{aligned}
		& x-1,~&&{\rm if}~x\geqslant\frac{1}{2},
		\\
		& -3x+1,~&&{\rm if}~\frac{1}{4}<x<\frac{1}{2},
		\\
		& x,~&&{\rm if}~-\frac{1}{4}\leqslant x\leqslant \frac{1}{4},
		\\
		& -3x-1,~&&{\rm if}~-\frac{1}{2}<x<-\frac{1}{4},
		\\
		& x+1,~&&{\rm if}~x\leqslant -\frac{1}{2}, 	
		\end{aligned} \right.
		\]
with		$x,y\in\mathbb{R}$,  $\mu_1,\mu_2$ being parameters such that $-\mu_1-\mu_2-1>0$ sufficiently small  and $\mu_1\geqslant0$.
	\end{example}
	
	We claim that system \eqref{mm} has no a closed orbit around $O$, but has one surrounding $(1,0)$ for $-\mu_1-\mu_2-1>0$ sufficiently small  and $\mu_1\geqslant0$. Indeed, one can check that system \eqref{mm} satisfies  {\bf(\romannumeral2)}--{\bf(\romannumeral4)} of Theorem \ref{thm1}. The first argument of the claim follows from Theorem \ref{thm3}.
	
To prove the second argument of the claim, we restrict to a small neighborhood of the equilibrium $(x,y)=(1,0)$, then system \eqref{mm} is of form
	\begin{equation}\label{mn}
	\left\{\begin{aligned}
	\frac{dx}{dt }&=y,\\
	\frac{dy}{dt }&=1-x+\mu_1y+\mu_2xy+x^2y.
	\end{aligned}
	\right.
	\end{equation}
	After the transformation $(x,y)\to(x+1,y)$,  system \eqref{mn} is changed to
	\begin{equation}\label{mo}
	\left\{\begin{aligned}
	\frac{dx}{dt }&=y,\\
	\frac{dy}{dt }&=-x+(\mu_1+\mu_2+1)y+(\mu_2+2)x y+x^2y.
	\end{aligned}
	\right.
	\end{equation}	
	When $\mu_1+\mu_2+1=0$, it is easy to check that $(0,0)$ of \eqref{mo} is an unstable weak focus of order one, see \cite[p.203 ]{CLW}.
	Thus, by Hopf bifurcation an unstable limit cycle births from the origin of system \eqref{mo}
	when $-\mu_1-\mu_2-1>0$ is sufficiently small. The proof is done.

	\subsection{Proof of Theorem \ref{lem2}}
	We  only consider  the case $g(x)=-g(-x)$ for $x>0$
	since the other case $g(x)\not\equiv-g(-x)$  for {$x>0$} can be studied in the same way.  In fact, the transformation \eqref{tr} can sends system \eqref{1} in an equivalent way to system \eqref{bh}, which has the new $g(u)$ an odd function.
	
The condition $f(x,y)\equiv -f(-x,y)$  implies $b=0$. Furthermore, the assumptions {
$g(x)=-g(-x)$  and $f(x,y)\equiv-f(-x,y)$ in $(x,y)\in(0,+\infty)\times\R$
 mean that the vector field 
associated to system \eqref{1}
is symmetric with respect to the $y$--axis.

If $a=g'(0)> 0$, i.e., $(a,b)\in \mathscr{G}_4$,  by Lemma \ref{lem1}, $O$ is a center or focus of system \eqref{1}.
The symmetry of the vector field forces that $O$ must be a center, see Figure \ref{tu2}{\rm{(c)}}.

If $a=0$, i.e., $(a,b)\in \mathscr{G}_6$, Lemma \ref{lem1} { and its proof verify that  $O$ is  a nilpotent} equilibrium.
Again the condition $f(x,y)\equiv -f(-x,y)$ can be written as
$$
-b_nx^n-O(x^{n+1})-yp(x,y)-b_n(-x)^n-O((-x)^{n+1})-yp(-x,y)\equiv 0,
$$
which means that  $n$ is odd. Consequently, $\mathscr{G}_6=  \mathscr{G}_{61}\cup \mathscr{G}_{63}$.

When $(a,b)\in \mathscr{G}_{61}$, by Theorem \ref{thm7.2} of Appendix B and symmetry of the vector field it follows that  $O$ is a nilpotent center, see Figure \ref{tu2}{\rm{(c)}}.

When $(a,b)\in\mathscr{G}_{63}$,
it follows from  Lemmas 7.3 and 7.4 of \cite[Chapter 2]{ZDHD} that  system \eqref{1} has infinitely many orbits approaching $O$ along the negative $x$--axis as $t\to-\infty$,  and also infinitely many orbits approaching $O$ along the positive $x$--axis as $t\to+\infty$ but has no orbits approaching $O$ along other directions. {Next we prove the existence of one elliptic and one hyperbolic sectors under the assumptions of the theorem. As a first step, we consider $g(x)$ and $f(x,y)$ having only the leading terms.}

{	Taking $g(x)=a_kx^k$ and $f(x,y)=b_nx^n$, with $k$ and $n$ odd, such that the assumptions of the theorem hold.
	Let
	\[
	H(x,y):=\frac{a_k}{k+1}x^{k+1}+\frac{y^2}{2},
	\]		
	implying that
	\[
	\left.\frac{dH(x,y)}{dt}\right|_{\eqref{1}}=-b_nx^ny^2<0
	\]
	in the first and fourth quadrants of the $(x,y)$--plane.
	Therefore, { by symmetry of the vector field with respect to the $y$--axis one can conclude that the positive semi--orbit with an initial point $(0,y_0)$
		lies in the interior enclosed by the closed curve
		$H(x,y)={y_0}^2/2$ except the initial point, where $y_0>0$ is small. }
	Let $(0,y_1)$ be
	the first intersection point of the semi--orbit with the $y$--axis.
	Then, $-y_0<y_1\leqslant0$.
	Again by the symmetry of the vector field,  the orbit passing $(0,y_0)$ is a closed one if $y_1<0$, and
	is a homoclinic one if $y_1=0$.
	In conclusion,  $S_\delta(O)$ includes at least
	one elliptic sector and one hyperbolic sector.
	
	{Next we turn to the general $g(x)=a_kx^k+O(x^{k+1})$ and $f(x,y)=b_nx^n+O(x^{n+1})+yp(x,y)$, which satisfy the assumptions of the theorem.
		By symmetry of the vector field and the continuous dependence of solutions with respect to initial values and parameters, it follows that} $S_\delta(O)$ still includes at least
	one elliptic sector and one hyperbolic sector,
}
see Figures \ref{tu2}{\rm{(a)--(c)}}.

{  Finally we study the number of parabolic sectors at $O$. By symmetry of the vector field with respect to the $y$--axis, it follows clearly that the number of parabolic sectors at $O$ is even.
	If all orbits with their $\alpha$--limit set at $O$ have also their $\omega$--limit sets $O$, then $S_\delta(O)$ consists of  exactly	 one elliptic and one hyperbolic sectors, see Figure \ref{tu2}(a).
	If some orbits with their $\alpha$ (resp.  $\omega$)--limit set at $O$ have their $\omega$ (resp. $\alpha$)--limit sets not at $O$, then $S_\delta(O)$ includes at least two parabolic sectors. The previous proofs have shown that $\theta=0$ and $\theta=\pi$ are the only two exceptional directions, and that the orbits approaching $O$ along $\theta=0$ are in positive sense and along $\theta=\pi$ are in negative sense. These imply that there do not have other hyperbolic sections in $S_\delta(O)$. Consequently, there are exactly two parabolic sectors in $S_\delta(O)$, one repelling and the other attracting. We claim that if the elliptic sector is bounded, then system \eqref{1} cannot have parabolic sectors in a neighborhood of $O$. Indeed, by contrary we assume that system \eqref{1} has two parabolic sectors at $O$. Let $\gamma^*$ be the maximal homoclinic orbit to $O$ inside the elliptic sector. By existence of the two parabolic sectors and continuity of solutions with respect to initial values, it follows that there exist orbits inside each of the parabolic sectors which intersect the positive $y$--axis and are outside $\gamma$. By symmetry of the vector fields associated to system \eqref{1}, such kinds of orbits inside the parabolic sectors must be homoclinic to $O$, and so belong to the elliptic sector. This is in contradiction with the assumption that $\gamma^*$ is the maximal homoclinic orbit. The claim follows.
	
	In summary, the above arguments verify the next facts for system \eqref{1} at $O$. If the elliptic sector is bounded, $S_\delta(O)$ is composed of exactly one elliptic and one hyperbolic sectors provided that the elliptic sector is bounded. Figure \ref{fEHsymmetry} illustrates this case for system
	{\begin{equation}\label{eg3}
\dot x= y, \ \dot y = -2 x^5 - x  y,
\end{equation}
 by Matlab with the initial points $(0,-0.3)$,
	$(0,-0.2)$, $(0,-0.1)$, $(0,0.1)$, $(0,0.21)$, $(0,0.41)$.
	\begin{figure}[hpt]
		\centering
		{			\includegraphics[width=0.35\textwidth]{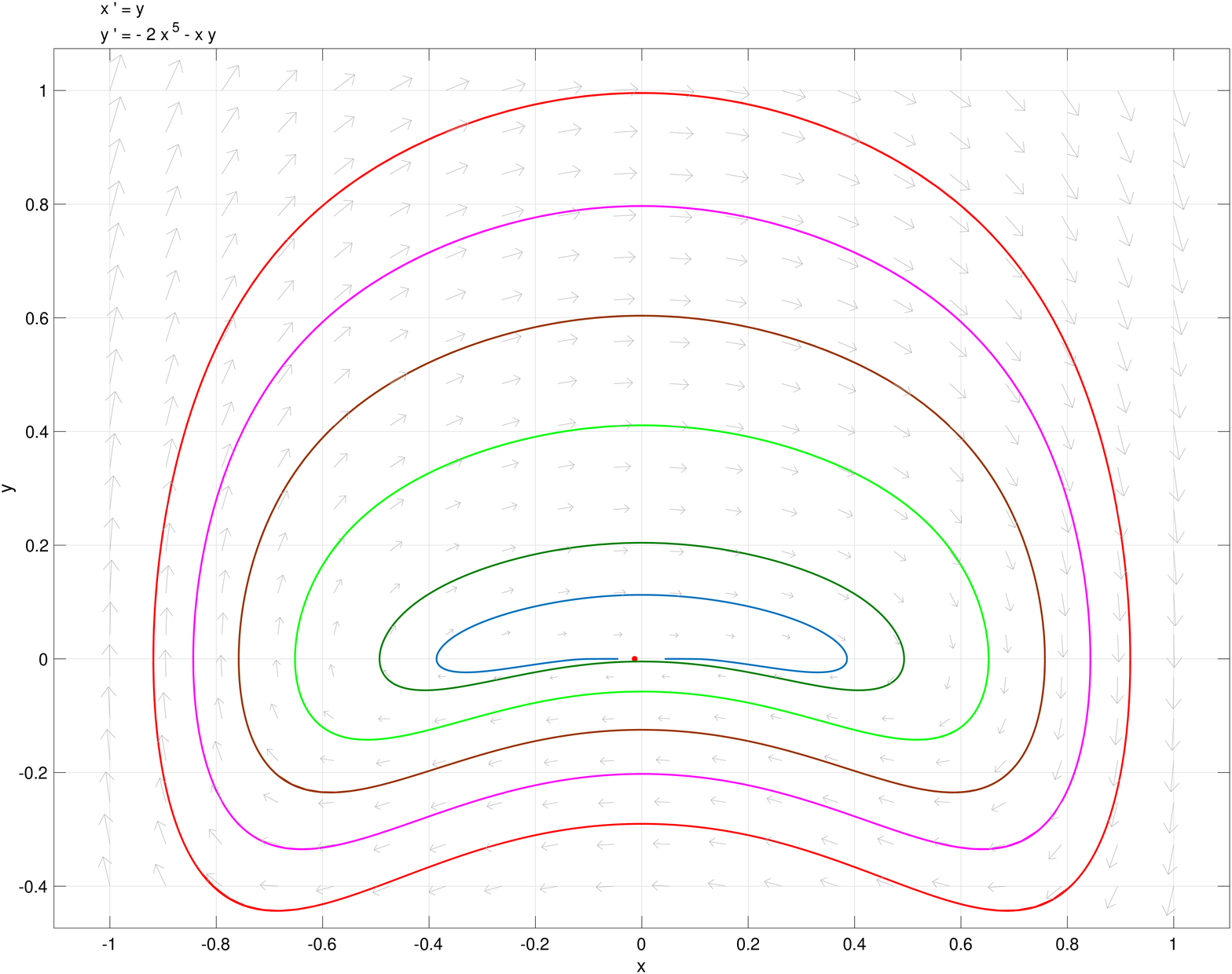}}
		\caption{Symmetric case with only one elliptic and one hyperbolic sectors near $O$.} \label{fEHsymmetry}
	\end{figure}
}

	If the elliptic sector is unbounded, $S_\delta(O)$ consists of either exactly one elliptic and one hyperbolic sectors (see Figure \ref{tu2}(c)), or one elliptic, one hyperbolic and two parabolic sectors (see Figure \ref{tu2}(b)). Example \ref{eg1} with $\varepsilon=0$ is in this symmetric case and it has two parabolic sectors in a neighborhood of $O$, and an unbounded elliptic and a hyperbolic sectors. } {At the moment we do not have an example without parabolic sectors, and also cannot prove its nonexistence.}
}

	\subsection{Proof of Theorem \ref{thm4}}
	\label{proofthm4}

To prove this theorem, we first analyse the equilibrium at the origin of system \eqref{1}.

 	\begin{lemma}\label{lem1}
		Suppose that all  conditions  of {\rm Theorem \ref{thm4}} hold. The unique equilibrium $O=(0,0)$ has the following  qualitative properties.
		\begin{itemize}
			\item[\bf (F1)]
		 $O$ is a  stable node for $(a,b)\in\mathscr{G}_1\cup \mathscr{G}_3$,  and is a stable focus for  $(a,b)\in \mathscr{G}_2$;
			
			\item[\bf (F2)] $O$ is a center or focus for $(a,b)\in \mathscr{G}_4$;

			\item[\bf (F3)] $O$ is a  stable node for $(a,b)\in\mathscr{G}_5$;
			
	\item[\bf (F4)]	$O$ is a degenerate equilibrium for $(a,b)\in \mathscr{G}_6$.
		\end{itemize}
	\end{lemma}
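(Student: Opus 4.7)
The plan is to read off the type of $O$ directly from the Jacobian
\[
J(0,0)=\begin{pmatrix} 0 & 1 \\ -a & -b \end{pmatrix},
\]
whose characteristic polynomial is $\lambda^{2}+b\lambda+a=0$ with eigenvalues $\lambda_{\pm}=(-b\pm\sqrt{b^{2}-4a})/2$. The six subregions $\mathscr{G}_{1},\dots,\mathscr{G}_{6}$ of $\mathscr{G}$ are precisely the loci on which the real/complex nature and the signs of $\lambda_{\pm}$ take different configurations, so each item {\bf(F1)}--{\bf(F4)} will correspond to one of these spectral situations.

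For the hyperbolic strata $\mathscr{G}_{1},\mathscr{G}_{2},\mathscr{G}_{3}$ one has $a>0$ and $b>0$, so both eigenvalues have strictly negative real parts and the Hartman--Grobman theorem yields topological equivalence with the linearization: a stable node when $b^{2}-4a>0$, a stable focus when $b^{2}-4a<0$, and a stable (improper) node when $b^{2}-4a=0$. This settles {\bf(F1)}. For {\bf(F2)} with $(a,b)\in\mathscr{G}_{4}$, so $a>0$ and $b=0$, the eigenvalues are the pure imaginary pair $\pm i\sqrt{a}$, hence $O$ is monodromic; the classical center--focus dichotomy for analytic planar systems (see \cite[Theorem 1.5 of Chapter 2]{ZDHD}) then implies that $O$ is either a center or a focus.

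The substantive case is {\bf(F3)}, where $(a,b)\in\mathscr{G}_{5}$ makes $O$ semi-hyperbolic with eigenvalues $\{0,-b\}$. I would apply the center manifold theorem: the one-dimensional center manifold is tangent at $O$ to $\ker J=\mathrm{span}\{(1,0)^{T}\}$, so it can be written as $y=h(x)$ with $h(0)=h'(0)=0$, and the invariance relation $h'(x)\,h(x)=-g(x)-f(x,h(x))\,h(x)$ forces $h(x)=-g(x)/b+o(|x|^{k})$ as $x\to 0$. Condition {\bf(i)} combined with analyticity of $g$ near $0$ implies $g(x)=a_{k}x^{k}+O(x^{k+1})$ with $a_{k}>0$ and $k$ odd (cf.\ \eqref{eab1}); hence the reduced dynamics on the center manifold read
\[
\dot x \;=\; -\frac{a_{k}}{b}\,x^{k}+o(x^{k}),
\]
in which the leading coefficient is negative and $k$ is odd. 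By the classification of semi-hyperbolic singular points (e.g.\ \cite[Theorem 7.1 of Chapter 2]{ZDHD}) $O$ is therefore a topological stable node, proving {\bf(F3)}.

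Finally, {\bf(F4)} is immediate: for $(a,b)\in\mathscr{G}_{6}$ one has $J(0,0)=\bigl(\begin{smallmatrix}0&1\\0&0\end{smallmatrix}\bigr)$, a nonzero nilpotent matrix, so by definition $O$ is a nilpotent (degenerate) equilibrium. The only step that is not purely mechanical is {\bf(F3)}: the stable-node conclusion requires both the odd parity of $k$ forced by condition {\bf(i)} and the correct sign of the leading coefficient of the center-manifold reduction, which together rule out a saddle or a saddle-node and single out the attracting node case.
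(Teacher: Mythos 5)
Your proof is correct and follows essentially the same route as the paper: read the type of $O$ off the Jacobian for $\mathscr{G}_1$--$\mathscr{G}_4$ and $\mathscr{G}_6$, and reduce the semi-hyperbolic case $\mathscr{G}_5$ to a one-dimensional equation with leading term $-(a_k/b)x^k$ ($k$ odd, $a_k>0$); your center-manifold reduction is the same computation the paper carries out via the change of variables $(x,y)\to\left((x-y)/b,y\right)$ together with the implicit function theorem and Theorem 7.1 of Appendix B. One small imprecision: Hartman--Grobman yields only a topological conjugacy, which cannot distinguish a stable node from a stable focus, so for {\bf(F1)} you should instead appeal to the classical classification of elementary singular points of almost-linear analytic systems (e.g.\ Chapter 2 of \cite{ZDHD}); the conclusions themselves are unaffected.
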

	\begin{proof}
It is easy to check that system \eqref{1} has the unique equilibrium $O=(0,0)$, at which the Jacobian matrix of system \eqref{1} is
\begin{equation*}
	\begin{aligned}
		\left.	J:= \begin{pmatrix}
			0 & 1  \\
			-g'(0)-{{f}_{x}}(x,y)y & -{{f}_{y}}(x,y)y-f(x,y)
		\end{pmatrix}\right| _{(0,0)}= \begin{pmatrix}
			0 & 1  \\
			-a & -b
		\end{pmatrix}
	\end{aligned}.
\end{equation*}
The eigenvalues of $J$ are
$$
\lambda_{1,2}=\frac{-b\pm\sqrt{b^2-4a}}{2}.
$$
Obviously, the equilibrium  $O$ is a stable node when $(a,b)\in\mathscr{G}_1\cup \mathscr{G}_3$, and is a stable focus when $(a,b)\in\mathscr{G}_2$.
For $(a,b)\in\mathscr{G}_4$, $\lambda_{1,2}$ is a pair of pure imaginary numbers, and so $O$ is a center or focus. For $(a,b)\in\mathscr{G}_5$, one of $\lambda_{1,2}$ is zero, and the other is nonzero.  In $S_\delta (O)$, $g(x)$ and $f(x,y)$ can be rewritten as
$$
g(x)=a_kx^k+O(x^{k+1}), \quad f(x,y)=b+\hat{f}(x,y),
$$
where $a_k>0$, $k$ is odd, $b>0$ and $\hat{f}(0,0)=0$.
With the transformation $ (x,y)\to \left((x-y)/b, y\right)$, system \eqref{1} is changed into
\begin{equation}\label{1change}
	\dot{x}=by-g\left(\frac{x-y}{b}
	\right)-f\left(\frac{x-y}{b},y\right)y,\quad \dot{y}=-g\left(\frac{x-y}{b}\right)-f\left(\frac{x-y}{b},y\right)y.
\end{equation}
By the implicit function theorem,
$$
-g\left(\frac{x-y}{b}\right)-f\left(\frac{x-y}{b},y\right)y=0
$$
has a unique root $y=\phi(x)=-a_k/b^{k+1}x^k+o(x^k)$ for small $|x|>0$.
Thus,  in system \eqref{1change}
$$
\dot{x}|_{y=\phi(x)}=b\phi(x)=-\frac{a_k}{b^k}x^k+o(x^k).
$$
By Theorem \ref{thm7.1} of Appendix B, the equilibrium $O$ is a stable node.
Moreover, by calculation, the origin of system \eqref{1} has four exceptional directions
$$
\theta_1:=\pi -\arctan \left(\frac{b-\sqrt{b^2-4a}}{2}\right),\,\,\,
\theta_2:=\pi -\arctan  \left(\frac{b+\sqrt{b^2-4a}}{2}\right),
$$
$$
\theta_3:=2\pi -\arctan \left(\frac{b-\sqrt{b^2-4a}}{2}\right),\,\,\,
\theta_4:=2\pi -\arctan \left(\frac{b+\sqrt{b^2-4a}}{2}\right)
$$
 when	$(a,b)\in \mathscr{G}_{1}\cup \mathscr{G}_{5}$,
or has two exceptional directions $\theta_5:=\pi-\arctan \left(b/2\right)$, $\theta_6:=2\pi-\arctan \left(b/2\right)$
when $(a,b)\in \mathscr{G}_{3}$.    {In the former the origin is a normal node, and in the latter the origin is an improper node.}

When $(a,b)\in\mathscr{G}_6$, $\lambda_1=\lambda_2=0$,  and the linearization of system \eqref{1} at $O$ {has its coefficient matrix being nilpotent}. This means that $O$ is a nilpotent equilibrium.
The proof is finished.
	\end{proof}

 Now we are back to the proof of Theorem \ref{thm4} and focus on  the case $g(x)=-g(-x)$ for { $x>0$}. The case $g(x)\not\equiv-g(-x)$  for { $x>0$} can be handled {by applying transformation \eqref{tr}}, and so is omitted.
Since all conditions of Theorem \ref{thm1} hold, system \eqref{1} has no a closed orbit around the origin.

If $b>0$ { (i.e. $(a,b)\in\mathscr{G}_{1}\cup\mathscr{G}_{2}\cup\mathscr{G}_{3}\cup\mathscr{G}_{5}$) }, by Lemma \ref{lem1},  it is easy to get the local phase portraits of system \eqref{1} at $O$, as those in Figures \ref{1pps2}{\rm{(a)--(c)}}.

For $b=0$, to classify the local phase portraits of system \eqref{1} at $O$ we distinguish  two cases: $a>0$ and $a=0$.

\noindent{\it Case }1. $a>0$ { (i.e. $(a,b)\in\mathscr{G}_{4}$)}.  Since $O$ is a center or a weak focus  by {\bf (F2)}  of Lemma \ref{lem1}, and there is no a closed orbit around it, the equilibrium $O$ must be a weak focus of  system \eqref{1}.

To characterize the stability of $O$, let $M$ be a point on the positive $y$--axis  in  $S_\delta(O)$, see Figure \ref{tu10}{\rm(a)}, and let $\varphi(M,I^+)$ be the positive orbit of system \eqref{1} having the initial point $M$. Obviously $\varphi(M,I^+)$ has to intersect the negative $y$--axis at a first time at a point $P$, and then return to the positive $y$--axis again at a first time at a point $N$. Comparing both the positive orbit arc and the negative one starting from $N$ of system \eqref{1} in the $x\geqslant 0$ half plane,
	\begin{figure}[hpt]
		\centering
		\subfigure[{ The orbit arc starting from $M$ for system \eqref{1}} ]{
			\includegraphics[width=0.3\textwidth]{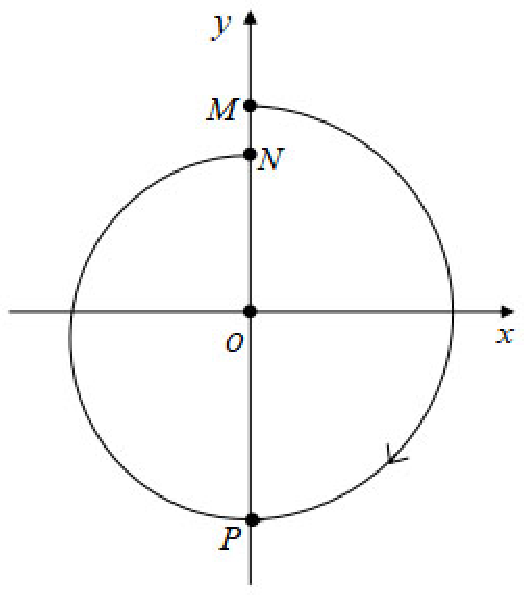}}
		\quad
		\subfigure[{ The orbit arc starting from $P$ for equations \eqref{3} and \eqref{4}}]{
			\includegraphics[width=0.3\textwidth]{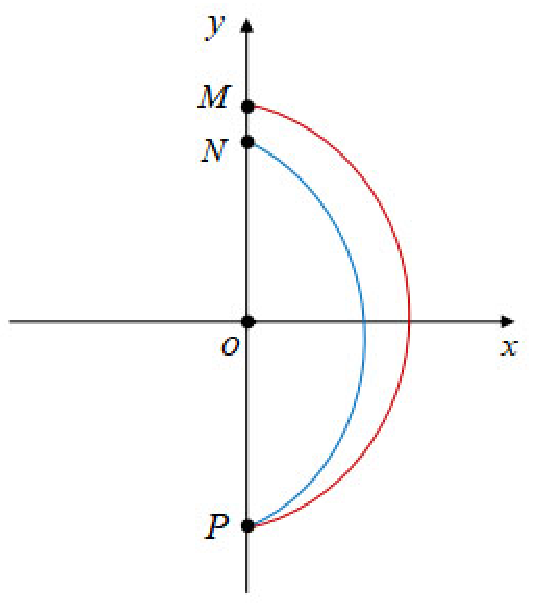}}
		\caption{{The orbit arcs for showing the stability of $O$.}} \label{tu10}
	\end{figure}	
one gets from $f(x,y)\geqslant-f(-x,y)$ that
	$$
	\left.	\frac{dy}{dx}\right |_\eqref{3}\leqslant \left. \frac{dy}{dx}\right |_\eqref{4}.
	$$
This implies that
	${\varphi}(N,I)|_\eqref{3}$ must be strictly located on the right hand--side of ${\varphi}(N,I)|_\eqref{4}$ except at $N$, and so $y_N<y_M$, see Figure \ref{tu10}{\rm (b)}, where $I$ is a suitable time interval corresponding to the orbit arc. Thus, the origin $O$ is a stable focus. As a result, we have the local phase portrait of system \eqref{1},  as shown in Figure \ref{1pps2}{\rm{(b)}}.
	
\noindent{\it Case }2. $a=0$ { (i.e. $(a,b)\in\mathscr{G}_{6})$}.  By {\bf (F4)} of Lemma \ref{lem1}
 the equilibrium  $O$ is {  a nilpotent one}. Recall that  system \eqref{1} has no a closed orbit around $O$, { and that $\mathscr{G}_{6}=\mathscr{G}_{61}\cup\mathscr{G}_{62}\cup\mathscr{G}_{63}$. According to this decomposition on $\mathscr{G}_{6}$, the next proof is divided in three subcases.}
	
\noindent{\it Subcase} 2.1. $(a,b)\in \mathscr{G}_{61}$. The arguments adopted in the proof of Case 1 work here and show that $O$ is a weak focus. Hence, the local phase portrait of system \eqref{1} at $O$ is also that as shown in Figure \ref{1pps2}{\rm{(b)}}.
	
\noindent{\it Subcase} 2.2. $(a,b)\in \mathscr{G}_{62}$. In this case, $n$ is even and $b_n\ne0$.
With the  polar transformation  $(x,y)=(r\cos\theta, r\sin\theta)$,
  system \eqref{1} is changed  to
	\begin{equation*}
	\frac{1}{r}\frac{dr}{d\theta}=\frac{H(\theta)+\widetilde{H}(\theta,r)}{G(\theta)+\widetilde{G}(\theta,r)},
	\end{equation*}
 where  $G(\theta)=-\sin^2\theta$,   $\widetilde{H}(\theta,r),  \widetilde{G}(\theta,r)\to 0$ as $r\to0$. Clearly,  $G(\theta)=0$ has the two roots $\theta=0,\,\,\pi$.
 Therefore, if an orbit approaches $O$ in the positive or negative limit, it must be along the $x$--axis. We now prove that $O$ is stable. Indeed,
set $\triangle\widehat{OAB}=\{(x,y):0<x\ll\delta,-x^{p+1}<y<x^{p+1}\}$ with $\delta>0$ sufficiently small.
  Let
	\[
	E(x,y)=\frac{y^2}{2}+\int_{\text{0}}^{x}{g(s)}ds.
	\]
Then along system \eqref{1}
	\[
	\begin{aligned}
\left.	\frac{dE }{dt}\right|_{(x,y)\in\triangle\widehat{OAB}}=-f(x,y){{y}^{2}}
	=-\left(b_nx^n+O(x^{n+1})+yp(x,y)\right)y^2
	<0 .
	\end{aligned}
	\]
This shows that when an orbit approaches $O$, it must be in the positive sense. Consequently, $O$ is a stable node and its local phase portrait is that as shown in Figure \ref{1pps2}{\rm{(c)}}.
	
\noindent{\it Subcase} 2.3.  $(a,b)\in \mathscr{G}_{63}$.
{ From the assumption of the theorem, the expressions \eqref{eab} and \eqref{eab1} hold. Then we are in the conditions in the last row of Table \ref{T7.2}. By Theorem \ref{thm7.2} (i.e. \cite[Theorem 7.2 of Chapter 2]{ZDHD}) system \eqref{1} has a neighborhood of $O$ which contains an elliptic sector and one hyperbolic sector. Moreover, from the proofs of Andreev \cite{Andreev} or of Zhang et al \cite{ZDHD}, system \eqref{1} has no other elliptic sectors and hyperbolic sectors.}

{
	By the facts $\dot x=y>0$ for $y>0$ and $\dot x=y<0$ for $y<0$, it follows that any homoclinic orbit inside the elliptic sector of system \eqref{1} at $O$ must intersect both the negative and positive $x$--axes.
Then, the natural question is that $S_\delta(O)$ includes any  parabolic sectors or not. It is clear that $S_\delta(O)$ has at most two  parabolic sectors, since system  \eqref{1} has exactly two exceptional directions $\theta=0,\pi$, { along each of which there are
		infinitely many orbits approaching the origin, and all orbits approaching $O$ are only along $\theta=0$  as $t\to+\infty$, and only along $\theta=\pi$ as $t\to-\infty$. }

 In what follows, we distinguish boundedness or not of the elliptic sector in $S_\delta(O)$
{ to answer the above question}.

{
	\noindent{\bf The elliptic sector is bounded.}}
{The similar arguments as those in the proof of Theorem \ref{lem2} verify that $S_\delta(O)$ cannot include two parabolic sectors. We next prove that in this asymmetric case, $S_\delta(O)$ consists of one parabolic sector locally in the fourth quadrant, together with the bounded elliptic sector and the hyperbolic sector.  {Recall that we have taken $b_n\ge 0$ in \eqref{eab}. If $b_n<0$ the location of the parabolic sector may vary locally from the fourth quadrant to some other one.}
}

{ To prove our results, we construct} a new  vector field
\begin{equation}
	\label{new}
	\left\{\begin{aligned}
		\dot{x}&=y,
		\\
		\dot{y}&=B(x,y),
	\end{aligned}
	\right.
\end{equation}
where
$$
B(x,y)=\left \{
\begin{aligned}&-g(x)-f(x,y)y,\,\,  &{\rm if }\,x<0,
	\\
	&-g(x)+f(-x,y)y,\,\,&{\rm if }\,x>0.
\end{aligned}
\right.
$$
{ 
Since systems \eqref{1} and \eqref{new} are the same in the $x<0$ half plane, in which if two orbits of the two systems intersect, they must coincide there. Of course, they could be different in the $x>0$ half plane.

\begin{figure*}[htp]
	\centering
	\includegraphics[ height=4cm,width=5cm]{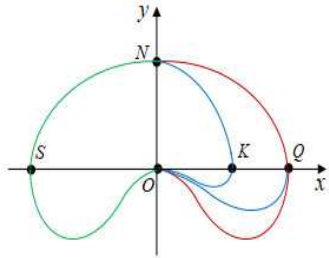}
	\caption{{ Relative positions of the maximal homoclinic orbits.}
	}
	\label{tu12}
\end{figure*}

	Let  the maximal homoclinic orbit for system \eqref{1}
	intersect the positive $x$--axis at $K$, the negative $x$--axis at $S$ and the positive $y$--axis at $N$ {(the green  one for $x<0$ and the blue for $x>0$ in Figure \ref{tu12})}.
{By contrary, we assume that $S_\delta(O)$ has no parabolic sectors.
Then} the orbit arc $\widehat{OSN}$ is the outermost orbit connecting $O$ in $x<0$. {Note that the vector fields of system \eqref{new} are symmetric with respect to the $y$--axis.} Since the vector fields of system \eqref{1} and \eqref{new} are same in the region
$x<0$ and
  the elliptic sector of system \eqref{1}  is bounded, it follows that
the elliptic sector of system \eqref{new} is also bounded, and its maximal homoclinic orbit has the same intersection point $N$ with the $y$--axis as that of the elliptic sector for system \eqref{1}. Let the maximal homoclinic orbit for system \eqref{new}
intersect the positive $x$--axis at $Q$ {(the green  one for $x<0$ and the red for $x>0$ in Figure \ref{tu12})}.

We claim that $K$ lies on the left hand side of $Q:=(x_Q,0)$, and that $\varphi(Q,I^+)$ for system \eqref{1} connects $O$ as $t\to+\infty$. For proving the first part,
 let the orbit arcs { $\widehat{NK}$ for system \eqref{1} and $\widehat{NQ}$ for system \eqref{new}} have  respectively the expressions $y=y_5(x)$ and $y=y_6(x)$. As shown in the proof of Theorem \ref{thm1}, we can  obtain that
$$
y_6(x)-y_5(x)=\int_{0}^{x}M_2(s)(y_6(s)-y_5(s))ds+N_2(x),
$$
	where
	\begin{equation*}
		M_2(x)=	\left\{\begin{aligned}
			&\frac{g(x)}{y_5(x)y_6(x)}-\frac{f(x,y_6(x))-f(x,y_5(x))}{y_6(x)-y_5(x)},\,\,\,&{\rm if }\,y_5(x)\ne y_6(x),\\
			&	\frac{g(x)}{y_5(x)y_6(x)},\,\ \  \ \ \ \ \ \ \  \ \ \ \ \  \ \ \  &{\rm if }\,y_5(x)=y_6(x)
		\end{aligned}
		\right.
	\end{equation*}
and $$ N_2(x)=\int_{0}^{x}f(s,y_6(s))+f(-s,y_6(s))ds.
$$
With a similar  calculation { as in the proof of Theorem \ref{thm1}, it follows that} the expression of $y_6(x)-y_5(x)$ has the following form
\begin{equation*}
	y_6(x)-y_5(x)=\int_{0}^{x}\left(f(s,y_6(s))+f(-s,y_6(s))\right)\exp\left(\int_{s}^{x}M_2(\xi)d\xi\right)ds.
\end{equation*}
Since $f(x,y)\geqslant-f(-x,y)$ for $x>0$ and $f(x,y)\not\equiv-f(-x,y)$ for $x\in(0,\zeta)$,  $\forall~   \zeta>0$, $y_6(x)>y_5(x)$ for $x\in(0,x_K)$,
where $x_K$ is the abscissa of $K$. Thus,
 $\widehat{NK}$ lies  on the left hand side of $\widehat{NQ}$. The first part of the claim follows.
To prove the second part,
 let the orbit arcs $\varphi(Q,I^+)$ for system \eqref{1}  and $\widehat{QO}$ for system \eqref{new}  have the expressions $y=y_7(x)$ and $y=y_8(x)$,
respectively. We can similarly obtain that
\begin{equation*}
y_8(x)-y_7(x)=\int_{x_Q}^{x}\left(f(s,y_8(s))+f(-s,y_8(s))\right)\exp\left(\int_{s}^{x}M_3(\xi)d\xi\right)ds<0
\end{equation*}
 for $x\in(0,x_Q)$,
where
	\begin{equation*}
	M_3(x)=	\left\{\begin{aligned}
		&\frac{g(x)}{y_7(x)y_8(x)}-\frac{f(x,y_7(x))-f(x,y_8(x))}{y_7(x)-y_8(x)},\,\,\,&{\rm if }\ y_7(x)\ne y_8(x),\\
		&	\frac{g(x)}{y_7(x)y_8(x)},\,\,\,&{\rm if }\ y_7(x)=y_8(x).
	\end{aligned}
	\right.
\end{equation*}
Thus, the orbit arc $\varphi(Q,I^+)$ for system \eqref{1} lies {above} the orbit arc $\widehat{QO}$ for system \eqref{new}.
{Consequently, the orbit arc $\varphi(Q,I^+)$ for system \eqref{1} cannot intersect the negative $y$--axis and must approach the origin as $t\rightarrow +\infty$. This proves the second part and so the claim.

Since $\widehat{OSNKO}$ is the maximal homoclinic orbit inside the elliptic sector of system \eqref{1} at $O$, by this last claim all orbits of system \eqref{1} with the initial points located in between $K$ and $Q$ on the positive $x$--axis will positively approach the origin, and they form a part of a parabolic sector at $O$ locally in the fourth quadrant. This proves that system \eqref{1} has a parabolic sector at $O$ locally in the fourth quadrant. {This is in contradiction with the contrary assumption. Consequently, system \eqref{1} must have at least one parabolic sector in $S_\delta(O)$.}

Because of the existence of the parabolic sector for system \eqref{1} at $O$ locally in the fourth quadrant, it forces that under the assumption of the theorem, system \eqref{1} in this asymmetric case cannot have a  parabolic sector at $O$ locally in the third quadrant. Otherwise we will be in contradiction with the fact that $\widehat{OSNKO}$ is the maximal homoclinic orbit inside the elliptic sector of system \eqref{1} at $O$.

In summary, we have proved that $S_\delta(O)$ of system \eqref{1} contains  one and only one parabolic sector, which are locally in the fourth quadrant. 
Example \ref{eg2} provides a concrete verification on this case via Figure \ref{fEHasymOneparabolic}. }
 }

{\noindent{\bf The elliptic sector is unbounded.} 
{Beside the elliptic sector (unbounded by the assumption) and the hyperbolic sector, $S_\delta(O)$ for system \eqref{1} possibly contains also either zero, one, or two parabolic sectors.
 }

Example \ref{eg1} when $\varepsilon>0$ is in the asymmetric case, and its origin has a neighborhood consisting of one elliptic, one hyperbolic and two parabolic sectors, where the elliptic sector is unbounded.} 
{
When an elliptic sector is unbounded, we cannot numerically simulate its existence.
For completing the proof of the theorem, we now theoretically prove the conclusion in Example \ref{eg1}, and so by a concrete example shows existence of a nilpotent equilibrium whose neighborhood can contain two parabolic sectors besides one elliptic and one hyperbolic sectors.}

System \eqref{ub} when $\varepsilon=0$ can be reduced to
\begin{equation}\label{ubo}	
\begin{aligned}
\dot{x}=y-x^2,\quad
\dot{y}=-2xy,
\end{aligned}
\end{equation}
by the transformation $(x,y)\to(x,y-x^2)$.
It is easy to check that system \eqref{ubo} is symmetry on the $y$--axis, and has a unique equilibrium
and two invariant orbits along the $x$--axis. Since system \eqref{ubo} can be written in a Bernoulli equation of $x$ with respect to $y$, { or in a linear homogeneous equation via $z=x^2$, by their solutions} one can get the global phase portrait in the Poincar\'e disc of  system  \eqref{ubo}, as shown in {\rm Figure \ref{gpps4}(a)}
\footnote{System \eqref{ubo} and its global phase portrait in the Poincar\'e disc were originally obtained by professor Feng Li. }.
Therefore, it follows from the transformation that system \eqref{ub} when $\varepsilon=0$ has the global phase portrait as shown in {\rm Figure \ref{gpps34}}.
We remark that system \eqref{ubo} has two pairs of equilibria at the infinity, whereas system \eqref{ub} $($when $\varepsilon=0$ or not$)$ has a unique pair of equilibria, and that the two invariant lines on the $x$--axis of system  \eqref{ubo} have been deformed to two orbits connecting the origin and the equilibrium at infinity in the
negative $y$--axis of system \eqref{ub} when $\varepsilon=0$, see the red orbits in {\rm Figures \ref{gpps34} and \ref{gpps4}(a)}.

 	\begin{figure}[hpt]
 		\centering
 		\subfigure[{ for system \eqref{ubo} }]
 { 			\includegraphics[width=0.35\textwidth]{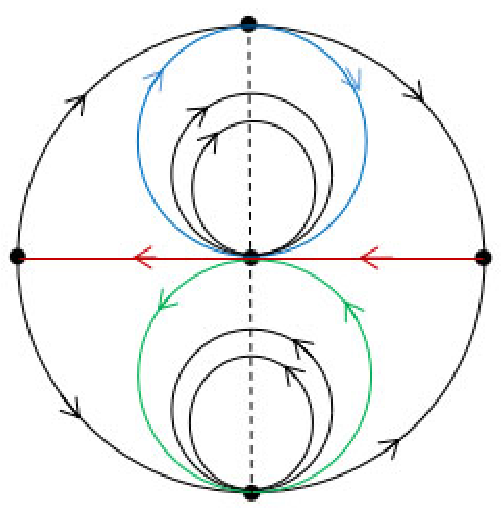}}
 		\quad
 		\subfigure[{ for system \eqref{ub}  with $\varepsilon>0$ }]
 { 	\includegraphics[ width=0.35\textwidth]{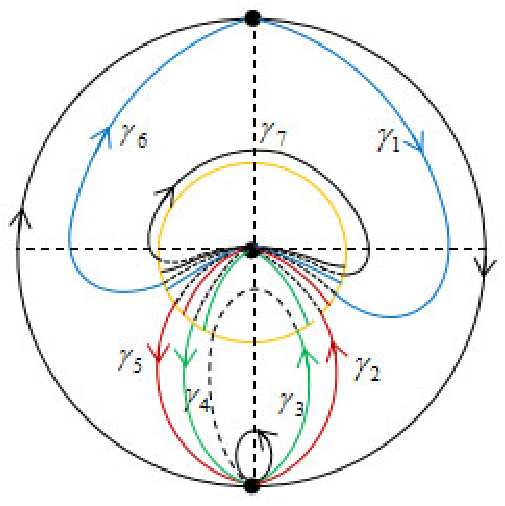}}
 		\caption{The global phase portraits and variations of orbits with $\varepsilon$.  }
 		\label{gpps4}
 	\end{figure}


{ We next show that system \eqref{ub} when $\varepsilon>0$ sufficiently small has the same topological phase portrait as that shown in Figure \ref{gpps34}. Indeed,
on the one hand, we can check that the vector fields of system \eqref{ub} in the region $r\geqslant2$ are independent of $\varepsilon>0$ or not, as shown in Figure \ref{gpps4}(b), and that }
when $\varepsilon>0$ is sufficiently small,
 the origin of
 system \eqref{ub} still has only two exceptional directions $\theta=0$ and $\theta=\pi$, infinitely many orbits
 connect the origin along $\theta=0$ as $t\to+\infty$ and infinitely many orbits
 connect the origin along $\theta=\pi$ as $t\to-\infty$.
 On the other hand,
{ the vector field associated to system  \eqref{ub} rotates counterclockwise in the disc $r<2$ for $\varepsilon>$ increasing, Figure \ref{gpps4}(b) illustrates this variation by the representative orbits $\gamma_1,\ldots,\gamma_7$ and their positive and negative limits. This fact together with dependence of solutions with respect to parameters shows that the elliptic and hyperbolic sectors of system \eqref{ub} at $O$ will both persist for $\varepsilon>0$ sufficiently small. In addition, parabolic sectors are structurally stable. These verify that system  \eqref{ub} when $\varepsilon>0$ sufficiently small has still the global phase portrait in the Poincar\'e disc as that shown in  {\rm Figure \ref{gpps34}}.

Since system \eqref{1} cannot have more than two parabolic sectors, we can finish the proof of the theorem.
\qed
}

As commented in Remark \ref{rem3}, at the moment we could not find examples in this asymmetric case with an unbounded elliptic sector and either zero or one parabolic sector, and also cannot prove nonexistence of systems of form \eqref{1} which are in asymmetric case and have an unbounded elliptic sector and zero or one parabolic sector.

 \subsection{Proof of Proposition \ref{P7}}

	It suffices to consider the case $x_0>0$. The case $x_0<0$ can be handled in a similar way via the transformation $(x,t)\to(-x,-t)$.
	Without loss of generality, we consider $x_0>0$ to be the minimal one
 satisfying $F(x_0)=0$.

Set $A:=(x_0,0)$, and let $\varphi(A,I^+)$ and $\varphi(A,I^-)$ be respectively the positive and negative semi--orbits with their initial points at $A$.
 We claim that $\varphi(A,I^-)$ and $\varphi(A,I^+)$  must intersect the positive and negative $y$--axes respectively, see Figure \ref{P1}. {Indeed, since $dx/dt=0$ on $y=F(x)$ and $dy/dt=-g(x)<0$ for $x>0$, it follows that when the orbit arc of $\varphi(A,I^-)$ is in the first quadrant, it must be located above the curve $y=F(x)$, and that $\varphi(A,I^-)$ either intersects the positive
 $y$--axis, or always keep in the first quadrant and its $\alpha$--limit set is} the equilibrium at infinity in the positive $y$-axis. We now prove that the latter cannot happen.
 Set $E(x,y):=y^2/2+\int_{0}^{x}g(s)ds$,
   then
   \[
\left.   \frac{dE}{dt}\right|_{\eqref{lienard}}=-g(x)F(x).
   \]
    Let $y=y(x)$ be the expression of $\varphi(A,I^-)$, and $x=\hat x(y)$ be its inverse.
  One can check that
\begin{equation}
	\label{A}
E(x_0,0)-E(0,+\infty)=\int_{0}^{x_1}\frac{-g(x)F(x)}{y(x)-F(x)}dx+\int_0^{y(x_1)}F(\hat x(y))dy
\end{equation}
for any fixed $x_1\in(0,x_0)$. Note that the left hand of \eqref{A} is equal to $-\infty$, whereas the right hand of \eqref{A} is finite, a contradiction.  Hence $\varphi(A,I^-)$ must intersect the positive
 $y$--axis, denote by $B:=(0,y_1)$ this intersection point. The same arguments verify also that $\varphi(A,I^+)$ must intersect the negative $y$--axis, saying at $C:=(0,y_2)$. This proves the claim.
	  	\begin{figure*}[htp]
	  \centering
	  \subfigure[{\quad}]{
	  	\includegraphics[width=0.33\textwidth]{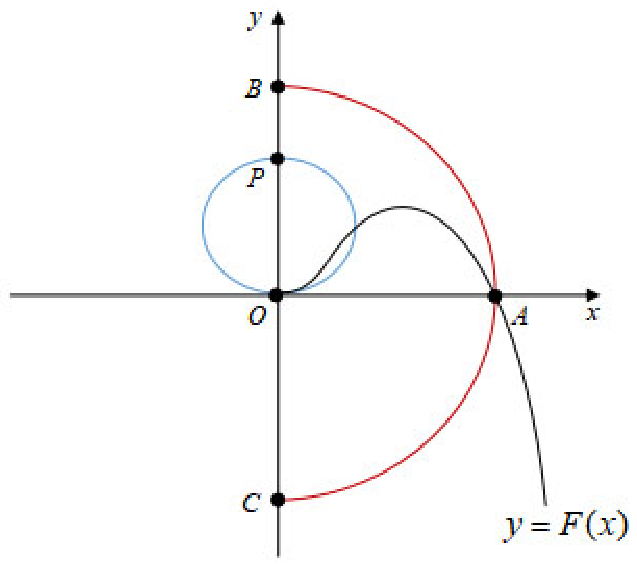}}
	  \quad
	  \subfigure[{\quad}]{
	  	\includegraphics[width=0.33\textwidth]{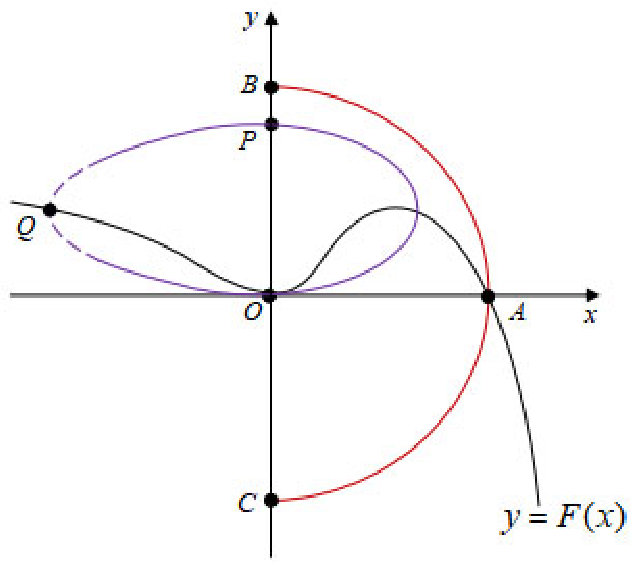}}
	  	\caption{Location of the orbit arcs for showing boundedness of elliptic sector.}
	  	\label{P1}
	  \end{figure*}

We claim that any homoclinic orbit inside the elliptic sector at the origin must positively approach the origin from the first quadrant and negatively approach the origin from the second quadrant.
{ Since	the condition $(a,b)\in\mathscr{G}_{63}$ implies that $b_n>0$, $n$ is odd, $F(x)>0$ in $(-\epsilon,0)\cup(0,\epsilon)$ for some small $\epsilon>0$ and $F(x)=b_nx^{n+1}/(n+1)+O(x^{n+2})$ for small $|x|$, 
  $dy/dx=g(x)/(F(x)-y)>0$ near the origin and in the fourth quadrant, and $dy/dx=g(x)/(F(x)-y)<0$ near the origin and in the third quadrant.
}
 Indeed, by contrary, assume that there is an orbit connecting the origin in the third quadrant.
Let   $(-\epsilon, y(-\epsilon))$ be a point on the orbit and be closed to the origin. It implies $dy/dx>0$ at this point, a contradiction. The claim hold.
Hence, all orbits connecting $O$  in a small neighborhood of the origin lie in either the first or the second quadrants and are tangent to the $x$--axis at the origin.

We claim that $\varphi(A,I)$ cannot form a homoclinic orbit to the origin $O$. Otherwise, $\varphi(A,I^+)$ must positively pass the segment of $y=F(x)$ in between $x=0$ and $x=x_0$, and approach the origin in between the positive $x$--axis and $y=F(x)$. Then $\varphi(A,I^-)$ will cannot connect $O$, a contradiction. The claim follows.
	
Finally we prove that the elliptic sector (if exists) at $O$ must be bounded. By contrary, we assume that the  elliptic sector is unbounded.
	  Notice that $x=0$ is the horizontal isoclinic and $y=F(x)$ is  the vertical isoclinic, and that $dy/dt<0$ for $x>0$ and $dy/dt>0$ for $x<0$. These facts imply that any homoclinic orbit inside the elliptic sector at $O$ has its highest point located on the positive $y$--axis,
	  and its leftmost point and rightmost point on $y=F(x)$.
By the existence of	$\varphi(A,I)$  and its properties, it is clear that
 the highest point and  the rightmost point of the elliptic sector are bounded.
By the contrary assumption, it is only possible that the leftmost point of the elliptic sector is unbounded.
We claim that if it is the case, then $F(x)>0$  for $x<0$. Otherwise, there is a largest negative value $x_2$ satisfying $F(x_2)=0$. Then, $\varphi((x_2,0),I^-)$ and $\varphi((x_2,0),I^+)$  must intersect the positive and the negative $y$--axes respectively, denote these intersection points by $C'$ and $B'$. Thus, the elliptic sector lies inside of the region limited by $\widehat{CB}\cup\overline{BB'}\cup\widehat{B'C'}\cup\overline{C'C}$,
implying that the elliptic sector is bounded, a contradiction.
The claim is proved.

	 Let $P:=(0,y_0)$ be a point on the positive $y$--axis for which $\varphi(P,I)$ is a homoclinic orbit inside the elliptic sector, and let $Q:= (x_Q, y_Q)$ be the leftmost point of $\varphi(P,I^-)$, see Figure \ref{P1}(b).

	 Since the leftmost point of the elliptic sector is unbounded, when the homoclinic orbit approaches the outer boundary of the elliptic sector, one has  $x_Q\to-\infty$.
On the one hand, $$E(Q)=\int_{0}^{x_Q}g(x)dx+\frac{y_Q^2}{2}\to+\infty,\quad \text{as}\,\,\, x_Q\to-\infty$$ and $E(P)={y_0^2}/{2} $
is a finite value.
On the other hand,
\[
E(P)-E(Q)=\int_{\widehat{QP}}dE=\int_{\widehat{QP}}-g(x)F(x)dt>0,
\]
where we have used the fact that $dE/dt|_{\eqref{lienard}}=-g(x)F(x)>0$ in the second quadrant.
Again a contradiction happens. Hence, the elliptic sector must be bounded.

It completes the proof of the proposition.

	\section{Applications}
	\label{ap}

This section provides an application of our aforementioned theoretic results to a concrete planar differential system for obtaining its global phase portraits.
	
For the cubic family
		\begin{equation}
	\label{gasull}
	\left\{\begin{aligned}
	\dot{x}&=ax+by,
	\\
	\dot{y}&=cx^3+dx^2y+exy^2+fy^3,
	\end{aligned}
	\right.
	\end{equation}
where $a,\dots,f$ are real parameters, Gasull \cite{Gasull} posed an open question:
\textit{Is 2 its maximum number of limit cycles of system \eqref{gasull}?}
This is the first half of the third problem of his list of 33 open problems in \cite{Gasull}. Here we characterize its global phase portraits under the conditions such that system \eqref{gasull} has no a limit cycle.

If $b=0$, system \eqref{gasull} has an invariant line $x=0$, and its dynamics is simple. The details are omitted.
For $b\neq0$, the linear change of variables
$$
(x,y)\to\left(x,\frac{y-ax}{b}\right)
$$
sends system
\eqref{gasull} to
\begin{equation}
\label{cgu}
\left\{\begin{aligned}
\dot{x}&=y,
\\
\dot{y}&=ay+\nu x^3+\left(d-\frac{2ae}{b}+\frac{3a^2f}{b^2}\right)x^2y+\left(\frac{e}{b}-\frac{3af}{b^2}\right)xy^2+\frac{f}{b^2}y^3,
\end{aligned}
\right.
\end{equation}
where $\nu=bc-ad+a^2e/b-a^3f/b^2$.
When $\nu>0$,  system \eqref{cgu} has a unique equilibrium, which is a saddle.
When $\nu=0$, system \eqref{cgu} has an invariant line $y=0$ and its dynamics is simple.
The next focuses on $\nu<0$.

Related to system \eqref{cgu} is the next system
	\begin{equation}
	\label{cer}
	\left\{\begin{aligned}
	\dot{x}&=y,
	\\
	\dot{y}&=-\omega_0^2x+2\mu\omega_0y\left(1-\beta x^2-\frac{\gamma}{\omega_0} xy-\frac{\delta}{\omega_0^2}y^2\right),
	\end{aligned}
	\right.
	\end{equation}
which is obtained from the equation
	\begin{equation}
	\notag
	\ddot{u}_y(t)-2\mu\omega_0\dot{u}_y(t)\left(1-\beta u_y^2(t)-\frac{\gamma}{\omega_0}\dot{u}_y(t)u_y(t)-\frac{\delta}{\omega_0^2}\dot{u}_y^2(t)\right)+\omega_0^2u_y(t)=0,
	\end{equation}
by Erlicher et al. \cite{ETA} for modelling a hybrid Van der Pol-Rayleigh oscillator with an additional $\gamma$-term
(see	\cite[Section 4.2]{ETA}).

Notice that both system \eqref{cgu} with $\nu\leqslant0$ and  system \eqref{cer} are a sub-family of the system
	\begin{equation}
	\label{app}
	\left\{\begin{aligned}
	\frac{dx}{dt}&=y,
	\\
	\frac{dy}{dt}&=-\lambda x-\mu y-\kappa x^3-ax^2y-bxy^2-cy^3.
	\end{aligned}
	\right.
	\end{equation}
We now apply our main results to system \eqref{app} for obtaining its global  phase portraits in the Poincar\'e  disc when the parameters $\lambda$, $\mu$, $\kappa$, $a$, $c$ are non--negative.
Note that
	when $b=0$, the divergence of the system is non--positive, and so the Dulac criterion can be applied directly.
For $b\neq0$,
	when $\lambda=\kappa=0$, system \eqref{app} has the invariant line $y=0$, and its dynamics is simple. In what follows, our study is under the conditions: $b\ne 0$, and $\lambda+\kappa\ne0$.
	
		When $\kappa>0$, the scaling $(x,y,t)\to(x/\sqrt{\kappa} , y/\sqrt{\kappa} ,t)$
sends		system \eqref{app} to
		\begin{equation}
		\label{61}
		\left\{\begin{aligned}
		\frac{dx}{dt}&=y,
		\\
		\frac{dy}{dt}&=-\lambda x-\mu y-x^3-ax^2y-bxy^2-cy^3,
		\end{aligned}
		\right.
		\end{equation}
with the parameters belonging to the region
			\begin{align*}
			\mathscr{R}_1:=\{(\lambda,\mu,a,b,c)\in\mathbb{R}^5: \lambda\geqslant0,\mu\geqslant0, a\geqslant0, c\geqslant0,b\ne0\}.
			\end{align*}
The global topological phase portraits of system \eqref{61} will be summarized in Theorem \ref{theo} via Figure \ref{qjxt}.

	When $\kappa=0$ and $\lambda>0$, the rescaling $(x,y,t)\to (x,\sqrt{\lambda} y, t/\sqrt{\lambda})$
sends	system \eqref{app} to
	\begin{equation}
	\label{71}
	\left\{\begin{aligned}
	\frac{dx}{dt}&=y,
	\\
	\frac{dy}{dt}&=-x-\mu y-ax^2y-bxy^2-cy^3,
	\end{aligned}
	\right.
	\end{equation}		
with the parameters belonging to the region
			\begin{align*}
			\mathscr{R}_2:=\{(\mu,a,b,c)\in\mathbb{R}^4: \mu\geqslant0, a\geqslant0, c\geqslant0,b\ne0\}.
			\end{align*}
The global topological phase portraits of system \eqref{71} will be summarized in Theorem \ref{thm8} via Figure \ref{qjxt1}.

		\subsection{Global dynamics of system \eqref{61}}
		\subsubsection{Nonexistence of closed orbits of system \eqref{61}}
		\label{sec411}
		In this subsection, we  study the nonexistence of closed orbits of system \eqref{61}.
		\begin{lemma}
			\label{lemma1}
			When $\mu$, $a$, $c$ are non-negative and $\mu^2+a^2+c^2\ne0$, system \eqref{61}  has no closed orbit.
		\end{lemma}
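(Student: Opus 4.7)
The plan is to recognize system \eqref{61} as a special case of system \eqref{1} with
\[
g(x)=\lambda x+x^3,\qquad f(x,y)=\mu+ax^2+bxy+cy^2,
\]
and to invoke Theorem \ref{thm1} on the whole plane, that is, with $\alpha=-\infty$ and $\beta=+\infty$. The anti-symmetry $g(x)=-g(-x)$ holds identically because $g$ is an odd polynomial, and since $g$ and $f$ are polynomials, the Lipschitz continuity required in condition \textbf{(ii)} is automatic.

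For condition \textbf{(i)}, one has $xg(x)=\lambda x^2+x^4>0$ for $x\ne 0$ whenever $\lambda\geqslant 0$. The decisive algebraic step is the computation
\[
f(x,y)+f(-x,y)=2\mu+2ax^2+2cy^2,
\]
in which the $bxy$ contribution cancels. Because $\mu$, $a$, $c$ are non-negative, the right-hand side is non-negative everywhere, which yields $f(x,y)\geqslant -f(-x,y)$ and so verifies condition \textbf{(iii)}. The same expression is a polynomial in $(x,y)$ whose coefficients are $2\mu$, $2a$, $2c$; it vanishes identically on the strip $\{(x,y):0<x<\zeta,\ y\in\mathbb{R}\}$ (for any $\zeta>0$) if and only if $\mu=a=c=0$. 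Hence condition \textbf{(iv)} is exactly equivalent to the hypothesis $\mu^2+a^2+c^2\ne 0$.

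With all four hypotheses verified, Theorem \ref{thm1} delivers nonexistence of closed orbits of system \eqref{61} in the strip $-\infty<x<+\infty$, which is the whole plane. I do not foresee any genuine obstacle: conditions \textbf{(i)}--\textbf{(iv)} of Theorem \ref{thm1} appear to be tailor-made for systems of this form. The only noteworthy feature is that the parameter $b$ never enters the verification, since the antisymmetric term $bxy$ drops out of $f(x,y)+f(-x,y)$; this explains why the lemma imposes no sign restriction on $b$.
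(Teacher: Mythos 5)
Your proposal is correct and follows exactly the paper's own route: identify $g(x)=\lambda x+x^3$ and $f(x,y)=\mu+ax^2+bxy+cy^2$, compute $f(x,y)+f(-x,y)=2\mu+2ax^2+2cy^2$, and verify conditions \textbf{(i)}--\textbf{(iv)} of Theorem \ref{thm1}. Your added remark that the $bxy$ term cancels (so $b$ plays no role) matches the paper's subsequent remark contrasting this criterion with the Dulac test.
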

		\begin{proof}
			Set $g(x)=\lambda x+x^3$ and $f(x,y)=\mu+ax^2+bxy+cy^2$. Obviously $g(x)$ is an odd function. One can check that the conditions {\bf{(\romannumeral1)}} and  {\bf{(\romannumeral2)}} of Theorem \ref{thm1} hold for $(x,y)\in \mathbb{R}^2$ and  $\lambda\geqslant0$. Besides, $f(x,y)+f(-x,y)=2\mu+2ax^2+2cy^2\geqslant0$ when
			$\mu$, $a$, $c$ are non-negative. Then the condition {\bf{(\romannumeral3)}} of Theorem \ref{thm1} holds, and so does the condition {\bf{(\romannumeral4)}}, because $\mu^2+a^2+c^2\ne0$. By Theorem \ref{thm1} system \eqref{61} has no a closed orbit around the origin.
		\end{proof}

		\begin{remark}
Lemma \ref{lemma1} indicates that our criterion in Theorem \ref{thm1} is more applicable than the classical Dulac one for system
			\eqref{61}. The divergence of system \eqref{61} is
			$$
			div(X,Y)=-\mu-ax^2-2bxy-3cy^2.
			$$
If $\mu$, $a$ and $c$  are of the same  sign and $b^2-3ac\leqslant0$, the Dulac criterion shows that system
			\eqref{app} has no a closed orbit around $O$. However,
			the Dulac criterion is invalid for $b^2-3ac>0$.  While, the application of the criterion in Theorem \ref{thm1} is independent on $b$.
		\end{remark}	

		\subsubsection{Equilibria  of system \eqref{61}}
		\label{sec412}
		Applying Lemma  \ref{lem1} and Theorem \ref{lem2} to system \eqref{61} directly provides
		the qualitative properties of the unique equilibrium $O$, which are shown in Table \ref{qpeq}.
		
		\begin{table}[htp]
			\renewcommand\arraystretch{1.5}
			\setlength{\tabcolsep}{5mm}{
				\caption{	\label{qpeq} The qualitative property of $O$ of system \eqref{app}.}
				\begin{tabular}{c|c|c|c}
					\hline
					\multicolumn{3}{c|}
					{Cases of parameters} & Type of $O$
					\\
					\hline
					\multirow{2}{*}
					{\begin{tabular}[c]{@{}c@{}} $\mu=0$\end{tabular}} & \multicolumn{2}{c|}{$a=c=0$} & center
					\\
					\cline{2-4}
					& \multicolumn{2}{c|}{ $a^2+c^2\ne0$} &  stable focus
					\\
					\hline
					\multirow{4}{*}{\begin{tabular}[c]{@{}c@{}}
							$\mu>0$\end{tabular}} & \multicolumn{2}{c|}{$\lambda=0$} & stable  improper  node
					\\
					\cline{2-4}
					& \multirow{3}{*}{\begin{tabular}[c]{@{}c@{}} $\lambda> 0$\end{tabular}} & $\mu^2-4\lambda>0$ & stable node
					\\
					\cline{3-4}
					&  & $\mu^2-4\lambda=0$ & stable improper node
					\\
					\cline{3-4}
					&  & $\mu^2-4\lambda<0$ &  stable focus
					\\
					\hline
				\end{tabular}}
			\end{table}
			
			On the qualitative properties of the equilibria  at infinity,
			the proof is not difficult but is too long to give here, see Appendix C.

		\subsubsection{Global phase portraits in the Poincar\'e disc of system \eqref{61}}
		By the nonexistence of closed orbits, qualitative properties of equilibria (including at infinity) of  system \eqref{61}, 
		we can obtain 		all global phase portraits in the Poincar\'e disc in the following theorem.
	\begin{theorem}
		\label{theo}
		All global phase portraits in the Poincar\'e disc
		are shown in
		{\rm Figure \ref{qjxt}} for system \eqref{61},
		where
		\begin{align*}
				S_1&=\{(\lambda,\mu,a,b,c)\in\mathscr{R}_1: a=\mu=c=0, b<0\},\\
				S_2&=\{(\lambda,\mu,a,b,c)\in\mathscr{R}_1: a=\mu=c=0, b>0\},\\
			S_3&=\{(\lambda,\mu,a,b,c)\in\mathscr{R}_1: c=0, a^2+\mu^2\ne0, b>a^2/4\},\\
			S_4&=\{(\lambda,\mu,a,b,c)\in\mathscr{R}_1:  c=0, 0<b<a^2/4\},\\
			S_5&=\{(\lambda,\mu,a,b,c)\in\mathscr{R}_1: c=0, a^2+\mu^2\ne0, b<0\},\\
				S_6&=\{(\lambda,\mu,a,b,c)\in\mathscr{R}_1: c=0, b=a^2/4, u_0^2+
				\mu u_0+\lambda<0\},\\
						S_7&=\{(\lambda,\mu,a,b,c)\in\mathscr{R}_1: c=0, b=a^2/4, u_0^2+
				\mu u_0+\lambda=0\},\\
						S_8&=\{(\lambda,\mu,a,b,c)\in\mathscr{R}_1: c=0, b=a^2/4, u_0^2+
				\mu u_0+\lambda>0\},\\
						S_9&=\{(\lambda,\mu,a,b,c)\in\mathscr{R}_1: c>0, -\sqrt{3ac}\leqslant b\leqslant \sqrt{3ac} \} \\
					&\ \ \cup \{(\lambda,\mu,a,b,c)\in\mathscr{R}_1: c>0, b<-\sqrt{3ac}, \Phi(\varrho_2)>0\}\\
					&\ \ \cup \{(\lambda,\mu,a,b,c)\in\mathscr{R}_1: c>0, b>\sqrt{3ac},  \Phi(\varrho_1)<0\}\\
					&\ \ \cup \{(\lambda,\mu,a,b,c)\in\mathscr{R}_1: c>0, b>\sqrt{3ac}, \Phi(\varrho_1)>0, \Phi(\varrho_2)>0\},\\
	S_{10}&=\{(\lambda,\mu,a,b,c)\in\mathscr{R}_1: c>0, b>\sqrt{3ac}, \Phi(\varrho_1)=0,
 \varrho_1^2+\mu \varrho_1+\lambda<0\},\\					
	S_{11}&=\{(\lambda,\mu,a,b,c)\in\mathscr{R}_1: c>0, b>\sqrt{3ac}, \Phi(\varrho_1)=0,  \varrho_1^2+\mu \varrho_1+\lambda=0\},\\					
	S_{12}&=\{(\lambda,\mu,a,b,c)\in\mathscr{R}_1: c>0, b>\sqrt{3ac}, \Phi(\varrho_1)=0,  \varrho_1^2+\mu \varrho_1+\lambda>0\},\\	
	S_{13}&=\{(\lambda,\mu,a,b,c)\in\mathscr{R}_1: c>0, b<-\sqrt{3ac}, \Phi(\varrho_2)=0\},\\			
			S_{14}&=\{(\lambda,\mu,a,b,c)\in\mathscr{R}_1: c>0, b>\sqrt{3ac}, \Phi(\varrho_1)>0,  \Phi(\varrho_2)=0,  \varrho_2^2+\mu \varrho_2+\lambda<0\},\\		
		S_{15}&=\{(\lambda,\mu,a,b,c)\in\mathscr{R}_1: c>0, b>\sqrt{3ac}, \Phi(\varrho_1)>0,  \Phi(\varrho_2)=0,  \varrho_2^2+\mu \varrho_2+\lambda=0\},\\				
			S_{16}&=\{(\lambda,\mu,a,b,c)\in\mathscr{R}_1: c>0, b>\sqrt{3ac}, \Phi(\varrho_1)>0, \Phi(\varrho_2)=0,  \varrho_2^2+\mu \varrho_2+\lambda>0\},\\
			S_{17}&=\{(\lambda,\mu,a,b,c)\in\mathscr{R}_1: c>0, b>\sqrt{3ac}, \Phi(\varrho_1)>0, \Phi(\varrho_2)<0\},\\				
				S_{18}&=\{(\lambda,\mu,a,b,c)\in\mathscr{R}_1: c>0, b<-\sqrt{3ac},\Phi(\varrho_2)<0\},
			\end{align*}
$\Phi(u):=cu^3+bu^2+au+1$, $u_0=-a/2b$, and $\varrho_1=(-b-\sqrt{b^2-3ac})/3c$,  $\varrho_2=(-b+\sqrt{b^2-3ac})/3c$ for $c>0$.
\end{theorem}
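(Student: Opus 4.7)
\smallskip

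\noindent\textbf{Proof proposal for Theorem \ref{theo}.} The plan is to assemble the global phase portraits by combining four ingredients already prepared in the paper: (i) the nonexistence of closed orbits from Lemma \ref{lemma1}; (ii) the topological type of the unique finite equilibrium $O$ from Table \ref{qpeq}; (iii) the classification of the infinite equilibria carried out in Appendix C, whose description is controlled by the auxiliary polynomial $\Phi(u)=cu^3+bu^2+au+1$ and by the quantities $u_0=-a/(2b)$, $\varrho_1$, $\varrho_2$ appearing in the definition of the regions $S_i$; and (iv) the $\mathbb{Z}_2$--symmetry $(x,y)\to(-x,-y)$ of system \eqref{61}, which forces every global portrait to be invariant under the central symmetry of the Poincar\'e disc. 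Once these are in place, classifying the phase portraits reduces to determining which pairs of separatrices at infinity are connected to each other or to the separatrices of $O$, region by region.

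First I would organize the 18 parameter regions by the structure of $\Phi$: (a) the degenerate cases $S_1,S_2$ where $\mu=a=c=0$ and only the sign of $b$ matters; (b) the ``$c=0$'' strata $S_3,\dots,S_8$, in which $\Phi$ reduces to a quadratic and the discussion of infinite equilibria in Appendix C becomes explicit via $b$ versus $a^2/4$ and the sign of $u_0^2+\mu u_0+\lambda$; and (c) the generic strata $S_9,\dots,S_{18}$ with $c>0$, where $\Phi$ is a genuine cubic, its critical points are $\varrho_1,\varrho_2$, and the number and degeneracy of roots of $\Phi$ are governed by $\mathrm{sign}\,\Phi(\varrho_1)$ and $\mathrm{sign}\,\Phi(\varrho_2)$. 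For every region I would read from Appendix C how many hyperbolic/semi--hyperbolic/nilpotent/degenerate equilibria appear at infinity, together with their local sectors, and from Table \ref{qpeq} the type of $O$. Note that the finer splittings inside $S_{10}$--$S_{12}$ and $S_{13}$--$S_{16}$ correspond exactly to the three possibilities for the sign of $\varrho_i^2+\mu\varrho_i+\lambda$, which is the discriminant--type quantity that Appendix C uses to distinguish a saddle--node from a cusp or a degenerate equilibrium at infinity.

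Having fixed the ``local dictionary'' for each $S_i$, the global picture is then built with the standard machinery: by Lemma \ref{lemma1} system \eqref{61} has no closed orbit, so by the Poincar\'e--Bendixson theorem every $\omega$-- and $\alpha$--limit set in the Poincar\'e disc is an equilibrium or a separatrix polycycle. Since $O$ is an attractor in all the cases considered in Table \ref{qpeq}, no finite equilibrium can serve as an $\alpha$--limit set, so all separatrices not approaching $O$ as $t\to\pm\infty$ must come from or go to equilibria at infinity. I would then trace each separatrix by: combining the direction of the flow on the axes ($\dot x=y$ gives the vertical sign rule and $\dot y|_{y=0}=-\lambda x-x^3$ gives the horizontal one); using the central symmetry to pair separatrices; and applying the rotated vector field trick together with Theorem \ref{thm1} to exclude return maps on the equator of the Poincar\'e disc. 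The combinatorial outcome of this matching is then identified with one of the portraits in Figure \ref{qjxt}, and each region $S_i$ is attached to the corresponding portrait.

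The main obstacle I expect is in the generic cubic strata $S_{10}$--$S_{18}$, specifically in deciding the \emph{connections} between the various semi--hyperbolic and degenerate infinite equilibria produced by the double roots of $\Phi$. Local sector data alone does not determine which unstable separatrix of a saddle at infinity lands at $O$ and which lands at another infinite equilibrium; ruling out a separatrix loop or a heteroclinic polycycle (which the Poincar\'e--Bendixson alternative leaves open in principle) will require a global argument. I plan to use monotonicity of $y(x)$ along orbits in sectors where $\dot y/\dot x$ has a definite sign, together with the energy function $E(x,y)=y^2/2+\lambda x^2/2+x^4/4$ whose derivative along \eqref{61} equals $-(\mu+ax^2+bxy+cy^2)y^2$, to bound trajectories in appropriate Lyapunov sublevel sets; in the subcases where $b$ is such that this quadratic form in $y$ is sign--definite, $E$ acts as a strict Lyapunov function and the connections are forced, while in the remaining ``mixed--sign'' subcases the separatrices are pinned by comparing the flow on the two nullclines $y=0$ and $\lambda x+x^3+\mu y+ax^2y+bxy^2+cy^3=0$. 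This case analysis is bookkeeping--heavy but essentially mechanical once the local pictures from Appendix C are in hand, so after verifying one representative subcase in each of the three blocks (a), (b), (c) above in detail, I would summarize the remaining verifications in a table and point to Figure \ref{qjxt}.
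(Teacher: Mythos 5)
Your overall architecture coincides with the paper's: nonexistence of closed orbits from Lemma \ref{lemma1}, the type of $O$ from Table \ref{qpeq}, the local structure of the infinite equilibria from Lemmas \ref{lemy1}--\ref{lemx} of Appendix C organized by the roots of $\Phi$, and then a gluing step that decides the separatrix connections. (Two small corrections: $O$ is a center, not an attractor, in the cases $S_1,S_2$, so those two regions must be split off before you invoke ``$O$ is an attractor in all cases''; and while the central symmetry $(x,y)\to(-x,-y)$ of \eqref{61} is a valid and useful observation, the paper does not need it.)

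The genuine gap is in the tool you propose for the hard step, which you correctly identify as deciding where the separatrices of the semi--hyperbolic and degenerate infinite equilibria land. Take the paper's own worked subcase $S_5$ ($c=0$, $b<0$, $a^2+\mu^2\ne0$): the separatrix $\theta$ leaving $I_{B^-}$ has three candidate $\omega$--limit sets, $O$, $I_{D^+}$ and $I_{A^+}$. The candidate $I_{D^+}$ is excluded by a Poincar\'e--Bendixson annulus argument (a limit cycle would have to exist, contradicting Lemma \ref{lemma1}); this part your plan covers. But the candidate $I_{A^+}$ is excluded in the paper by viewing \eqref{61} as a generalized rotated vector field in $\mu$ (resp.\ $a$) and using monotone dependence of the separatrix on the rotation parameter. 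Your proposed substitute does not reach this case: the derivative $\dot E=-(\mu+ax^2+bxy+cy^2)y^2$ of your energy function is sign--indefinite precisely in the regions where the ambiguity occurs ($c=0$, $b<0$, and more generally $b^2>3ac$), so $E$ is not a Lyapunov function there; and ``comparing the flow on the two nullclines'' cannot distinguish the separatrix tending to $O$ from the separatrix escaping to $I_{A^+}$, because both scenarios are compatible with the same local sector data and the same nullcline configuration --- the distinction is a global one. You do mention the rotated vector field trick, but only to ``exclude return maps on the equator,'' which is a different use; to close the argument you must apply the rotation monotonicity to the separatrix connection itself, as the paper does, or supply an equivalent global comparison.
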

		\begin{figure}[htp]
			\centering
			\subfigure[{ $	S_{1}$}]{
				\includegraphics[width=0.2\textwidth]{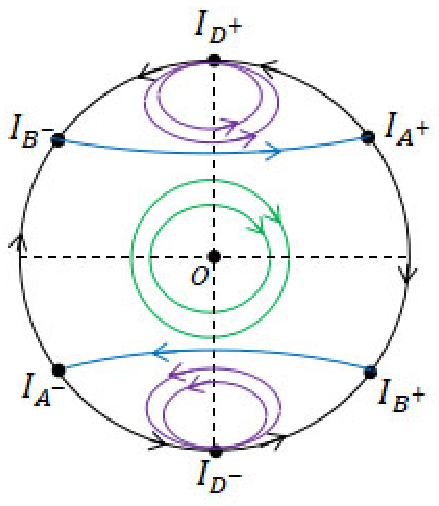}}
			\quad
			\subfigure[{ 	$	S_{2}$ }]{
				\includegraphics[width=0.2\textwidth]{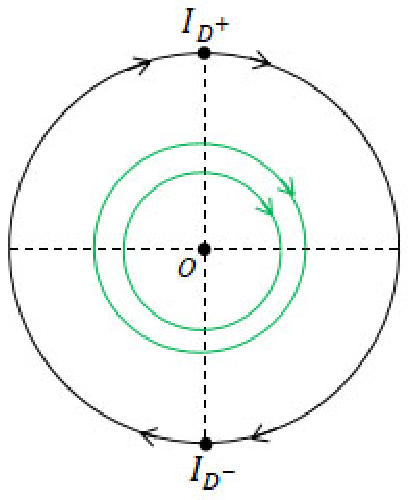}}
			\quad
			\subfigure[{ $	S_{3}$ }]{
				\includegraphics[width=0.2\textwidth]{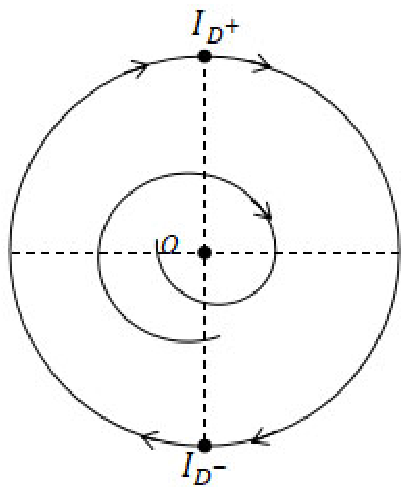}}
			\quad
			\subfigure[{ $	S_{4}$ }]{
				\includegraphics[width=0.2\textwidth]{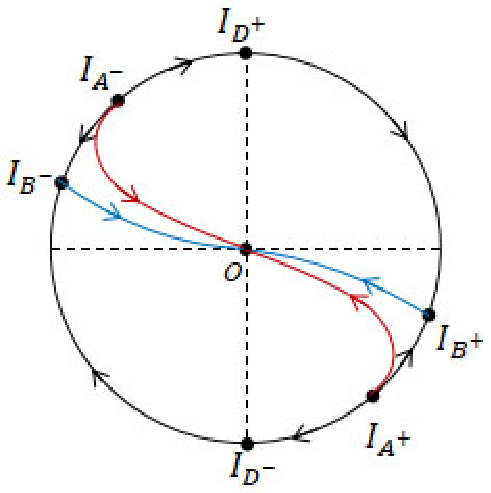}}
			\quad
			\subfigure[{ $	S_{5}$ }]{
				\includegraphics[width=0.2\textwidth]{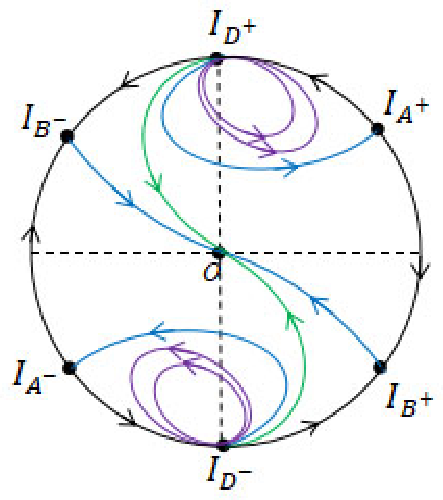}}
			\quad
			\subfigure[{ $	S_{6}$ }]{
				\includegraphics[width=0.2\textwidth]{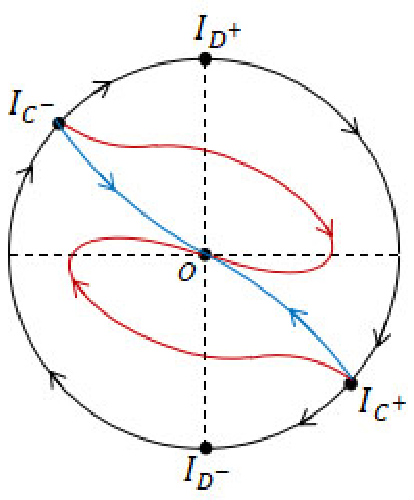}}
			\quad
			\subfigure[{ $	S_{7}$ }]{
				\includegraphics[width=0.2\textwidth]{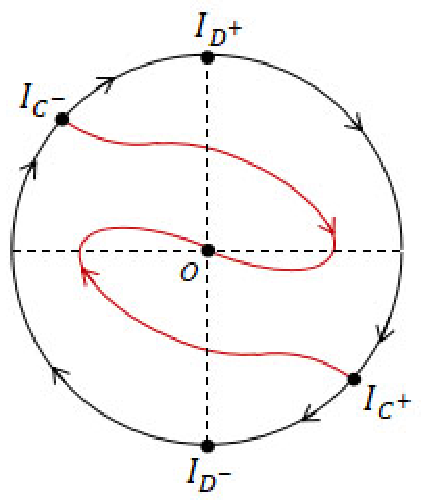}}
			\quad
			\subfigure[{ $	S_{8}$ }]{
				\includegraphics[width=0.2\textwidth]{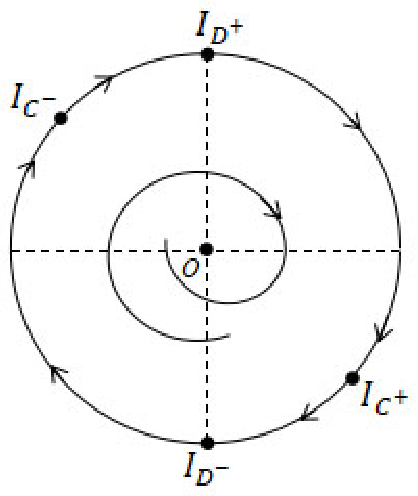}}
			\quad
			\subfigure[{ $	S_{9}$ }]{
				\includegraphics[width=0.2\textwidth]{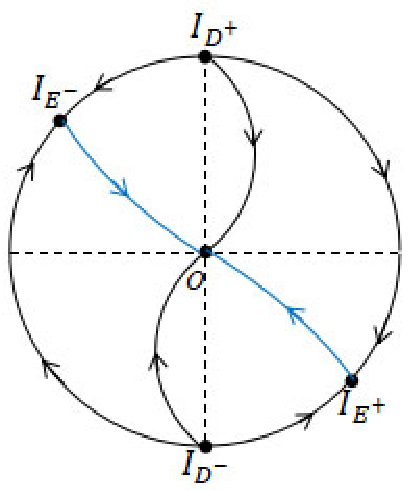}}
			\quad	
			\subfigure[{$	S_{10}$}]{
				\includegraphics[width=0.2\textwidth]{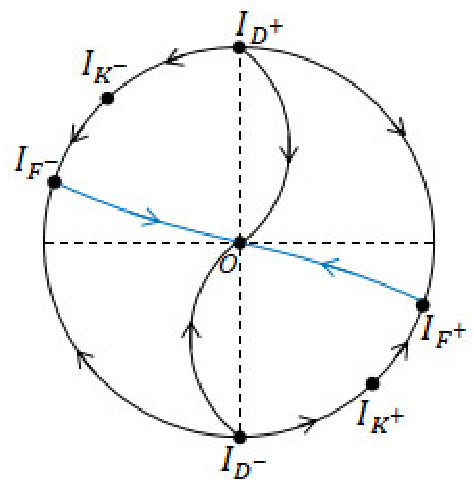}}
			\quad		
			\subfigure[{$	S_{11}$}]{
				\includegraphics[width=0.2\textwidth]{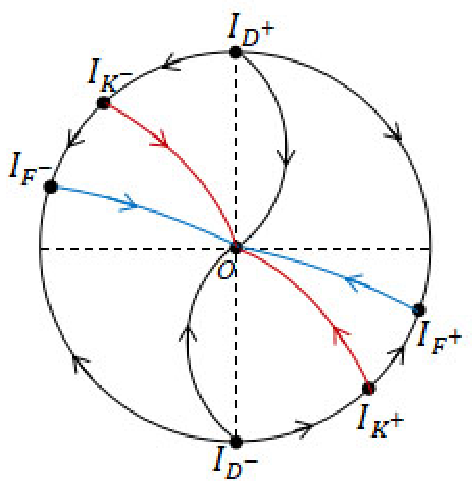}}
			\quad	
			\subfigure[{$	S_{12}$}]{
				\includegraphics[width=0.2\textwidth]{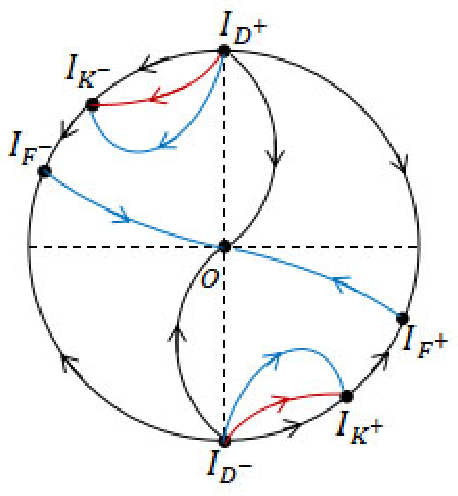}}
			\quad
			\subfigure[{$	S_{13}$}]{
				\includegraphics[width=0.2\textwidth]{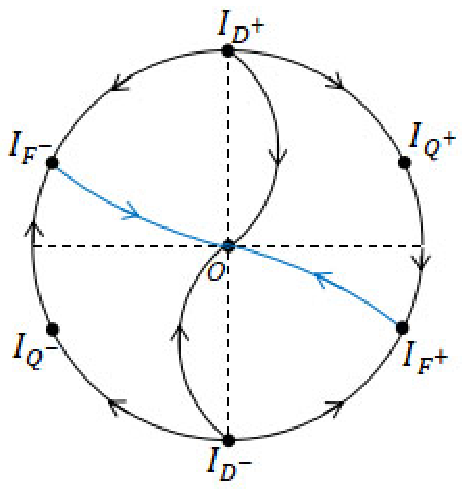}}
			\quad
			\subfigure[{$	S_{14}$}]{
				\includegraphics[width=0.2\textwidth]{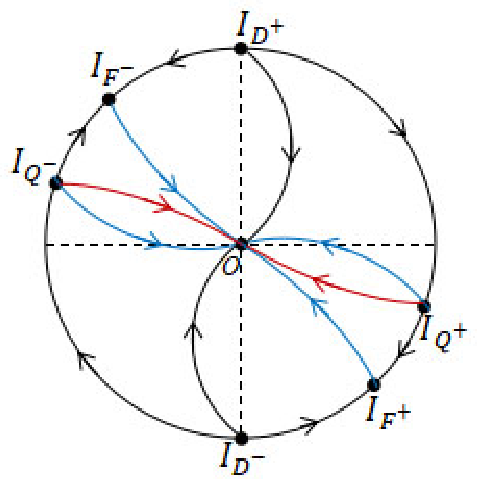}}
			\quad	
			\subfigure[{$	S_{15}$}]{
				\includegraphics[width=0.2\textwidth]{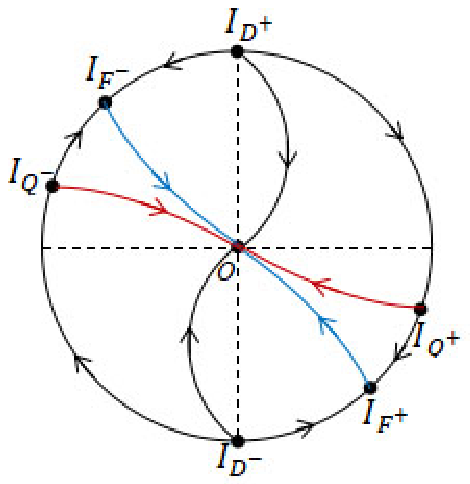}}
			\quad
			\subfigure[{$	S_{16}$}]{
				\includegraphics[width=0.2\textwidth]{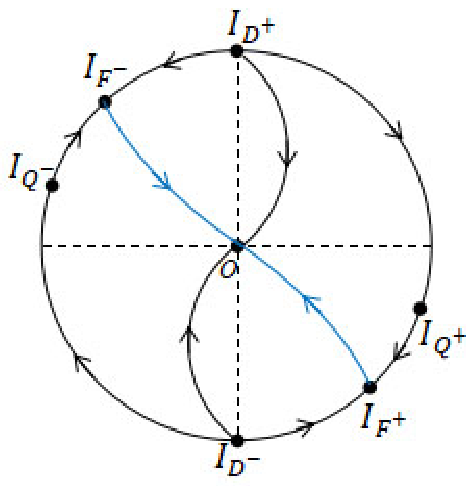}}
			\quad
			\subfigure[{$	S_{17}$}]{
				\includegraphics[width=0.2\textwidth]{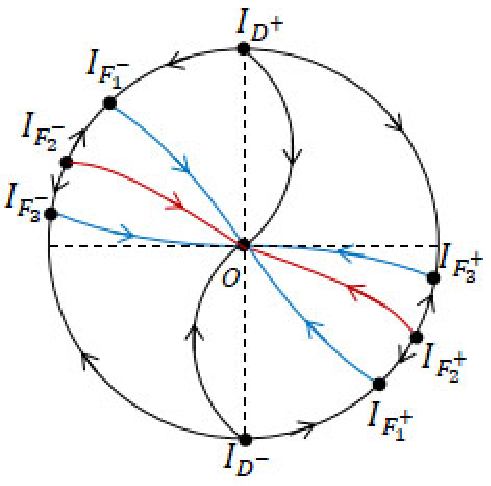}}
			\quad
			\subfigure[{$S_{18}$ }]{
				\includegraphics[width=0.2\textwidth]{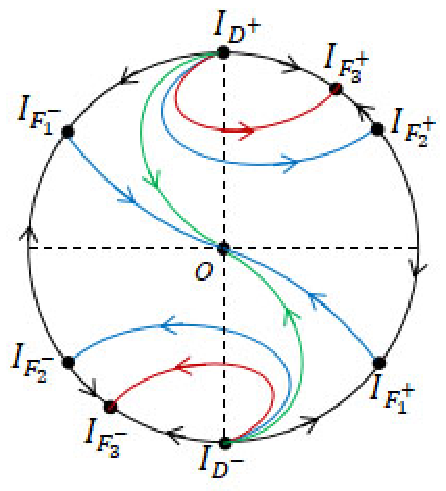}}
			\quad
			\caption{ Global phase portraits of system \eqref{61}. }\label{qjxt}
		\end{figure}
		\begin{proof}
			When $\mu=a=c=0$, the unique equilibrium $O$ is a center for system \eqref{61} by Table \ref{qpeq}. When $\mu$, $a$, $c$ are non-negative and $\mu^2+a^2+c^2\ne0$, systems \eqref{61}  has no closed orbits and $O$ is a sink by Lemma \ref{lemma1} and Table \ref{qpeq}.
			
		Combining Lemmas \ref{lemy1}--\ref{lemx} of Appendix C, we can obtain all  global phase portraits of systems \eqref{61}.
		It is worth to notice that the aquirment of Figure \ref{qjxt}(e, l, r)  need  more  derivation. In fact,
		consider the region $S_{5}$. Incorporated with  Figure \ref{tu11}(b) and  Figure \ref{tu19}(b) of Appendix C, it is easy to obtain that {the $\omega$--limit sets of $\theta$ is probably $O$, $I_{D^+}$, $I_{A^+}$, which can be seen} 
		 in Figure \ref{tu21}, where $\theta$ stands for the orbit leaving $I_{B^-}$.
		
		\begin{figure}[htp]
			\centering
			\subfigure[{ $\Omega_\theta=O$  }]{
				\includegraphics[width=0.3\textwidth]{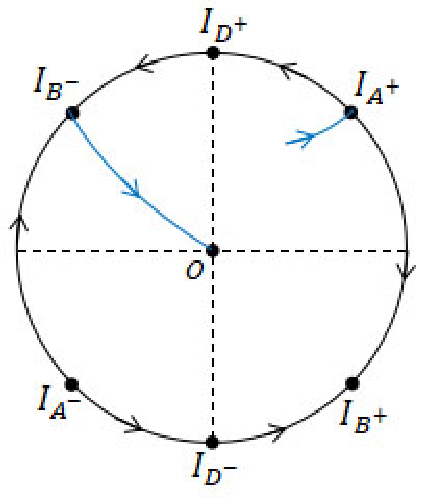}}
			\quad
			\subfigure[{$\Omega_\theta=I_{D^+}$ }]{
				\includegraphics[width=0.3\textwidth]{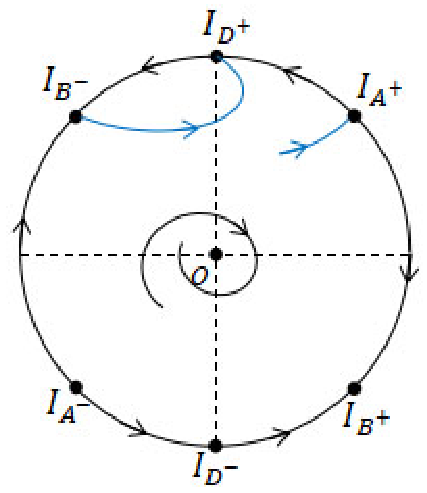}}
			\quad
			\subfigure[{ $\Omega_\theta=I_{A^+}$}]{
				\includegraphics[width=0.3\textwidth]{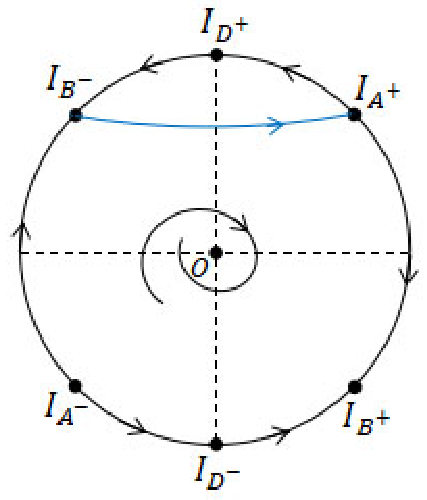}}
			\caption{ The possibilities of the $\omega$--limit sets of $\theta$.
			}\label{tu21}
		\end{figure}
		
		We claim that the $\omega$--limit sets of $\theta$ is $O$.
		In fact, if the $\omega$--limit sets of $\theta$ is $I_{D^+}$.
		 Because of the stability of $O$ and a orbit approaching $I_{A^+}$,  $\widehat{I_{B^-}I_{D^-}I_{D^+}}$ and $O$ can be used as the inner and  outer boundaries. Thus, there is a limit cycle surrounding $O$ which contradicts the nonexistence of closed orbits.
		 If the
 $\omega$--limit sets of $\theta$ is $I_{A^+}$.
		 The fixed $b$ and $a$ (resp. $\mu$) make it easy to check that system \eqref{61} is a generalized rotated vector field on $\mu$ (resp. $a$). When $\mu$ (resp. $a$) decreases,  in Figure \ref{tu21}(b) will happen which is a conflict. Thus, the global portrait of  system \eqref{61} for $c=0$, $b<0$ and $a^2+\mu^2\ne0$ is shown in Figure \ref{qjxt}(e).
		
		Next, we can give similarly the rest of global phase portraits in the Poincar\'e disc of system \eqref{61} .
		\end{proof}
		
		\subsection{Global dynamics of system \eqref{71} }
		
		As proven in subsections \ref{sec411} and \ref{sec412}, we can obtain similarly
		that  system \eqref{71}  has no closed orbits when $\mu$, $a$, $c$ are non-negative and $\mu^2+a^2+c^2\ne0$, and the qualitative properties of
	 the unique equilibrium $O$ is also as shown in   Table \ref{qpeq}.
			Similarly,	on the qualitative properties of the equilibria  at infinity of system \eqref{71},
	the proof is also not difficult but is too long to give here, see Appendix D.
	Then, we obtain all global phase portraits in the Poincar\'e disc in  the following theorem.

		\begin{theorem}
			\label{thm8}
			All global phase portraits in the Poincar\'e disc
			are shown in
		 {\rm Figure \ref{qjxt1}} for system \eqref{71},
			where
			\begin{align*}
				G_{1}&=\{(\mu,a,b,c)\in\mathscr{R}_2:\mu=a=c=0, b>0\},\\
				G_{2}&=\{(\mu,a,b,c)\in\mathscr{R}_2:\mu=a=c=0, b<0\},\\
				G_{3}&=\{(\mu,a,b,c)\in\mathscr{R}_2:\mu>0, a=c=0, b<0\},\\
				G_{4}&=\{(\mu,a,b,c)\in\mathscr{R}_2:\mu>0, a=c=0, b>0\},\\
				G_{5}&=\{(\mu,a,b,c)\in\mathscr{R}_2:a>0, b>0, c=0\},\\
				G_{6}&=\{(\mu,a,b,c)\in\mathscr{R}_2: a>0, b<0, c=0\},\\
				G_{7}&=\{(\mu,a,b,c)\in\mathscr{R}_2: a=0, b>0, c>0\},\\
			G_{8}&=\{(\mu,a,b,c)\in\mathscr{R}_2: a=0, b<0, c>0\},\\	
			G_{9}&=\{(\mu,a,b,c)\in\mathscr{R}_2: a>0, c>0, -2\sqrt{ac}<b<2\sqrt{ac}\},\\
				G_{10}&=\{(\mu,a,b,c)\in\mathscr{R}_2: a>0, c>0, b>2\sqrt{ac}\},\\
				G_{11}&=\{(\mu,a,b,c)\in\mathscr{R}_2: a>0,  c>0, b<-2\sqrt{ac}\},\\
				G_{12}&=\{(\mu,a,b,c)\in\mathscr{R}_2: a>0, c>0, b=2\sqrt{ac}, a-\mu\sqrt{ac}+c<0\},\\
			G_{13}&=\{(\mu,a,b,c)\in\mathscr{R}_2: a>0, c>0,  b=-2\sqrt{ac}\},\\
			G_{14}&=\{(\mu,a,b,c)\in\mathscr{R}_2: a>0, c>0,  b=2\sqrt{ac}, a-\mu\sqrt{ac}+c=0\},\\
				G_{15}&=\{(\mu,a,b,c)\in\mathscr{R}_2: a>0, c>0, b=2\sqrt{ac}, a-\mu\sqrt{ac}+c>0\},\\
			\end{align*}
		\end{theorem}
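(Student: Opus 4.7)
The plan is to follow exactly the strategy used to establish Theorem \ref{theo} for system \eqref{61}, now applied to system \eqref{71}. Three ingredients are already at hand: (a) the absence of closed orbits when $\mu^2+a^2+c^2\ne 0$, which follows from Theorem \ref{thm1} by the same argument as in Lemma \ref{lemma1} (take $g(x)=x$ and $f(x,y)=\mu+ax^2+bxy+cy^2$, and check $f(x,y)+f(-x,y)=2\mu+2ax^2+2cy^2\geqslant 0$ whenever $\mu,a,c\geqslant 0$); (b) the type of the unique finite equilibrium $O$, which is read off Table \ref{qpeq} specialized to $\lambda=1$, so that the focus/node dichotomy is controlled by the sign of $\mu^2-4$; and (c) the classification of equilibria at infinity provided in Appendix D. The remaining task is purely combinatorial: glue these local pieces into a global phase portrait for each of the fifteen regions $G_1,\ldots,G_{15}$.

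First, for each region $G_i$ I would tabulate the local data: the type of $O$, the number and type of equilibria on the equator of the Poincar\'e disc, and the finite set of separatrices emanating from each hyperbolic or semi--hyperbolic infinite equilibrium. Since $\dot x=y$, the flow crosses the $x$--axis transversally off the origin, and since on the $x$--axis one has $\dot y=-x$, the direction of crossing is prescribed. These elementary sign conditions already restrict how separatrices can intersect the coordinate axes and pin down most configurations immediately.

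Second, for each region I would determine the $\alpha$-- and $\omega$--limit sets of every separatrix by elimination. Under the standing hypotheses the only candidate limit sets are $O$ itself and the infinite equilibria, and the absence of closed orbits forbids any annulus configuration surrounding $O$. For genuinely ambiguous cases, I would embed system \eqref{71} in the generalized rotated family obtained by varying $\mu$ (or alternatively $a$), and resolve the ambiguity by monotonic continuation from an adjacent non--ambiguous region, exactly as was done for system \eqref{61} around Figure \ref{tu21} in the proof of Theorem \ref{theo}.

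The principal obstacle will be the boundary strata $b=\pm 2\sqrt{ac}$ giving the regions $G_{12}$--$G_{15}$, where the equilibria at infinity degenerate and the blow--up computations of Appendix D are needed to read off their separatrix structure. Among these, the subdivision of $b=2\sqrt{ac}$ by the sign of $a-\mu\sqrt{ac}+c$ is the most delicate: it governs a saddle--node type transition at infinity and hence decides which infinite equilibrium absorbs the separatrix issuing from $O$. Once these transitional strata are settled, the remaining open strata follow at once by structural stability combined with the rotated--field argument, yielding the fifteen topologically distinct phase portraits collected in Figure \ref{qjxt1}.
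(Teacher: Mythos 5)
Your proposal follows essentially the same route as the paper: the authors likewise obtain Theorem \ref{thm8} by (a) transferring the nonexistence-of-closed-orbits argument of Lemma \ref{lemma1} to system \eqref{71}, (b) reading the type of $O$ off Table \ref{qpeq}, and (c) invoking the Appendix D classification of the infinite equilibria (Lemmas \ref{lem17}--\ref{lem20} and \ref{lemx}), then gluing the local data region by region with the rotated-vector-field argument for ambiguous separatrix connections. Your write-up is in fact more explicit than the paper's, which compresses this gluing step to a reference to the analogous treatment of system \eqref{61}.
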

		\begin{figure}[htp]
		\centering
		\subfigure[{ $	G_{1}$}]{
			\includegraphics[width=0.2\textwidth]{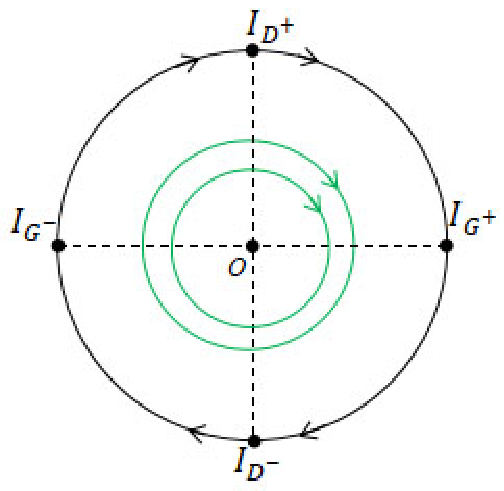}}
		\quad
		\subfigure[{ 	$	G_{2}$ }]{
			\includegraphics[width=0.2\textwidth]{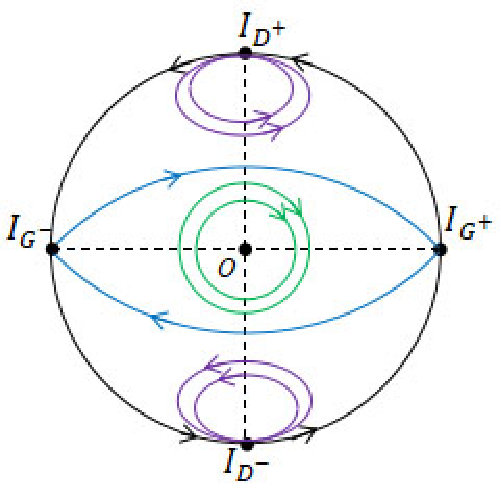}}
		\quad
		\subfigure[{ $G_{3}$ }]{
			\includegraphics[width=0.2\textwidth]{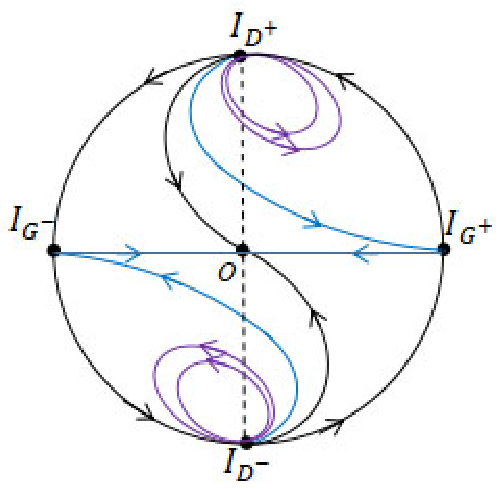}}
		\quad
		\subfigure[{ $G_{4}$ }]{
			\includegraphics[width=0.2\textwidth]{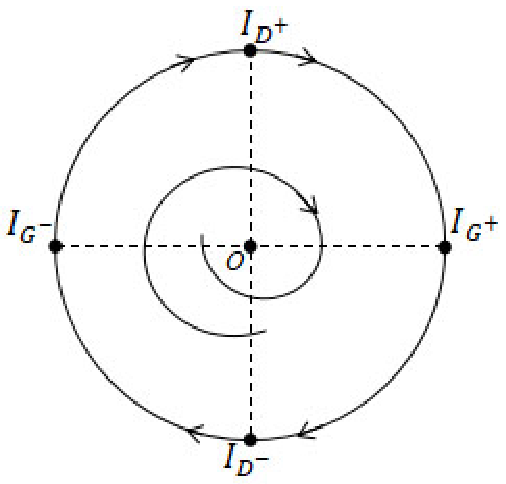}}
		\quad
		\subfigure[{ $G_{5}$ }]{
			\includegraphics[width=0.2\textwidth]{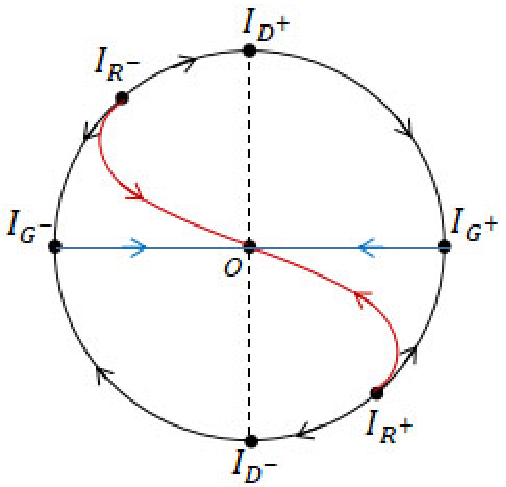}}
		\quad
		\subfigure[{ $G_{6}$ }]{
			\includegraphics[width=0.2\textwidth]{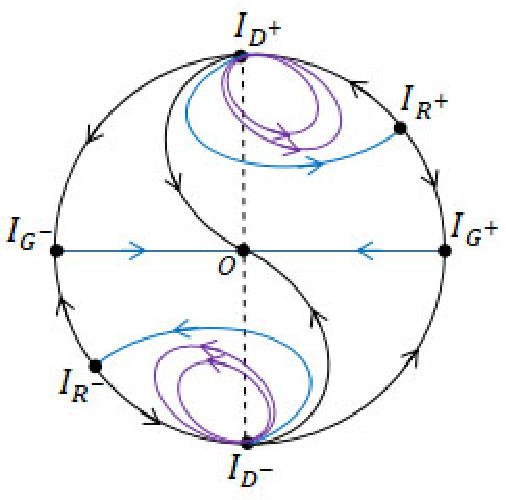}}
		\quad
		\subfigure[{ $G_{7}$ }]{
			\includegraphics[width=0.2\textwidth]{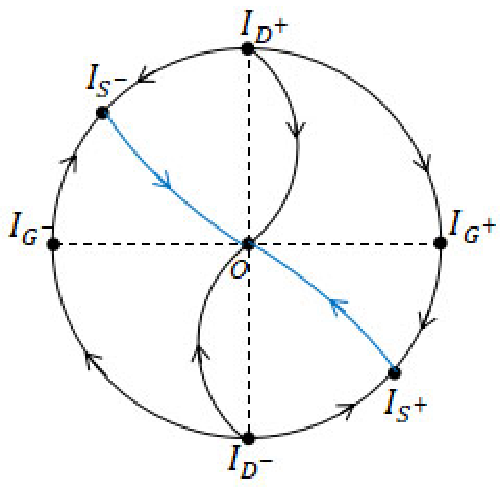}}
		\quad
		\subfigure[{ $G_{8}$ }]{
			\includegraphics[width=0.2\textwidth]{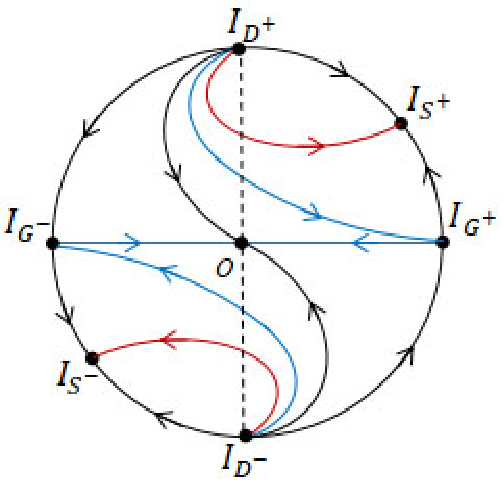}}
		\quad
		\subfigure[{ $G_{9}$ }]{
			\includegraphics[width=0.2\textwidth]{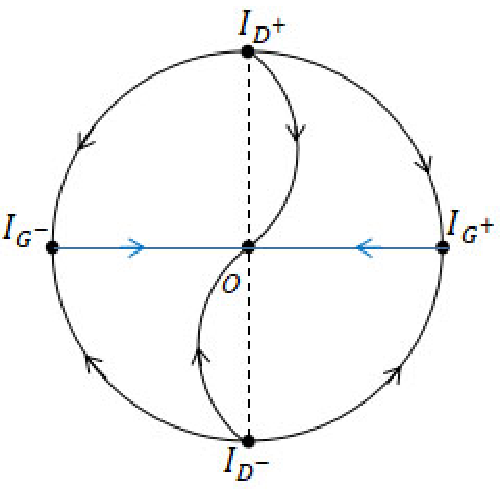}}
		\quad	
		\subfigure[{$G_{10}$}]{
			\includegraphics[width=0.2\textwidth]{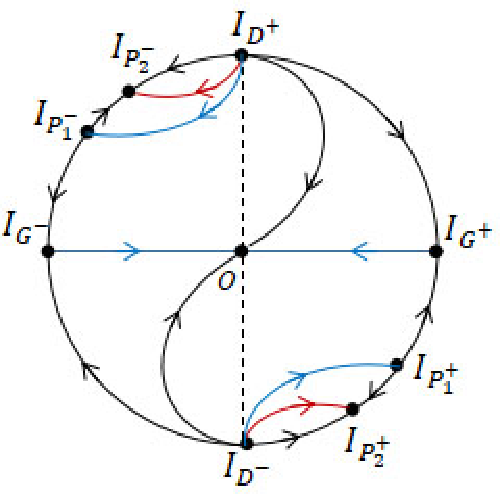}}
		\quad		
		\subfigure[{$G_{11}$}]{
			\includegraphics[width=0.2\textwidth]{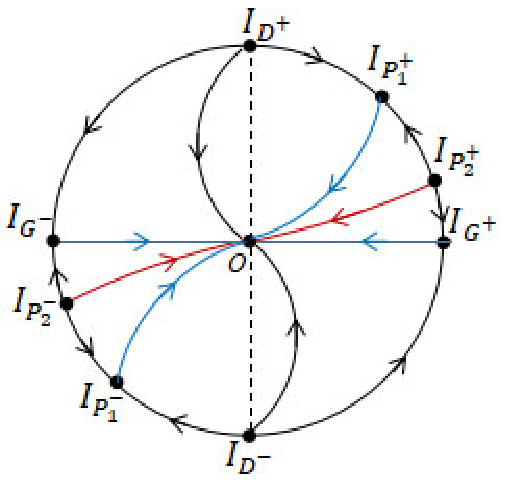}}
		\quad	
		\subfigure[{$G_{12}$}]{
			\includegraphics[width=0.2\textwidth]{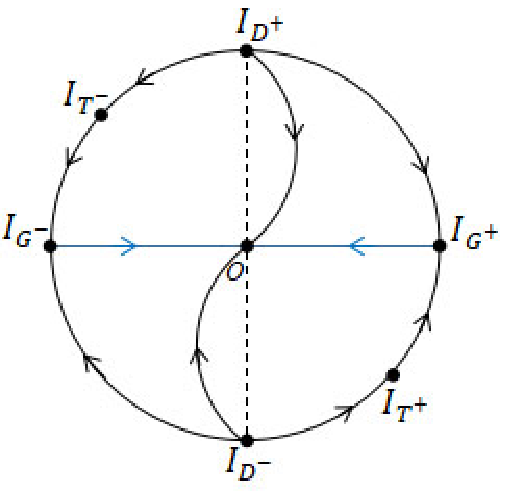}}
		\quad
		\subfigure[{$G_{13}$}]{
			\includegraphics[width=0.2\textwidth]{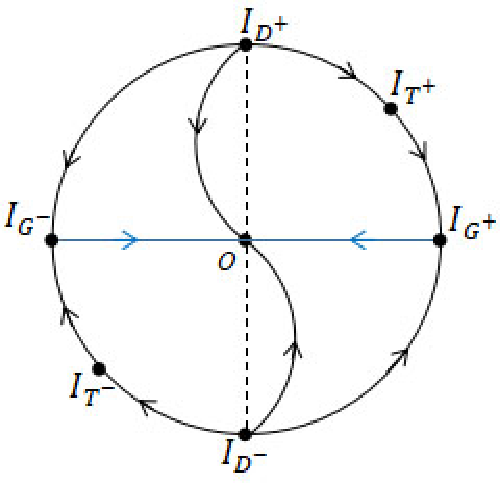}}
		\quad
		\subfigure[{$G_{14}$}]{
			\includegraphics[width=0.2\textwidth]{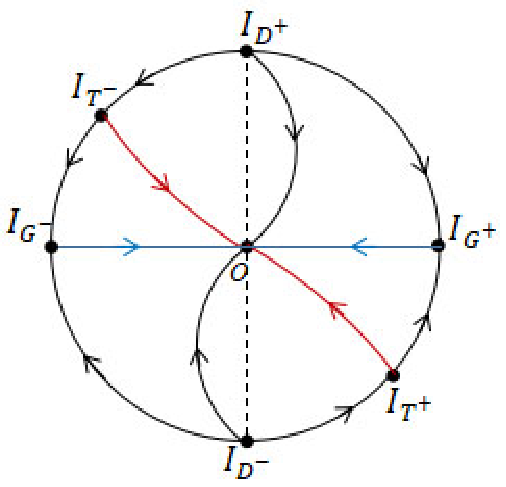}}
		\quad	
		\subfigure[{$G_{15}$}]{
			\includegraphics[width=0.2\textwidth]{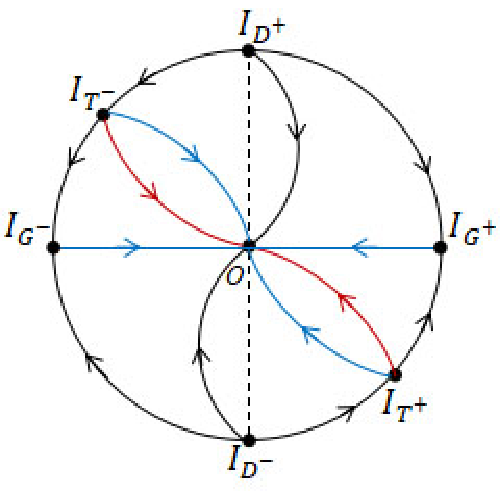}}
		\caption{ Global phase portraits of system \eqref{71}. }\label{qjxt1}
	\end{figure}

	\section*{Appendix A}

We will recall here the classical results on nonexistence of closed orbits of planar dynamical systems, which are used for comparing with our criterions.

	Consider the following planar dynamical system
	\begin{equation}\label{A1}
	\frac{dx}{dt}=X(x,y), \quad
	\frac{dy}{dt}=Y(x,y),
	\end{equation}
	where $X(x,y)$, $Y(x,y)$ are defined on $\mathbb{R}^2$.
	We first state the Poincar\'e's method of tangential curves.

	\begin{theorem}
		\cite[Theorem 1.6 of Chapter 4]{ZDHD} Let $F(x, y) = C$
		be a family of curves, where $F(x , y) \in C^1(G)$. Suppose that
		$$
		\frac{dF}{dt}=X\frac{\partial F}{\partial x}+Y\frac{\partial F}{\partial y}
		$$
		has a fixed sign on $G$ $($i.e., $\geqslant0$ or $\leqslant0)$, and the equality
		$$X\frac{\partial F}{\partial x}+Y\frac{\partial F}{\partial y}=0$$
		cannot be satisfied on an entire orbit of \eqref{A1}. Then system \eqref{A1}
		has no closed orbits in $G$.
	\end{theorem}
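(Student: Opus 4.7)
The plan is to argue by contradiction using the fact that $F$ is single-valued along an orbit, so its total change over one period of a closed orbit must vanish. Suppose toward a contradiction that system \eqref{A1} admits a closed orbit $\Gamma\subset G$, parametrized as $(x(t),y(t))$ with minimal period $T>0$. Along any solution one has the chain rule identity
\begin{equation*}
\frac{d}{dt}F(x(t),y(t))=X(x(t),y(t))\,\frac{\partial F}{\partial x}(x(t),y(t))+Y(x(t),y(t))\,\frac{\partial F}{\partial y}(x(t),y(t)),
\end{equation*}
which makes sense because $F\in C^1(G)$.

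Integrating the identity over one period of $\Gamma$ gives
\begin{equation*}
F(x(T),y(T))-F(x(0),y(0))=\int_{0}^{T}\left(X\frac{\partial F}{\partial x}+Y\frac{\partial F}{\partial y}\right)dt.
\end{equation*}
Since $\Gamma$ is closed, $(x(T),y(T))=(x(0),y(0))$, so the left-hand side equals $0$. On the right-hand side the integrand has a fixed sign on $G$ by hypothesis (say $\geqslant 0$; the opposite case is symmetric), and by the second hypothesis it cannot vanish identically on the orbit $\Gamma$. Because $X,Y$ and the partial derivatives of $F$ are continuous, the integrand is a continuous function of $t$ on $[0,T]$ that is nonnegative everywhere and strictly positive at least one point; hence it is strictly positive on some open subinterval, and its integral over $[0,T]$ is strictly positive. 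This contradicts the vanishing of the left-hand side, so no such $\Gamma$ exists.

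The only delicate point in the plan is to justify that the non-identical-vanishing condition on the orbit, combined with continuity, forces the definite integral to be strictly positive (not merely nonnegative). This is handled by the standard observation that a nonnegative continuous function on a compact interval has strictly positive integral whenever it is positive at some point; the hypothesis in the theorem is worded precisely so as to supply such a point along $\Gamma$. No further regularity beyond $F\in C^1(G)$ and continuity of $X,Y$ is required, and the argument is local to a single orbit, so the conclusion ``no closed orbits in $G$'' follows at once.
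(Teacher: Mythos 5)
Your argument is correct and is the standard proof of Poincar\'e's tangential--curve criterion: integrating $dF/dt$ over one period of a hypothetical closed orbit forces the integral of a fixed--sign, not--identically--vanishing continuous integrand to be zero, a contradiction. The paper itself only recalls this statement from \cite[Theorem 1.6 of Chapter 4]{ZDHD} without reproducing a proof, and your reasoning (including the justification that a nonnegative continuous function positive at one point of the compact period interval has strictly positive integral) is exactly the classical argument behind that citation.
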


Secondly, we recall the Bendixson-Dulac criterion.
	\begin{theorem}
		\cite[Theorem 1.7 of Chapter 4]{ZDHD}
		Suppose that in the simply connected
		region $G$, the vector field $(X(x,y), Y(x,y))$  associated to system
		\eqref{A1} is   $C^1(G)$. Further, there is a function $B(x, y)\in C^1(G)$ such that
		$$\frac{\partial (BX)}{\partial x}+\frac{\partial (BY)}{\partial y}$$ is of the fixed sign, and is never identically zero
		in any subregion. Then system \eqref{A1} has no a closed orbit in $G$.
	\end{theorem}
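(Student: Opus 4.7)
The plan is to give the standard Green's-theorem argument by contradiction. Suppose, for a contradiction, that system \eqref{A1} possesses a closed orbit $\Gamma\subset G$. Since $G$ is simply connected and $\Gamma$ is a simple closed curve in $G$, $\Gamma$ bounds an open region $D$ with $\overline{D}\subset G$. The idea is to evaluate the integral
\[
I:=\iint_{D}\left(\frac{\partial (BX)}{\partial x}+\frac{\partial (BY)}{\partial y}\right)dx\,dy
\]
in two ways and reach a contradiction.

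First I would apply Green's theorem (valid since $BX,BY\in C^1(G)$ and $\Gamma$ is piecewise smooth as an orbit of a $C^1$ vector field) to convert $I$ into a line integral:
\[
I=\oint_{\Gamma}\bigl(BX\,dy-BY\,dx\bigr).
\]
Parametrizing $\Gamma$ by $t$ with period $T>0$, we have $dx=X\,dt$ and $dy=Y\,dt$ along the orbit, hence the integrand becomes $B(x,y)\bigl(XY-YX\bigr)\,dt\equiv 0$. Therefore $I=0$.

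On the other hand, by the hypothesis, the divergence $\partial_x(BX)+\partial_y(BY)$ has a fixed sign on $G$ and is not identically zero on any subregion. Since $D$ is an open subregion of $G$, the integrand is of one sign on $D$ and is strictly nonzero on a set of positive measure inside $D$ (otherwise it would vanish identically on the open set $D$, contradicting the hypothesis). Continuity of the integrand then yields $I\neq 0$, contradicting $I=0$. Hence no such closed orbit $\Gamma$ exists in $G$.

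The only delicate point is justifying that the hypothesis ``never identically zero in any subregion'' forces the double integral to be strictly nonzero; I would handle this by noting that a continuous function of constant sign whose zero set contains a nonempty open subset of $D$ would contradict the hypothesis, and therefore the set where the integrand is strictly of that sign has positive Lebesgue measure in $D$. No other step is subtle: the argument is entirely Green's theorem plus the trivial vanishing of $BX\,dy-BY\,dx$ along orbits of \eqref{A1}.
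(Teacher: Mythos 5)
Your argument is the standard and correct proof of the Bendixson--Dulac criterion: Green's theorem applied to the region bounded by a hypothetical closed orbit, the vanishing of $BX\,dy-BY\,dx$ along trajectories, and the observation that a continuous function of fixed sign that is not identically zero on the open region $D$ must have a nonzero integral there. Note, however, that the paper does not prove this statement at all --- it is recalled in Appendix~A purely as a known classical result, with a citation to \cite[Theorem 1.7 of Chapter 4]{ZDHD}, so there is no in-paper proof to compare against; your write-up is complete and handles the one delicate point (why ``never identically zero in any subregion'' forces $I\neq0$) correctly via continuity and positivity on an open subset of positive measure.
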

	
	Thirdly, we recall the result by Lins et al   \cite{LMP} on nonexistence of closed orbits of the classical Li\'enard system.
	
	\begin{theorem}
		\cite[Proposition 1]{LMP}
		Consider the Li\'enard differential system
		\begin{equation}\label{A2}
		\frac{dx}{dt}=y-F(x), \quad
		\frac{dy}{dt}=-x,
		\end{equation}
		where $F(x)=a_dx^d+\cdots+a_1x$. Set $F(x) = c(x) + e(x)$, with $c(x)$ even and
		$e(x)$ odd functions.  If $0$ is the unique zero of $e(x)$,
		then system \eqref{A2} has no closed orbits around the origin.
		\label{LMP}
	\end{theorem}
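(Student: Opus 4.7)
The plan is to deduce Theorem \ref{LMP} as a direct consequence of Theorem \ref{thm1b}, exactly in the spirit of the Remark that follows Theorem \ref{thm1b}. That remark observes that the involutive change of variables $(x,y,t)\mapsto(y,x,-t)$ converts system \eqref{1} with $g(x)=x$ and $f(x,y)=\hat f(y)$ into a Li\'enard system. Inverting this transformation sends \eqref{A2} to the particular instance of \eqref{1} with $g(x)=x$ and $f(x,y)=-F(y)/y$, extended by continuity to $y=0$ (note this is a polynomial because $F(0)=0$). Since the transformation is a diffeomorphism of $\mathbb{R}^2$ that maps closed orbits to closed orbits, showing that this instance of \eqref{1} has no closed orbits around $O$ will yield the conclusion of Theorem \ref{LMP}.

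The bulk of the work is then verifying the four hypotheses of Theorem \ref{thm1b} for the pair $g(x)=x$, $f(x,y)=-F(y)/y$. Conditions \textbf{(i)} and \textbf{(ii)} are immediate: $xg(x)=x^2>0$ for $x\neq 0$, and both $g$ and $f$ are polynomials, hence Lipschitz on every bounded set. The decisive computation, used for \textbf{(iii$'$)} and \textbf{(iv)}, is
\[
f(x,y)+f(x,-y)=-\frac{F(y)}{y}-\frac{F(-y)}{-y}=\frac{F(-y)-F(y)}{y}=-\frac{2e(y)}{y},
\]
where I used the decomposition $F=c+e$ with $c$ even and $e$ odd. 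Because $e$ is an odd polynomial, $e(y)/y$ is a polynomial in $y^{2}$, and the assumption that $0$ is the unique zero of $e$ guarantees that this polynomial has no nonzero real zero. An application of the intermediate value theorem to an even polynomial with no nonzero real zero forces it to be of one sign on all of $\mathbb{R}$, so $f(x,y)+f(x,-y)$ has constant sign, which is exactly \textbf{(iii$'$)}. The same identity, together with $e\not\equiv 0$ (forced by the hypothesis, since $e\equiv 0$ would make every point a zero of $e$), gives $f(x,y)+f(x,-y)\not\equiv 0$; noting that $f$ is independent of $x$ one also obtains $f(x,y)\not\equiv-f(-x,y)$, i.e.\ \textbf{(iv)}.

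With all hypotheses of Theorem \ref{thm1b} verified on the full strip $\alpha=-\infty$, $\beta=+\infty$, the theorem yields nonexistence of closed orbits of the auxiliary instance of \eqref{1}, and transporting this back through $(x,y,t)\mapsto(y,x,-t)$ gives Theorem \ref{LMP}. There is no serious obstacle: the only nontrivial point is the sign argument for $-2e(y)/y$, which is settled by the elementary fact that a real polynomial vanishing nowhere except possibly at $0$ cannot change sign on $\mathbb{R}$. The whole argument is essentially a dictionary between the symmetry hypothesis of LMP (``0 is the unique zero of the odd part of $F$'') and hypothesis \textbf{(iii$'$)} of Theorem \ref{thm1b}.
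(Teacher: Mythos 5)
Your proposal is correct and follows exactly the route the paper itself indicates in the Remark after Theorem \ref{thm1b}: transport system \eqref{A2} to the instance of \eqref{1} with $g(x)=x$ and $f(x,y)=-F(y)/y$ via the involution $(x,y,t)\mapsto(y,x,-t)$, and observe that $f(x,y)+f(x,-y)=-2e(y)/y$ has a fixed sign (and is not identically zero) precisely because $0$ is the unique zero of the odd part $e$. The paper quotes the statement from Lins, de Melo and Pugh and only sketches this reduction in the remark, but your fleshed-out verification of conditions \textbf{(i)}, \textbf{(ii)}, \textbf{(iii$'$)} and \textbf{(iv)} matches that sketch in every detail.
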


	Fourthly, we recall the result by Dumortier and Rousseau \cite{DR} on nonexistence of closed orbits of the generalized Li\'enard system.
	\begin{theorem}
		\cite[Proposition 2.3]{DR}
			Consider the generalized Li\'enard differential system
		\begin{equation}\label{A3}
		\frac{dx}{dt}=y-F(x), \quad
		\frac{dy}{dt}=-g(x),
		\end{equation}
 with $F$ of class $C^2$ and $g$ of class
		$C^1$ on  $( \alpha,\beta)$ $(\alpha$, $\beta$ can be $\pm\infty)$, and satisfying
\begin{itemize}
			\item[(\romannumeral1)]$f(x) = F'(x)$ has a unique zero $x_0< 0$; $f(x) < 0 \ (\text{resp.} > 0)$ as $\alpha<x<x_0\ (\text{resp.} \,\, x_0<x<\beta)$;
			\item[(\romannumeral2)] $F(0)=0$, $F(\xi_0)=0$ for $\alpha<\xi_0<x_0$;
			\item[(\romannumeral3)] $xg(x)>0$ for  $x \in (\alpha,\beta)$ and $x\ne0$.
		\end{itemize}
		Then system \eqref{A3}  has no a limit cycle in the strip $\xi_0<x<\beta$.
	\end{theorem}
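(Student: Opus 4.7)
The plan is to build a strict Lyapunov function in the strip $\xi_0<x<\beta$ using the classical Li\'enard energy
\[
E(x,y) \;:=\; \tfrac{1}{2}y^2 + G(x), \qquad G(x) := \int_0^x g(s)\,ds,
\]
and to show that $E$ strictly decreases along every non--equilibrium trajectory of \eqref{A3} that lies in this strip, which instantly forbids any closed orbit (in particular any limit cycle) inside it. Differentiating $E$ along \eqref{A3} gives
\[
\dot E \;=\; y\dot y + g(x)\dot x \;=\; -y g(x) + g(x)\bigl(y - F(x)\bigr) \;=\; -g(x)F(x),
\]
so the entire proof reduces to pinning down the sign of $g(x)F(x)$ on $\xi_0<x<\beta$.

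The key step is the sign chart for $F$ deduced from hypotheses (i) and (ii). Since $F'=f<0$ on $(\alpha,x_0)$ and $F'=f>0$ on $(x_0,\beta)$, the function $F$ is strictly decreasing on $(\alpha,x_0)$ and strictly increasing on $(x_0,\beta)$, and we have $\xi_0<x_0<0$ together with $F(\xi_0)=F(0)=0$. Consequently $F<0$ on $(\xi_0,x_0)$ (it starts at $0$ and decreases), $F<0$ on $(x_0,0)$ (it increases from its negative minimum back up to $0$), and $F>0$ on $(0,\beta)$ (it strictly increases from $0$). Combined with $\operatorname{sign}g(x)=\operatorname{sign}x$ from (iii), this yields $g(x)F(x)>0$ on $(\xi_0,0)\cup(0,\beta)$ and $g(0)F(0)=0$. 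Thus $\dot E\le 0$ throughout the strip, vanishing only on the $y$--axis.

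To close the argument, assume by contradiction that $\Gamma$ is a closed orbit of period $T$ contained in the strip $\xi_0<x<\beta$. Integrating along $\Gamma$ gives
\[
0 \;=\; \oint_\Gamma dE \;=\; \int_0^T \dot E\bigl(x(t),y(t)\bigr)\,dt \;=\; -\int_0^T g(x(t))F(x(t))\,dt.
\]
On the $y$--axis the flow satisfies $\dot x=y$, so $\Gamma$ crosses $\{x=0\}$ transversely except possibly at $(0,0)$, and the equilibrium $(0,0)$ cannot lie on a periodic orbit. Therefore $\{t\in[0,T]:x(t)=0\}$ is at most finite, whence $g(x(t))F(x(t))>0$ on a set of positive measure and the last integral is strictly positive, contradicting the identity above.

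The anticipated obstacle is very mild: it is only the routine but careful sign bookkeeping that establishes $F<0$ on the \emph{entire} interval $(\xi_0,0)$ (and not merely on one of the two monotone pieces separated by $x_0$), together with the observation that no orbit of \eqref{A3} can be entirely contained in $\{x=0\}$. Neither a Poincar\'e tangential curve nor a Dulac function needs to be invented; the natural Li\'enard energy $E$ already does all the work under hypotheses (i)--(iii), and also explains why the restriction $x>\xi_0$ is sharp, since for $x<\xi_0$ the sign of $F$ reverses while $g$ does not, so $\dot E$ changes sign and the Lyapunov argument breaks down.
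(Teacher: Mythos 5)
Your argument is correct. Note first that the paper itself gives no proof of this statement: it is recalled in Appendix~A as \cite[Proposition 2.3]{DR} purely for comparison with the new criterions, so there is no in-paper proof to measure you against. On its own merits your proof is sound: the sign chart for $F$ is right ($F$ decreases from $F(\xi_0)=0$ on $(\xi_0,x_0)$ and increases back to $F(0)=0$ on $(x_0,0)$, hence $F<0$ on all of $(\xi_0,0)$ and $F>0$ on $(0,\beta)$), so $\dot E=-g(x)F(x)\leqslant 0$ in the strip with equality exactly on $x=0$; the unique equilibrium is $(0,0)$, crossings of the $y$--axis by a closed orbit are transverse since $\dot x|_{x=0}=y$, so the zero set of $\dot E$ along the orbit is finite and $\oint dE<0$, a contradiction. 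One small conceptual quibble: your closing remark that ``neither a Poincar\'e tangential curve nor a Dulac function needs to be invented'' undersells what you did --- your $E(x,y)=y^2/2+G(x)$ is precisely a Poincar\'e tangential function in the sense of the first theorem recalled in the same appendix ($\dot E$ of fixed sign, not vanishing identically on any non-equilibrium orbit), so your proof is an explicit instance of that criterion rather than an alternative to it. This is also worth contrasting with the paper's own Theorem \ref{thm1}, which handles situations where no such energy function with one-signed derivative is available and instead compares the orbit with its reflection through an integral-equation estimate; your method is cleaner here exactly because hypotheses (i)--(ii) pin down the sign of $F$ on the whole strip.
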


	Fifthly, we recall the result by Sugie \cite{Sugie} on nonexistence of closed orbits of the generalized Li\'enard system.

	\begin{theorem}\cite[Theorems 3.1 and 3.2]{Sugie}
		Consider system \eqref{A3}, with  $F(x)$ and $g(x)$ being continuous functions in $\mathbb{R}$ and satisfying
		$F(0) = 0$ and $xg(x) > 0$ for $x \ne 0$. Assume that the initial value problem for system \eqref{A3} has always a unique solution. Let $M^+=\int_{0}^{+\infty} g(x)dx$, $M^-=\int_{0}^{-\infty} g(x)dx$ and $M=\min\{M^+,M^-\}$. Define
		$$w = G(x) := \int_0^x\left|g(s)\right|ds.$$
		\begin{itemize}
\item[{\rm(a)}] If
		$$F\left(G^{-1}(-w)\right)\ne F\left(G^{-1}(w)\right), \quad 0<w<M, $$
		then  system \eqref{A3} has no periodic solutions except for the origin.
		
\item[{\rm(b)}] Define $$H(w)=F\left(G^{-1}(-w)\right)- F\left(G^{-1}(w)\right)\quad\text{for} \quad0\leqslant w<M.$$Suppose that
		$$H(w)\geqslant0 \quad\text{or} \quad H(w)\leqslant0 \quad\text{for} \quad0\leqslant w<M,$$
		and that there exists a sequence $\{w_n\}$ such that
		$$w_n\to0^+ \quad	\text{as}\quad n\to+\infty \quad\text{ and}\quad  H(w_n) \ne 0.$$
		Then  system \eqref{A3} has no periodic solutions except for the origin.
\end{itemize}
	\end{theorem}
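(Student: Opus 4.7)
The plan is to reduce \eqref{A3} to a canonical form via the Filippov substitution and then derive a contradiction by comparing a hypothetical closed orbit with the closed orbit of an auxiliary symmetric system. Since $xg(x)>0$ for $x\ne 0$, the map $w=G(x)=\int_0^x|g(s)|\,ds$ is a strictly increasing homeomorphism from the $x$-domain onto $(-M^-,M^+)$; combined with the time rescaling $d\tau=|g(x)|\,dt$, it converts \eqref{A3} into
$$\frac{dw}{d\tau}=y-\Phi(w),\qquad \frac{dy}{d\tau}=-\operatorname{sgn}(w),\qquad \Phi(w):=F(G^{-1}(w)),$$
and closed orbits are preserved. The asymmetry of $F$ is recorded by $H(w)=\Phi(-w)-\Phi(w)$ on $[0,M)$. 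A preliminary observation reduces case (a) to case (b): continuity of $F$ and $G^{-1}$ makes $H$ continuous on $[0,M)$ with $H(0)=0$, so by the intermediate value theorem, $H$ nowhere vanishing on $(0,M)$ forces $H$ to have constant sign there, trivially fulfilling the hypothesis of (b); hence it suffices to treat (b).

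Suppose, for contradiction, the transformed system has a closed orbit $\Gamma$. Because $\dot y=-\operatorname{sgn}(w)\ne 0$ on $\{w\ne 0\}$, $\Gamma$ crosses the $y$-axis exactly twice, at $(0,y_1)$ and $(0,y_2)$ with $y_1>0>y_2$; moreover, on each half-plane $y$ is strictly monotone along $\Gamma$, so its right (resp.\ left) half is a single-valued graph $w=w_R(y)$ (resp.\ $w=w_L(y)$) over $y\in[y_2,y_1]$, vanishing at both endpoints. Introduce the auxiliary system obtained by replacing $\Phi$ with its even reflection $\Phi^*(w):=\Phi(|w|)$. Even-ness makes the auxiliary system invariant under $(w,y,\tau)\mapsto(-w,y,-\tau)$, so every orbit crossing the $y$-axis is self-symmetric and hence closed. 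Let $\Gamma^*$ denote the auxiliary closed orbit through $(0,y_1)$: its right half coincides with that of $\Gamma$ (same ODE, same initial condition), so $\Gamma^*$ returns to $(0,y_2)$ after the right sweep and, by reflection symmetry, its left half is the mirror image of the right, closing up again at $(0,y_1)$.

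The contradiction arises on comparing the two left halves. Parameterizing both by $y$ on $\{w<0\}$ where $\dot y=+1$, $w_L$ and $w_L^*$ solve respectively
$$\frac{dw}{dy}=y-\Phi(w),\qquad \frac{dw}{dy}=y-\Phi^*(w),$$
with common initial condition $w(y_2)=0$. Assuming hypothesis (b) with $H\ge 0$ (the other sign is symmetric), the identity $\Phi(W)-\Phi^*(W)=H(|W|)\ge 0$ for $W\in(-M,0)$, together with $H(w_n)\ne 0$ for some $w_n\to 0^+$, feeds into a Chaplygin-type differential inequality, giving $w_L(y)\le w_L^*(y)$ on $(y_2,y_1]$, with strict inequality as soon as the trajectory has entered a region where $H>0$. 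Because both orbits must pass through arbitrarily small $|W|$-values immediately after leaving $(0,y_2)$, and because $H$ does not vanish identically in any neighborhood of $0$, the strict inequality is activated. Since $w_L^*(y_1)=0$ (as $\Gamma^*$ closes at $(0,y_1)$), this forces $w_L(y_1)<0$; thus $\Gamma$'s left half has not yet returned to the $y$-axis at $y=y_1$, contradicting the assumed closedness of $\Gamma$ at $(0,y_1)$.

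The principal obstacle is ensuring the Chaplygin comparison is valid throughout the full left excursion, in particular that neither orbit leaves the common domain of $\Phi$ and $\Phi^*$. The critical bound is $w_-<M^+$, needed so that $\Phi^*(W)=\Phi(|W|)$ remains defined along $\Gamma$'s left excursion. When $M=M^+\le M^-$, the excursion of $\Gamma^*$ is confined to $|W|<M^+$ by construction, and the standard monotone-dependence argument propagates $w_L\le w_L^*$ even if $\Gamma$ were to threaten leaving $(-M^+,0)$, because $\Gamma^*$ returns to $0$ at $y_1$ before that can happen; when $M=M^-<M^+$, one interchanges the roles of the two halves, performing the even reflection from the left side to the right side, so the smaller bound $M^-$ controls the auxiliary orbit. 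A secondary technical point is promoting the weak Chaplygin inequality to strict near $y=y_1$; this is precisely what the hypothesis $H(w_n)\ne 0$ with $w_n\to 0^+$ is tailored for, ensuring that any nontrivial left excursion, no matter how small, inevitably sweeps through values of $|W|$ where the two vector fields differ strictly.
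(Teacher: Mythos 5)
This statement is quoted in Appendix~A from Sugie's paper and is not proved in the present paper, so there is no in-paper proof to compare against; your reflect-and-symmetrize strategy in the Filippov plane is, however, exactly the technique the paper itself uses for its Theorems~1--3 (reflect the vector field across the $y$-axis, compare the two orbit arcs, and show the difference cannot vanish at the return point), transplanted to the $(w,y)$-coordinates that the paper introduces via the transformation \eqref{tr}. The reduction of (a) to (b), the structure of the closed orbit as two graphs over $[y_2,y_1]$, and the use of the even reflection $\Phi^*$ to manufacture a closed reference orbit $\Gamma^*$ are all sound.

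There are two places that need repair. First, your $M^+$ versus $M^-$ case analysis is inverted, and the justification offered for the case you claim needs no role-interchange is fallacious. The left-half comparison requires $H(|w_L|)$ and $\Phi^*(w_L)=\Phi(|w_L|)$ to be defined along \emph{$\Gamma$'s} left excursion, i.e.\ $|w_L|<M$ and $|w_L|<M^+$; since only $|w_L|<M^-$ is automatic, this works precisely when $M^-\leqslant M^+$, not when $M=M^+\leqslant M^-$ as you assert. Your argument that ``$\Gamma^*$ returns to $0$ at $y_1$ before that can happen'' proves nothing about $\Gamma$: the very inequality $w_L\leqslant w_L^*$ you are establishing says $\Gamma$'s left excursion is \emph{wider} than $\Gamma^*$'s, so confinement of $\Gamma^*$ to $|W|<M^+$ places no bound on $w_L$. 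The fix is simply to swap your two cases: reflect left-to-right (compare $w_R$ with $-w_L$ in $w\geqslant 0$, where $w_R<M^+=M$ is automatic) when $M=M^+$, and left-half comparison when $M=M^-$. Second, the passage from the weak inequality to the strict one at $y=y_1$ is only gestured at. Since $F$ is merely continuous, the Gronwall-type identity the paper uses in its own Theorem~1 (which needs the Lipschitz hypothesis there) is unavailable; saying the orbits ``sweep through values where the vector fields differ'' is not yet a proof. What works is the dichotomy: either the two graphs coincide identically on $[y_2,y_1]$ --- then their derivatives agree, forcing $H$ to vanish on the whole interval $[0,\max|w_L^*|]$ swept out by $|w_L^*|$, which contradicts the sequence $w_n\to 0^+$ with $H(w_n)\ne 0$ --- or they separate at some $y_0$, and the strict separation propagates to $y_1$ by a forward/backward comparison that uses the uniqueness hypothesis on the initial value problem. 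Spelling this out, together with the corrected case split, closes the argument.
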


	Sixthly, we recall the result from   \cite{CC} on nonexistence of closed orbits of the Li\'enard system \eqref{A3}.

	\begin{theorem}
		\cite[Proposition 2.1]{CC}
		Consider system \eqref{A3} with $g$ continuous and $F$ smooth on $(a_1,b_1 )$ for
		some given $a_1<0<b_1$, and $F(0) = 0$. Assume that
 \begin{itemize}
 \item $g(x)$ has n +1 zeros $0$,  $x _1$, $\cdots$, $x_n$ in $(a_1,b_1 )$ such that $xg(x) > 0$
		for all $x\in(a_1,b_1 )\setminus\{0,x_1,\cdots, x_n\}$.
\end{itemize}
Set $z(x):= \int_0^xg(s)ds$, $z_1:=z(b_1)>0$ and $z_2:=z(a_1)>0$ and $x_1(z)$, and let $x_2(z)$ be the branches of the inverse of $z(x)$ for $x \geqslant 0$ and $x \leqslant 0$, respectively. Set
\[
 F_1(z):=F(x_1(z)),\qquad F_2(z):=F(x_2(z)).
 \]
	If
		\begin{itemize}
			\item[(\romannumeral1)]
$F_ 1(z)\not\equiv F_ 2(z)$ for all $0 < z \ll 1$, and
			\item[(\romannumeral2)]
either $F_ 1(z)\geqslant F_ 2(z)$  or $F_ 1(z)\leqslant F_ 2(z)$ for all $z\in(0,\min\{z_1,z_2\})$,
		\end{itemize}	
	then system \eqref{A3} has no closed orbits in the strip $a_1<x<b_1$.
	\end{theorem}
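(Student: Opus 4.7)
The plan is to use the Liénard change of variables $z=G(x):=\int_0^{x}g(s)\,ds$ to reduce the question to a scalar comparison between two Cauchy problems in the $(y,z)$ plane, then apply an integrating--factor estimate of exactly the kind used in the proof of Theorem \ref{thm1}.

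First, I would suppose for contradiction that \eqref{A3} has a closed orbit $\Gamma\subset(a_1,b_1)\times\mathbb{R}$. Because $xg(x)>0$ off the isolated zeros of $g$, the component $\dot y=-g(x)$ is strictly signed on each of $\Gamma\cap\{x>0\}$ and $\Gamma\cap\{x<0\}$ except at finitely many isolated points, so $\Gamma$ meets the $y$--axis transversally at exactly two points $A=(0,y_A)$ and $B=(0,y_B)$ with $y_A>0>y_B$, and $y$ is monotone along each of the right arc $\Gamma_R\subset\{x\ge 0\}$ and the left arc $\Gamma_L\subset\{x\le 0\}$. The same sign condition makes $G$ strictly monotone on each of $[0,b_1)$ and $(a_1,0]$, so the inverse branches $x_1,x_2$ and the associated nonlinearities $F_1(z)=F(x_1(z))$, $F_2(z)=F(x_2(z))$ are well defined on $[0,\min\{z_1,z_2\})$.

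Parameterizing each arc by the monotone variable $y$ and using $dz/dy=\dot z/\dot y=g(x)(y-F(x))/(-g(x))=F(x)-y$, one obtains the two scalar Cauchy problems
\[
(P_1)\colon\ \frac{dz_R}{dy}=F_1(z_R)-y,\ z_R(y_A)=0,\qquad
(P_2)\colon\ \frac{dz_L}{dy}=F_2(z_L)-y,\ z_L(y_A)=0,
\]
with $z_R(y)\ge 0$ and $z_L(y)\ge 0$ both tracing closed loops in the $(y,z)$ half--plane, and the closed--orbit hypothesis forces the additional terminal condition $z_R(y_B)=z_L(y_B)=0$. Setting $\delta(y):=z_L(y)-z_R(y)$, the difference of $(P_2)$ and $(P_1)$ reads
\[
\frac{d\delta}{dy}=F_2(z_L)-F_1(z_R)=-\bigl(F_1(z_L)-F_2(z_L)\bigr)+\bigl(F_1(z_L)-F_1(z_R)\bigr),
\]
and applying the mean--value theorem to the last bracket rewrites it as $a(y)\,\delta(y)$ with a locally integrable coefficient $a(y)=\int_0^1 F_1'\bigl(z_R(y)+\tau\delta(y)\bigr)\,d\tau$. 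The resulting linear inhomogeneous ODE for $\delta$ admits, via the integrating--factor formula performed exactly as in \eqref{6}--\eqref{8}, the representation
\[
\delta(y)=\int_{y}^{y_A}\bigl(F_1(z_L(t))-F_2(z_L(t))\bigr)\,\exp\!\Bigl(-\!\int_{y}^{t}\!a(u)\,du\Bigr)\,dt,\qquad y\in[y_B,y_A).
\]
Hypothesis (ii) makes the integrand of constant sign on $[y_B,y_A)$, and hypothesis (i), combined with the fact that $z_L(y)$ covers a whole right--neighborhood of $z=0$ as $y$ sweeps a left--neighborhood of $y_A$, ensures that the integrand is not identically zero there. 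Hence $\delta(y)\ne 0$ strictly for every $y\in[y_B,y_A)$, and in particular $\delta(y_B)\ne 0$, contradicting $z_R(y_B)=z_L(y_B)=0$.

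The main obstacle will be the behavior at $z=0$, since $g(0)=0$ gives $x_i'(z)=1/g(x_i(z))$ a blow--up of order $z^{-1/2}$ generically, so $F_i'(z)\sim z^{-1/2}$ and the coefficient $a(y)$ produced by the mean--value step is singular at the endpoints $y_A$ and $y_B$ where $z_L$ approaches $0$. Since this singularity is of integrable H\"older type, the exponent $\int a$ stays finite on compact subintervals of $(y_B,y_A)$, and the sign conclusion propagates continuously through both endpoints in the limit, matching exactly the way the integrable singularity at the $x$--axis is handled in the proof of Theorem \ref{thm1}. A secondary technicality is that the extra interior zeros $x_1,\ldots,x_n$ of $g$ give $G'$ isolated zeros on $(a_1,b_1)\setminus\{0\}$, so $F_1'$ and $F_2'$ may blow up at isolated interior $z$--values as well; these contribute only isolated integrable singularities of the same type, and the integrating--factor argument still goes through.
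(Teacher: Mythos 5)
The paper itself contains no proof of this statement: it is recalled verbatim from \cite[Proposition 2.1]{CC} in Appendix A purely as a known criterion against which Theorems \ref{thm1}--\ref{thm3} are compared. So the only benchmark is the technique of those theorems, and your plan is essentially that technique --- compare two arcs issuing from a common point, subtract, and use a variation--of--constants representation to upgrade a non--strict comparison to a strict one --- transplanted to the $(y,z)$--plane via the Filippov variable $z=G(x)$. The reduction itself is sound: the sign of $\dot y=-g(x)$ forces any closed orbit to cross $x=0$ at exactly two points, the two arcs do satisfy $dz/dy=F_i(z)-y$, and the representation of $\delta$ is correctly derived on any subinterval where the coefficient $a$ is integrable.

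The genuine gap is at the final step, where you evaluate $\delta$ at the terminal height $y_B$. Your justification --- that $\int a$ ``stays finite on compact subintervals of $(y_B,y_A)$'' and that ``the sign conclusion propagates continuously through both endpoints in the limit'' --- does not produce the contradiction: continuity of $\delta$ only gives $\delta(y_B)=\lim_{y\to y_B^+}\delta(y)\geqslant 0$, which is perfectly consistent with $z_R(y_B)=z_L(y_B)=0$. This is exactly the ``$\geqslant$ versus $>$'' trap that the paper's own Remark after the proof of Theorem \ref{thm1} warns about. To get $\delta(y_B)>0$ from your formula you must show that the weight $\exp\bigl(\int_t^{y_B}a(u)\,du\bigr)$ does not degenerate, i.e.\ that $a$ is integrable up to and including $y_B$, where both $z_R$ and $z_L$ tend to $0$ and $F_1'(z)=F'(x_1(z))/g(x_1(z))$ blows up; ``integrable on compact subintervals'' is not enough. (Your appeal to the proof of Theorem \ref{thm1} is also inapt: there the coefficient $M=g/(y_1y_2)-\cdots$ is genuinely non--integrable at the landing point on the $x$--axis, and the paper never integrates through it --- it proves strict separation on the open arc and then distinguishes the landing points $D$, $C$, $E$ geometrically.) Two repairs are available: either prove $\int_{y_B}^{y_B+\eta}|a|\,dy<\infty$ directly, writing $a=F'(\xi)\cdot\bigl(x_1(z_L)-x_1(z_R)\bigr)/(z_L-z_R)$ and using the two--sided bounds $c_1(y-y_B)\leqslant z_R(y),z_L(y)\leqslant c_2(y-y_B)$, which reduces the question to $\int_0^{\eta}x_1'(cs)\,ds=x_1(c\eta)/c<\infty$ when $g$ is monotone or has a finite--order zero at the origin but needs an extra argument for merely continuous $g$; or, better, avoid $y_B$ altogether by imitating the paper's endpoint argument: show that the left curve stays strictly above the right one while $z_R$ is still positive, conclude that the two curves return to $z=0$ at different values of $y$, and hence that the orbit cannot close.
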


	Seventhly, we recall the result  by Chen and Tang \cite{CT} on nonexistence of closed orbits of the generalized Li\'enard system \eqref{A3}.

	\begin{theorem}\cite[Theorem 2.1]{CT}
		Consider system \eqref{A3}
		with $F$ and $g$ of class $C^1$ in $(\beta_1,\beta_2)$, where $\beta_1 < 0$ and $\beta_2>0$ $(\beta_1$, $\beta_2$ could be $\pm\infty)$. Assume that $F(x)$ and $g(x)$  satisfy the conditions: 
		\begin{itemize}
			\item[(\romannumeral1)]$xg(x)>0$ for $x \in(\beta_1, 0)\cup(0,\beta_2)$;
			\item[(\romannumeral2)] $F (x)$ has at most three zeros $x_1$, $x_2$, $0 \in(\beta_1,\beta_2)$ with $x_1 < x_2 \leqslant 0$, and $F (x)>0\ (\text{resp.}
		\,\,	< 0)$ for $x \in(x_1, x_2) \cup (0,+\infty)\ (\text{resp.} \,\,x \in(\beta_1, x_1) \cup (x_2,0))$;
			\item[(\romannumeral3)] $f (x) = F'(x)$ has a unique zero $\xi_0$ in $(x_1, x_2)$, and $f (x) < 0 \ (\text{resp.}\,\, > 0)$ for $x \in (\xi_0,x_2) \
			(\text{resp.}\,\, x\in(x_1,\xi_0) \cup (0,\beta_2))$;
			\item[(\romannumeral4)] the simultaneous equations
			$$F(z_1) = F(z_2) \quad \text{and} \quad \frac{g(z_1)}{f(z_1)} = \frac{g(z_2)}{f(z_2)} $$
			have no common solutions in $(\beta_1,\beta_2)$ satisfying $x_1<z_1<x_2<0<z_2$.
		\end{itemize}
		Then system \eqref{A3} has no closed orbits in the strip $\beta_1<x<\beta_2$.
	\end{theorem}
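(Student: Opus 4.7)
The plan is to prove Theorem \ref{thm1} by contradiction, turning the near-symmetry of the vector field with respect to the $y$-axis provided by conditions (iii)--(iv) into an obstruction that prevents a closed orbit from closing up. First, via the involution $(x,y,t)\mapsto(-x,y,-t)$ the two alternatives in (iii) swap roles, so I may assume $f(x,y)\geqslant-f(-x,y)$ throughout. Suppose for contradiction that system \eqref{1} carries a closed orbit $\Gamma\subset\{\alpha<x<\beta\}$. Since $\dot x=y$, the line $\{y=0\}$ is the only place where the sign of $\dot x$ can change, so $\Gamma$ must meet the positive $y$-axis at one point $B$, the negative $y$-axis at one point $A$, and the positive $x$-axis at one point $C$. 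On $\{y<0,\,x\geqslant 0\}$ the arc $\widehat{AC}$ of $\Gamma$ is a graph $y=y_1(x)$ on $[0,x_C]$ satisfying $dy/dx=-g(x)/y-f(x,y)$.

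Next, I would introduce the reflected system obtained from \eqref{1} by $(x,y)\mapsto(-x,y)$; its trajectory starting at $A$ is a graph $y=y_2(x)$ satisfying $dy/dx=-g(x)/y+f(-x,y)$, ending at some point $D$ on the positive $x$-axis. Setting $\varphi(x):=y_2(x)-y_1(x)$, I would subtract the two scalar ODEs and, after adding and subtracting $g(s)/y_1(s)$ and $f(s,y_2(s))$ inside the integrand, obtain a linear Volterra identity
\[
\varphi(x)=\int_0^x M(s)\,\varphi(s)\,ds+N(x),
\]
where
\[
M(s)=\frac{g(s)}{y_1(s)y_2(s)}-\frac{f(s,y_2(s))-f(s,y_1(s))}{y_2(s)-y_1(s)}
\]
(interpreted as $g(s)/(y_1(s)y_2(s))$ on the coincidence set) and $N(x)=\int_0^x\bigl[f(s,y_2(s))+f(-s,y_2(s))\bigr]\,ds$.

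The next step is to invert this equation explicitly. Introducing $H(x)=\int_0^x M\varphi\,ds$ gives $H'=MH+MN$; variation of parameters together with integration by parts (using $N(0)=0$) yield the representation
\[
\varphi(x)=\int_0^x\bigl[f(s,y_2(s))+f(-s,y_2(s))\bigr]\exp\!\left(\int_s^x M(\xi)\,d\xi\right)ds.
\]
The exponential weight is strictly positive, while the bracket is nonnegative by (iii) and not identically zero near $0$ by (iv); hence $\varphi(x)>0$ for every $x>0$ in the common domain of $y_1$ and $y_2$. Thus $y_2(x)>y_1(x)$, and in particular $D$ lies strictly to the left of $C$. A completely parallel argument applied to the arc of $\Gamma$ starting at $B$, together with the corresponding reflected trajectory, shows that its first meeting point with the positive $x$-axis lies strictly to the right of $C$. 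Since these two endpoints cannot both equal $C$, the reflection of the left half of $\Gamma$ cannot coincide with its right half, contradicting the existence of the closed orbit $\Gamma$.

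The main obstacle I anticipate is establishing the strict positivity $\varphi(x)>0$. A direct Gronwall or comparison argument only yields $\varphi(x)\geqslant 0$, because conditions (iii)--(iv) provide a pointwise inequality that can degenerate on measure-zero sets; the explicit Volterra inversion above is what converts the weak pointwise information plus the local non-degeneracy (iv) into strict positivity. A secondary, more routine concern is verifying that both $y_1(x)$ and $y_2(x)$ exist on the common interval $[0,x_C]$ and do not reach $\{y=0\}$ before the designated endpoints; this uses condition (ii) for local uniqueness and continuous dependence, together with the sign of $\dot y$ near the $x$-axis to guarantee transverse crossing.
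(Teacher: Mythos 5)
Your proposal does not prove the statement you were asked to prove. The statement is the recalled result of Chen and Tang (\cite[Theorem 2.1]{CT}) about the generalized Li\'enard system \eqref{A3}, namely $\dot x=y-F(x)$, $\dot y=-g(x)$, under hypotheses (i)--(iv) that concern the zeros of $F$, the unique zero $\xi_0$ of $f=F'$ in $(x_1,x_2)$, and the non-solvability of the simultaneous equations $F(z_1)=F(z_2)$, $g(z_1)/f(z_1)=g(z_2)/f(z_2)$. What you have written is instead a proof of Theorem \ref{thm1} of the present paper, for the system $\dot x=y$, $\dot y=-g(x)-f(x,y)y$ under the reflection-type conditions $g(-x)=-g(x)$ and $f(x,y)\geqslant -f(-x,y)$. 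None of the actual hypotheses of the Chen--Tang theorem enter your argument: there is no two-variable $f(x,y)$ in \eqref{A3}, the quantity $f(s,y_2(s))+f(-s,y_2(s))$ whose sign drives your Volterra estimate has no counterpart there, and the key transversality condition (iv) on the simultaneous equations is never used. The reflection $(x,y)\mapsto(-x,y)$ that underlies your comparison of $y_1$ and $y_2$ is simply not available for \eqref{A3}, whose hypotheses are deliberately asymmetric ($x_1<x_2\leqslant 0$, with $F$ changing sign at interior points).

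For the record, the argument you give is essentially the paper's own proof of Theorem \ref{thm1} (contradiction via a closed orbit $\Gamma$, the reflected equation, the Volterra identity $\varphi=\int_0^x M\varphi\,ds+N$, explicit inversion by variation of constants, and strict positivity of $\varphi$ from the non-degeneracy condition), and as such it is sound for that theorem; the remark about Gronwall only yielding $\varphi\geqslant 0$ matches the paper's own observation. But the statement at hand is a cited external result that the paper recalls in Appendix A purely for comparison, and your proof establishes nothing about it. A correct proof of \cite[Theorem 2.1]{CT} would have to work in the Li\'enard plane with the energy $\frac{y^2}{2}+\int_0^x g$, compare the values of $F$ along a putative closed orbit at points $z_1\in(x_1,x_2)$ and $z_2>0$, and invoke condition (iv) to rule out the tangency configuration that would allow the orbit to close; none of that machinery appears in your write-up.
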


	\section*{Appendix B}

In this part we recall Theorems 7.1 and 7.2 of \cite [Chapter 2]{ZDHD} for reader's convenience, which are frequently used to characterize local topological structure of the equilibrium having at least one eigenvalue vanishing and its linearization nonvanishing.

	Consider the planar differential system
	\begin{equation}\label{T7.1}
	\frac{d x}{d t}=P_2(x,y), \quad \frac{d y}{d t}=y+Q_2(x,y).
	\end{equation}

	\begin{theorem}\label{thm7.1}\cite[Theorem 7.1 of Chapter 2]{ZDHD}
		Suppose that $O=(0,0)$ is an isolated equilibrium of system \eqref{T7.1} and that $P_{2}$ and $Q_{2}$ are analytic functions in a small neighborhood $S_{\delta}(O)$ of $O$ without constant and linear terms. Let $y=\phi(x)$, $|x|<\delta$,  be the unique analytic solution of the equation
		$$
		y+Q_{2}(x, y) \equiv 0, \quad \mbox{in} \ \ \ S_{\delta}(O),
		$$
		and set
		$$
		\psi(x)=P_{2}(x, \phi(x))=a_{m} x^{m}+o(x^m), \quad |x|<\delta,
		$$
		with $a_{m} \neq 0, m \geqslant 2 .$ Then the following properties are satisfied.
		
		\begin{itemize}
			\item[(i)]
			If $m$ is odd and $a_{m}>0$, then $O $ is an unstable node.
			\item[(ii)]If $m$ is odd and $a_{m}<0$, then $O $ is a saddle  with its four separatrices tending to $O(0,0)$ along the directions $\theta=0, \pi/2, \pi$ and $3 \pi/2$, respectively.
			\item[(iii)]If $m$ is even, then $O $ is a saddle--node, and $S_{\delta}(O)$ is divided by two separatrices,  tangent to respectively the positive and negative $y$--axes at $O$, into two parts: one is a parabolic sector, and the other consists of two hyperbolic sectors.
		\end{itemize}	
	\end{theorem}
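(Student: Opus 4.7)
The plan is to reduce the behavior near $O$ to an effective one-dimensional dynamics along the horizontal nullcline, and then read off the local phase portrait from the parity of $m$ and the sign of $a_m$. First, since $Q_2$ has neither constant nor linear part, one has $(\partial/\partial y)(y+Q_2)|_{(0,0)}=1\neq 0$, so the implicit function theorem produces the unique analytic curve $y=\phi(x)$, $|x|<\delta$, solving $y+Q_2(x,y)\equiv 0$. This curve is the set where $\dot y=0$, and by construction $\phi(0)=\phi'(0)=0$, so it is tangent to the $x$-axis at the origin.

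Second, I would rectify this nullcline by the analytic change of variables $u=y-\phi(x)$. A Taylor expansion of $y+Q_2(x,y)$ around $u=0$ gives $y+Q_2(x,y)=u\bigl(1+\alpha(x,u)\bigr)$ with $\alpha$ analytic and $\alpha(0,0)=0$, and $P_2(x,y)=\psi(x)+u\,\beta(x,u)$ with $\beta$ analytic and $\beta(0,0)=0$ (because $P_2$ has no linear terms, so $\partial_y P_2(x,\phi(x))=O(x)$). The system becomes
\begin{equation*}
\dot x=\psi(x)+u\,\beta(x,u),\qquad \dot u=u\bigl(1+\alpha(x,u)\bigr)-\phi'(x)\bigl[\psi(x)+u\beta(x,u)\bigr].
\end{equation*}
This makes visible the hyperbolic expansion transverse to $u=0$ (linear part $+1$ in $u$) and the effective one-dimensional flow $\dot x=\psi(x)=a_m x^m+o(x^m)$ on the invariant curve (which, by the analytic invariant-manifold theorem, can be realized as a genuine invariant analytic curve through $O$ tangent to the $x$-axis, since the eigenvalues $0$ and $1$ are non-resonant at the linear level).

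Third, I would translate the three alternatives into sectors. If $m$ is odd and $a_m>0$, both the reduced flow ($x=0$ is a source) and the transverse flow (eigenvalue $+1$) repel $O$, so $O$ is an unstable node: every orbit in $S_\delta(O)$ leaves the neighborhood in forward time, and by standard characteristic-direction analysis the node is non-degenerate. If $m$ is odd and $a_m<0$, the reduced flow on the invariant curve contracts to $O$ from both sides while the transverse flow expands; combining with the analysis along the $u$-axis (eigenvector $(0,1)$) yields four separatrices tangent respectively to $\theta=0,\pi$ (the incoming ones on the invariant curve) and $\theta=\pi/2,3\pi/2$ (outgoing along $y$-axis), giving the saddle. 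If $m$ is even, then $\psi(x)$ keeps a constant sign across $x=0$: on one side of $0$ the reduced flow pushes toward $O$, on the other it pushes away. The transverse expansion turns the side where the reduced flow comes in into a parabolic sector bounded by the two separatrices tangent to the $\pm y$ direction, and the opposite side into a pair of hyperbolic sectors, producing the saddle-node.

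The main obstacle will be the third case: constructing the separatrices tangent to the $\pm y$-axis and rigorously dividing $S_\delta(O)$ into one parabolic and two hyperbolic sectors. I would handle this by the standard normal-sector (Briot--Bouquet) method: blow up at $O$, or equivalently pass to polar coordinates and track the angular equation $d\theta/dr$, to show that the only characteristic directions at $O$ are $\theta=0,\pi,\pi/2,3\pi/2$; then use the sign of $\psi(x)$ on either side of $0$ together with $\dot u\sim u$ to build the flow box carrying orbits along the invariant curve and peeling them off in the expanding direction. In the odd case with $a_m<0$, the same polar analysis rules out spurious separatrices and confirms exactly four. The analogous but simpler computation pins down the unstable node in the first case.
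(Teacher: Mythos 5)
This statement is quoted verbatim from \cite[Theorem 7.1 of Chapter 2]{ZDHD} and is not proved in the paper at all: Appendix B merely recalls it for the reader's convenience and attributes the result to Lyapunov. So there is no in-paper proof to compare against, and your proposal must be judged on its own. Your overall strategy (rectify the $\dot y=0$ nullcline $y=\phi(x)$, exploit the transverse expansion coming from the eigenvalue $1$, read the tangential behaviour off $\psi(x)=a_mx^m+o(x^m)$, and finish the degenerate directions by the normal-sector method) is indeed the standard route taken in the cited source, and your case analysis of the three alternatives is correct in substance.

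There is, however, one genuinely wrong step. You justify the existence of a \emph{genuine invariant analytic curve} through $O$ tangent to the $x$--axis by an ``analytic invariant-manifold theorem'' and by the claim that ``the eigenvalues $0$ and $1$ are non-resonant at the linear level.'' Both halves of this are false: the eigenvalue $0$ is resonant with itself to every order, and the center manifold of a semi-hyperbolic equilibrium of an analytic system need not be analytic (the classical example $\dot x=-x^3$, $\dot y=-y+x^2$ has a divergent formal center manifold). Since your entire reduction rests on this invariant curve, you must either (a) replace it by the $C^k$ center manifold theorem, checking separately that the reduced flow on the $C^k$ manifold $u=w(x)$ still has leading term $a_mx^m$ --- this needs the small computation $w(x)=O(x^{m+1})$ from the invariance equation, which you skip when you silently identify the dynamics on the non-invariant curve $u=0$ with the dynamics on the invariant one --- or (b) dispense with invariant manifolds altogether and argue directly from the signs of $\dot y$ above and below $y=\phi(x)$ and of $\dot x=\psi(x)$ on it, which is how the quoted theorem is actually proved. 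A secondary, lesser gap is that case (iii), which you yourself flag as the main obstacle, is only deferred to ``standard'' blow-up techniques; the existence of exactly two separatrices tangent to the $\pm y$--axis and the decomposition into one parabolic and two hyperbolic sectors is precisely the content that needs proof there.
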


A system with the nilpotent equilibrium at the origin can be transformed to
	\begin{equation}\label{AA}
	\frac{d x}{d t}=y, \quad \frac{d y}{d t}=a_{k} x^{k}\left(1+h(x)\right)+b_{n} x^{n} y\left(1+g(x)\right)+y^{2} p(x, y),
	\end{equation}	
	where $h(x), ~g(x), ~p(x, y)$ are analytic functions in $S_{\delta}(O)$. Moreover, $h(O)=g(O)=0, ~a_{k} \neq 0, ~k \geqslant 2 ; ~b_{n}$ can be zero, and when $b_{n} \neq 0,~ n \geqslant 1 .$

	\begin{theorem}\label{thm7.2}
		\cite[Theorem 7.2 of Chapter 2]{ZDHD}
		For system \eqref{AA} with  $k=2 m+1, m\geqslant 1$,  the equilibrium $O$ has the local property as that in {\rm Table \ref{T7.2}}, where $\lambda=b_{n}^{2}+4(m+1) a_{2 m+1}$.

	\begin{table}
		\renewcommand\arraystretch{1.5}
		\setlength{\tabcolsep}{2mm}{
		\caption{\label{T7.2}}
		\centering
		\begin{tabular}{c|c|c|c}
			\hline
			\multicolumn{3}{c|}
			{Relations between $a_{2 m+1}, b_{n}, \lambda, m, n$}
			
			& Type of equilibrium $O$
			\\
			\hline
			\multicolumn{3}{c|}
			{$a_{2 m+1}>0$}
			
			& saddle
			\\
			\hline
			\multirow{4}{*}
			{$
				\begin{aligned}
				&  \\
				&  \\
				& {{a}_{2m+1}}<0
				\end{aligned}
				$}
			
			&\multicolumn{2}{c|}{$b_n=0$}
			
			& center or focus
			\\
			\cline{2-4}
			
			& \multirow{3}{*}{$
				\begin{aligned}
				&  \\
				&  \\
				& b_n\ne 0
				\end{aligned}
				$}
			
			& $n>m$; or $m=n$ and $\lambda<0$
			
			& center or focus
			\\
			\cline{3-4}
			& &
			\begin{tabular}[c]{@{}c@{}}
				$n$ is even
				$\left\{
				\begin{aligned}	
				& n<m,~ or
				\\ 	
				& n=m ~and~ \lambda \geqslant 0	
				\end{aligned}
				\right.$
			\end{tabular}
			
			& node
			\\
			
			\cline{3-4}
			& &
			\begin{tabular}[c]{@{}c@{}}
				$n$ is odd
				$\left\{
				\begin{aligned}
				& n<m,~ or
				\\ 	
				& n=m~ and ~\lambda \geqslant 0 	
				\end{aligned}
				\right.$
			\end{tabular}
			
			&
			\begin{tabular}[c]{@{}c@{}}
				$S_{\delta}(O)$ consists of one hyperbolic \\ sector and one elliptic sector
			\end{tabular}
			\\
			\hline
		\end{tabular}}
	\end{table}
		\end{theorem}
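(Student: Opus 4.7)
The plan is to follow Andreev's classical strategy: reduce the analysis of the nilpotent equilibrium $O$ of \eqref{AA} to a hyperbolic problem on a blown-up space, and then invoke the normal-sector lemmas to read off the local phase portrait in $S_\delta(O)$.

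First, I would analyse the exceptional directions. In polar coordinates $x = r\cos\theta$, $y = r\sin\theta$, one computes $r^{-1}\dot\theta = -\sin^2\theta + O(r^{k-1})$, so for $r$ small the only candidate directions for orbits to enter $O$ are $\theta = 0$ and $\theta = \pi$; all sectorial boundaries at $O$ must therefore lie on the $x$-axis. Next, I would introduce the quasi-homogeneous blow-up $x = u$, $y = u^{m+1}v$, together with the time reparametrization $d\tau = u^m\,dt$, turning \eqref{AA} into
\begin{equation}\label{planbu}
\frac{du}{d\tau} = uv,\qquad \frac{dv}{d\tau} = a_{2m+1} - (m+1)v^2 + b_n u^{n-m}v + R(u,v),
\end{equation}
where $R$ absorbs contributions of $h$, $g$, $p$ of strictly higher weighted order. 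When $n<m$ the factor $u^{n-m}$ is singular at $u=0$, and instead one uses the rescaling $y = x^{2m+1-n}w$, $d\tau = x^n\,dt$, which places the $b_n$-term at leading order on the exceptional divisor $\{x=0\}$. In either case, the fixed points on the divisor are precisely the tangential leading orders of orbits of \eqref{AA} approaching $O$ along the $x$-axis.

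Third, I would read off the table case by case. \emph{(i)} If $a_{2m+1}>0$, \eqref{planbu} has two hyperbolic saddles at $(0,\pm\sqrt{a_{2m+1}/(m+1)})$ on the divisor; their four stable/unstable manifolds blow down to the four separatrices of a topological saddle at $O$. \emph{(ii)} If $a_{2m+1}<0$ and $b_n=0$, the truncation of \eqref{AA} is Hamiltonian with $H = y^2/2 - a_{2m+1}x^{2m+2}/(2m+2)$, whose level sets near $O$ are closed, so $O$ is a center or focus. \emph{(iii)} If $a_{2m+1}<0$, $b_n\neq 0$ with $n>m$ or ($n=m$ and $\lambda<0$), the effective quadratic on $\{u=0\}$ has no real roots, so the Hamiltonian picture persists to leading order and $O$ is again a center or focus. \emph{(iv)} If $n\leqslant m$ and $\lambda\geqslant 0$, the same quadratic (or its $n<m$ linear analogue) has real roots and produces two hyperbolic fixed points on the divisor; the one stable and one unstable direction transverse to the divisor at each give, after blow-down, two separatrices whose pairing is controlled by the parity of $n$: $u^{n-m}$ changes sign under $u\mapsto -u$ iff $n$ is odd, yielding one-sided matching (a node, for $n$ even) versus crossed matching (one elliptic plus one hyperbolic sector, for $n$ odd). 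Finally, I would apply the normal-sector lemmas of \cite[Ch.~2]{ZDHD} to control the remainder $R$ and rule out extra sectors introduced by higher-order terms.

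The main obstacle is case (iv). One must carefully track orientations through the weighted blow-up (the branch $y = u^{m+1}v$ folds the $u<0$ half-plane nontrivially when $m$ is even), correctly pair the stable/unstable manifolds of the two blown-up fixed points, and exhibit a genuine homoclinic family bounding the elliptic sector in the odd-$n$ subcase. The technical heart is identifying $\lambda$ as the discriminant that governs reality of the tangential leading orders on the exceptional divisor, and the parity of $n$ as the selector of which side an elliptic sector can appear on, while simultaneously controlling $R$ via continuity of invariant manifolds under the weighted rescaling.
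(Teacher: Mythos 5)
First, a point of reference: the paper does not prove this statement at all. Theorem \ref{thm7.2} is recalled verbatim in Appendix B from \cite[Theorem 7.2 of Chapter 2]{ZDHD} (originally Andreev \cite{Andreev}) purely for use elsewhere, so there is no in-paper proof to compare against. The classical proofs in those sources proceed by the normal-sector method together with iterated directional (Briot--Bouquet) blow-ups $y=xz$; your quasi-homogeneous blow-up is a legitimate, more modern route to the same table, and your identification of $\lambda=b_n^2+4(m+1)a_{2m+1}$ as the discriminant of $-(m+1)v^2+b_nv+a_{2m+1}=0$ on the exceptional divisor is exactly right for the case $n=m$.

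There are, however, two concrete defects in the sketch. First, the rescaling you propose for $n<m$, namely $y=x^{2m+1-n}w$ with $d\tau=x^n\,dt$, does not desingularize: it yields $du/d\tau=x^{2m+1-2n}w$, which still vanishes to order $2m+1-2n>1$ on the divisor, and $dw/d\tau=a_{2m+1}+b_nw-(2m+1-n)x^{2m-2n}w^2+\cdots$, so the quadratic term drops off the divisor and you get a single fixed point $w=-a_{2m+1}/b_n$ instead of the two needed to assemble the four separatrices. The correct weight for $n<m$ is dictated by the Newton polygon: $y=x^{n+1}w$, $d\tau=x^n\,dt$, which gives $dx/d\tau=xw$ and $dw/d\tau=b_nw-(n+1)w^2+a_{2m+1}x^{2(m-n)}+\cdots$, with the two divisor fixed points $w=0$ and $w=b_n/(n+1)$. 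Second, your parity mechanism is misidentified: $u^{n-m}$ changes sign under $u\mapsto-u$ iff $n-m$ is odd, not iff $n$ is odd. The parity of $n$ actually enters through the time reparametrization $d\tau=x^n\,dt$, which reverses orientation on the half-plane $x<0$ precisely when $n$ is odd; it is this orientation flip (equivalently, the sign change of $b_nx^n$ across $x=0$ in the tangential flow along the exceptional directions $\theta=0,\pi$) that decides whether the two families of orbits entering $O$ along the $x$-axis do so in the same time direction (node, $n$ even) or in opposite time directions (one elliptic plus one hyperbolic sector, $n$ odd). With those two repairs, and with the standard verification that $\theta=0,\pi$ are the only exceptional directions so that no extra charts or sectors arise, the sketch can be completed along the lines you indicate.
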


 Notice that
	the	
	results of Theorem \ref{thm7.1} were initially obtained by Lyapunov in \cite[p. 301]{Lya}
	and
	 the	
	 results of Theorem \ref{thm7.2} were initially obtained by Andreev in \cite{Andreev}.

	 \section*{Appendix C}
	 
	 	 By a Poincar\'e transformation
	 $$
	 x=\frac{1}{z},\quad y=\frac{u}{z},
	 $$
	 system \eqref{61} is changed to
	 \begin{equation}\label{y}
	 \left\{\begin{aligned}
	 \frac{du}{d\tau}&=-z^2(u^2+\mu u+\lambda)-(cu^3+bu^2+au+1),\\
	 \frac{dz}{d\tau}&=-z^3u,
	 \end{aligned}
	 \right.
	 \end{equation}
	 where $d\tau=dt/z^2$. Notice that the abscissaes of the equilibria of system \eqref{y}  on $z = 0$ are the zeros   of the polynomial $\Phi(u):=cu^3+bu^2+au+1$.
	 
	 \begin{lemma}
	 	\label{lemy1}
	 	If $c=0$, system \eqref{y} has two equilibria $A=\left((-a-\sqrt{a^2-4b})/2b,0\right)$ and $B= \left((-a+\sqrt{a^2-4b})/2b,0\right)$  for $a^2-4b>0$,  only one equilibrium $C=(-a/2b,0)$ for   $a^2-4b=0$, and no an equilibrium for $a^2-4b<0$.
	 	Moreover, 	$A$ is an  unstable node for $b>0$, a saddle for $b<0$; $B$ is a saddle
	 	and $C$ is a degenerate equilibrium.
	 \end{lemma}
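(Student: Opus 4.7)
For proving Lemma \ref{lemy1}, I would organize the argument into the count of equilibria, a linearization computation, and a reduction to Theorem \ref{thm7.1}. Setting $c=0$ and $z=0$ in system \eqref{y} yields $du/d\tau=-(bu^2+au+1)$, whose real roots are exactly the equilibria on the equator; the three cases in the lemma correspond to the sign of the discriminant $a^2-4b$. A direct calculation shows that the Jacobian of \eqref{y} at any such equilibrium $(u_*,0)$ is diagonal with entries $-(2bu_*+a)$ and $0$, because $dz/d\tau=-z^3 u$ and the $z$-dependent part of $du/d\tau$ are both $O(z^2)$. At $A$ and $B$ one has $2bu_\pm+a=\pm\sqrt{a^2-4b}\neq 0$, so a single eigenvalue vanishes; at $C$ one has $2bu_0+a=0$, so the linearization is the zero matrix and $C$ is degenerate, which is all the lemma claims in that case.

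To classify $A$ and $B$, I invoke Theorem \ref{thm7.1}. After the shift $u=u_*+v$ and a time rescaling $dT=-\sigma\,d\tau$ with $\sigma:=2bu_*+a$, the system in the renamed variables $(x,y)=(z,v)$ takes the standard form $dx/dT=P_2(x,y)$, $dy/dT=y+Q_2(x,y)$, with $P_2(z,v)=z^3(u_*+v)/\sigma$ and $Q_2$ starting at order two. Solving $y+Q_2(x,y)=0$ to leading order gives $\phi(z)=-(u_*^2+\mu u_*+\lambda)z^2/\sigma+O(z^4)$, and substituting yields $\psi(z)=P_2(z,\phi(z))=(u_*/\sigma)\,z^3+O(z^4)$. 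Since $u_*\neq 0$ (otherwise $bu_*^2+au_*+1=1\neq 0$), this places us in the hypothesis of Theorem \ref{thm7.1} with $m=3$ odd and $a_m=u_*/\sigma$.

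A short case analysis then completes the classification. Using the standing assumption $a\geqslant 0$, one checks that $u_+<0$ for both signs of $b$, whereas $u_-<0$ for $b>0$ and $u_->0$ for $b<0$. Since $\sigma_A=-\sqrt{a^2-4b}<0$ and $\sigma_B=+\sqrt{a^2-4b}>0$, this gives $a_m(B)<0$ in every subcase and $a_m(A)>0$ for $b>0$, $a_m(A)<0$ for $b<0$. Theorem \ref{thm7.1} then identifies $B$ as a saddle, $A$ as an unstable node when $b>0$, and $A$ as a saddle when $b<0$. The rescaling $dT=-\sigma\,d\tau$ is orientation-preserving at $A$ (where $\sigma<0$) and orientation-reversing at $B$ (where $\sigma>0$); but a saddle is invariant under time reversal and the orientation is preserved at the node, so all conclusions transfer back to $\tau$-time without change.

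I expect the main obstacle to be sign bookkeeping rather than anything conceptual: computing $\phi(z)$ to high enough order to confirm $m=3$ and $a_m=u_*/\sigma$, pinning down the sign of $u_\pm$ in each subcase of $b$, and tracking whether the time rescaling reverses orientation so that Theorem \ref{thm7.1}'s verdicts are correctly translated into the original $\tau$-time. The degenerate case $C$ is painless since the lemma only asserts degeneracy, which is immediate from the vanishing of the full linearization.
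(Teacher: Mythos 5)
Your proposal is correct and follows essentially the same route as the paper: count the real roots of $bu^2+au+1$, compute the semi-hyperbolic Jacobians at $A$, $B$ and the vanishing one at $C$, and reduce to Theorem \ref{thm7.1} via the shift and the implicit-function computation $\psi(z)=(u_*/\sigma)z^3+O(z^5)$, with the same sign bookkeeping on $u_\pm$. The only (welcome) refinement is that you normalize time so the hyperbolic eigenvalue equals $+1$ and track the orientation reversal at $B$, a point the paper passes over with a ``similarly''.
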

	 
	 \begin{proof}
	 	The number of equilibria follows directly from the roots of $\Phi(u)=0$. On the properties of these equilibria, computing the Jacobian matrices of the system at $A$, $B$ and $C$ gives
	 	$$
	 	J_A:=	
	 	\begin{pmatrix}
	 	\sqrt{a^2-4b}&0\\
	 	0&0\\
	 	\end{pmatrix},
	 	\quad
	 	J_B:=	
	 	\begin{pmatrix}
	 	-\sqrt{a^2-4b}&0\\
	 	0&0\\
	 	\end{pmatrix},
	 	\quad
	 	J_C:=	
	 	\begin{pmatrix}
	 	0&0\\
	 	0&0\\
	 	\end{pmatrix},
	 	$$
	 	respectively. Thus, $A$ and $B$ are semi-hyperbolic equilibria and $C$ is a degenerate equilibrium.
	 	
	 	Set $u_1=-a-\sqrt{a^2-4b})/2b$ and $u_2=(-a+\sqrt{a^2-4b})/2b$.		 	By the transformation $(u,z)\to(u+u_1,z)$
	 	system \eqref{y} is changed to
	 	\begin{equation*}
	 	\left\{\begin{aligned}
	 	\frac{du}{d\tau}&=-z^2\left(u^2+(\mu+2u_1) u+u_1^2+\mu u_1+\lambda\right)-bu^2-(2bu_1+a)u=:P_1(u,z),\\
	 	\frac{dz}{d\tau}&=-z^3u-u_1z^3=:Q_1(u,z).
	 	\end{aligned}
	 	\right.
	 	\end{equation*}
	 	The implicit function theorem shows that $P_1(u,z)=0$
	 	has a unique root $u=\phi_1(z)$ for small $|z|$.
	 	Thus,
	 	$$
	 	Q_1(\phi_1(z),z)=-u_1 z^3+o(z^3).
	 	$$
	 	Notice that $u_1<u_2<0$ for $b>0$ and $u_2<0<u_1$ for $b<0$.
	 	By Theorem \ref{thm7.1} in Appendix B, it follows that $A$ is an unstable node for $b>0$, and is a saddle for $b<0$.   Similarly, $B$ is a saddle of system \eqref{y}.
	 	The proof is finished.
	 \end{proof}
	 
	 Next we give  the qualitative property of $C$.
	 \begin{lemma}	
	 	\label{lem10}
	 	The qualitative property of $C$ is shown  in  {\rm Tables \ref{C1}--\ref{C3}}.
	 	
	 	\begin{table}[htp]
	 		\renewcommand\arraystretch{2}
	 		\setlength{\tabcolsep}{2.5mm}{
	 			\caption{\label{C1} Numbers of orbits connecting $C$ for  $\gamma>0$.}
	 			\begin{tabular}{c|c}
	 				\hline
	 				Exceptional directions & Numbers of orbits
	 				\\
	 				\hline
	 				$\theta=0$ & one  \ $(+)$
	 				\\
	 				\hline
	 				$\theta=\pi$ & one  \ $(-)$
	 				\\
	 				\hline
	 		\end{tabular}}
	 		\begin{center}
	 			$(-)($resp.$(+))$ means that the orbits approaching $C$ as $\tau\to -\infty($resp.  $\tau\to +\infty)$.
	 		\end{center}
	 	\end{table}
	 	\begin{table}[htp]
	 		\renewcommand\arraystretch{2}
	 		\setlength{\tabcolsep}{2.5mm}{
	 			\caption{\label{C2} Numbers of orbits connecting $C$ for  $\gamma=0$.}
	 			\begin{tabular}{c|c}
	 				\hline
	 				Exceptional directions & Numbers of orbits
	 				\\
	 				\hline
	 				$\theta=0$ & one  \ $(+)$
	 				\\
	 				\hline
	 				$\theta=\frac{\pi}{2}$ & $\infty$  \ $(-)$
	 				\\
	 				\hline
	 				$\theta=\pi$ & one  \ $(-)$
	 				\\
	 				\hline
	 				$\theta=\frac{3\pi}{2}$ &  $\infty$  \ $(-)$
	 				\\
	 				\hline
	 		\end{tabular}}
	 	\end{table}
	 	\begin{table}[htp]
	 		\renewcommand\arraystretch{2}
	 		\setlength{\tabcolsep}{2.5mm}{
	 			\caption{\label{C3} Numbers of orbits connecting $C$ for  $\gamma<0$.}
	 			\begin{tabular}{c|c}
	 				\hline
	 				Exceptional directions & Numbers of orbits
	 				\\
	 				\hline
	 				$\theta=0$ & one  \ $(+)$
	 				\\
	 				\hline
	 				$\theta=\arctan\sqrt{\frac{-b}{\gamma}}$ &  one  \ $(-)$
	 				\\
	 				\hline
	 				$\theta=\pi-\arctan\sqrt{\frac{-b}{\gamma}}$ &  $\infty$  \ $(-)$
	 				\\
	 				\hline
	 				$\theta=\pi$ & one  \ $(-)$
	 				\\
	 				\hline	
	 				$\theta=\pi+\arctan\sqrt{\frac{-b}{\gamma}}$ & $\infty$ \ $(-)$
	 				\\
	 				\hline
	 				$\theta=2\pi-\arctan\sqrt{\frac{-b}{\gamma}}$ &  one \ $(-)$
	 				\\
	 				\hline
	 		\end{tabular}}
	 	\end{table}
	 \end{lemma}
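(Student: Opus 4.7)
The plan is to perform a directional blow-up at the degenerate equilibrium $C$. First, translate $u\mapsto u+u_0$ with $u_0=-a/(2b)$ to place $C$ at the origin, and note that under $c=0$ and $a^2=4b$ we have $b=a^2/4>0$, $u_0=-2/a\ne 0$, and the constant and linear terms in both components of system \eqref{y} vanish identically, leaving
\begin{equation*}
\frac{du}{d\tau}=-\bigl(bu^2+\gamma z^2\bigr)-(2u_0+\mu)\,uz^2-u^2z^2,\qquad
\frac{dz}{d\tau}=-u_0z^3-uz^3,
\end{equation*}
where $\gamma:=u_0^2+\mu u_0+\lambda$ is precisely the sign parameter appearing in Tables \ref{C1}--\ref{C3}. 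Since the Jacobian at $(0,0)$ vanishes, neither Theorem \ref{thm7.1} nor Theorem \ref{thm7.2} applies, and one must proceed by an Andreev sector analysis based on the leading quadratic--cubic form.

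Next, I will identify the characteristic directions. In polar coordinates $u=r\cos\theta$, $z=r\sin\theta$, the leading tangential velocity is proportional to $(b\cos^2\theta+\gamma\sin^2\theta)\sin\theta\cdot r^2$ and the leading radial velocity is proportional to $-(b\cos^2\theta+\gamma\sin^2\theta)\cos\theta\cdot r^2$. Hence an orbit may approach $C$ only along zeros of $\sin\theta\,(b\cos^2\theta+\gamma\sin^2\theta)$. This yields $\theta=0,\pi$ always; additionally $\theta=\pi/2,3\pi/2$ when $\gamma=0$; and the four angles $\theta=\pm\arctan\sqrt{-b/\gamma}\ (\mathrm{mod}\ \pi)$ when $\gamma<0$, reproducing exactly the direction columns of Tables \ref{C1}--\ref{C3}.

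For each characteristic direction I will determine the number of orbits and their time direction by the normal-sector method of \cite[Ch.~2, \S7]{ZDHD}. Along $\theta=0,\pi$ the line $z=0$ is invariant with $\dot u=-bu^2$, so exactly one orbit approaches along each: forward in time from $u>0$ (a $(+)$ on $\theta=0$) and backward in time from $u<0$ (a $(-)$ on $\theta=\pi$). Along the remaining characteristic rays (present only when $\gamma\leqslant 0$) I pass to the chart $v=u/z$, in which the exceptional divisor $z=0$ carries a one-dimensional reduced flow whose zeros are exactly the oblique angles above. At $\theta=\pi/2,3\pi/2$ (the $z$-axis, which becomes invariant when $\gamma=0$) the dynamics reduce to $\dot z=-u_0z^3$; since $u_0$ has constant sign, this gives a one-parameter family of orbits ($\infty$-many) all approaching $O$ backward in time. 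For the four oblique rays when $\gamma<0$, the second-order correction to the tangential velocity has opposite signs on adjacent rays, so an Andreev-type pairing in each half-plane produces one hyperbolic-type direction (a single separatrix) and one parabolic-type direction (a whole family); the common backward-time label $(-)$ follows from the sign of $\dot z=-u_0z^3(1+O(r))$ along each such ray.

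The main obstacle will be the case $\gamma<0$: distinguishing which two of the four symmetric oblique directions carry a single separatrix and which two carry an entire family of orbits. I will resolve this by computing the next-order term in the tangential component along each ray $\theta=\theta_\ast$, comparing its sign with that of $\dot z$ there, and invoking the standard normal-sector alternative between attracting (parabolic, $\infty$ orbits) and repelling (hyperbolic, one orbit) characteristic rays; the resulting count then matches Table \ref{C3} entry by entry.
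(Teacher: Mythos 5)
Your setup is on the right track and coincides with the paper's: translate $C$ to the origin, pass to polar coordinates, and read off the candidate directions from the zeros of $G_1(\theta)=\sin\theta\,(b\cos^2\theta+\gamma\sin^2\theta)$; for $\theta=0,\pi$ the condition $G_1'H_1<0$ does give exactly one orbit each, with the time labels you state. The genuine gap is in the degenerate directions, which is where all the work lies. At $\theta=\pi/2,3\pi/2$ (case $\gamma=0$) and at the four oblique rays (case $\gamma<0$), the radial factor $H_1(\theta)$ vanishes \emph{together} with $G_1(\theta)$, so the ``standard normal-sector alternative'' you invoke (one orbit versus a parabolic family according to the sign of $G'H$) is simply not applicable there; this is exactly why the paper abandons the normal-sector method at those angles and instead performs Briot--Bouquet blow-ups ($z=\widetilde z u$ for $\gamma<0$, $u=\widetilde u z$ for $\gamma=0$), locates the induced equilibria on the exceptional divisor, classifies them as saddles or saddle--nodes via Theorem \ref{thm7.1}, and only then blows down to count orbits. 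Your plan to ``compute the next-order term in the tangential component and compare signs'' is not a substitute: with both leading factors vanishing, the one-versus-infinity dichotomy is decided by the local type of the blown-up singular point (the parabolic side of a saddle--node versus its hyperbolic side), and nothing in your sketch identifies which oblique rays inherit the parabolic sector. That identification is the content of Table \ref{C3}.

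A second, more local error: for $\gamma=0$ you argue that invariance of the $z$-axis with $\dot z=-u_0z^3$ ``gives a one-parameter family of orbits ($\infty$-many)'' along $\theta=\pi/2,3\pi/2$. An invariant half-axis is a single orbit; it proves existence of \emph{one} orbit in each of those directions, not infinitely many. The infinitude asserted in Table \ref{C2} comes from showing that the corresponding equilibrium of the blown-up system \eqref{bb4} has a nodal/parabolic sector (the paper gets this from a second polar analysis and \cite[Theorems 3.7 and 3.8 of Chapter 2]{ZDHD} applied where $G_2'H_2>0$). As written, your argument would equally well ``prove'' one orbit there, so the counting in Tables \ref{C2} and \ref{C3} is not established. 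To close the proof you need to actually carry out the blow-up at each degenerate direction and determine the local type of the resulting equilibria, as the paper does with systems \eqref{bb3}--\eqref{bb4} and Theorem \ref{thm7.1}.
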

	 
	 \begin{proof}
	 	In this case, one has $b=a^2/4>0$ and $u_0:=-a/2b<0$. With the transformation $(u,z)\to (u+u_0, z)$,
	 	system \eqref{y} becomes
	 	\begin{equation}
	 	\label{y1}\left\{\begin{aligned}
	 	\frac{du}{d\tau}&=-z^2\left(u^2+(\mu+2u_0) u+u_0^2+\mu u_0+\lambda\right)-bu^2,\\
	 	\frac{dz}{d\tau}&=-z^3u-u_0z^3.
	 	\end{aligned}
	 	\right.
	 	\end{equation}
	 	With a polar transformation $(u,z)=(r\cos\theta,r\sin\theta)$,   system \eqref{y1} can be written as
	 	\begin{equation}
	 	\notag
	 	\frac{1}{r}\frac{dr}{d\theta}=\frac{H_1(\theta)+\widetilde{H_1}(\theta,r)}{G_1(\theta)+\widetilde{G_1}(\theta,r)},
	 	\end{equation}
	 	where
	 	$G_1(\theta)=\sin\theta((u_0^2+\mu u_0+\lambda)\sin^2\theta+b\cos^2\theta)$, $H_1(\theta)=-\cos\theta((u_0^2+\mu u_0+\lambda)\sin^2\theta+b\cos^2\theta)$, and $\widetilde{H_1}(\theta,r),  \widetilde{G_1}(\theta,r)\to 0$ as $r\to0$.
	 	A necessary condition on existence of exceptional directions is $G_1(\theta)=0$ by \cite[Chapter 2]{ZDHD}.
	 	Whereas, the zeros of $G_1(\theta)$ is strongly related to the sign of $\gamma:= u_0^2+\mu u_0+\lambda$.
	 	
	 	For $\gamma>0$, it is easy to check that $G_1(\theta)=0$   has exactly two roots $0$, $\pi$ in $\theta\in[0,2\pi)$. Moreover, easy calculation gives  $G_1'(0)H_1(0)=G_1'(\pi)H_1(\pi)=-b^2<0$.  By
	 	{ $H_1(0)<0$, $H_1(\pi)>0$ }
	 	and \cite[Theorem 3.7 of  Chapter 2]{ZDHD}, system \eqref{y1} has
	 	a unique orbit approaching  $(0, 0)$ in the direction $\theta = \pi$ as $\tau\to-\infty$,  and 	a unique orbit approaching $(0, 0)$  in the direction $\theta = 0$  as $\tau\to+\infty$. So is $C$.

	 	For $\gamma<0$, it is easy to check that  $G_1(\theta)$ has six zeros $\theta=0$, $\arctan\left(\sqrt{-b/\gamma}\right)$, $\pi-\arctan \left(\sqrt{-b/\gamma}\right)$, $\pi$, $ \pi+\arctan\left(\sqrt{-b/\gamma}\right)$, $2\pi-\arctan\left(\sqrt{-b/\gamma}\right)$ in $\theta\in[0,2\pi)$.
	 	Since $G_1'(0)H_1(0)=G_1'(\pi)H_1(\pi)=-b^2<0$, it follows from
	 	{$H_1(0)<0$, $H_1(\pi)>0$  and}
	 	\cite[Theorem 3.7 of  Chapter 2]{ZDHD} that system \eqref{y1} has
	 	a unique orbit approaching  $(0, 0)$ in the direction $\theta = \pi$ as $\tau\to-\infty$,  and 	a unique orbit approaching $(0, 0)$  in the direction $\theta = 0$  as $\tau\to+\infty$. So is $C$.
	 	Since the other four zeros of $G_1(\theta)$ are also those of $H_1(\theta)$, one cannot apply  the normal sector method  (see \cite[Chapter 2]{ZDHD}) to analyze the four exceptional directions
	 	$\theta=\arctan\left(\sqrt{-b/\gamma}\right)$, $\pi-\arctan \left(\sqrt{-b/\gamma}\right)$,   $ \pi+\arctan\left(\sqrt{-b/\gamma}\right)$, $2\pi-\arctan\left(\sqrt{-b/\gamma}\right)$  of the origin for system \eqref{y1}.  Instead, we adopt Briot--Bouquet transformations to blow up the four directions.
	 	
	 	With the Briot--Bouquet transformation
	 	$	z=\widetilde{z} u$,
	 	system \eqref{y1} is changed to
	 	\begin{equation}
	 	\label{bb3}
	 	\left\{\begin{aligned}
	 	\frac{du}{d\delta}&=-\widetilde{z} ^2u\left(u^2+(\mu+2u_0) u+\gamma\right)-bu,\\
	 	\frac{d\widetilde{z} }{d\delta}&=(\mu+u_0)\widetilde{z} ^3u+\gamma\widetilde{z}^3+b\widetilde{z},
	 	\end{aligned}
	 	\right.
	 	\end{equation}
	 	where $d\delta=ud\tau$.  System \eqref{bb3}  has three  equilibria $(0,0)$, $\left(0,\sqrt{-b/\gamma}\right)$ and $\left(0,-\sqrt{-b/\gamma}\right)$.
	 	The equilibrium $(0,0)$ is a saddle. For the other two  equilibria,  	
	 	taking transformation $(u,\widetilde{z})\to (u, \widetilde{z}+z_1)$ with $z_1=\sqrt{-b/\gamma}$, system \eqref{bb3} becomes
	 	\begin{equation}
	 	\label{bbb3}
	 	\left\{\begin{aligned}
	 	\frac{du}{d\delta}=&-(\widetilde{z}^2+2z_1\widetilde{z})\left(u^3+(\mu+2u_0) u^2+\gamma u\right)-z_1^2\left(u^3+(\mu+2u_0) u^2\right),\\
	 	\frac{d\widetilde{z} }{d\delta}=&(\mu+u_0)z_1^3u+2\gamma z_1^2\widetilde{z}+(\mu+u_0)(u\widetilde{z}^3+3z_1u\widetilde{z}^2+3z_1^2u\widetilde{z})\\
	 	&+\gamma \widetilde{z}^3+3\gamma z_1\widetilde{z}^2.
	 	\end{aligned}
	 	\right.
	 	\end{equation}
	 	A further transformation $(u,\widetilde{z}) \to\left(u, \left(\widetilde{z}-(\mu+u_0)z_1^3u\right)/2\gamma z_1^2  \right) $ sends system \eqref{bbb3} to
	 	\begin{equation}
	 	\label{bbb4}
	 	\left\{\begin{aligned}
	 	\frac{du}{d\delta}&=-u_0u^2-\frac{1}{z_1}u\widetilde{z}+h.o.t.=:P_2(u,\widetilde{z}),\\
	 	\frac{d\widetilde{z} }{d\delta}&=2\gamma z_1^2\widetilde{z}+h.o.t.=:Q_2(u,\widetilde{z}).
	 	\end{aligned}
	 	\right.
	 	\end{equation}
	 	By the implicit function theorem, $Q_2(u,\widetilde{z})=0$
	 	has a unique root $\widetilde{z}=\phi_2(u)$ for small $|u|$.
	 	Thus,
	 	$$
	 	P_2(u,\phi_2(u)=-u_0 u^2+o(u^2).
	 	$$
	 	Theorem \ref{thm7.1} in Appendix B together with $\gamma<0$ and $u_0<0$ verifies that the origin of system \eqref{bbb4} is a saddle--node, so is $\left(0,\sqrt{-b/\gamma}\right)$   of system \eqref{bb3}. Similarly,  $\left(0,-\sqrt{-b/\gamma}\right)$ is also a saddle--node.
	 	Figure \ref{tu9}$(a)$ illustrates the qualitative properties of the equilibria $(0, 0)$, $\left(0,\sqrt{-b/\gamma}\right)$ and $\left(0,-\sqrt{-b/\gamma}\right)$ of system  \eqref{bb3} in the $(u,\widetilde{z})$ plane.
	 	In fact, when $u\ne0$,
	 	the Briot--Bouquet transformation $z=\widetilde{z} u$ is a topological transformation from  $(u, z)$ plane to $(u, \widetilde{z})$ plane, mapping the first, second, third and fourth quadrants respectively
	 	into the first, third, second and fourth quadrants. Moreover,
	 	when $u=0$, the transformation makes the whole $\widetilde{z}$--axis {shrinking to} the
	 	origin in the $(u, z)$ plane.
	 	{ 	In other words, all orbit segments in first, second, third and fourth quadrants in the { $(u, z)$  plane}
	 		correspond first, third, second  and fourth quadrants in the {  $(u, \widetilde{z})$   plane}, respectively.}
	 	Further, an orbit with the initial point $(0,\tilde z_0)$ in
	 	the $(u , \widetilde{z})$ plane becomes an orbit connecting $O$ along $\theta=\tilde\theta_0$ in the $(u, z)$ plane.
	 	
	 	Therefore,
	 	blowing down these equilibria to $C$ yields that system \eqref{y} has infinitely many orbits approaching $C$ in respectively the directions $\theta=\pi-\arctan\left(\sqrt{-b/\gamma}\right)$ and $\pi+\arctan\left(\sqrt{-b/\gamma}\right)$   as $\tau\to -\infty$,  a unique orbit approaching  $C$ in respectively the directions $\theta=\arctan\left(\sqrt{-b/\gamma}\right)$ and  $2\pi-\arctan\left(\sqrt{-b/\gamma}\right)$  as $\tau\to -\infty$. Figure \ref{tu9}$(b)$ exhibits the local structure of  $C$ in the $(u, z)$ plane.
	 	
	 	\begin{figure}[htp]
	 		\centering
	 		\subfigure[{$(u,\widetilde{z})$ plane for system \eqref{bb3}}]{
	 			\includegraphics[width=0.35\textwidth]{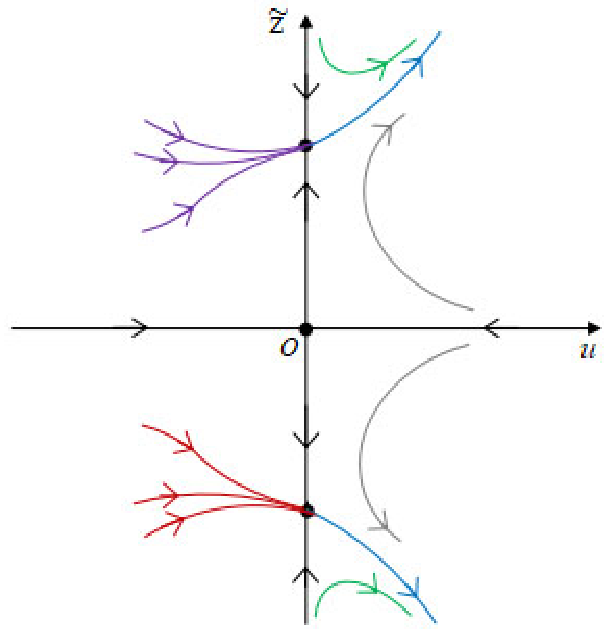}}
	 		\quad
	 		\subfigure[{$(u, z)$ plane for system \eqref{y1}}]{
	 			\includegraphics[width=0.35\textwidth]{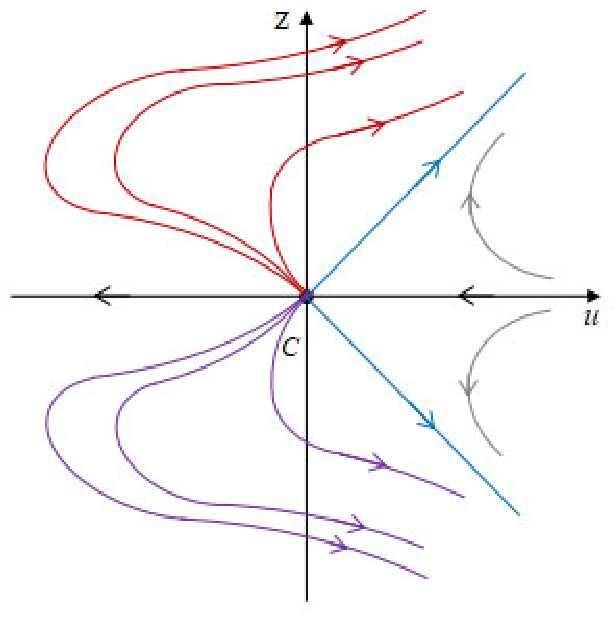}}
	 		\caption{ Orbits  changing under the Briot--Bouquet transformation for $\gamma<0$. }
	 		\label{tu9}
	 	\end{figure}

	 	For $\gamma=0$, the equation $G_1(\theta)=0$ has four roots $\theta=0, \pi/2$, $\pi$,  $3\pi/2$ in $\theta\in[0,2\pi)$. Moreover, $G_1'(0)H_1(0)=G_1'(\pi)H_1(\pi)=-b^2<0$.
	 	By
	 	{ $H_1(0)<0$, $H_1(\pi)>0$  and}
	 	\cite[Theorem 3.7 of  Chapter 2]{ZDHD}, system \eqref{y1} has
	 	a unique orbit approaching  $(0, 0)$ in the direction $\theta = \pi$ as $\tau\to-\infty$,  and 	a unique orbit approaching $(0, 0)$  in the direction $\theta = 0$  as $\tau\to+\infty$. So is $C$.
	 	However,
	 	$H_1(\pi/2)=H_1(3\pi/2)=0$.
	 	We need  Briot--Bouquet transformations to blow up the   directions $\theta=\pi/2$ and $\theta= 3\pi/2$.
	 	
	 	With the Briot--Bouquet transformation $		 				u=\widetilde{u}z,
	 	$
	 	system \eqref{y1} becomes
	 	\begin{equation}
	 	\label{bb4}
	 	\left\{\begin{aligned}
	 	\frac{d\widetilde{u}}{ds}&=(-u_0-\mu)\widetilde{u}z-b\widetilde{u}^2,\\
	 	\frac{dz }{ds}&=-\widetilde{u}z^3-u_0z^2 ,
	 	\end{aligned}
	 	\right.
	 	\end{equation}
	 	where $ds=zd\tau$.
	 	The polar change of variables $(\widetilde{u},z)=(r \cos\theta,r\sin\theta)$ sends system \eqref{bb4} to
	 	\begin{equation}
	 	\notag
	 	\frac{1}{r}\frac{dr}{d\theta}=\frac{H_2(\theta)+\widetilde{H_2}(\theta,r)}{G_2(\theta)+\widetilde{G_2}(\theta,r)},
	 	\end{equation}
	 	where
	 	$$
	 	G_2(\theta)=\sin\theta\cos\theta(b\cos\theta+\mu\sin\theta)
	 	$$
	 	and
	 	$$
	 	H_2(\theta)=-u_0\sin^3\theta+(-u_0-\mu)\sin\theta\cos^2\theta-b\cos^3\theta.
	 	$$
	 	The condition $\gamma=0$ implies $\mu>0$, and so  $G_2(\theta)$ has six zeros $\theta=0$,  $\pi/2$,  $\pi-\arctan(b/\mu)$, $\pi$, $3\pi/2$, $2\pi-\arctan(b/\mu)$ in $[0,2\pi)$.
	 	Moreover, $G_2'(0)H_2(0)=G_2'(\pi)H_2(\pi)=-b^2<0$,  $G_2'(\pi/2)H_2(\pi/2)=G_2'(3\pi/2)H_2(3\pi/2)=u_0\mu<0$, $G_2'(\theta_0)H_2(\theta_0)=G_2'(\pi+\theta_0)H_2(\pi+\theta_0)=-\mu u_0\sin^2\theta_0>0$, where $\theta_0=\pi-\arctan(b/\mu)$. By  \cite[Theorems 3.7 and 3.8  of Chapter 2]{ZDHD}, system \eqref{bb4} has
	 	a unique orbit approaching $(0, 0)$ in respectively the directions $\theta = 0$ and $3\pi/2$  as $s\to+\infty$,
	 	a unique orbit approaching $(0, 0)$ in respectively the
	 	directions $\theta= \pi$ and $\pi/2$  as $s\to-\infty$,
	 	infinitely many orbits approaching $(0, 0)$ in the direction $2\pi-\arctan(b/\mu)$ as $s\to+\infty$, and
	 	infinitely many orbits approaching $(0, 0)$ in the direction $\theta =\pi- \arctan(b/\mu)$ as $s\to-\infty$.
	 	Figure \ref{tu18}$(a)$ illustrates the local structure of $(0, 0)$ for system \eqref{bb4}  in the $(\widetilde{u}, z)$ plane.
	 	It follows that  system \eqref{y} has infinitely many orbits approaching $C$ in respectively the directions $\pi/2$ and $3\pi/2$ as $\tau\to-\infty$. Figure \ref{tu18}$(b)$ exhibits the local qualitative structure of system \eqref{y} at $C$ in the $(u, z)$ plane.
	 	\begin{figure}[htp]
	 		\centering
	 		\subfigure[{$(\widetilde{u}, z)$ plane for system \eqref{bb4}}]{
	 			\includegraphics[width=0.35\textwidth]{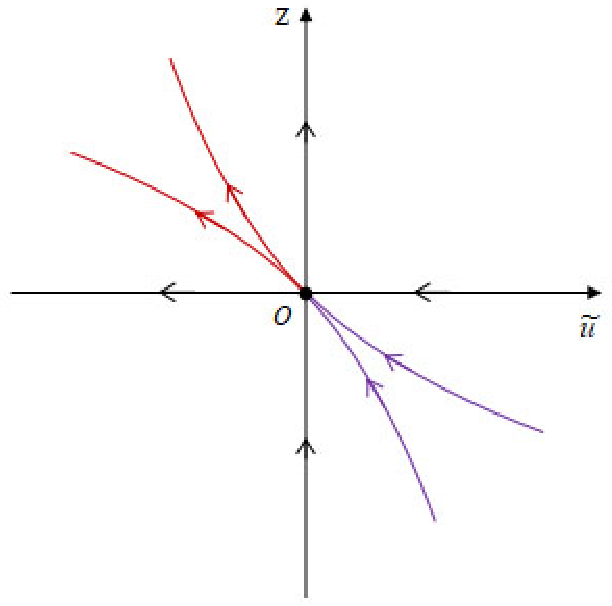}}
	 		\quad
	 		\subfigure[{$(u,z)$ plane for system \eqref{y}}]{
	 			\includegraphics[width=0.35\textwidth]{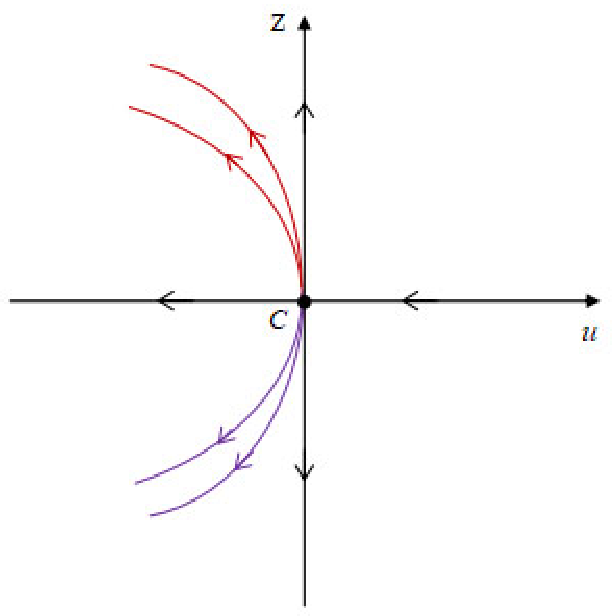}}
	 		\caption{ Orbits  changing under the  Briot--Bouquet transformation for $\gamma=0$.}
	 		\label{tu18}
	 	\end{figure}
	 \end{proof}
	 
	 	By Lemmas
	 \ref{lemy1} and \ref{lem10}, we obtain the local phase portraits of system \eqref{61} at infinity of the Poincar\'e disc, as those shown Figure \ref{tu11}, where $I_{A^\pm}$, $I_{B^\pm}$ and $I_{C^\pm}$ correspond respectively to  equilibria $A$, $B$ and $C$  of system \eqref{y}.
	 \begin{figure}[htp]
	 	\centering
	 	\subfigure[{ $b<0$}]{
	 		\includegraphics[width=0.28\textwidth]{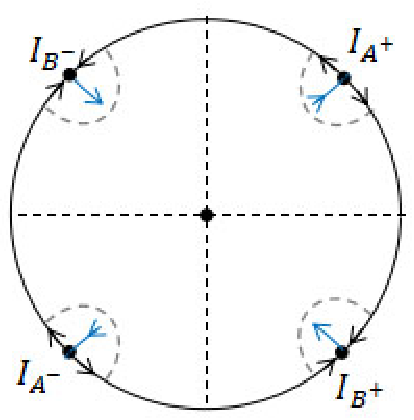}}
	 	\quad
	 	\subfigure[{   $0<b<a^2/4$}]{
	 		\includegraphics[width=0.28\textwidth]{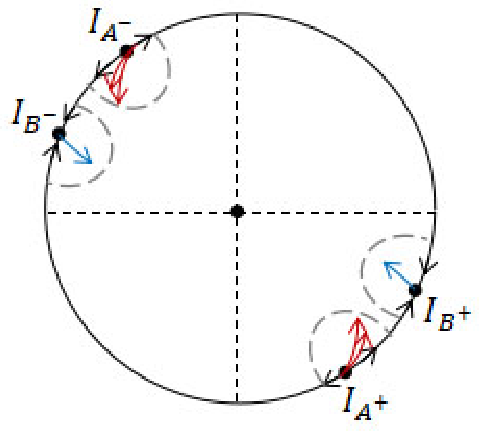}}
	 	\\
	 	\centering
	 	\subfigure[{$b=a^2/4$, $\gamma>0$}]{
	 		\includegraphics[width=0.28\textwidth]{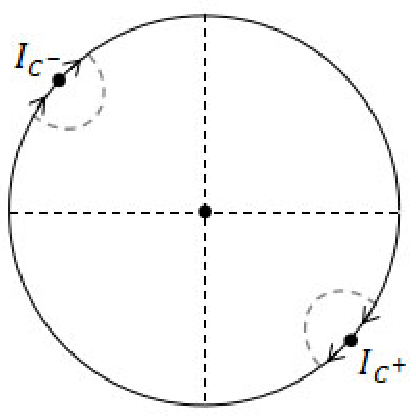}}
	 	\quad
	 	\subfigure[{$b=a^2/4$, $\gamma<0$}]{
	 		\includegraphics[width=0.28\textwidth]{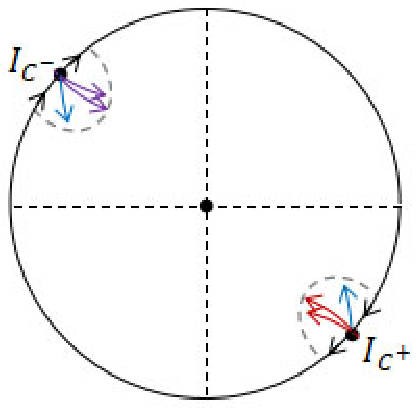}}
	 	\quad
	 	\subfigure[{ $b=a^2/4$, $\gamma=0$}]{
	 		\includegraphics[width=0.28\textwidth]{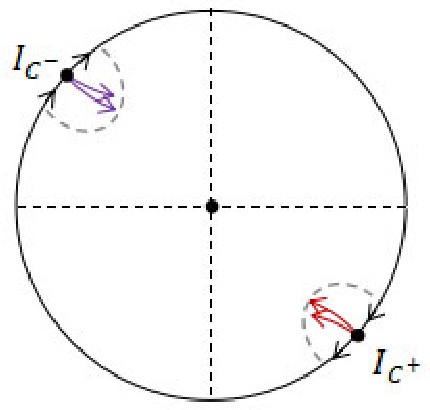}}
	 	\caption{{Locally qualitative property of} equilibria at infinity $I_{A^{\pm}}$, $I_{B^{\pm}}$ and $I_{C^{\pm}}$   in the Poincar\'e disc for $c=0$.}
	 	\label{tu11}
	 \end{figure}

	 
	 \begin{lemma}\label{lemma}
	 	When $c>0$, the  equilibria of system \eqref{y} is shown in the following Table \ref{NE}.
	 	\begin{table}[htp]
	 		\renewcommand\arraystretch{1.5}
	 		\setlength{\tabcolsep}{5mm}{
	 			\caption{\label{NE}The equilibria of system \eqref{y} for $c>0$.}
	 			
	 			\begin{tabular}{c|c|c|c}
	 				\hline
	 				\multicolumn{3}{c|}
	 				{Cases of parameters} & Equilibria
	 				\\
	 				\hline
	 				\multicolumn{3}{c|}
	 				{$b\in\left[-\sqrt{3ac},0\right)\cup\left(0,\sqrt{3ac}\right ]$} & $E:(u_3,0)$
	 				\\
	 				\hline
	 				\multirow{3}{*}
	 				{\begin{tabular}[c]{@{}c@{}}
	 						$b\in\left(-\infty,-\sqrt{3ac}\right)$
	 				\end{tabular}}
	 				& \multicolumn{2}{c|}{$\Phi(\varrho_2)<0$} & $F_1:(u_4,0)$, $F_2:(u_5,0)$, $F_3:(u_6,0)$
	 				\\
	 				\cline{2-4}
	 				& \multicolumn{2}{c|}{$\Phi(\varrho_2)=0$} & $F:(u_7,0)$, $Q:(\varrho_2,0)$
	 				\\
	 				\cline{2-4}
	 				& \multicolumn{2}{c|}{$\Phi(\varrho_2)>0$} & $E:(u_3,0)$
	 				\\
	 				\hline
	 				\multirow{5}{*}
	 				{\begin{tabular}[c]{@{}c@{}}
	 						$b\in\left(\sqrt{3ac},+\infty\right)$
	 				\end{tabular}}
	 				& \multicolumn{2}{c|}{$\Phi(\varrho_1)<0$} & $E:(u_3,0)$
	 				\\
	 				\cline{2-4}
	 				&\multicolumn {2}{c|}{$\Phi(\varrho_1)=0$} & $K:(\varrho_1,0)$,  $F:(u_7,0)$
	 				\\
	 				\cline{2-4}
	 				& \multirow{3}{*}{\begin{tabular}[c]{@{}c@{}} $\Phi(\varrho_1)>0$\end{tabular}} & $\Phi(\varrho_2)>0$ & $E:(u_3,0)$
	 				\\
	 				\cline{3-4}
	 				&  & $\Phi(\varrho_2)=0$ & $F:(u_7,0)$, $Q:(\varrho_2,0)$
	 				\\
	 				\cline{3-4}
	 				&  & $\Phi(\varrho_2)<0$ & $F_1:(u_4,0)$, $F_2:(u_5,0)$, $F_3:(u_6,0)$
	 				\\
	 				\hline
	 		\end{tabular}}
	 		\begin{center}
	 			Remark :  $\varrho_1:=(-b-\sqrt{b^2-3ac})/3c$, $\varrho_2:=(-b+\sqrt{b^2-3ac})/3c$, and $u_3,\cdots, u_7$ are probable zeros of $\Phi(u)=cu^3+bu^2+au+1$.
	 		\end{center}
	 	\end{table}
	 \end{lemma}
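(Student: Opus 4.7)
My plan is to reduce the lemma to the enumeration of real roots of the cubic $\Phi(u)=cu^3+bu^2+au+1$ and then carry out a case analysis based on the critical points of $\Phi$. I will first verify that every equilibrium of system \eqref{y} lies on the line $z=0$. Setting the right-hand sides to zero, the second equation $-z^3u=0$ forces $z=0$ or $u=0$. If $z=0$, the first equation collapses to $\Phi(u)=0$; if $z\neq 0$ and $u=0$, then the first equation becomes $-z^2\lambda-1=0$, giving $z^2=-1/\lambda$, which has no real solution under the standing assumption $\lambda\geqslant 0$. Hence the equilibria of \eqref{y} are exactly the points $(u^*,0)$ with $\Phi(u^*)=0$.

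Next, I will analyse the cubic $\Phi$. Since $c>0$, the derivative $\Phi'(u)=3cu^2+2bu+a$ has discriminant $4(b^2-3ac)$, and its real roots (when they exist) are $\varrho_1=(-b-\sqrt{b^2-3ac})/(3c)$ and $\varrho_2=(-b+\sqrt{b^2-3ac})/(3c)$ with $\varrho_1<\varrho_2$. A quick check of $\Phi''$ shows $\varrho_1$ is a local maximum and $\varrho_2$ is a local minimum. This, together with $\Phi(-\infty)=-\infty$, $\Phi(+\infty)=+\infty$ and $\Phi(0)=1>0$, splits the analysis into the monotone case $b^2\leqslant 3ac$ (where $\Phi$ is nondecreasing, hence has a unique real root $u_3$, called $E$) and the strictly non-monotone case $b^2>3ac$.

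In the non-monotone case I split further on the sign of $b$. When $b>\sqrt{3ac}$, both $\varrho_1,\varrho_2$ are negative (since $\sqrt{b^2-3ac}<b$), so $\Phi(\varrho_2)<\Phi(0)=1$, but the sign of $\Phi(\varrho_1)$ is free. I then tabulate:
\begin{itemize}
\item $\Phi(\varrho_1)<0$: one simple root $u_3$, giving $E$;
\item $\Phi(\varrho_1)=0$: $\varrho_1$ is a double root (giving $K$) and one additional simple root $u_7$ (giving $F$);
\item $\Phi(\varrho_1)>0$: then the number of simple roots depends on $\Phi(\varrho_2)$: three if $\Phi(\varrho_2)<0$ (points $F_1,F_2,F_3$), two if $\Phi(\varrho_2)=0$ (the double root at $\varrho_2$ giving $Q$ and the simple root $u_7$ giving $F$), one if $\Phi(\varrho_2)>0$ (point $E$).
\end{itemize}
When $b<-\sqrt{3ac}$, both critical points are positive and $\Phi(\varrho_1)>\Phi(0)=1>0$ automatically, so only $\Phi(\varrho_2)$ is relevant; the same trichotomy yields the three rows of the table under $b<-\sqrt{3ac}$.

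I expect no serious obstacles: the argument is a careful bookkeeping exercise, and the main care point is to keep the ordering $\varrho_1<\varrho_2$ consistent with the max/min convention while translating each subcase into the labels ($E$, $F$, $F_1,F_2,F_3$, $K$, $Q$) used in Table \ref{NE}, and to confirm in each subcase that the sign constraints listed in the table match those arising from the cubic-root count above.
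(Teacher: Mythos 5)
Your proposal is correct and follows essentially the same route as the paper's proof, which likewise reduces the lemma to counting real zeros of $\Phi$ via the monotonicity of $\Phi$ determined by $\Phi'(u)=3cu^2+2bu+a$ and the signs of the critical values $\Phi(\varrho_1)$, $\Phi(\varrho_2)$; your write-up merely spells out the case bookkeeping that the paper leaves implicit and adds the (harmless, slightly more careful) verification that no equilibria of \eqref{y} occur off $z=0$ when $\lambda\geqslant 0$.
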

	 
	 \begin{proof}
	 	When $b\in\left[-\sqrt{3ac},0\right)\cup\left(0,\sqrt{3ac}\right]$, $\Phi'(u)=3cu^2+2bu+a\geqslant0$ holds identically. Thus,  $\Phi(u)$  is  increasing and has a unique zero.   When $b\in\left(-\infty,-\sqrt{3ac}\right)\cup\left(\sqrt{3ac},+\infty\right)$, $\Phi'(u)=3cu^2+2bu+a=0$ has two roots
	 	$$
	 	\varrho_1=\frac{-b-\sqrt{b^2-3ac}}{3c}, \quad \varrho_2=\frac{-b+\sqrt{b^2-3ac}}{3c}.
	 	$$
	 	Therefore,
	 	$\Phi(u)$  is  increasing in $(-\infty,\varrho_1)$, $(\varrho_2,+\infty)$ and decreasing in $(\varrho_1,\varrho_2)$. The zeros of $\Phi(u)$ are determined by discussing the signs of $\Phi(\varrho_1)$ and $\Phi(\varrho_2)$.  The proof is finished.
	 \end{proof}
	 
	 We study further the qualitative properties of the equilibria  in Lemma \ref{lemma}.

	 \begin{lemma}\label{E}
	 	When $b\in[-\sqrt{3ac},0)\cup(0,\sqrt{3ac}]$,  $E$ is a saddle.
	 \end{lemma}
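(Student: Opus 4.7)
The plan is to recognize $E=(u_3,0)$ as a semi--hyperbolic equilibrium of \eqref{y} and then determine its topological type using Theorem \ref{thm7.1} of Appendix B, following the same template as in the proof of Lemma \ref{lemy1}.

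First I would record basic properties of $u_3$. Under $b\in[-\sqrt{3ac},0)\cup(0,\sqrt{3ac}]$ one has $b^2\leqslant 3ac$, so $\Phi'(u)=3cu^2+2bu+a\geqslant 0$ on $\mathbb{R}$ and $\Phi$ is monotonic. Since $\Phi(0)=1>0$ and $c>0$, its unique real root $u_3$ is negative. Moreover $\Phi'(u_3)>0$: otherwise $u_3$ would be a multiple real root of the cubic $\Phi$, and $\Phi$ having only the single real root $u_3$ would force $u_3$ to be a triple root, contradicting $\Phi(0)=1\neq 0$. Computing the Jacobian of \eqref{y} at $E$ and using $\Phi(u_3)=0$, one gets $J_E=\mathrm{diag}(-\Phi'(u_3),0)$ with $-\Phi'(u_3)<0$, so $E$ is semi--hyperbolic.

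Next I would translate $E$ to the origin by $w=u-u_3$ and rescale time via $d\tilde\tau=-\Phi'(u_3)\,d\tau$ (which only reverses orientation, an operation that preserves the saddle property). Writing $(x,y):=(z,w)$, the system takes the form
\begin{equation*}
\frac{dx}{d\tilde\tau}=\frac{u_3+y}{\Phi'(u_3)}x^3=:P_2(x,y),\qquad \frac{dy}{d\tilde\tau}=y+Q_2(x,y),
\end{equation*}
where $Q_2$ is an explicit polynomial starting at order two, containing notably the pure $x^2$ term $(u_3^2+\mu u_3+\lambda)x^2/\Phi'(u_3)$. This matches precisely the hypotheses of Theorem \ref{thm7.1}.

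Finally I would apply Theorem \ref{thm7.1}. The implicit function theorem solves $y+Q_2(x,y)=0$ as $y=\phi(x)=O(x^2)$, and substitution gives
\begin{equation*}
\psi(x)=P_2(x,\phi(x))=\frac{u_3}{\Phi'(u_3)}x^3+O(x^4),
\end{equation*}
so $m=3$ and $a_m=u_3/\Phi'(u_3)<0$. Item (ii) of Theorem \ref{thm7.1} then declares the origin of the transformed system a saddle, whence $E$ is a saddle for \eqref{y}. The main (but minor) obstacle is to verify $\Phi'(u_3)>0$ uniformly on the closed parameter region, in particular at the endpoints $b=\pm\sqrt{3ac}$ where $\Phi'$ acquires a double zero at $-b/(3c)$; the triple--root argument above rules out $u_3$ landing on that critical point. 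The remainder is routine bookkeeping of lowest--order coefficients.
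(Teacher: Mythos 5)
Your treatment of the semi--hyperbolic case is correct and is essentially the paper's argument (translate $E$ to the origin, solve the hyperbolic equation by the implicit function theorem, read off $\psi(x)=u_3x^3/\Phi'(u_3)+O(x^4)$ with $u_3<0$, and invoke Theorem \ref{thm7.1}(ii)). The gap is in your claim that $\Phi'(u_3)>0$ holds throughout the closed parameter region. Your own reduction is right up to the last step: if $\Phi'(u_3)=0$ then, since $\Phi$ is monotone under $b^2\leqslant 3ac$, the root $u_3$ must be a \emph{triple} root. But a triple root does \emph{not} contradict $\Phi(0)=1$. Indeed $\Phi(u)=c(u-u_3)^3$ forces only $-cu_3^3=1$, i.e. $u_3=-c^{-1/3}\neq 0$, which is perfectly consistent; the corresponding parameters are $a=3c^{1/3}>0$, $b=3c^{2/3}>0$, and then $b=\sqrt{3ac}$ exactly, so this configuration lies at the included endpoint of the hypothesis $b\in[-\sqrt{3ac},0)\cup(0,\sqrt{3ac}]$. (At the other endpoint $b=-\sqrt{3ac}$ the critical point $-b/(3c)$ of $\Phi'$ is positive while $u_3<0$, so no degeneracy occurs there.)

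Consequently the case $3cu_3^2+2bu_3+a=0$ genuinely occurs and cannot be dispatched by Theorem \ref{thm7.1}: there $J_E$ is the zero matrix and $E$ is a fully degenerate equilibrium. This case is in fact the bulk of the paper's proof of the lemma: one passes to polar coordinates, separates the subcases $u_3^2+\mu u_3+\lambda=0$ and $\neq 0$, and in the latter performs two successive Briot--Bouquet blow--ups (systems \eqref{bb5}--\eqref{bb7}) before concluding that $E$ is a degenerate saddle. Your proposal is missing this entire branch, so as written it does not prove the statement on all of the hypothesized parameter set.
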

	 \begin{proof}\rm
	 	Clearly,	the  Jacobian matrix at $E$ is
	 	$$
	 	J_E:=	\begin{pmatrix}
	 	-3cu_3^2-2bu_3-a&0\\
	 	0&0\\
	 	\end{pmatrix}.
	 	$$
	 	When $3cu_3^2+2bu_3+a>0$, $E$ is a semi-hyperbolic equilibrium.   With a transformation $(u,z)\to (u+u_3,z)$,
	 	system \eqref{y} is changed into
	 	\begin{equation}
	 	\label{yyy}\left\{\begin{aligned}
	 	\frac{du}{d\tau}=&-z^2\left(u^2+(\mu+2u_3) u+u_3^2+\mu u_3+\lambda\right)-cu^3-(3cu_3+b)u^2
	 	\\
	 	&-(3cu_3^2+2bu_3+a)u=:P_3(u,z),
	 	\\
	 	\frac{dz}{d\tau}=&-z^3u-u_3z^3=:Q_3(u,z).
	 	\end{aligned}
	 	\right.
	 	\end{equation}
	 	By the implicit function theorem, $P_3(u,z)=0$
	 	has a unique root $u=\phi_3(z)=-(u_3^2+\mu u_3+\lambda)/(3cu_3^2+2bu_3+a)z^2+o(z^2)$ for small $|z|$.
	 	Thus,
	 	$$
	 	Q_3(\phi_3(z),z)=-u_3 z^3+o(z^3).
	 	$$
	 	By Theorem \ref{thm7.1} of Appendix B and $u_3<0$, $E$ is a saddle.
	 	
	 	When $3cu_3^2+2bu_3+a=0$, $E$ is a degenerate equilibrium and
	 	system \eqref{yyy} can be simplified as
	 	\begin{equation}
	 	\label{y2}
	 	\left\{\begin{aligned}
	 	\frac{du}{d\tau}&=-z^2(u^2+(\mu+2u_3) u+u_3^2+\mu u_3+\lambda)-cu^3,\\
	 	\frac{dz}{d\tau}&=-z^3u-u_3z^3.
	 	\end{aligned}
	 	\right.
	 	\end{equation}

	 	Firstly,
	 	consider $u_3^2+\mu u_3+\lambda=0$.
	 	Then,	system \eqref{y2} is simplified as
	 	\begin{equation}
	 	\label{y7}
	 	\left\{\begin{aligned}
	 	\frac{du}{d\tau}&=-z^2(u^2+(\mu+2u_3) u)-cu^3,\\
	 	\frac{dz}{d\tau}&=-z^3u-u_3z^3.
	 	\end{aligned}
	 	\right.
	 	\end{equation}
	 	Using a polar coordinate $(u,z)=(r\cos\theta, z=r\sin\theta)$,
	 	system \eqref{y7} is transformed into the polar form
	 	\begin{equation}
	 	\notag
	 	\frac{1}{r}\frac{dr}{d\theta}=\frac{H_3(\theta)+\widetilde{H_3}(\theta,r)}{G_3(\theta)+\widetilde{G_3}(\theta,r)},
	 	\end{equation}
	 	where
	 	$$
	 	G_3(\theta)=\sin\theta\cos\theta\left((\mu+u_3)\sin^2\theta+c\cos^2\theta\right)$$
	 	and
	 	$$
	 	H_3(\theta)=-u_3\sin^4\theta-(\mu+2u_3)\cos^2\theta\sin^2\theta-c\cos^4\theta.
	 	$$
	 	It follows from $u_3^2+\mu u_3+\lambda=0$ that $\mu+u_3>0$ (resp. $=0$) if $\lambda>0$ (resp. $=0$). Thus, $G_3(\theta)$ has four zeros $\theta=0$, $\pi/2$, $\pi$, $3\pi/2$ in $[0, 2\pi)$,
	 	where $0,\pi$ are simple zeros, $\pi/2, 3\pi/2$ are simple zeros (resp. zeros of three-multiple) for $\mu+u_3>0$
	 	(resp. $=0$).	
	 	We can check that
	 	$G_3'(0)H_3(0)=G_3'(\pi)H_3(\pi)=-c^2<0$.
	 	Moreover, we can obtain
	 	\[
	 	G_3'(\pi/2)H_3(\pi/2)=G_3'(3\pi/2)H_3(3\pi/2)=u_3(\mu+u_3)<0	
	 	\] for $\mu+u_3>0$
	 	and
	 	\[
	 	G_3'''(\pi/2)H_3(\pi/2)=G_3'''(3\pi/2)H_3(3\pi/2)=6cu_3<0
	 	\] for $\mu+u_3=0$.
	 	By \cite[ Theorem 3.7 of  Chapter 2 ]{ZDHD},
	 	there is a unique orbit  approaching $ (0,0)$ in respectively  the directions $\theta=\pi/2$, $3\pi/2$  as $\tau\to-\infty$,  and a unique orbit approaching $ (0,0)$ in respectively  the directions $\theta=0$, $\pi$ as $\tau\to+\infty$
	 	in system \eqref{y2}. In other words, $E$ is a degenerate saddle of system \eqref{y}.
	 	
	 	Secondly, consider $u_3^2+\mu u_3+\lambda\ne0$. Then, applying the polar transformation $(u,z)=(r\cos\theta, r\sin\theta)$ to system \eqref{y2}, we still get	
	 	\begin{equation}
	 	\notag
	 	\frac{1}{r}\frac{dr}{d\theta}=\frac{H_4(\theta)+\widetilde{H_4}(\theta,r)}{G_4(\theta)+\widetilde{G_4}(\theta,r)},
	 	\end{equation}
	 	where $G_4(\theta)=(u_3^2+\mu u_3+\lambda)\sin^3\theta$ and $H_4(\theta)=-(u_3^2+\mu u_3+\lambda)\cos\theta\sin^2\theta$.
	 	It is easy to check that $G_4(\theta)$ has two zeros $\theta=0$, $\pi$ in $[0,2\pi)$ and  $H_4(0)=H_4(\pi)=0$.
	 	Therefore, we need to desingularize further the degenerate equilibrium.		
	 	With
	 	the Briot--Bouquet transformation $
	 	z=\widetilde{z} u,
	 	$
	 	system \eqref{y2}
	 	becomes
	 	\begin{equation}
	 	\label{bb5}
	 	\left\{\begin{aligned}
	 	\frac{du}{d\delta}&=-\widetilde{z}^2u(u^2+(\mu+2u_3) u+u_3^2+\mu u_3+\lambda)-cu^2,\\
	 	\frac{d\widetilde{z} }{d\delta}&=(\mu+u_3)\widetilde{z}^3u+(u_3^2+\mu u_3+\lambda)\widetilde{z}^3+c\widetilde{z}u,
	 	\end{aligned}
	 	\right.
	 	\end{equation}
	 	where $d\delta=ud\tau$.   Since the origin  of system \eqref{bb5} is still degenerate, we repeat
	 	the aforementioned analysis steps.
	 	With a polar transformation $(u,\widetilde{z})=(r\cos\theta,r\sin\theta)$,
	 	system \eqref{bb5} is written as
	 	\begin{equation}
	 	\notag
	 	\frac{1}{r}\frac{dr}{d\theta}=\frac{H_5(\theta)+\widetilde{H_5}(\theta,r)}{G_5(\theta)+\widetilde{G_5}(\theta,r)},
	 	\end{equation}
	 	where $G_5(\theta)=2c\sin\theta\cos^2\theta$ and $H_5(\theta)=c\sin^2\theta\cos\theta-c\cos^3\theta$. It is easy to check that $\theta= 0$, $\pi/2$, $\pi$, $3\pi/2$ are four roots of $G_5(\theta)=0$ in $[0,2\pi)$. Moreover, $G_5'(0)H_5(0)=G_5'(\pi)H_5(\pi)=-2c^2<0$. Thus, there is a unique orbit approaching $ (0,0)$ in  respectively  the direction $\theta= 0$, $\pi$
	 	as $\delta\to+\infty$, and a unique orbit approaching $ (0,0)$ in   respectively the direction $\theta=\pi$
	 	as $\delta\to-\infty$ in system \eqref{bb5} by    applying  \cite[Theorem 3.7 of Chapter 2 ]{ZDHD}. For $\theta= \pi/2$, $3\pi/2$, we need to do more desingularisations because $H_5(\pi/2)=H_5(3\pi/2)=0$.
	 	
	 	With
	 	the second  Briot--Bouquet transformation
	 	$ u=\widetilde{u}\widetilde{z}$ ,
	 	system \eqref{bb5} is rewritten as
	 	\begin{equation}
	 	\label{bb6}
	 	\left\{\begin{aligned}
	 	\frac{d\widetilde{u}}{d\sigma}&=-\widetilde{u}^3\widetilde{z}^3-(2\mu+3u_3) \widetilde{u}^2\widetilde{z}^2-2(u_3^2+\mu u_3+\lambda)\widetilde{u}\widetilde{z} -2c\widetilde{u}^2,\\
	 	\frac{d\widetilde{z} }{d\sigma}&=(\mu+u_3)\widetilde{z}^3\widetilde{u}+(u_3^2+\mu u_3+\lambda)\widetilde{z}^2+c\widetilde{z}\widetilde{u},
	 	\end{aligned}
	 	\right.
	 	\end{equation}
	 	where $d\sigma=\widetilde{z}d\delta$. Similarly, transforming system \eqref{bb6} into  equation
	 	\begin{equation}
	 	\notag
	 	\frac{1}{r}\frac{dr}{d\theta}=\frac{H_6(\theta)+\widetilde{H_6}(\theta,r)}{G_6(\theta)+\widetilde{G_6}(\theta,r)},
	 	\end{equation}
	 	by a polar transformation $(\widetilde{u},\widetilde{z}) =(r\cos\theta,r\sin\theta)$, we obtain that
	 	$$
	 	G_6(\theta)= 3\sin\theta\cos\theta\left((u_3^2+\mu u_3+\lambda)\sin\theta+c\cos\theta\right) $$
	 	and
	 	$$
	 	H_6(\theta)=(u_3^2+\mu u_3+\lambda)\sin^3\theta+c\sin^2\theta\cos\theta-2(u_3^2+\mu u_3+\lambda)\sin\theta\cos^2\theta-2c\cos^3\theta.
	 	$$
	 	Thus, $G_6(\theta)=0$ has six roots  $\theta=0$, $\pi/2$, $\pi$, $3\pi/2$, $\arctan(-c/(u_3^2+\mu u_3+\lambda))$ ,  $\pi+\arctan(-c/(u_3^2+\mu u_3+\lambda))$ for $u_3^2+\mu u_3+\lambda<0$  and six roots
	 	$\theta=0$, $\pi/2$, $\pi$, $3\pi/2$,  $\pi-\arctan(c/(u_3^2+\mu u_3+\lambda))$,   $2\pi-\arctan(c/(u_3^2+\mu u_3+\lambda))$ for $u_3^2+\mu u_3+\lambda>0$ in $[0,2\pi)$.
	 	Then, we can check that  $G_6'(0)H_6(0)=G_6'(\pi)H_6(\pi)=-6c^2<0$, $G_6'(\pi/2)H_6(\pi/2)=G_6'(3\pi/2)H_6(3\pi/2)=-3(u_3^2+\mu u_3+\lambda)^2<0$.
	 	However,
	 	\[
	 	H_6\left(\arctan\left(-\frac{c}{u_3^2+\mu u_3+\lambda}\right)\right)=H_6\left(\pi+\arctan\left(-\frac{c}{u_3^2+\mu u_3+\lambda}\right)\right)=0
	 	\]
	 	for $u_3^2+\mu u_3+\lambda<0$
	 	and
	 	\[
	 	H_6\left(\pi-\arctan\left(\frac{c}{u_3^2+\mu u_3+\lambda}\right)\right)= H_6\left(2\pi-\arctan\left(\frac{c}{u_3^2+\mu u_3+\lambda}\right)\right)=0
	 	\]
	 	for $u_3^2+\mu u_3+\lambda>0$.
	 	Therefore, we need to desingularize further the degenerate equilibrium.
	 	
	 	Using the third transformation
	 	$
	 	\widetilde{z}=\left(\widetilde{\widetilde{z}}-z_2\right)\widetilde{u}
	 	$
	 	with $z_2=c/(u_3^2+\mu u_3+\lambda)$, we change system \eqref{bb6} into	
	 	\begin{equation}
	 	\label{bb7}
	 	\left\{\begin{aligned}
	 	\frac{d\widetilde{u}}{d\upsilon}&=-2(u_3^2+\mu u_3+\lambda)\widetilde{u}\widetilde{\widetilde{z}}-(\widetilde{\widetilde{z}}-z_2)^3\widetilde{u}^5-(2\mu+3u_3)(\widetilde{\widetilde{z}}-z_2)^2\widetilde{u}^3=:P_4(\widetilde{u},\widetilde{\widetilde{z}}),
	 	\\
	 	\frac{d\widetilde{\widetilde{z}} }{d\upsilon}&=-3c\widetilde{\widetilde{z}}
	 	+3(u_3^2+\mu u_3+\lambda)\widetilde{\widetilde{z}}^2+(3\mu+4u_3)(\widetilde{\widetilde{z}}-z_2)^3\widetilde{u}^2+(\widetilde{\widetilde{z}}-z_2)^4\widetilde{u}^4=:Q_4(\widetilde{u},\widetilde{\widetilde{z}}),
	 	\end{aligned}
	 	\right.
	 	\end{equation}
	 	where $d\upsilon=\widetilde{u}d\sigma$.
	 	On the one hand,   By the implicit function theorem,  $Q_4(\widetilde{u},\widetilde{\widetilde{z}})=0$ has a unique root $\widetilde{\widetilde{z}}=\phi_4(\widetilde{u})=-(3\mu+4u_3)z_2^3/(3c)\widetilde{u}^2+o(\widetilde{u}^2)$  for small $|\widetilde{u}|$. Thus,
	 	\begin{equation}
	 	\notag
	 	\begin{aligned}
	 	P_4(\widetilde{u},\phi_4(\widetilde{u})) =-\frac{u_3}{3}z_2^2\widetilde{u}^3+o(\widetilde{u}^3).
	 	\end{aligned}
	 	\end{equation}
	 	By Theorem \ref{thm7.1}  of Appendix B,  the orgin of system \eqref{bb7} is a saddle.  On the other hand, it is easy to check $(0,z_2)$ of  system \eqref{bb7} is a hyperbolic saddle. Thus,
	 	we can obtain  the qualitative properties of system \eqref{bb7}, as shown in Figure \ref{tu7} (a) (resp. Figure \ref{tu8} (a)) when $u_3^2+\mu u_3+\lambda>0$ (resp. $u_3^2+\mu u_3+\lambda<0$). Further, we obtain the qualitative properties of $(0, 0)$ in the $(\widetilde{u},\widetilde{z})$
	 	plane for system \eqref{bb6} and in the  $(u, \widetilde{z})$ plane
	 	for  system \eqref{bb5}, respectively.
	 	See Figures \ref{tu7} (b) and (c)  and Figures  \ref{tu8} (b) and (c).
	 	Finally, $E$ of  system \eqref{y} is a degenerate saddle, as shown in Figures \ref{tu7} (d) and \ref{tu8} (d).
	 	\begin{figure}[htp]
	 		\centering
	 		\subfigure[{$(\widetilde{u},\widetilde{\widetilde{z}})$ plane for system \eqref{bb7}}]{
	 			\includegraphics[width=0.2\textwidth]{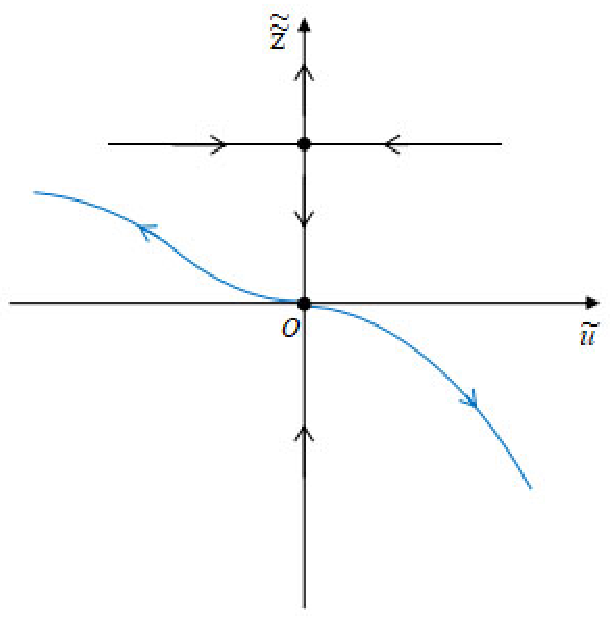}}
	 		\quad
	 		\subfigure[{$(\widetilde{u},\widetilde{z})$  plane for system \eqref{bb6}}]{
	 			\includegraphics[width=0.2\textwidth]{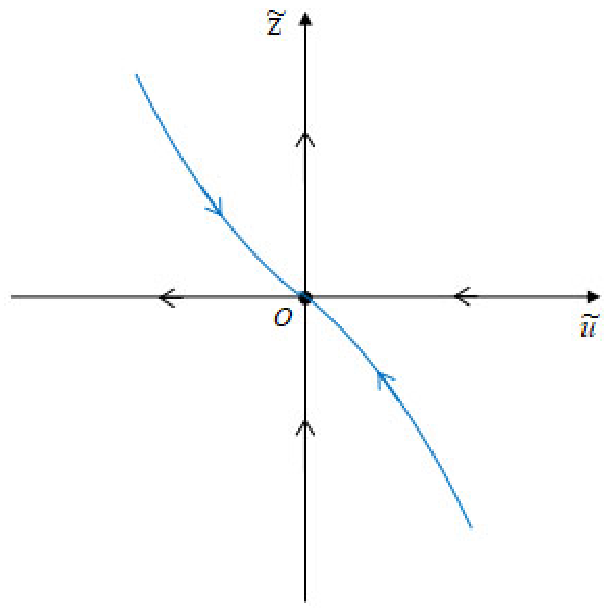}}
	 		\quad
	 		\subfigure[{$(u,\widetilde{z})$ plane for system \eqref{bb5}}]{
	 			\includegraphics[width=0.2\textwidth]{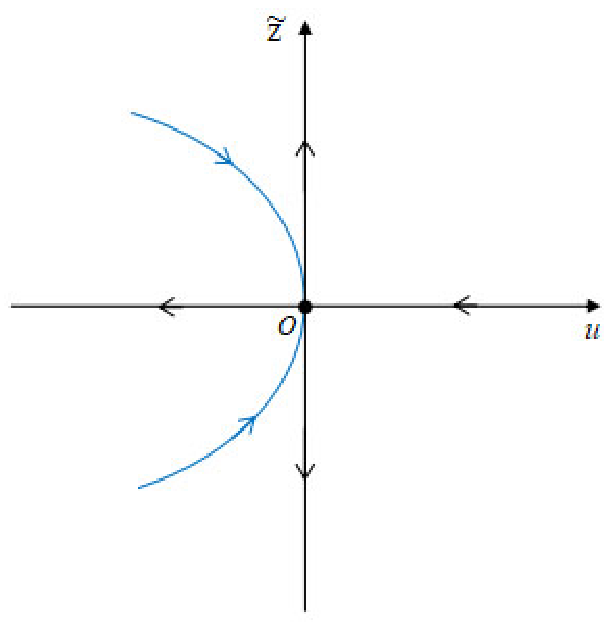}}
	 		\quad
	 		\subfigure[{$(u,z)$ plane for system \eqref{y}}]{
	 			\includegraphics[width=0.2\textwidth]{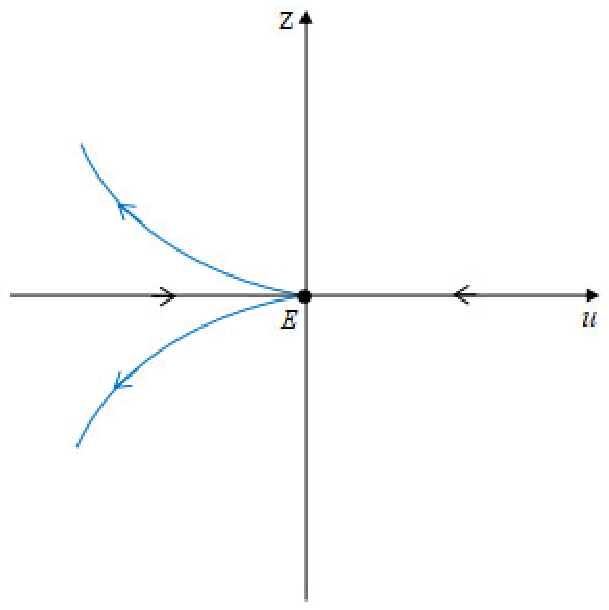}}
	 		\caption{ Orbits changing under Briot--Bouquet transformations when $u_3^2+\mu u_3+\lambda>0$. }\label{tu7}
	 	\end{figure}
	 	\begin{figure}[htp]
	 		\centering
	 		\subfigure[{$(\widetilde{u},\widetilde{\widetilde{z}})$ plane for system \eqref{bb7}}]{
	 			\includegraphics[width=0.2\textwidth]{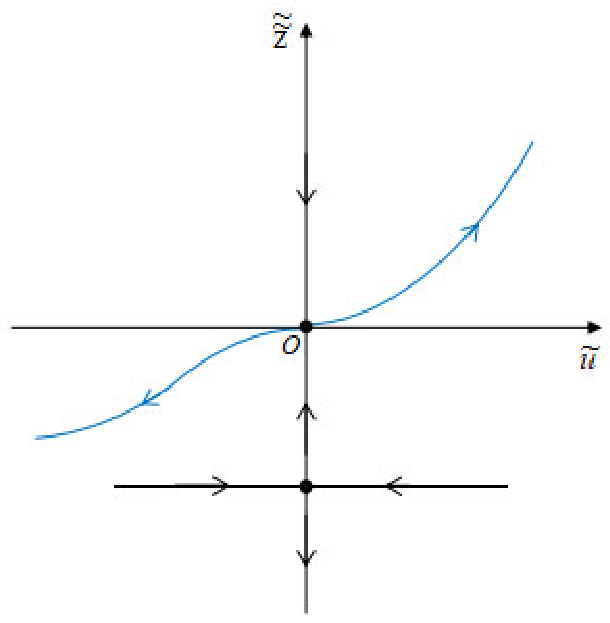}}
	 		\quad
	 		\subfigure[{$(\widetilde{u},\widetilde{z})$ plane for system \eqref{bb6}}]{
	 			\includegraphics[width=0.2\textwidth]{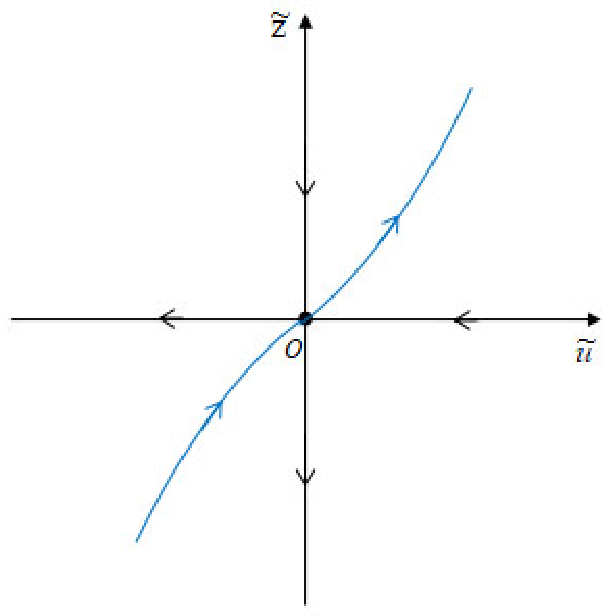}}
	 		\quad
	 		\subfigure[{$(u,\widetilde{z})$ plane for system \eqref{bb5}}]{
	 			\includegraphics[width=0.2\textwidth]{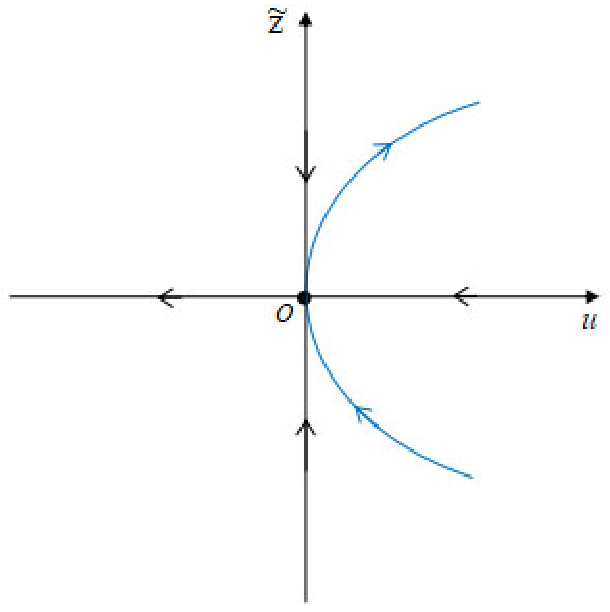}}
	 		\quad
	 		\subfigure[{$(u,z)$ plane for system \eqref{y}}]{
	 			\includegraphics[width=0.2\textwidth]{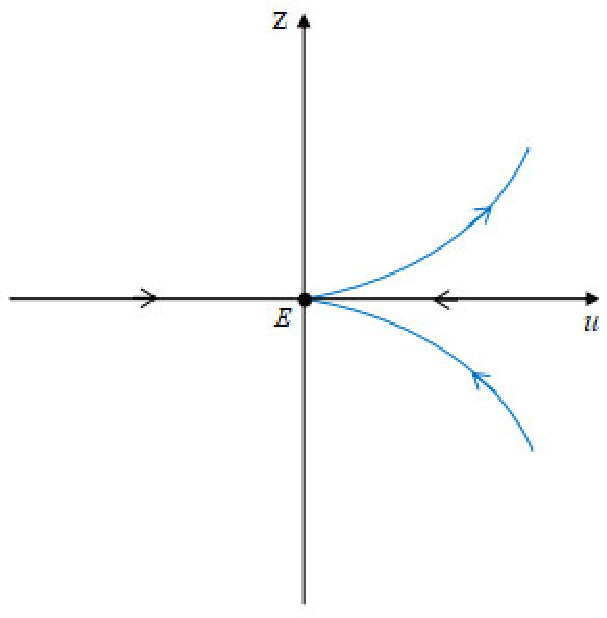}}
	 		\caption{ Orbits changing under Briot--Bouquet transformations when $u_3^2+\mu u_3+\lambda<0$. }\label{tu8}
	 	\end{figure}
	 \end{proof}
	 By Lemma \ref{E},  when $b\in[-\sqrt{3ac},0)\cup(0,\sqrt{3ac}]$, the qualitative properties of equilibria $I_{E^\pm}$  at infinity in the Poincar\'e disc of system \eqref{61}, which correspond the equilibrium $E$ of system \eqref{y},  are as shown in Figure \ref{tu14} {(which illustrate only the local topological structure at $I_{E^+}$ and $I_{E^-}$, there have other equilibria between them, without marked there)}.
	 \begin{figure*}[htp]
	 	\centering
	 	\includegraphics[ height=5cm,width=5cm]{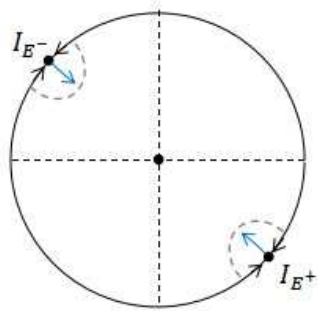}
	 	\caption{{ Locally qualitative property of} equilibria  at  infinity $I_{E^{\pm}}$  in the Poincar\'e disc for $b\in[-\sqrt{3ac},0)\cup(0,\sqrt{3ac}]$ {(other equilibria between them are not marked here).} }
	 	\label{tu14}
	 \end{figure*}

 \begin{lemma}
 	\label{FQ}
 	Consider $b\in\left(-\infty,-\sqrt{3ac}\right)$. Then, $F_1$ and $F_2$ are saddles, and $F_3$ is a stable node  for $\Phi(\varrho_2)<0$; $F$ is a saddle, and $Q$ is a degenerate equilibrium  for $\Phi(\varrho_2)=0$; $E$ is a saddle for $\Phi(\varrho_2)>0$. Moreover,  there is a unique orbit approaching $Q$ along $\theta=0$ as $\tau\to+\infty$,
 	a unique orbit approaching $Q$ along $\theta=\pi$ as $\tau\to-\infty$, and no orbit connecting $Q$ along other directions when  $\Phi(\varrho_2)=0$.
 \end{lemma}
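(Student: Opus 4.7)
The plan is to adapt the machinery of Lemmas \ref{lemy1}, \ref{lem10} and \ref{E}: locate the roots of $\Phi(u):=cu^3+bu^2+au+1$ in each of the three sub-cases, apply Theorem \ref{thm7.1} of Appendix B to the semi-hyperbolic zeros, and perform a polar analysis at the degenerate zero. Under $c>0$, $a\geqslant 0$ and $b<-\sqrt{3ac}$, the critical points $\varrho_1<\varrho_2$ of $\Phi$ are both non-negative with $\varrho_2>0$, $\Phi$ is increasing on $(-\infty,\varrho_1)\cup(\varrho_2,+\infty)$ and decreasing on $(\varrho_1,\varrho_2)$, and $\Phi(\varrho_1)\geqslant \Phi(0)=1>0$. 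Hence a simple negative root always exists, and the positive roots are dictated by $\mathrm{sgn}\,\Phi(\varrho_2)$: (a) if $\Phi(\varrho_2)<0$ there are three simple roots $u_4<0<u_5<u_6$ with $\Phi'(u_4)>0$, $\Phi'(u_5)<0$, $\Phi'(u_6)>0$; (b) if $\Phi(\varrho_2)=0$ there is a simple $u_7<0$ and a double root at $\varrho_2$; (c) if $\Phi(\varrho_2)>0$ only a simple $u_3<0$ remains.

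At every simple root $u_*$ the Jacobian of \eqref{y} at $(u_*,0)$ equals $\mathrm{diag}(-\Phi'(u_*),0)$ with $\Phi'(u_*)\ne 0$, so $(u_*,0)$ is semi-hyperbolic. Following the proof of Lemma \ref{lemy1}, I would translate $u\mapsto u+u_*$, use the implicit function theorem to solve the $u$-equation as
\[
\phi(z)=-\frac{u_*^2+\mu u_*+\lambda}{\Phi'(u_*)}\,z^2+O(z^3),
\]
and substitute to obtain $\left.dz/d\tau\right|_{u=\phi(z)}=-u_*z^3+O(z^5)$. After a time rescaling normalising the hyperbolic direction (which reverses the flow iff $\Phi'(u_*)>0$), Theorem \ref{thm7.1} determines the type from the sign of $u_*/\Phi'(u_*)$. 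Combining the four sign cases reproduces the claims that $F_1=(u_4,0)$, $F_2=(u_5,0)$, $F=(u_7,0)$ and $E=(u_3,0)$ are saddles, while $F_3=(u_6,0)$ is a stable node.

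For the degenerate equilibrium $Q=(\varrho_2,0)$, where $\Phi(\varrho_2)=\Phi'(\varrho_2)=0$, I would translate $u\mapsto u+\varrho_2$ and exploit $\Phi''(\varrho_2)=6c\varrho_2+2b=2\sqrt{b^2-3ac}=:2\kappa>0$, together with the automatic positivity $\gamma_Q:=\varrho_2^2+\mu\varrho_2+\lambda>0$ (since $\varrho_2>0$ and $\mu,\lambda\geqslant 0$). The leading-order polar reduction then reads
\[
\frac{1}{r}\frac{dr}{d\theta}=\frac{H(\theta)+\widetilde H(\theta,r)}{G(\theta)+\widetilde G(\theta,r)},\quad G(\theta)=\sin\theta\bigl(\kappa\cos^2\theta+\gamma_Q\sin^2\theta\bigr),\quad H(\theta)=-\cos\theta\bigl(\kappa\cos^2\theta+\gamma_Q\sin^2\theta\bigr).
\]
Because $\kappa,\gamma_Q>0$ the quadratic factor is strictly positive, so $G(\theta)$ vanishes only at $\theta=0,\pi$ in $[0,2\pi)$, and these are the only possible exceptional directions. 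A direct calculation gives $G'(0)H(0)=G'(\pi)H(\pi)=-\kappa^2<0$ with $H(0)=-\kappa<0$ and $H(\pi)=\kappa>0$, and \cite[Theorem 3.7 of Chapter 2]{ZDHD} (applied exactly as in Lemma \ref{lem10}) then produces a unique orbit approaching $Q$ along $\theta=0$ as $\tau\to+\infty$, a unique orbit approaching $Q$ along $\theta=\pi$ as $\tau\to-\infty$, and no connection along any other direction.

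The only mildly delicate point is bookkeeping in the semi-hyperbolic step, namely combining consistently (i) the sign of the time rescaling that brings each equilibrium into the normal form of Theorem \ref{thm7.1}, and (ii) the sign of the leading coefficient $u_*/\Phi'(u_*)$ of $\psi(z)$. The degenerate case $Q$ is by contrast short, because the positivity of $\kappa\cos^2\theta+\gamma_Q\sin^2\theta$ eliminates the secondary Briot--Bouquet blow-ups that were required at $C$ in Lemma \ref{lem10}.
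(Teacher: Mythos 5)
Your proposal is correct and follows essentially the same route as the paper: the same translation, implicit-function-theorem and Theorem \ref{thm7.1} treatment of the semi-hyperbolic roots of $\Phi$, and the same polar reduction at $Q=(\varrho_2,0)$ using $\varrho_2^2+\mu\varrho_2+\lambda>0$ and $G'(0)H(0)=3ac-b^2<0$ together with \cite[Theorem 3.7 of Chapter 2]{ZDHD}. Your extra bookkeeping on the root ordering $u_4<0<u_5<u_6$ and on the sign of the time rescaling is exactly the detail the paper leaves implicit in its ``similarly'' steps.
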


 \begin{proof}	
 	Firstly,	
 	consider $\Phi(\varrho_2)<0$.
 	With the transformation $(u,z)\to (u+u_4,z)$,
 	system \eqref{y} is changed into
 	\begin{equation}
 	\label{u4}\left\{\begin{aligned}
 	\frac{du}{d\tau}=&-z^2\left(u^2+(\mu+2u_4) u+u_4^2+\mu u_4+\lambda\right)-cu^3-(3cu_4+b)u^2
 	\\
 	&	-(3cu_4^2+2bu_4+a)u=:P_5(u,z),
 	\\
 	\frac{dz}{d\tau}=&-z^3u-u_4z^3=:Q_5(u,z).
 	\end{aligned}
 	\right.
 	\end{equation}
 	It is easy to obtain $3cu_4^2+2bu_4+c>0$ by $u_4<\varrho_1$.
 	By the implicit function theorem, $P_5(u,z)=0$
 	has a unique root $u=\phi_5(z)=-(u_4^2+\mu u_4+\lambda)/(3cu_4^2+2bu_4+a)z^2+o(z^2)$ for small $|z|$.
 	Thus,
 	$$
 	Q_5(\phi_5(z),z)=-u_4 z^3+o(z^3).
 	$$
 	By Theorem \ref{thm7.1} of Appendix B and $u_4<0$, $F_1$ is a saddle. It is similar to prove that $F_2$ is a saddle and $F_3$ is a stable node.
 	
 	Secondly,
 	consider  $\Phi(\varrho_2)>0$. It is easy to check $u_3<0$ and $3cu_3^2+2bu_3+a>0$. Considering system \eqref{yyy},
 	by the implicit function theorem, $P_3(u,z)=0$
 	has a unique root $u=\phi_3(z)$ for small $|z|$.
 	Thus,
 	$$
 	Q_3(\phi_3(z),z)=-u_3 z^3+o(z^3).
 	$$
 	By Theorem \ref{thm7.1} of Appendix B, $E$ is a saddle.
 	
 	Finally, consider $\Phi(\varrho_2)=0$. It is obvious that $u_7<0<\varrho_1<\varrho_2$  and the Jacobian matrices at $F$ and $Q$ are
 	$$
 	J_{F}:=	\begin{pmatrix}
 	-3cu_7^2-2bu_7-a&0\\
 	0&0\\
 	\end{pmatrix},
 	\quad	J_{Q}:=	\begin{pmatrix}
 	-3c\varrho_2^2-2b\varrho_2-a&0\\
 	0&0\\
 	\end{pmatrix},
 	$$	
 	respectively.
 	Since   $3cu_7^2+2bu_7+a>0$ and $3c\varrho_2^2+2b\varrho_2+a=0$ by Lemma \ref{lemma}, $F$ is a semi-hyperbolic equilibrium and $Q$ is a degenerate equilibrium.  For $F$, taking the transformation $(u,z)\to (u+u_7, z)$,
 	system \eqref{y} becomes
 	\begin{equation}
 	\label{u7}
 	\left\{\begin{aligned}
 	\frac{du}{d\tau}=&-z^2\left(u^2+(\mu+2u_7) u+u_4^2+\mu u_4+\lambda\right)-cu^3-(3cu_7+b)u^2
 	\\
 	&-(3cu_7^2+2bu_7+a)u=:P_6(u,z),
 	\\
 	\frac{dz}{d\tau}=&-z^3u-u_7z^3=:Q_6(u,z).
 	\end{aligned}
 	\right.
 	\end{equation}
 	By the implicit function theorem, $P_6(u,z)=0$
 	has a unique root $u=\phi_6(z)=-(u_7^2+\mu u_7+\lambda)/(3cu_7^2+2bu_7+a)z^2+o(z^2)$ for small $|z|$.
 	Thus,
 	$$
 	Q_6(\phi_6(z),z)=-u_7 z^3+o(z^3).
 	$$
 	By Theorem \ref{thm7.1} of Appendix B and $u_7<0$, we can check that $F$ is a saddle.
 	
 	Concerning the equilibrium $Q$, a transformation
 	$(u,z) \to(u+\varrho_2, z)$ sends system \eqref{y} to
 	\begin{equation}\label{y4}\left\{\begin{aligned}
 	\frac{du}{d\tau}&=-z^2(u^2+(\mu+2\varrho_2) u+\varrho_2^2+\mu \varrho_2+\lambda)-cu^3-\left(\sqrt{b^2-3ac}\right)u^2,\\
 	\frac{dz}{d\tau}&=-z^3u-\varrho_2z^3.
 	\end{aligned}
 	\right.
 	\end{equation}
 	With a polar transformation $(u,z)=(r\cos\theta,  r\sin\theta)$,  system \eqref{y4} is rewritten as
 	\begin{equation}
 	\label{G}
 	\frac{1}{r}\frac{dr}{d\theta}=\frac{H_7(\theta)+\widetilde{H_7}(\theta,r)}{G_7(\theta)+\widetilde{G_7}(\theta,r)},
 	\end{equation}
 	where
 	$$
 	G_7(\theta)=\sin\theta\left((\varrho_2^2+\mu \varrho_2+\lambda)\sin^2\theta+\left(\sqrt{b^2-3ac}\right)\cos^2\theta\right)
 	$$
 	and $$
 	H_7(\theta)=-\cos\theta\left((\varrho_2^2+\mu\varrho_2+\lambda)\sin^2\theta+\left(\sqrt{b^2-3ac}\right)\cos^2\theta\right).
 	$$
 	It follows from $\varrho_2>0$  that  $\varrho_2^2+\mu \varrho_2+\lambda>0$. Then, we can check that $G_7(\theta)$ has two zeros $\theta=0$, $\pi$ in $[0,2\pi)$ and \[G_7'(0)H_7(0)=G_7'(\pi)H_7(\pi)
 	=3ac-b^2<0.\]
 	Therefore,  there is a unique orbit approaching  $Q$  in the direction $\theta=0$ as $\tau\to+\infty$, and a unique orbit approaching  $Q$  in the direction $\theta=\pi$ as $\tau\to-\infty$ in  system \eqref{y} by   \cite[Theorem 3.7 of Chapter 2 ]{ZDHD},  $H_7(0)<0$ and $H_7(\pi)>0$.
 \end{proof}
 
 By Lemma \ref{FQ},  when $b\in(-\infty,-\sqrt{3ac})$, the qualitative properties of equilibria at infinity  $I_{F_1^\pm}$, $I_{F_2^\pm}$ and $I_{F_3^\pm}$ for $\Phi(\varrho_2)<0$, $I_{F^\pm}$ and $I_{Q^\pm}$ for $\Phi(\varrho_2)=0$, or $I_{E^\pm}$  for $\Phi(\varrho_2)>0$ in the Poincar\'e disc of system \eqref{61}, which correspond respectively the equilibria $F_1$, $F_2$, $F_3$, $F$, $Q$, $E$ of system \eqref{y},  are as shown in Figure \ref{tuFQ} {(there may probably other equilibria at the infinity, which are not marked there)}.
 \begin{figure}[htp]
 	\centering
 	\subfigure[{ $\Phi(\varrho_2)<0$}]{
 		\includegraphics[width=0.28\textwidth]{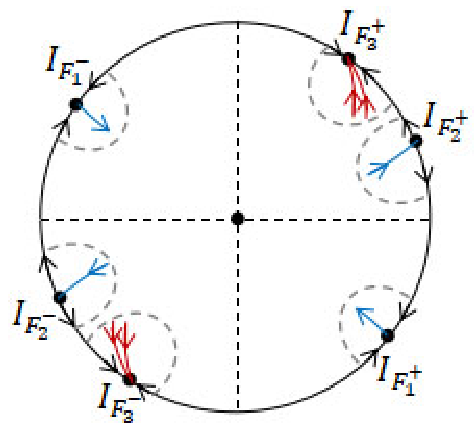}}	
 	\quad
 	\subfigure[{$\Phi(\varrho_2)=0$}]{
 		\includegraphics[width=0.28\textwidth]{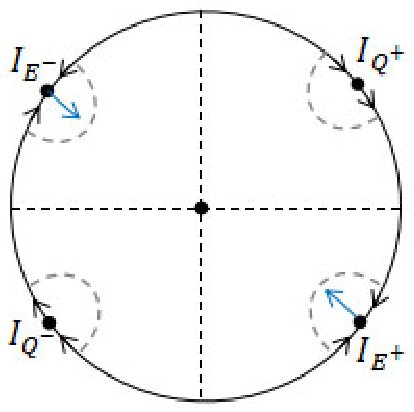}}
 	\quad
 	\subfigure[{  $\Phi(\varrho_2)>0$}]{
 		\includegraphics[width=0.28\textwidth]{29}}
 	\caption{{Locally qualitative property of} equilibria at  infinity  $I_{F_1^\pm}$, $I_{F_2^\pm}$ and $I_{F_3^\pm}$,  or $I_{F^\pm}$ and $I_{Q^\pm}$, or $I_{E^\pm}$  in the Poincar\'e disc for $b\in\left(-\infty, -\sqrt{3ac}\right)$ {(not including all equilibria at infinity)}. }
 	\label{tuFQ}
 \end{figure}

 \begin{lemma}
 	\label{EFK}
 	Consider  $b\in\left(\sqrt{3ac},+\infty\right)$.
 	Then, $E$ is a saddle for $\Phi(\varrho_1)<0$; $F$ is a saddle and $K$ is a degenerate equilibrium for $\Phi(\varrho_1)=0$. When $\Phi(\varrho_1)>0$, $E$ is a saddle for  $\Phi(\varrho_2)>0$;  $F_1$ and $F_3$ are saddles, $F_2$ is an unstable node   for  $\Phi(\varrho_2)<0$;  $F$ is a saddle and  $Q$ is a degenerate equilibrium for  $\Phi(\varrho_2)=0$. Moreover,  the qualitative properties of $K$ or $Q$ are shown in {\rm Tables \ref{K1}--\ref{K3}} or {\rm \ref{Q}--\ref{Q3}}.
 	
 	\begin{table}[htp]
 		\renewcommand\arraystretch{2}
 		\setlength{\tabcolsep}{2.5mm}{
 			\caption{\label{K1} Numbers of orbits connecting $K$ for   $\varrho_1^2+\mu \varrho_1+\lambda<0$.}
 			\begin{tabular}{c|c}
 				\hline
 				Exceptional directions & Numbers of orbits
 				\\
 				\hline
 				$\theta=0$ & one \ $(-)$
 				\\
 				\hline
 				$\theta=\pi$ & one \ $(+)$
 				\\
 				\hline
 		\end{tabular}}
 	\end{table}
 	\begin{table}[htp]
 		\renewcommand\arraystretch{2}
 		\setlength{\tabcolsep}{2.5mm}{
 			\caption{\label{K2} Numbers of orbits connecting $K$ for   $\varrho_1^2+\mu \varrho_1+\lambda=0$.}
 			\begin{tabular}{c|c}
 				\hline
 				Exceptional directions & Numbers of orbits
 				\\
 				\hline
 				$\theta=0$ & one \ $(-)$
 				\\
 				\hline
 				$\theta=\frac{\pi}{2}$ & $\infty$  \ $(-)$
 				\\
 				\hline
 				$\theta=\pi$ & one \ $(+)$
 				\\
 				\hline
 				$\theta=\frac{3\pi}{2}$ &  $\infty$  \ $(-)$
 				\\
 				\hline
 		\end{tabular}}
 	\end{table}
 	\begin{table}[htp]
 		\renewcommand\arraystretch{2}
 		\setlength{\tabcolsep}{2.5mm}{
 			\caption{\label{K3} Numbers of orbits connecting $K$ for    $\varrho_1^2+\mu \varrho_1+\lambda>0$.}
 			\begin{tabular}{c|c}
 				\hline
 				Exceptional directions & Numbers of orbits
 				\\
 				\hline
 				$\theta=0$ & one \ $(-)$
 				\\
 				\hline
 				$\theta=\arctan\left(\sqrt{\frac{\sqrt{b^2-3ac}}{\varrho_1^2+\mu \varrho_1+\lambda}}\right)$ &  one \ $(+)$
 				\\
 				\hline
 				$\theta=\pi-\arctan\left(\sqrt{\frac{\sqrt{b^2-3ac}}{\varrho_1^2+\mu \varrho_1+\lambda}}\right)$ &  $\infty$\ $(+)$
 				\\
 				\hline
 				$\theta=\pi$ & one \ $(+)$
 				\\
 				\hline	
 				$\theta=\pi+\arctan\left(\sqrt{\frac{\sqrt{b^2-3ac}}{\varrho_1^2+\mu \varrho_1+\lambda}}\right)$ & $\infty$\ $(+)$
 				\\
 				\hline
 				$\theta=2\pi-\arctan\left(\sqrt{\frac{\sqrt{b^2-3ac}}{\varrho_1^2+\mu \varrho_1+\lambda}}\right)$ &  one \ $(+)$
 				\\
 				\hline
 		\end{tabular}}
 	\end{table}
 	\begin{table}[htp]
 		\renewcommand\arraystretch{2}
 		\setlength{\tabcolsep}{2.5mm}{
 			\caption{\label{Q} Numbers of orbits connecting $Q$ for  $\varrho_2^2+\mu \varrho_2+\lambda>0$.}
 			\begin{tabular}{c|c}
 				\hline
 				Exceptional directions & Numbers of orbits
 				\\
 				\hline
 				$\theta=0$ & one \ $(+)$
 				\\
 				\hline
 				$\theta=\pi$ & one \ $(-)$
 				\\
 				\hline
 		\end{tabular}}
 	\end{table}
 	\begin{table}[htp]
 		\renewcommand\arraystretch{2}
 		\setlength{\tabcolsep}{2.5mm}{
 			\caption{\label{Q2} Numbers of orbits connecting $Q$ for  $\varrho_2^2+\mu \varrho_2+\lambda=0$.}
 			\begin{tabular}{c|c}
 				\hline
 				Exceptional directions & Numbers of orbits
 				\\
 				\hline
 				$\theta=0$ & one \ $(+)$
 				\\
 				\hline
 				$\theta=\frac{\pi}{2}$ & $\infty$  \ $(-)$
 				\\
 				\hline
 				$\theta=\pi$ & one \ $(-)$
 				\\
 				\hline
 				$\theta=\frac{3\pi}{2}$ &  $\infty$  \ $(-)$
 				\\
 				\hline
 		\end{tabular}}
 	\end{table}
 	\begin{table}[htp]
 		\renewcommand\arraystretch{2}
 		\setlength{\tabcolsep}{2.5mm}{
 			\caption{\label{Q3} Numbers of orbits connecting $Q$ for $\varrho_2^2+\mu \varrho_2+\lambda<0$.}
 			\begin{tabular}{c|c}
 				\hline
 				Exceptional directions & Numbers of orbits
 				\\
 				\hline
 				$\theta=0$ & one \ $(+)$
 				\\
 				\hline
 				$\theta=\arctan\left(\sqrt{\frac{-\sqrt{b^2-3ac}}{\varrho_2^2+\mu \varrho_2+\lambda}}\right)$ & one \ $(-)$
 				\\
 				\hline
 				$\theta=\pi-\arctan\left(\sqrt{\frac{-\sqrt{b^2-3ac}}{\varrho_2^2+\mu \varrho_2+\lambda}}\right)$ &  $\infty$  \ $(-)$
 				\\
 				\hline
 				$\theta=\pi$ & one \ $(-)$
 				\\
 				\hline	
 				$\theta=\pi+\arctan\left(\sqrt{\frac{-\sqrt{b^2-3ac}}{\varrho_2^2+\mu \varrho_2+\lambda}}\right)$ & $\infty$ \ $(-)$
 				\\
 				\hline
 				$\theta=2\pi-\arctan\left(\sqrt{\frac{-\sqrt{b^2-3ac}}{\varrho_2^2+\mu \varrho_2+\lambda}}\right)$ &  one \ $(-)$
 				\\
 				\hline
 		\end{tabular}}
 	\end{table}
 \end{lemma}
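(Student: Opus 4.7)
The overall strategy mirrors the proof of Lemma \ref{FQ} for the symmetric regime $b<-\sqrt{3ac}$, but with the critical points $\varrho_1<\varrho_2<0$ both now negative (since $b>\sqrt{3ac}>0$ and $c>0$). I would begin by cataloguing the real zeros of $\Phi(u)=cu^3+bu^2+au+1$ from the signs of $\Phi(\varrho_1),\Phi(\varrho_2)$ together with the monotonicity pattern increasing--decreasing--increasing and the fact that $\Phi(0)=1>0$. This yields: a unique simple zero $u_3\in(\varrho_2,0)$ when $\Phi(\varrho_1)<0$; a unique simple zero $u_3<\varrho_1$ when $\Phi(\varrho_1)>0,\Phi(\varrho_2)>0$; three simple zeros $u_4<\varrho_1<u_5<\varrho_2<u_6<0$ when $\Phi(\varrho_1)>0,\Phi(\varrho_2)<0$; and, when $\Phi(\varrho_1)=0$ (resp.\ $\Phi(\varrho_2)=0$), a double zero at $\varrho_1$ (resp.\ $\varrho_2$) together with a simple zero $u_7\in(\varrho_2,0)$ (resp.\ $u_7<\varrho_1$). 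Crucially, every simple zero in this regime is negative.

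For each simple zero $u_i$ the Jacobian at $(u_i,0)$ has nonzero eigenvalue $-\Phi'(u_i)$ and a zero eigenvalue. Following the semi-hyperbolic reduction used in Lemmas \ref{lemy1} and \ref{FQ}, I translate by $(u,z)\mapsto(u+u_i,z)$, solve the fast equation by the implicit function theorem to produce $u=\phi(z)$, and obtain $\psi(z)=-u_iz^3+o(z^3)$. Theorem \ref{thm7.1} with $m=3$ odd then classifies the equilibrium by the product $u_i\cdot\Phi'(u_i)$: negative product gives a saddle, positive product gives a node (stable if both positive, unstable if both negative). Since every simple zero is negative, and $\Phi'(u)>0$ outside $[\varrho_1,\varrho_2]$ while $\Phi'(u)<0$ strictly between, one reads off: $E$ is a saddle whenever it exists, $F$ is a saddle (since $u_7<0$ with $\Phi'(u_7)>0$), and in the three-zero case $F_1,F_3$ are saddles while $F_2$ is an unstable node (as $\Phi'(u_5)<0$).

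The principal work is analyzing the degenerate equilibrium $K=(\varrho_1,0)$ when $\Phi(\varrho_1)=0$. Translating and using $\Phi''(\varrho_1)/2=-\sqrt{b^2-3ac}$ gives
\begin{equation*}
\dot u=\sqrt{b^2-3ac}\,u^2-\gamma_1 z^2+\text{h.o.t.},\qquad \dot z=-\varrho_1 z^3-uz^3,
\end{equation*}
where $\gamma_1:=\varrho_1^2+\mu\varrho_1+\lambda$. Passing to polar coordinates exactly as in Lemma \ref{lem10} produces a characteristic equation with denominator
\[
G(\theta)=\sin\theta\bigl(\gamma_1\sin^2\theta-\sqrt{b^2-3ac}\cos^2\theta\bigr),
\]
whose zero set depends on the sign of $\gamma_1$. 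For $\gamma_1<0$ only $\theta=0,\pi$ are exceptional, and \cite[Theorem 3.7 of Chapter 2]{ZDHD} gives a single orbit along each; the positive coefficient $+\sqrt{b^2-3ac}$ of $u^2$ (opposite to the $C$-case of Lemma \ref{lem10}) makes $\theta=0$ a $(-)$-direction and $\theta=\pi$ a $(+)$-direction, matching Table \ref{K1}. For $\gamma_1=0$ the directions $\theta=\pi/2,3\pi/2$ arise with $H$ also vanishing, and the Briot--Bouquet blow-up $u=\widetilde u z$ (as in the $\gamma=0$ analysis of Lemma \ref{lem10}) resolves them and yields infinitely many orbits along each, matching Table \ref{K2}. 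For $\gamma_1>0$ four additional directions $\arctan(\pm\sqrt{\sqrt{b^2-3ac}/\gamma_1})$ and their $\pi$-reflections appear, and the blow-up $z=\widetilde z u$ together with a subsequent translation and implicit-function reduction classifies each blown-up equilibrium as a saddle--node; blowing down gives the orbit counts of Table \ref{K3}.

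The treatment of $Q=(\varrho_2,0)$ when $\Phi(\varrho_2)=0$ runs exactly in parallel, with $\varrho_1$ replaced by $\varrho_2$ and $\gamma_1$ by $\gamma_2:=\varrho_2^2+\mu\varrho_2+\lambda$. The single structural difference is that $\varrho_2$ is a local minimum of $\Phi$ with $\Phi''(\varrho_2)/2=+\sqrt{b^2-3ac}>0$, so the coefficient of $u^2$ in $\dot u$ becomes $-\sqrt{b^2-3ac}$ and the role of the sign of $\gamma_i$ inverts: for $Q$ it is $\gamma_2>0$ that gives two exceptional directions (Table \ref{Q}), $\gamma_2=0$ that gives four (Table \ref{Q2}), and $\gamma_2<0$ that gives six (Table \ref{Q3}). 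The main obstacle throughout is bookkeeping the $\tau\to\pm\infty$ orientations of semi-orbits as one blows down through successive Briot--Bouquet transformations, since each blow-up permutes quadrants and may require a further desingularization; the sign $\varrho_i<0$ enters at each stage through the term $-\varrho_iz^3$ in $\dot z$, fixing orientations and producing the $(\pm)$ labels of Tables \ref{K1}--\ref{K3} and \ref{Q}--\ref{Q3}.
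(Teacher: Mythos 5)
Your proposal is correct and follows essentially the same route as the paper: semi-hyperbolic reduction via translation plus the implicit function theorem and Theorem \ref{thm7.1} for the simple zeros of $\Phi$ (your sign rule $u_i\,\Phi'(u_i)<0\Rightarrow$ saddle is exactly the paper's computation, packaged uniformly), and for $K$ and $Q$ the same polar/normal-sector analysis with Briot--Bouquet blow-ups carried over from Lemma \ref{lem10}, with the sign of $\Phi''(\varrho_i)/2=\mp\sqrt{b^2-3ac}$ and of $\varrho_i^2+\mu\varrho_i+\lambda$ governing the number and orientation of exceptional directions. The zero-cataloguing, the location of $u_3,\dots,u_7$ relative to $\varrho_1,\varrho_2$, and the resulting classifications all agree with the paper's proof.
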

 \begin{proof}
 	Firstly,
 	consider $\Phi(\varrho_1)<0$. It is easy to check $\varrho_2<u_3<0$ and $3cu_3^2+2bu_3+a>0$.	 For $E$, considering system \eqref{yyy}, 	by the implicit function theorem, $P_3(u,z)=0$
 	has a unique root $u=\phi_3(z)=-(u_3^2+\mu u_3+\lambda)/(3cu_3^2+2bu_3+a)z^2+o(z^2)$ for small $|z|$.
 	Thus,
 	$$
 	Q_3(\phi_3(z),z)=-u_3 z^3+o(z^3).
 	$$
 	By Theorem \ref{thm7.1} of Appendix B, $E$ is a saddle.
 	
 	Secondly,	
 	consider $\Phi(\varrho_1)=0$. Notice that $\varrho_2<u_7<0$ and $3cu_7^2+2bu_7+a>0$.  Considering system \eqref{u7},
 	by the implicit function theorem, $P_6(u,z)=0$
 	has a unique root $u=\phi_6(z)=-(u_7^2+\mu u_7+\lambda)/(3cu_7^2+2bu_7+a)z^2+o(z^2)$ for small $|z|$.
 	Thus,
 	$$
 	Q_6(\phi_6(z),z)=-u_7 z^3+o(z^3).
 	$$
 	Thus,	$F$ is a saddle by Theorem \ref{thm7.1} of Appendix B.
 	
 	Concerning the degenerate equilibrium $K$, 	 we can rewrite system  \eqref{y3} by the transformation
 	$
 	(u,z)\to (u+\varrho_1, z)$  as
 	\begin{equation}
 	\label{y3}
 	\left\{\begin{aligned}
 	\frac{du}{d\tau}&=-z^2(u^2+(\mu+2\varrho_1) u+\varrho_1^2+\mu \varrho_1+\lambda)-cu^3+\left(\sqrt{b^2-3ac}\right) u^2,\\
 	\frac{dz}{d\tau}&=-z^3u-\varrho_1z^3.
 	\end{aligned}
 	\right.
 	\end{equation}
 	Considering a polar transformation $(u,z)=(r\cos\theta,r\sin\theta)$, 	
 	system  \eqref{y3} is changed into
 	\begin{equation}
 	\notag
 	\frac{1}{r}\frac{dr}{d\theta}=\frac{H_8(\theta)+\widetilde{H_8}(\theta,r)}{G_8(\theta)+\widetilde{G_8}(\theta,r)},
 	\end{equation}
 	where $G_8(\theta)=\sin\theta[(\varrho_1^2+\mu \varrho_1+\lambda)\sin^2\theta-(\sqrt{b^2-3ac})\cos^2\theta]$, $H_8(\theta)=-\cos\theta[(\varrho_1^2+\mu \varrho_1+\lambda)\sin^2\theta-(\sqrt{b^2-3ac})\cos^2\theta]$.
 	As a similar progress by discussing the sign of $\varrho_1^2+\mu \varrho_1+\lambda$ in Lemma \ref{lem10}, we can obtain that there is a unique orbit  approaching  $K$ in the direction $\theta = \pi$ as $\tau\to+\infty$,  and 	a unique orbit approaching $K$  in the direction $\theta = 0$  as $\tau\to-\infty$ for $\varrho_1^2+\mu \varrho_1+\lambda<0$;
 	there is a unique orbit  approaching  $K$ in the direction $\theta = \pi$ as $\tau\to+\infty$, 	a unique orbit approaching $K$  in the direction $\theta = 0$  as $\tau\to-\infty$, infinitely many orbits approaching $K$ in  respectively the  directions $\pi/2$ and $3\pi/2$ as $\tau\to-\infty$ for $\varrho_1^2+\mu \varrho_1+\lambda=0$;
 	there is a unique orbit  approaching  $K$ in  respectively the directions
 	$$
 	\theta = \pi,
 	\quad \arctan\left(\sqrt{\frac{\sqrt{b^2-3ac}}{\varrho_1^2+\mu \varrho_1+\lambda}}\right),
 	\quad 2\pi-\arctan\left(\sqrt{\frac{\sqrt{b^2-3ac}}{\varrho_1^2+\mu \varrho_1+\lambda}}\right)
 	$$
 	as $\tau\to+\infty$, a unique orbit approaching $K$  in the direction $\theta = 0$  as $\tau\to-\infty$,  infinitely many orbits approaching $K$  in  respectively the directions
 	$$
 	\theta =\pi-\arctan\left(\sqrt{\frac{\sqrt{b^2-3ac}}{\varrho_1^2+\mu \varrho_1+\lambda}}\right),
 	\quad \pi+\arctan\left(\sqrt{\frac{\sqrt{b^2-3ac}}{\varrho_1^2+\mu \varrho_1+\lambda}}\right)$$
 	as $\tau\to+\infty$
 	for $\varrho_1^2+\mu \varrho_1+\lambda>0$, see Tables \ref{K1}--\ref{K3}.
 	
 	Finally, consider $\Phi(\varrho_1)>0$. Then, we need to discuss
 	the sign of  $\Phi(\varrho_2)$, which makes our study divided in three subcases.
 	When $\Phi(\varrho_2)>0$,
 	considering system \eqref{yyy}, $E$ is a saddle by Theorem \ref{thm7.1} of Appendix B, $u_3<0$ and $3cu_3^2+2bu_3+a>0$.
 	When $\Phi(\varrho_2)<0$, considering system \eqref{u4}, $F_1$ is a saddle by Theorem \ref{thm7.1} of Appendix B, $u_4<0$ and $3cu_4^2+2bu_4+a>0$. Similarly, $F_2$ is an unstable node and $F_3$ is a saddle.
 	
 	When $\Phi(\varrho_2)=0$, it is easy to check that  $u_7<\varrho_1<\varrho_2<0$ and $3cu_7^2+2bu_7+a>0$. Considering system \eqref{u7}, $F$ is a saddle by Theorem \ref{thm7.1} of Appendix B.	
 	Concerning the degenerate equilibrium $Q$,  we consider system \eqref{y4} and equation \eqref{G}. By the expression of $G_7(\theta)=\sin\theta[(\varrho_2^2+\mu \varrho_2+\lambda)\sin^2\theta+(\sqrt{b^2-3ac})\cos^2\theta]$, the sign of $\varrho_2^2+\mu \varrho_2+\lambda$ determines the zeros of $G_7(\theta)$.
 	As a similar progress  in Lemma \ref{lem10}, we can obtain that there is a unique orbit  approaching  $Q$ in the direction $\theta = \pi$ as $\tau\to-\infty$,  and 	a unique orbit approaching $Q$  in the direction $\theta = 0$  as $\tau\to+\infty$ for $\varrho_2^2+\mu \varrho_2+\lambda>0$;
 	there is a unique orbit  approaching  $Q$ in the direction $\theta = \pi$ as $\tau\to-\infty$, 	a unique orbit approaching $Q$  in the direction $\theta = 0$  as $\tau\to+\infty$, infinitely many orbits approaching $Q$ in the  respectively directions $\pi/2$ and $3\pi/2$ as $\tau\to-\infty$ for $\varrho_2^2+\mu \varrho_2+\lambda=0$;
 	there is a unique orbit  approaching  $Q$ in  respectively the directions
 	$$
 	\theta = \pi, \quad \arctan\left(\sqrt{\frac{-\sqrt{b^2-3ac}}{\varrho_2^2+\mu \varrho_2+\lambda}}\right),\quad 2\pi-\arctan\left(\sqrt{\frac{-\sqrt{b^2-3ac}}{\varrho_2^2+\mu \varrho_2+\lambda}}\right)
 	$$ as $\tau\to-\infty$, a unique orbit approaching $Q$  in the direction $\theta = 0$  as $\tau\to+\infty$,  infinitely many orbits approaching $Q$  in the  respectively directions $$\theta =\pi-\arctan\left(\sqrt{\frac{-\sqrt{b^2-3ac}}{\varrho_2^2+\mu \varrho_2+\lambda}}\right),\quad \pi+\arctan\left(\sqrt{\frac{-\sqrt{b^2-3ac}}{\varrho_2^2+\mu \varrho_2+\lambda}}\right)$$
 	as $\tau\to-\infty$
 	for $\varrho_2^2+\mu \varrho_2+\lambda<0$, see Tables \ref{Q}-\ref{Q3}.
 \end{proof}

 By Lemma \ref{EFK},  when $b\in(\sqrt{3ac}, +\infty)$, the qualitative properties    of equilibria at infinity $I_{E^\pm}$ for $\Phi(\varrho_1)<0$, or $\Phi(\varrho_1)>0$ and $\Phi(\varrho_2)>0$, $I_{K^\pm}$ and $I_{F^\pm}$ for $\Phi(\varrho_1)=0$, $I_{Q^\pm}$ and $I_{F^\pm}$ for $\Phi(\varrho_1)>0$ and $\Phi(\varrho_2)=0$,  $I_{F_1^\pm}$, $I_{F_2^\pm}$ and $I_{F_3^\pm}$ for $\Phi(\varrho_1)>0$ and $\Phi(\varrho_2)<0$ in the Poincar\'e disc of system \eqref{61}, which correspond respectively the equilibria $E$, $K$, $F$, $Q$, $F_1$, $F_2$, $F_3$ of system \eqref{y},  are as shown in Figure \ref{tuEFK}.
 \begin{figure}[htp]
 	\centering
 	\subfigure[{$\Phi(\varrho_1)=0$, $\varrho_1^2+\mu \varrho_1+\lambda<0$}]{
 		\includegraphics[width=0.3\textwidth]{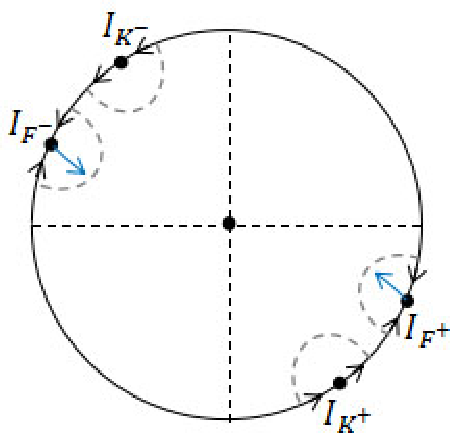}}
 	\quad
 	\subfigure[{$\Phi(\varrho_1)=0$, $\varrho_1^2+\mu \varrho_1+\lambda=0$}]{
 		\includegraphics[width=0.3\textwidth]{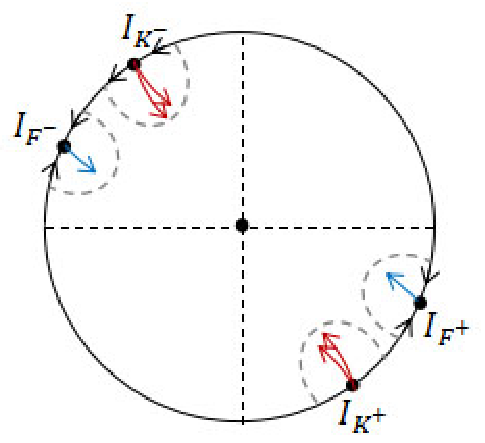}}
 	\quad
 	\subfigure[{$\Phi(\varrho_1)=0$, $\varrho_1^2+\mu \varrho_1+\lambda>0$}]{
 		\includegraphics[width=0.3\textwidth]{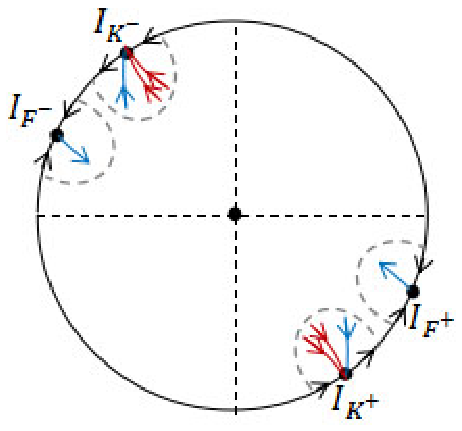}}
 	\centering
 	\subfigure[{$\Phi(\varrho_1)>0$, $\Phi(\varrho_2)=0$, $\varrho_2^2+\mu \varrho_2+\lambda>0$}]{
 		\includegraphics[width=0.3\textwidth]{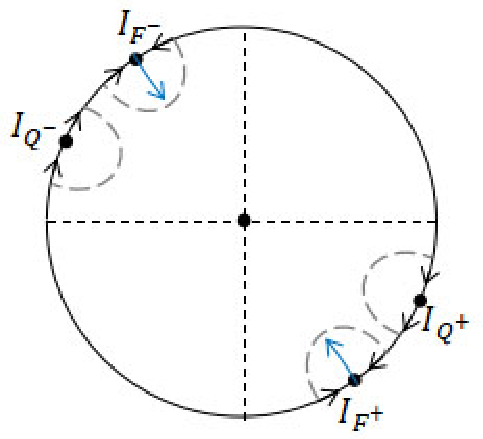}}
 	\quad
 	\subfigure[{$\Phi(\varrho_1)>0$, $\Phi(\varrho_2)=0$, $\varrho_2^2+\mu \varrho_2+\lambda=0$}]{
 		\includegraphics[width=0.3\textwidth]{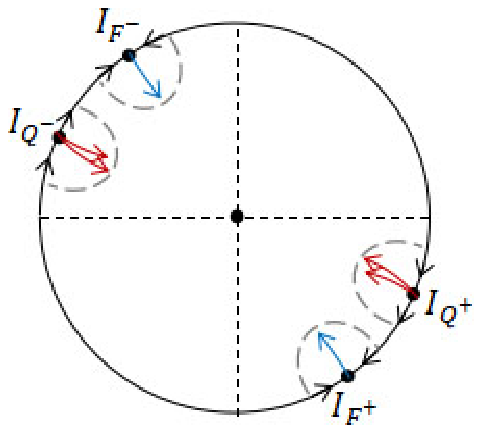}}
 	\quad
 	\subfigure[{$\Phi(\varrho_1)>0$, $\Phi(\varrho_2)=0$, $\varrho_2^2+\mu \varrho_2+\lambda<0$}]{
 		\includegraphics[width=0.3\textwidth]{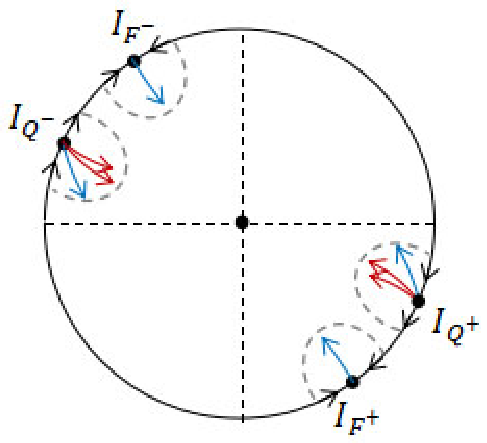}}		 													
 	\subfigure[{$\Phi(\varrho_1)<0$, or $\Phi(\varrho_1)>0$ and $\Phi(\varrho_2)>0$ }]{
 		\includegraphics[width=0.3\textwidth]{29}}
 	\quad
 	\subfigure[{ $\Phi(\varrho_1)>0$, $\Phi(\varrho_2)<0$ }]{
 		\includegraphics[width=0.3\textwidth]{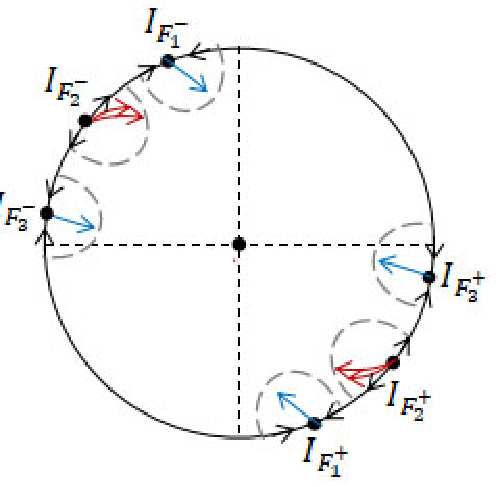}}	
 	\caption{ {Locally qualitative property of}	
 		equilibria at  infinity  $I_{E^{\pm}}$, $I_{K^\pm}$ and $I_{F^\pm}$, or $I_{Q^\pm}$ and $I_{F^\pm}$, or  $I_{F_1^\pm}$, $I_{F_2^\pm}$ and $I_{F_3^\pm}$  in the Poincar\'e disc for $b\in\left(\sqrt{3ac}, +\infty\right)$ {(some equilibria at infinity are not marked here)}.}
 	\label{tuEFK}
 \end{figure}

 With the other Poincar\'e transformation
 $$
 x=\frac{v}{z},\quad y=\frac{1}{z},
 $$
 system \eqref{61} is written as
 \begin{equation}
 \label{x}
 \left\{\begin{aligned}
 \frac{dv}{d\tau}&=z^2(\lambda v^2+\mu v+1)+v(v^3+av^2+bv+c),\\
 \frac{dz}{d\tau}&=z^3(\lambda v+\mu)+z(v^3+av^2+bv+c),
 \end{aligned}
 \right.
 \end{equation}
 where $d\tau=dt/z^2$.
 By   \cite[Chapter 5]{ZDHD}, we only need to study the equilibrium $D=(0, 0)$
 of system \eqref{x}.
 \begin{lemma}\label{lemx}
 	$D$ is an unstable star node for $c>0$, and a degenerate equilibrium for $c=0$. Moreover, the qualitative properties of $D$ is shown  in  {\rm Tables  \ref{D1}--\ref{D2}} for $c=0$.
 	\begin{table}[htp]
 		\renewcommand\arraystretch{2}
 		\setlength{\tabcolsep}{2.5mm}{
 			\caption{\label{D1} Numbers of orbits connecting $D$ for  $b>0$.}
 			\begin{tabular}{c|c}
 				\hline
 				Exceptional directions & Numbers of orbits
 				\\
 				\hline
 				$\theta=0$ & one \ $(-)$
 				\\
 				\hline
 				$\theta=\pi$ & one \ $(+)$
 				\\
 				\hline
 		\end{tabular}}
 	\end{table}
 	\begin{table}[]
 		\renewcommand\arraystretch{2}
 		\setlength{\tabcolsep}{2.5mm}{
 			\caption{\label{D2} Numbers of orbits connecting $D$ for  $b<0$.}
 			\begin{tabular}{c|c}
 				\hline
 				Exceptional directions & Numbers of orbits
 				\\
 				\hline
 				$\theta=0$ & $\infty$\ $(+)$
 				\\
 				\hline
 				$\theta=\pi$ & $\infty$\ $(-)$
 				\\
 				\hline
 		\end{tabular}}
 	\end{table}
 \end{lemma}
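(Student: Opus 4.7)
The plan is to compute the Jacobian of system \eqref{x} at $D=(0,0)$ and observe that it equals the scalar matrix $cI$. When $c>0$ the two coincident positive eigenvalues together with the already-diagonal linear part make $D$ an unstable star node: because all nonlinear terms vanish to second order at $D$, on a small neighborhood the vector field is dominated by the radial field $c(v,z)$, so every orbit leaves $D$ along a well--defined direction. When $c=0$ the Jacobian vanishes identically, so $D$ is degenerate and the orbit counts of Tables~\ref{D1} and \ref{D2} will require a nonlinear analysis.

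For $c=0$ the $v$--axis is invariant and on it $\dot v=v^{2}(v^{2}+av+b)$. The sign of $b$ determines the sign of $\dot v$ in a punctured neighborhood of the origin, giving exactly one orbit on the positive $v$--axis and one on the negative $v$--axis that connect to $D$ with the time directions recorded in the tables. To decide whether additional off--axis orbits can approach $D$ along $\theta=0$ or $\theta=\pi$, I would pass to polar coordinates $(v,z)=(r\cos\theta,r\sin\theta)$ and extract the leading homogeneous part of system \eqref{x}, producing
\begin{equation*}
\frac{1}{r}\frac{dr}{d\theta}=\frac{\cos\theta(\sin^{2}\theta+b)}{-\sin^{3}\theta}+O(r).
\end{equation*}
Because $G(\theta)=-\sin^{3}\theta$ has triple zeros at $\theta=0$ and $\theta=\pi$, neither \cite[Theorem 3.7 of Chapter 2]{ZDHD} nor \cite[Theorem 3.8 of Chapter 2]{ZDHD} applies directly; this is the main obstacle.

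I would bypass it by integrating the leading planar equation explicitly, obtaining the orbits $r|\sin\theta|=C\exp(b/(2\sin^{2}\theta))$. For $b>0$ the exponential factor forces $r\to+\infty$ as $\theta\to 0$ or $\theta\to\pi$, so no off--axis orbit of the leading system approaches $D$ along these exceptional directions, and a routine higher--order perturbation argument extends this to the full system \eqref{x}, leaving the two axis orbits as the only ones connecting $D$ (Table~\ref{D1}). For $b<0$ the same expression forces $r\to 0^{+}$ as $\theta\to 0$ or $\theta\to\pi$ from either half--plane, producing a one--parameter family of entering orbits in each of the four sectors adjacent to the axis; the signs of $\dot r=r^{2}\cos\theta(\sin^{2}\theta+b)$ and $\dot\theta=-r\sin^{3}\theta$ then pin down the time direction in each sector, yielding Table~\ref{D2}. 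An equivalent desingularization that I would use to cross-check the counts is the Briot--Bouquet substitution $z=\tilde{z}v$, which after the rescaling $ds=v\,d\tau$ turns the system with $c=0$ into
\begin{equation*}
\frac{dv}{ds}=v\bigl[\tilde{z}^{2}(\lambda v^{2}+\mu v+1)+v^{2}+av+b\bigr],\qquad\frac{d\tilde{z}}{ds}=-\tilde{z}^{3},
\end{equation*}
whose origin is semi--hyperbolic with Jacobian $\mathrm{diag}(b,0)$; applying Theorem~\ref{thm7.1} on the center manifold $v=0$ reproduces the orbit counts of both tables and closes the argument.
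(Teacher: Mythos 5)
Your proposal reaches the correct conclusions, but the decisive step is handled differently from the paper. The paper also passes to polar coordinates and identifies $G_9(\theta)=-\sin^3\theta$ and $H_9(\theta)=(b+1)\sin^2\theta\cos\theta+b\cos^3\theta$, but it then applies the normal-sector theorems \cite[Theorems 3.7 and 3.8 of Chapter 2]{ZDHD} directly to the triple zeros $\theta=0,\pi$ via the test $G_9'''(0)H_9(0)=G_9'''(\pi)H_9(\pi)=-6b$, which is negative for $b>0$ (one orbit per direction) and positive for $b<0$ (infinitely many); together with $H_9(0)=b$, $H_9(\pi)=-b$ this yields the time directions in Tables \ref{D1}--\ref{D2}. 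So your premise that those theorems ``do not apply directly'' to a zero of multiplicity three is not how the paper reads them: the normal-sector classification covers any odd-multiplicity zero of $G$ with $H(\theta_0)\ne0$, the relevant sign being that of $G^{(k)}(\theta_0)H(\theta_0)$. Your substitute for that step, integrating the truncated equation to get $r|\sin\theta|=C\exp\left(b/(2\sin^2\theta)\right)$, does give the right dichotomy, but the ``routine higher-order perturbation argument'' you defer is exactly the nontrivial content of the normal-sector machinery: near $\theta=0$ the remainder $\widetilde{G_9}(\theta,r)=O(r)$ dominates $-\sin^3\theta$, so the truncation is not innocently justified and, taken alone, this route has a gap. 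What closes it is your Briot--Bouquet cross-check: the computation $dv/ds=v\bigl[\tilde z^2(\lambda v^2+\mu v+1)+v^2+av+b\bigr]$, $d\tilde z/ds=-\tilde z^3$ is correct, the origin there is semi-hyperbolic (a saddle for $b>0$, a stable node for $b<0$ after reducing to the center manifold $v=0$), and blowing down, remembering that $ds=v\,d\tau$ reverses time for $v<0$ and that the $\tilde z$-axis collapses to the point $D$, reproduces both tables rigorously, while the invariance of the $v$-axis rules out spiralling and the zeros of $G_9$ rule out any exceptional direction other than $0,\pi$. If you promote that chart from cross-check to main argument, your proof is complete and somewhat more self-contained than the paper's appeal to the normal-sector theorems.
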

 
 \begin{proof}
 	It is easy to  prove that $D$ is an unstable star node for $c>0$.
 	
 	Consider $c=0$. In the polar coordinate $(v,z)=(r\cos\theta,r\sin\theta)$, system \eqref{x} is transformated into
 	\begin{equation}
 	\notag
 	\frac{1}{r}\frac{dr}{d\theta}=\frac{H_9(\theta)+\widetilde{H_9}(\theta,r)}{G_9(\theta)+\widetilde{G_9}(\theta,r)},
 	\end{equation}
 	where $G_9(\theta)=-\sin^3\theta$, $H_9(\theta)=(b+1)\sin^2\theta\cos\theta+b\cos^3\theta$.  $\theta=0$, $\pi$  are zeros of three-multiple  of $G_9(\theta)$.  It is easy to check  $G_9'''(0)H_9(0)=G_9'''(\pi)H_9(\pi)=-6b<0$ for $b>0$, and $G_9'''(0)H_9(0)=G_9'''(\pi)H_9(\pi)=-6b>0$ for $b<0$. By  \cite[Theorems 3.7 and 3.8 of Chapter 2]{ZDHD}, $H_9(0)=b$ and $H_9(\pi)=-b$, system \eqref{x} has
 	a unique orbit approaching $D$ in the direction $\theta = 0$ as $\tau\to-\infty$, and a unique orbit  approaching $D$ in the direction $\theta =\pi$ as $\tau\to+\infty$ for $b>0$;   infinitely many orbits approaching  $D$ in the direction $\theta = 0$ as $\tau\to+\infty$, and infinitely many orbits approaching $D$ in the direction $\theta  = \pi$ as $\tau\to-\infty$ for $b<0$.
 \end{proof}
 By  Lemma \ref{lemx}, the qualitative properties of equilibria $I_{D^\pm}$  at infinity in the Poincar\'e disc of system \eqref{61}, which correspond the equilibrium $D$ of system \eqref{x},  are as shown in Figure 	\ref{tu19}.
 \begin{figure}
 	\centering
 	\subfigure[{$c=0$, $b>0$ }]{
 		\includegraphics[width=0.3\textwidth]{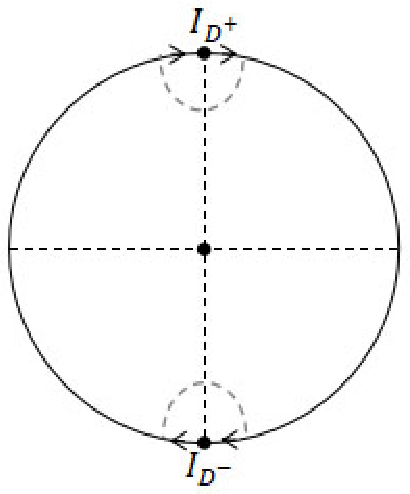}}
 	\quad
 	\subfigure[{$c=0$, $b<0$}]{
 		\includegraphics[width=0.3\textwidth]{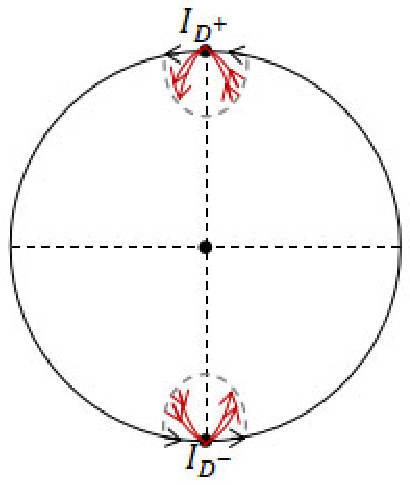}}
 	\quad
 	\subfigure[{ $c>0$}]{
 		\includegraphics[width=0.3\textwidth]{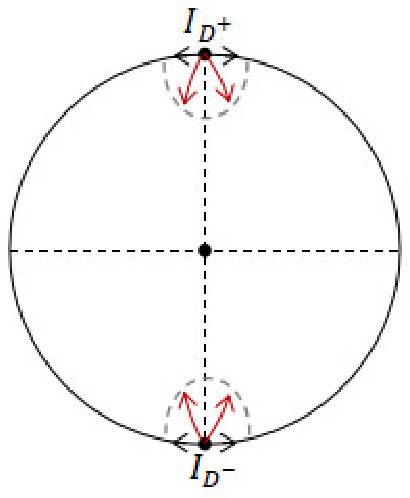}}
 	\caption{{Locally qualitative property of} equilibria at infinity $I_{D^{\pm}}$   in the Poincar\'e disc {(some equilibria at infinity are not marked here)}. }\label{tu19}
 \end{figure}

 	\section*{Appendix D}

 	By a Poincar\'e transformation
 	$$
 	x=\frac{1}{z},\quad y=\frac{u}{z},
 	$$
 	system \eqref{71} is changed into
 	\begin{equation}\label{Y}
 	\left\{\begin{aligned}
 	\frac{du}{d\tau}&=-z^2(u^2+\mu u+1)-u(cu^2+bu+a)=:P_7(u,z),\\
 	\frac{dz}{d\tau}&=-z^3u=:Q_7(u,z),
 	\end{aligned}
 	\right.
 	\end{equation}
 	where $d\tau=dt/z^2$. The equilibria of system \eqref{Y}  at $z=0$ are shown in Table \ref{T6}.
 	\begin{table}[htp]\normalsize
 		\renewcommand\arraystretch{2}
 		\setlength{\tabcolsep}{3mm}{
 			\caption{ \label{T6} Equilibria of system \eqref{Y} at $z=0$}
 			\begin{tabular}{c|c|c|c}
 				\hline
 				\multicolumn{3}{c|}
 				{ Relations between $a$ and $c$} & Equilibria
 				\\
 				\hline
 				\multirow{2}{*}
 				{\begin{tabular}[c]{@{}c@{}}$c=0$\end{tabular}} & \multicolumn{2}{c|}{$a=0$} & $G:(0,0)$
 				\\
 				\cline{2-4}
 				& \multicolumn{2}{c|}{$a>0$} & $G:(0,0)$, $R:\left(-\frac{a}{b},0\right)$
 				\\
 				\hline
 				\multirow{4}{*}
 				{\begin{tabular}[c]{@{}c@{}} $c>0$\end{tabular}} & \multicolumn{2}{c|}{$a=0$} & $G:(0,0)$, $S:\left(-\frac{b}{c},0\right)$
 				\\
 				\cline{2-4}
 				& \multirow{3}{*}{\begin{tabular}[c]{@{}c@{}} $a>0$\end{tabular}} & $b^2-4ac<0$ & $G: (0,0)$
 				\\
 				\cline{3-4}
 				&  & $b^2-4ac=0$ & $G:(0,0)$, $T:\left(-\frac{b}{2c},0\right)$
 				\\
 				\cline{3-4}
 				&  & $b^2-4ac>0$ & $G:(0,0)$, $P_1:\left(\frac{-b+\sqrt{b^2-4ac}}{2c},0\right)$, $P_2:\left(\frac{-b-\sqrt{b^2-4ac}}{2c},0\right)$
 				\\
 				\hline
 		\end{tabular}}
 	\end{table}
 	
 	\begin{lemma}
 		\label{lem17}
 		Consider $a=c=0$. $G$ is a degenerate equilibrium. Moreover, the qualitative properties of $G$ is shown  in {\rm Tables \ref{G3}--\ref{G2}}.
 		\begin{table}
 			\renewcommand\arraystretch{2}
 			\setlength{\tabcolsep}{2.5mm}{
 				\caption{\label{G3} Numbers of orbits connecting $G$ for  $b>0$ when $a=c=0$.}
 				\begin{tabular}{c|c}
 					\hline
 					Exceptional directions & Numbers of orbits
 					\\
 					\hline
 					$\theta=0$ & one \ $(+)$
 					\\
 					\hline
 					$\theta=\pi$ & one \ $(-)$
 					\\
 					\hline
 			\end{tabular}}
 		\end{table}
 		\begin{table}
 			\renewcommand\arraystretch{2}
 			\setlength{\tabcolsep}{2.5mm}{
 				\caption{\label{G2} Numbers of orbits connecting $G$ for  $b<0$ when $a=c=0$.}
 				\begin{tabular}{c|c}
 					\hline
 					Exceptional directions & Numbers of orbits
 					\\
 					\hline
 					$\theta=0$ & one \ $(-)$
 					\\
 					\hline
 					$\theta=\arctan\left(\sqrt{-b}\right)$ &  one \ $(+)$
 					\\
 					\hline
 					$\theta=\pi-\arctan\left(\sqrt{-b}\right)$ &  one \ $(-)$
 					\\
 					\hline
 					$\theta=\pi$ & one \ $(+)$
 					\\
 					\hline	
 					$\theta=\pi+\arctan\left(\sqrt{-b}\right)$ & one \ $(-)$
 					\\
 					\hline
 					$\theta=2\pi-\arctan\left(\sqrt{-b}\right)$ &  one \ $(+)$
 					\\
 					\hline
 			\end{tabular}}
 		\end{table}
 	\end{lemma}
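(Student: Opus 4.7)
\bigskip

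\noindent\textbf{Proof plan for Lemma \ref{lem17}.} First I would observe that when $a=c=0$, system \eqref{Y} reduces to
\[
\frac{du}{d\tau}=-z^2(u^2+\mu u+1)-bu^2,\qquad \frac{dz}{d\tau}=-z^3u,
\]
whose Jacobian matrix at $G=(0,0)$ is the zero matrix, so $G$ is a degenerate equilibrium. Following the template in the proof of Lemma \ref{lem10}, I would introduce polar coordinates $(u,z)=(r\cos\theta,r\sin\theta)$ and rewrite the system as
\[
\frac{1}{r}\frac{dr}{d\theta}=\frac{H_{10}(\theta)+\widetilde{H}_{10}(\theta,r)}{G_{10}(\theta)+\widetilde{G}_{10}(\theta,r)},
\]
where, extracting the lowest-order (cubic) terms, $G_{10}(\theta)=\sin\theta\,(b\cos^2\theta+\sin^2\theta)$ and $H_{10}(\theta)=-\cos\theta\,(b\cos^2\theta+\sin^2\theta)$. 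The exceptional directions at $G$ are the zeros of $G_{10}(\theta)$ in $[0,2\pi)$.

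Next I would split into the two cases. For $b>0$, $G_{10}(\theta)$ has only the two simple zeros $\theta=0,\pi$, at both of which a direct computation gives $G_{10}'(\theta_0)H_{10}(\theta_0)=-b^2<0$. By \cite[Theorem 3.7 of Chapter 2]{ZDHD}, together with the signs $H_{10}(0)=-b<0$ and $H_{10}(\pi)=b>0$, exactly one orbit connects $G$ along $\theta=0$ as $\tau\to+\infty$ and exactly one orbit connects $G$ along $\theta=\pi$ as $\tau\to-\infty$, which yields Table \ref{G3}. For $b<0$, besides the roots $\theta=0,\pi$ (for which the same calculation $G_{10}'H_{10}=-b^2<0$ and Theorem 3.7 give one orbit each, with time directions determined by the signs of $H_{10}$), the factor $b\cos^2\theta+\sin^2\theta$ vanishes along the four additional directions $\theta=\arctan\sqrt{-b},\ \pi-\arctan\sqrt{-b},\ \pi+\arctan\sqrt{-b},\ 2\pi-\arctan\sqrt{-b}$. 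At each of these, $H_{10}(\theta_0)=0$ as well, so \cite[Theorem 3.7 of Chapter 2]{ZDHD} does not apply directly.

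To treat the four remaining directions, I would carry out the Briot--Bouquet blow-up $z=\widetilde{z}u$, which converts the system into
\[
\frac{du}{d\delta}=-\widetilde{z}^2u(u^2+\mu u+1)-bu,\qquad \frac{d\widetilde{z}}{d\delta}=\mu\widetilde{z}^3 u+\widetilde{z}^3+b\widetilde{z},
\]
with $d\delta=u\,d\tau$. This has the three equilibria $(0,0)$, $(0,\sqrt{-b})$ and $(0,-\sqrt{-b})$ on the $\widetilde{z}$-axis, and they are exactly the blow-ups of the four problematic directions (plus the two already analyzed). I would then identify $(0,0)$ as hyperbolic (a saddle by direct linearization) and apply Theorem \ref{thm7.1} of Appendix B to the translated systems at $(0,\pm\sqrt{-b})$ to show that each is a semi-hyperbolic saddle-node, exactly as in the corresponding step of Lemma \ref{lem10}. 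Finally, blowing down via the sign conventions of the Briot--Bouquet map (each horizontal axis in the $(u,\widetilde{z})$-plane maps to one direction at $G$, the upper and lower parts of the $\widetilde{z}$-axis collapse to $G$) and tracking the orientation of $d\delta=u\,d\tau$, I would read off that each of the four directions carries exactly one connecting orbit, with the time directions as in Table \ref{G2}.

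The main obstacle will be the blow-down step: the Briot--Bouquet transformation reverses orientation in two of the four quadrants and rescales time by the factor $u$, so even though the qualitative picture in the $(u,\widetilde{z})$-plane is clear, one has to argue carefully that each saddle-node at $(0,\pm\sqrt{-b})$ contributes exactly one (not zero, not infinitely many) orbit approaching $G$ along the corresponding direction with the correct time orientation. This is the same type of analysis carried out around Figures \ref{tu9} and \ref{tu18} in the proof of Lemma \ref{lem10}, and I would closely follow that template to produce the entries of Tables \ref{G3} and \ref{G2}. \hfill$\square$
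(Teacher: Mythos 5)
Your overall strategy is the same as the paper's: reduce to the polar equation with $G_{10}(\theta)=\sin\theta\,(b\cos^2\theta+\sin^2\theta)$, handle $\theta=0,\pi$ by \cite[Theorem 3.7 of Chapter 2]{ZDHD}, and resolve the four remaining directions for $b<0$ by the Briot--Bouquet substitution $z=\widetilde z u$, which indeed yields system \eqref{BY1} with the three equilibria $(0,0)$ and $(0,\pm\sqrt{-b})$ on the $\widetilde z$--axis. The $b>0$ case and the treatment of $\theta=0,\pi$ are correct.

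There is, however, a genuine error at the decisive step: you classify $(0,\pm\sqrt{-b})$ as semi-hyperbolic \emph{saddle-nodes}, ``exactly as in the corresponding step of Lemma \ref{lem10}'', and then read off ``exactly one'' orbit per direction. These two assertions are incompatible, and the first one is false. At $(0,\sqrt{-b})$ the nonzero eigenvalue is $-2b>0$ and the center eigenspace is tangent to $\widetilde z-\sqrt{-b}=-\tfrac{\mu\sqrt{-b}}{2}u$; carrying out the center-manifold reduction of \eqref{BY2} one finds that the quadratic terms in the reduced equation \emph{cancel} (the $b\mu u^2$ term from $b(u^3+\mu u^2)$ is exactly compensated by $-2\sqrt{-b}\,\widetilde z u$ evaluated on the center manifold), leaving $\dot u=bu^3+o(u^3)$ with $b<0$. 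By Theorem \ref{thm7.1}(ii) this is a \emph{saddle}, not a saddle-node --- which is precisely what the paper proves via the auxiliary change of variables leading to \eqref{BY3}. The distinction is not cosmetic: a saddle has exactly one separatrix entering each half-plane $u>0$ and $u<0$ transversally to the invariant $\widetilde z$--axis, which after blow-down gives exactly one orbit along each of the four directions $\arctan\sqrt{-b}$, $\pi\pm\arctan\sqrt{-b}$, $2\pi-\arctan\sqrt{-b}$, as Table \ref{G2} asserts; a saddle-node would place a parabolic sector on one side of the $\widetilde z$--axis and hence produce \emph{infinitely many} orbits along some of these directions --- this is exactly what happens in Lemma \ref{lem10}, where the corresponding Table \ref{C3} records $\infty$ for two of the blown-up directions. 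So the analogy you invoke points to the wrong answer here, and your argument as written would not yield the ``one'' entries of Table \ref{G2}. To repair it, replace the saddle-node claim by the center-manifold computation above (or the explicit linear change of variables used in the paper) showing the reduced dynamics is cubic with negative leading coefficient.
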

 	\begin{figure}
 		\centering
 		\subfigure[{$u$-$\widetilde{z}$ plane for system \eqref{BY1} }]{
 			\includegraphics[width=0.35\textwidth]{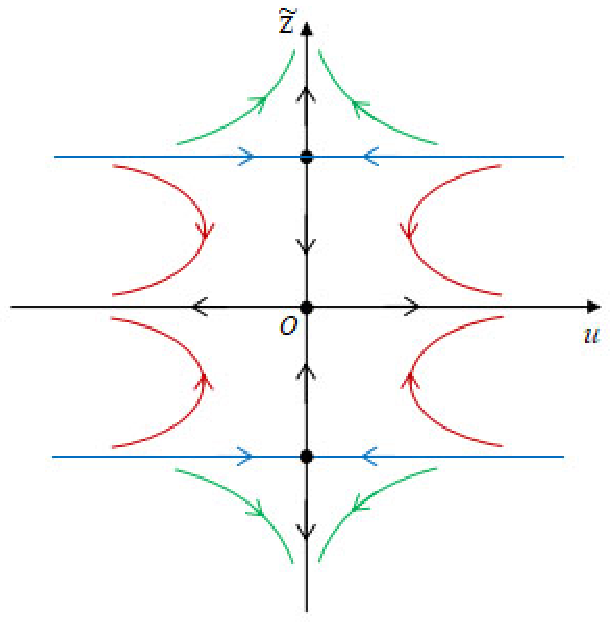}}
 		\quad
 		\subfigure[{$u$-$z$ plane for system \eqref{Y} } ]{
 			\includegraphics[width=0.35\textwidth]{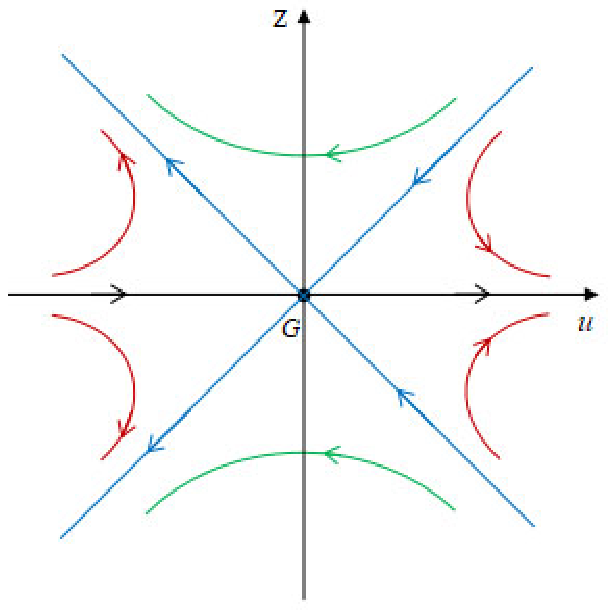}}
 		\caption{ Orbits changing under Briot--Bouquet transformations when $a=c=0$ and $b<0$. }\label{tu20}
 	\end{figure}
 	\begin{proof}
 		Considering a polar transformation $(u,z)=(r\cos\theta,r\sin\theta)$,
 		system \eqref{Y} can be rewritten as
 		\begin{equation}
 		\label{G10}
 		\frac{1}{r}\frac{dr}{d\theta}=\frac{H_{10}(\theta)+\widetilde{H_{10}}(\theta,r)}{G_{10}(\theta)+\widetilde{G_{10}}(\theta,r)},
 		\end{equation}
 		where $G_{10}(\theta)=\sin\theta(b\cos^2\theta+\sin^2\theta)$ and $H_{10}(\theta)=-\cos\theta(b\cos^2\theta+\sin^2\theta)$.	 Clearly, the zeros of $G_{10}(\theta)$ is related to   the sign of $b$.
 		
 		When $b>0$, $G_{10}(\theta)$ has two zeros   $\theta=0$, $\pi$ in $[0,2\pi)$. It is clear that $G_{10}'(0)H_{10}(0)=G_{10}'(\pi)H_{10}(\pi)=-b^2<0$.
 		By   \cite[Theorem 3.7 of Chapter 2]{ZDHD},  $H_{10}(0)=-b<0$ and $H_{10}(\pi)=b>0$, system \eqref{Y} has
 		a unique orbit approaching  $G$ in the direction $\theta = \pi$ as $\tau\to-\infty$,  and 	a unique orbit approaching $G$  in the direction $\theta = 0$  as $\tau\to+\infty$.
 		
 		When $b<0$, $G_{10}(\theta)$ has six zeros  $\theta=0$, $\arctan\left(\sqrt{-b}\right)$, $\pi-\arctan \left(\sqrt{-b}\right)$, $\pi$, $ \pi+\arctan\left(\sqrt{-b}\right)$, $2\pi-\arctan\left(\sqrt{-b}\right)$ in $[0,2\pi)$. It is easy to get $G_{10}'(0)H_{10}(0)=G_{10}'(\pi)H_{10}(\pi)=-b^2<0$.  Similarly by
 		\cite[Theorem 3.7 of Chapter 2]{ZDHD},  $H_{10}(0)>0$ and $H_{10}(\pi)<0$,  system \eqref{Y} has
 		a unique orbit approaching  $G$ in the direction $\theta = \pi$ as $\tau\to+\infty$,  and 	a unique orbit approaching $G$  in the direction $\theta = 0$  as $\tau\to-\infty$.
 		However, we need to blow up the other four directions because   $H_{10}\left(\arctan\left(\sqrt{-b}\right)\right)=H_{10}\left(\pi-\arctan \left(\sqrt{-b}\right)\right)=H_{10}\left(\pi+\arctan \left(\sqrt{-b}\right)\right)=H_{10}\left(2\pi-\arctan \left(\sqrt{-b}\right)\right)=0$.
 		
 		With the
 		Briot--Bouquet transformation
 		$z=\widetilde{z}u$,
 		system \eqref{Y} is changed into
 		\begin{equation}\label{BY1}
 		\left\{\begin{aligned}
 		\frac{du}{d\delta}&=-\widetilde{z}^2u(u^2+\mu u+1)-bu,\\
 		\frac{d\widetilde{z}}{d\delta}&=\widetilde{z}^3(\mu u+1)+b\widetilde{z},
 		\end{aligned}
 		\right.
 		\end{equation}
 		where $d\delta=ud\tau$.  System \eqref{BY1}  has three  equilibria $(0,0)$, $\left(0,\sqrt{-b}\right)$ and $\left(0,-\sqrt{-b}\right)$.  It is sure that $(0,0)$ is a saddle and the other  two are semi--degenerate equilibria  of system \eqref{BY1}.
 		Moreover,
 		a  transformation $(u,\widetilde{z})=(u,\widetilde{z}+\sqrt{-b})$
 		yields that
 		system
 		\eqref{BY1}  is changed into
 		\begin{equation}\label{BY2}
 		\left\{\begin{aligned}
 		\frac{du}{d\delta}&=-\widetilde{z}^2u(u^2+\mu u+1)-2\sqrt{-b}\widetilde{z}u(u^2+\mu u+1)+b(u^3+\mu u^2),\\
 		\frac{d\widetilde{z}}{d\delta}&=-\mu b\sqrt{-b}u-2b\widetilde{z}-3\mu b\widetilde{z}u+(\widetilde{z}^3+3\sqrt{-b}\widetilde{z}^2)(\mu u+1).
 		\end{aligned}
 		\right.
 		\end{equation}	
 		A  further transformation $(u,\widetilde{z}) \to\left(u, \left(\widetilde{z}+\mu b\sqrt{-b}u\right)/(-2b) \right)$
 		sends system \eqref{BY2}   to
 		\begin{equation}\label{BY3}
 		\left\{\begin{aligned}
 		\frac{du}{d\delta}&=\left(-\frac{3\mu^2b}{4}+b\right)u^3+
 		\frac{\sqrt{-b}}{b}\widetilde{z}u+h.o.t.=:P_8(u,\widetilde{z}),\\
 		\frac{d\widetilde{z}}{d\delta}&=-2b\widetilde{z}-
 		\frac{3\mu^2b^2\sqrt{-b}}{2}u^2+h.o.t.=:Q_8(u,\widetilde{z}).
 		\end{aligned}
 		\right.
 		\end{equation}	
 		By the implicit function theorem, $Q_8(u,\widetilde{z})=0$
 		has a unique root $$\widetilde{z}=\phi_8(u)=-\frac{3\mu^2b\sqrt{-b}}{4}u^2+o(u^2)$$ for small $|u|$.
 		Thus,
 		$P_8(u,\phi_8(u))=bu^3+o(u^3)$.
 		By Theorem \ref{thm7.1} of Appendix B and $b<0$, the orgin of system \eqref{BY3} is a saddle. So is $\left(0,\sqrt{-b}\right)$  for system \eqref{BY1}.
 		Similarly,  $\left(0,-\sqrt{-b}\right)$ is  a saddle of system \eqref{BY1}, see Figure \ref{tu20}(a).
 		Further, we obtain the  qualitative properties of $G$ in the $(u, z)$ plane for system \eqref{Y}, as shown in Figure \ref{tu20}(b). The proof is finished.
 	\end{proof}
 	By Lemma \ref{lem17},  when $a=c=0$, the qualitative properties of equilibria $I_{G^\pm}$ at infinity in the Poincar\'e disc of system \eqref{71}, which correspond the equilibrium $G$ of system \eqref{Y},  are as shown in Figure \ref{tuG}.
 	\begin{figure}[htp]
 		\centering
 		\subfigure[{ $b>0$ }]{
 			\includegraphics[width=0.3\textwidth]{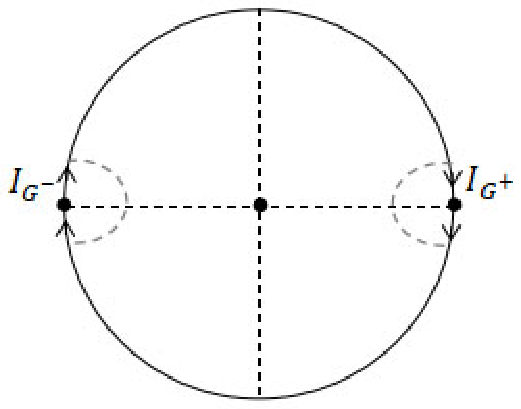}}
 		\quad
 		\subfigure[{$b<0$}]{
 			\includegraphics[width=0.3\textwidth]{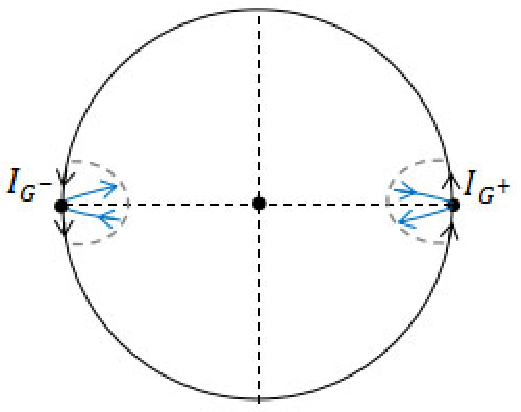}}
 		\caption{{Locally qualitative property of} equilibria at infinity  $I_{G^{\pm}}$ in the Poincar\'e disc when $a=c=0$. }\label{tuG}
 	\end{figure}
 	
 	\begin{lemma}
 		\label{lem18}
 		Consider $c=0$ and $a>0$. $G$ is a saddle. $R$ is a saddle for $b<0$, and an unstable node for $b>0$.
 	\end{lemma}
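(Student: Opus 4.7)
The plan is to apply Theorem \ref{thm7.1} to each of the two equilibria on the line $z=0$, in the same spirit as the treatment of $A$ and $B$ in Lemma \ref{lemy1}. With $c=0$, the Jacobian of system \eqref{Y} at $G=(0,0)$ is $\mathrm{diag}(-a,0)$. For $R=(-a/b,0)$, the substitution $u=v-a/b$ converts the quadratic factor $-u(bu+a)$ into $av-bv^2$, so the translated system has linear part $\mathrm{diag}(a,0)$ at $R$. Both equilibria are therefore semi--hyperbolic with one vanishing eigenvalue, and Theorem \ref{thm7.1} is the natural tool.

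For $G$, first reverse time so that the nonzero eigenvalue becomes $+a$, then rescale by $1/a$ to place the system in the normal form $\dot u=u+Q_2(z,u)$, $\dot z=P_2(z,u)$ required by Theorem \ref{thm7.1}. The implicit function theorem applied to $u+Q_2(z,u)=0$ yields the analytic center manifold
\[
u = \phi(z) = -\tfrac{1}{a}z^2 + O(z^4),
\]
and substituting into $P_2(z,u)=(1/a)z^3u$ gives
\[
\psi(z) = P_2(z,\phi(z)) = -\tfrac{1}{a^2}z^5 + O(z^7).
\]
Here $m=5$ is odd with leading coefficient $-1/a^2<0$, so part (ii) of Theorem \ref{thm7.1} identifies the origin of the time--reversed system as a saddle. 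Since the saddle property is preserved under time reversal, $G$ is a saddle of \eqref{Y}.

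For $R$, the hyperbolic eigenvalue is already $+a$, so only a rescaling by $1/a$ is needed. Solving $\dot v=0$ via the implicit function theorem yields $\phi(z)=\tfrac{a^{2}-\mu ab+b^{2}}{ab^{2}}z^2+O(z^4)$, and insertion into $\dot z=(a/b)z^3-z^3v$ produces
\[
\psi(z) = \tfrac{1}{b}z^3 + O(z^5).
\]
The exponent $m=3$ is again odd, while the leading coefficient has the sign of $b$. For $b>0$ Theorem \ref{thm7.1}(i) makes $R$ an unstable node; for $b<0$ Theorem \ref{thm7.1}(ii) makes $R$ a saddle.

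No substantive obstacle is expected; the argument is a direct computation that mirrors the pattern already used in Lemmas \ref{lemy1}, \ref{E}, \ref{FQ} and \ref{EFK}. The only points requiring mild care are keeping track of signs under the time reversal at $G$ and recognising that the ``saddle'' classification is invariant under that operation, together with verifying that in each case it is $\dot z$ (not $\dot v$) that plays the role of $P_2$ in the normal form of Theorem \ref{thm7.1}.
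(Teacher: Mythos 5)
Your proposal is correct and follows essentially the same route as the paper: translate to each equilibrium, solve $P=0$ by the implicit function theorem for the center manifold $u=\phi(z)$, substitute into $\dot z$, and invoke Theorem \ref{thm7.1}, obtaining $\psi(z)\sim z^5$ with negative sign (after time reversal) at $G$ and $\psi(z)\sim (a/b)z^3$ at $R$. Your explicit handling of the time reversal needed at $G$ (where the hyperbolic eigenvalue is $-a<0$) is a point the paper leaves implicit, but the argument and conclusions coincide.
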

 	\begin{proof}
 		The Jacobian matrices at $G$, $R$ are
 		$$
 		J_G:=	
 		\begin{pmatrix}
 		-a&0\\
 		0&0\\
 		\end{pmatrix},
 		\quad
 		J_R:=	
 		\begin{pmatrix}
 		a&0\\
 		0&0\\
 		\end{pmatrix}
 		$$
 		respectively.
 		Considering system \eqref{Y},
 		by the implicit function theorem, $P_7(u,z)=0$
 		has a unique root $u=\phi_7(z)=-1/az^2+o(z^2)$ for small $|z|$.
 		Thus,
 		$$
 		Q_7(\phi_7(z),z)=\frac{1}{a}z^5+o(z^5).
 		$$
 		By Theorem \ref{thm7.1} of Appendix B, $G$ is  a saddle.
 		For $R$,
 		with a transformation $(u,z)\to (u-a/b,z)$, system \eqref{Y} is changed into \begin{equation}\notag
 		\left\{\begin{aligned}
 		\frac{du}{d\tau}&=-z^2\left(u^2+\left(\mu-\frac{2a}{b}\right)u+\frac{a^2}{b^2}-\frac{a\mu}{b}+1\right)-bu^2+au=:P_9(u,z),\\
 		\frac{dz}{d\tau}&=\frac{a}{b}z^3-z^3u=:Q_9(u,z).
 		\end{aligned}
 		\right.
 		\end{equation}
 		By the implicit function theorem, $P_9(u,z)=0$
 		has a unique root $u=\phi_9(z)=(a^2-\mu ab+b^2)/(ab^2)z^2+o(z^2)$ for small $|z|$.
 		Thus,
 		$$
 		Q_9(\phi_9(z),z)=\frac{a}{b}z^3+o(z^3).
 		$$
 		By Theorem \ref{thm7.1} of Appendix B,
 		$R$ is a saddle for $b<0$, and an unstable node for $b>0$.
 	\end{proof}
 	
 	By Lemma \ref{lem18},  when $c=0$ and $a>0$, the qualitative properties of equilibria $I_{G^\pm}$ and $I_{R^\pm}$ at infinity in the Poincar\'e disc of system \eqref{71}, which correspond the equilibria $G$, $R$ of system \eqref{Y},  are as shown in Figure \ref{tuGR}.
 	\begin{figure}[htp]
 		\centering
 		\subfigure[{ $b>0$ }]{
 			\includegraphics[width=0.3\textwidth]{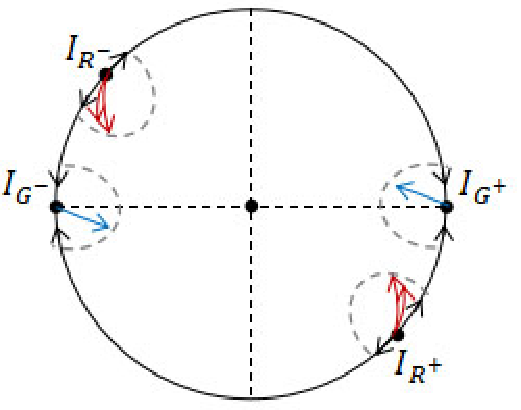}}
 		\quad
 		\subfigure[{$b<0$}]{
 			\includegraphics[width=0.3\textwidth]{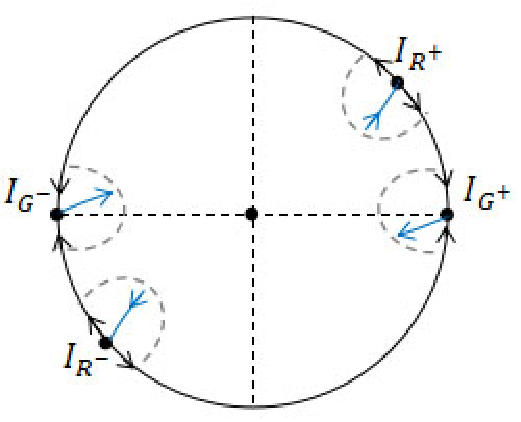}}
 		\caption{{Locally qualitative property of} equilibria at infinity  $I_{G^\pm}$ and $I_{R^\pm}$ in the Poincar\'e disc when $c=0$ and $a>0$. }\label{tuGR}
 	\end{figure}
 	\begin{lemma}
 		\label{lem19}
 		Consider $c>0$ and $a=0$.  $S$  is  a saddle for $b>0$, and a stable node for $b<0$. $G$ is a degenerate equilibrium and its  qualitative properties  is also shown  in  {\rm Tables \ref{G3}--\ref{G2}}.
 	\end{lemma}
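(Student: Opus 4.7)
The plan is to treat the two equilibria $S=(-b/c,0)$ and $G=(0,0)$ of system \eqref{Y} separately; $S$ is semi--hyperbolic while $G$ has an identically vanishing linear part and requires polar analysis and a Briot--Bouquet blow--up, in close parallel to the proof of Lemma \ref{lem17}.

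For the equilibrium $S$, a direct computation gives the Jacobian $J_S=\operatorname{diag}(-b^2/c,\,0)$, so the $u$--direction is hyperbolic with attracting eigenvalue $-b^2/c$ and the $z$--direction is central. After the translation $(u,z)\to(u-b/c,z)$, the implicit function theorem produces a smooth branch $u=\phi(z)=O(z^2)$ annihilating the first component, and substitution into the $z$--equation yields $dz/d\tau=(b/c)z^3+o(z^3)$ along the centre manifold. Combining Theorem \ref{thm7.1} of Appendix B with a time reversal (needed because the hyperbolic eigenvalue is negative rather than positive), I conclude that $S$ is a saddle when $b>0$ and a stable node when $b<0$.

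The equilibrium $G$ is completely degenerate. Writing $(u,z)=(r\cos\theta,r\sin\theta)$, the leading order of $uP_7+zQ_7$ and $uQ_7-zP_7$ is cubic in $r$, and the additional term $-cu^3$ of $P_7$ together with the factor $Q_7=-uz^3$ contribute only at order $r^4$; hence the reduced angular functions coincide with those already analysed in Lemma \ref{lem17}, namely
\[
G_{10}(\theta)=\sin\theta(\sin^2\theta+b\cos^2\theta),\qquad H_{10}(\theta)=-\cos\theta(\sin^2\theta+b\cos^2\theta).
\]
For $b>0$ these have only the two simple zeros $\theta=0,\pi$, with $G_{10}'H_{10}=-b^2<0$; by \cite[Theorem 3.7 of Chapter 2]{ZDHD} one obtains a unique orbit along each direction with the senses listed in Table \ref{G3}. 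For $b<0$ there are four additional zeros $\theta=\arctan\sqrt{-b}$, $\pi-\arctan\sqrt{-b}$, $\pi+\arctan\sqrt{-b}$ and $2\pi-\arctan\sqrt{-b}$ at which $G_{10}$ and $H_{10}$ vanish simultaneously, requiring desingularisation.

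For this last case I would, following Lemma \ref{lem17}, apply the Briot--Bouquet blow--up $z=\tilde z u$ to obtain
\begin{equation*}
\frac{du}{d\delta}=-\tilde z^{2}u(u^2+\mu u+1)-cu^{2}-bu,\qquad \frac{d\tilde z}{d\delta}=\tilde z^{3}(1+\mu u)+\tilde z(b+cu),
\end{equation*}
whose equilibria on $u=0$ are the hyperbolic saddle $(0,0)$ and the two semi--hyperbolic points $(0,\pm\sqrt{-b})$. At each of the latter I would shift $\tilde z=w\pm\sqrt{-b}$ and parametrise the centre manifold $w=\alpha u+\beta u^2+O(u^3)$ by the implicit function theorem; back--substitution should yield $du/d\delta=b\,u^3+o(u^3)$, and Theorem \ref{thm7.1}(ii) with $m=3$ and $a_m=b<0$ then declares each blown--up equilibrium a saddle, so that blowing down produces a single orbit along each of the four exceptional directions, in agreement with Table \ref{G2}. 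The main technical obstacle is exactly this last computation: both the linear and quadratic contributions of $du/d\delta$ on the centre manifold must cancel, and a somewhat delicate algebraic identity is needed to reduce the cubic coefficient $4\alpha^{2}+2(c-2b\mu)\alpha/\sqrt{-b}+\mu(c-b\mu)+b$ to precisely $b$. It is this cancellation that accounts for the picture at $G$ being independent of $c\geqslant 0$ and for Tables \ref{G3}--\ref{G2} of Lemma \ref{lem17} applying verbatim in the present setting.
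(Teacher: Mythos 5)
Your proposal is correct and follows essentially the same route as the paper: a centre--manifold reduction plus Theorem \ref{thm7.1} for the semi--hyperbolic point $S$, and the observation that the leading polar angular functions at $G$ coincide with the $G_{10},H_{10}$ of Lemma \ref{lem17}, followed by the normal--sector argument for $b>0$ and the Briot--Bouquet blow--up for $b<0$. If anything you are more explicit than the paper, which at the blow--up stage simply defers to Lemma \ref{lem17}, whereas you actually verify that the extra terms $-cu^{2}$ and $c\tilde{z}u$ in the blown--up system still yield $du/d\delta=b\,u^{3}+o(u^{3})$ on the centre manifold; that cancellation is exactly right.
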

 	\begin{proof}
 		Applying a transformation $(u,z)\to (u-b/c,0)$, system \eqref{Y} is changed into
 		\begin{equation}\label{Y1}
 		\left\{\begin{aligned}
 		\frac{du}{d\tau}&=-z^2\left(u^2+\left(\mu-\frac{2b}{c}\right)u+\frac{b^2}{c^2}-\frac{b\mu}{c}+1\right)-cu^3+2bu^2-\frac{b^2}{c}u=:P_9(u,z),\\
 		\frac{dz}{d\tau}&=\frac{b}{c}z^3-z^3u=:Q_9(u,z).
 		\end{aligned}
 		\right.
 		\end{equation}
 		By the implicit function theorem, $P_{10}(u,z)=0$
 		has a unique root $u=\phi_{10}(z)=(b^2-\mu bc+c^2)/(b^2c)z^2+o(z^2)$ for small $|z|$.
 		Thus,
 		$$
 		Q_{10}(\phi_{10}(z),z)=\frac{b}{c}z^3+o(z^3).
 		$$
 		By Theorem \ref{thm7.1} in Appendix B,  the orgin of system \eqref{Y1} is  a saddle for $b>0$, and a stable node for $b<0$. So is $S$.
 		
 		Concerning the equilibrium $G$, it is easy to check that $G$ is a degenerate equilibrium.	To obtain the  qualitative properties of $G$, we need to  similarly consider a  polar transformation $(u,z)=(r\cos\theta,r\sin\theta)$. Then, system \eqref{Y} is changed into equation \eqref{G10}, where $G_{10}(\theta)=\sin\theta(b\cos^2\theta+\sin^2\theta)$ and $H_{10}(\theta)=-\cos\theta(b\cos^2\theta+\sin^2\theta)$. Thus, as studied in Lemma
 		\ref{lem17}, we can get the results. The proof is finished.
 	\end{proof}
 	
 	By Lemma \ref{lem19},  when $a=0$ and $c>0$, the qualitative properties of equilibria $I_{G^\pm}$ and $I_{S^\pm}$ at infinity in the Poincar\'e disc of system \eqref{71}, which correspond the equilibria $G$, $S$ of system \eqref{Y},  are as shown in Figure \ref{tuGS}.
 	\begin{figure}[htp]
 		\centering
 		\subfigure[{ $b>0$ }]{
 			\includegraphics[width=0.3\textwidth]{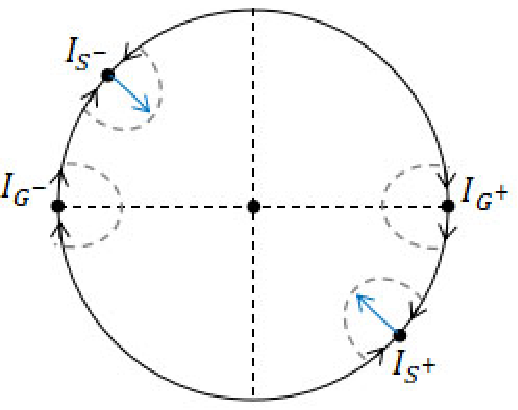}}
 		\quad
 		\subfigure[{$b<0$}]{
 			\includegraphics[width=0.3\textwidth]{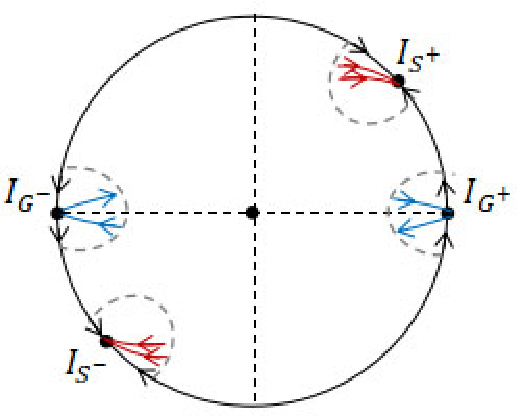}}
 		\caption{{ Locally qualitative property of} equilibria at infinity  $I_{G^\pm}$ and $I_{S^\pm}$ in the Poincar\'e disc when $a=0$ and $c>0$ {(some equilibria at infinity are not marked here)}.}\label{tuGS}
 	\end{figure}
 	\begin{lemma}
 		\label{lem20}
 		Consider $c>0$ and $a>0$. $G$ is a saddle.  $P_1$ is a saddle, and $P_2$ is an $($resp. a$)$ unstable $($resp. stable$)$ node for $b<0$ $($resp. $b>0$$)$ when $b^2-4ac>0$.
 		When $b^2-4ac=0$, $T$ is a degenerate equilibrium. Moreover, the qualitative properties of $T$ is as shown in {\rm Tables
 			\ref{T1}--\ref{T3}}, where $\omega:= \sqrt{c\sqrt{ac}/(a-\mu\sqrt{ac}+c)}$.
 		\begin{table}[htp]
 			\renewcommand\arraystretch{2}
 			\setlength{\tabcolsep}{2.5mm}{
 				\caption{\label{T1} Numbers of orbits connecting $T$ for  $b=-2\sqrt{ac}$ (resp. $b=2\sqrt{ac}$ and $a-\mu\sqrt{ac}+c<0$)  when $a>0$ and $c>0$.}
 				\begin{tabular}{c|c}
 					\hline
 					Exceptional directions & Numbers of orbits
 					\\
 					\hline
 					$\theta=0$ & one \ $(+)$\ $($resp. $(-))$
 					\\
 					\hline
 					$\theta=\pi$ & one \ $(-)$\  $($resp. $(+))$
 					\\
 					\hline
 			\end{tabular}}
 		\end{table}
 		\begin{table}[htp]
 			\renewcommand\arraystretch{2}
 			\setlength{\tabcolsep}{2.5mm}{
 				\caption{\label{T2} Numbers of orbits connecting $T$ for  $b=2\sqrt{ac}$ and $a-\mu\sqrt{ac}+c=0$ when $a>0$ and $c>0$.}
 				\begin{tabular}{c|c}
 					\hline
 					Exceptional directions & Numbers of orbits
 					\\
 					\hline
 					$\theta=0$ & one \ $(+)$\ $($resp. $(-))$
 					\\
 					\hline
 					$\theta=\frac{\pi}{2}$ & $\infty$\ $(-)\ ($resp. $(+))$
 					\\
 					\hline
 					$\theta=\pi$ & one \ $(-)$\ $($resp. $(+))$
 					\\
 					\hline
 					$\theta=\frac{3\pi}{2}$ &  $\infty$ \ $(-)\ ($resp. $(+))$
 					\\
 					\hline
 			\end{tabular}}
 		\end{table}
 		\begin{table}[htp]
 			\renewcommand\arraystretch{2}
 			\setlength{\tabcolsep}{2.5mm}{
 				\caption{\label{T3} Numbers of orbits connecting $T$ for $b=2\sqrt{ac}$ and $a-\mu\sqrt{ac}+c>0$ when $a>0$ and $c>0$.}
 				\begin{tabular}{c|c}
 					\hline
 					Exceptional directions & Numbers of orbits
 					\\
 					\hline
 					$\theta=0$ & one \ $(-)$
 					\\
 					\hline
 					$\theta=\arctan\omega$ & $\infty$\ $(-)$
 					\\
 					\hline
 					$\theta=\pi-\arctan\omega$ &  one \ $(-)$
 					\\
 					\hline
 					$\theta=\pi$ & one \ $(+)$
 					\\
 					\hline	
 					$\theta=\pi+\arctan\omega$ &  one \ $(-)$
 					\\
 					\hline
 					$\theta=2\pi-\arctan\omega$ &  $\infty$\ $(-)$
 					\\
 					\hline
 			\end{tabular}}
 		\end{table}
 	\end{lemma}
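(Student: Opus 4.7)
The plan is to treat the three equilibria $G$, $P_1$/$P_2$ (or $T$), separately, following the same pattern used in the proofs of Lemmas \ref{lemy1}, \ref{lem18}, and \ref{lem19}: compute the Jacobian matrix to distinguish semi--hyperbolic from degenerate cases, and then appeal either to Theorem \ref{thm7.1} of Appendix B (via an implicit--function reduction) or to the normal--sector / Briot--Bouquet machinery from \cite[Chapter 2]{ZDHD}.

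First, for $G=(0,0)$ the linear part of \eqref{Y} is $\mathrm{diag}(-a,0)$, so $G$ is semi--hyperbolic. Solving $P_7(u,z)=0$ by the implicit function theorem gives $u=\phi(z)=-z^2/a+o(z^2)$, and substituting into $Q_7$ yields $Q_7(\phi(z),z)=z^5/a+o(z^5)$; since the leading exponent is odd and the coefficient is positive, Theorem \ref{thm7.1} forces $G$ to be a saddle (in fact, the same computation does not depend on $b$, so this covers all three subcases $b^2-4ac\lessgtr 0$ uniformly).

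Next, for the hyperbolic--on--the--line equilibria $P_1$ and $P_2$ in the case $b^2-4ac>0$, I translate by $(u,z)\mapsto(u+u_*,z)$ with $u_*\in\{(-b\pm\sqrt{b^2-4ac})/(2c)\}$. A direct computation shows that the $u$--derivative of the first component equals $-(3cu_*^2+2bu_*+a) = \mp\sqrt{b^2-4ac}$, so each $P_i$ is semi--hyperbolic. Applying Theorem \ref{thm7.1} in the same way — solve $P=0$ for $u=\phi(z)$ then read off the leading term of $Q(\phi(z),z) = -u_* z^3 + o(z^3)$ — the sign of $u_*$ and parity of the exponent $3$ classify each equilibrium: $P_1$ (with $u_*>0$ when $b<0$, $u_*<0$ when $b>0$) turns out to be a saddle in both cases, while $P_2$ gives an unstable node for $b<0$ and a stable node for $b>0$, which is the announced statement.

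The main obstacle is the degenerate equilibrium $T=(-b/(2c),0)$ for $b^2-4ac=0$, where the whole Jacobian vanishes. I would translate to put $T$ at the origin and pass to polar coordinates $(u,z)=(r\cos\theta,r\sin\theta)$, obtaining an equation of the form $(1/r)dr/d\theta = (H(\theta)+o(1))/(G(\theta)+o(1))$ where the relevant factor inside $G(\theta)$ is $\kappa\sin^2\theta+c\cos^2\theta$ with $\kappa:=T^2+\mu T+1$ proportional to $a-\mu\sqrt{ac}+c$. The three subcases correspond exactly to the sign of $\kappa$: if $\kappa>0$ (i.e.\ $a-\mu\sqrt{ac}+c<0$ with the sign flip coming from $b=-2\sqrt{ac}$, vice versa for $b=2\sqrt{ac}$, $a-\mu\sqrt{ac}+c>0$) only $\theta=0,\pi$ are exceptional and \cite[Theorem 3.7 of Chapter 2]{ZDHD} gives one orbit in each direction, matching Table \ref{T1} and Table \ref{T3}; if $\kappa=0$ two extra exceptional directions $\theta=\pi/2,3\pi/2$ appear and the same theorem plus an additional Briot--Bouquet blow--up $u=\tilde u z$ (as in the proof of Lemma \ref{lem10} for the case $\gamma=0$) produces the infinite families claimed in Table \ref{T2}; if $\kappa<0$ two more roots $\theta=\arctan\omega$ and $\pi-\arctan\omega$ of $G$ appear with $\omega = \sqrt{c\sqrt{ac}/(a-\mu\sqrt{ac}+c)}$, and since these are simultaneously roots of $H$ I would blow them up by $z=\tilde z u$ exactly as done for $C$ in Lemma \ref{lem10} (case $\gamma<0$): the blow--up yields auxiliary equilibria at $(0,\pm\omega)$ which reduce, after one more translation and an implicit function step, to saddle--nodes, and blowing down produces the direction count of Table \ref{T3}. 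The sign conventions of the $(\pm)$ labels follow from the sign of $T=-b/(2c)$ in $dz/d\tau = -z^3(u+T)$, so the $b=-2\sqrt{ac}$ and $b=2\sqrt{ac}$ cases differ only by a time--reversal on the exceptional directions, which accounts for the "resp." statements in the tables.
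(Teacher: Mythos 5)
Your overall route is the same as the paper's: an implicit--function reduction combined with Theorem \ref{thm7.1} for the semi--hyperbolic equilibria $G$, $P_1$, $P_2$, and polar coordinates, \cite[Theorem 3.7 of Chapter 2]{ZDHD} plus Briot--Bouquet blow--ups for the degenerate point $T$, with the subcases for $T$ separated by the sign of the same quadratic quantity. So there is no methodological divergence to report.

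There is, however, one step where your own formulas do not deliver the conclusion you assert. At $P_1=(u_8,0)$ with $u_8=(-b+\sqrt{b^2-4ac})/(2c)$, the nonzero eigenvalue is $-u_8(2cu_8+b)=-u_8\sqrt{b^2-4ac}$, not $\mp\sqrt{b^2-4ac}$ as you wrote, and your (correctly signed) center--manifold reduction $\dot z=-u_8z^3+o(z^3)$ makes the center direction have the \emph{same} stability type as the hyperbolic one: for $b>0$ one has $u_8<0$, so both directions are repelling and Theorem \ref{thm7.1} then yields a node at $P_1$ and, by the analogous computation, a saddle at $P_2$ --- the opposite of what you assert. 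You must either locate a compensating sign (note that the paper's $Q_{11}$ in \eqref{Y3} carries the opposite sign to your $-u_*z^3$) or conclude that the roles of $P_1$ and $P_2$ are interchanged; as written, this step reads off the announced answer rather than deriving it. Two smaller slips occur in the analysis of $T$: the coefficient of $\cos^2\theta$ in the polar factor is $-b/2$, not $c$, and the number of exceptional directions (two, four or six) is governed by the sign of $b\kappa$ rather than of $\kappa$ alone --- which is why $b=-2\sqrt{ac}$ (where $\kappa>0$) still lands in the two--direction Table \ref{T1}, while the six--direction Table \ref{T3} requires $b=2\sqrt{ac}$ together with $a-\mu\sqrt{ac}+c>0$. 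These do not change the blow--up strategy, but they must be straightened out for the tables to come out as stated.
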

 	\begin{figure}[htp]
 		\centering
 		\subfigure[{ 	$b=-2\sqrt{ac}$}]{
 			\includegraphics[width=0.3\textwidth]{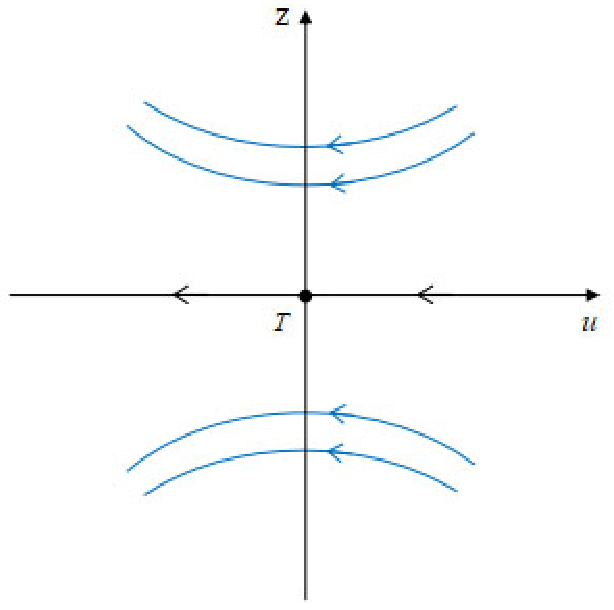}}
 		\quad
 		\subfigure[{ 	$b=2\sqrt{ac}$,  $a-\mu\sqrt{ac}+c<0$}]{
 			\includegraphics[width=0.3\textwidth]{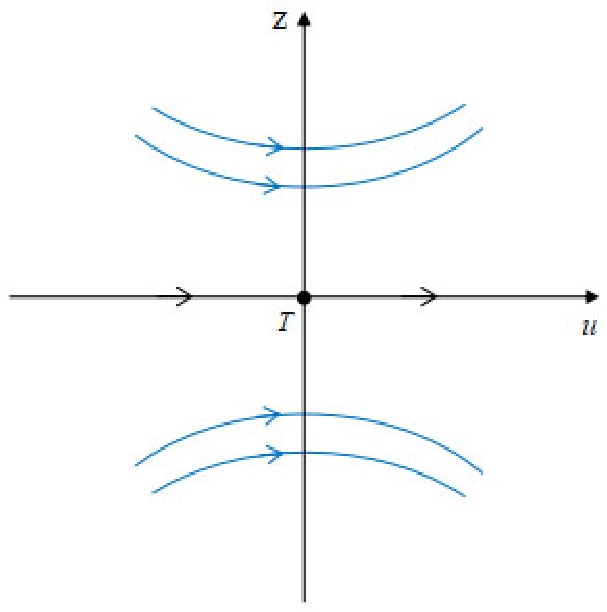}}
 		\quad
 		
 		\subfigure[{  $b=2\sqrt{ac}$, $a-\mu\sqrt{ac}+c=0$}]{
 			\includegraphics[width=0.3\textwidth]{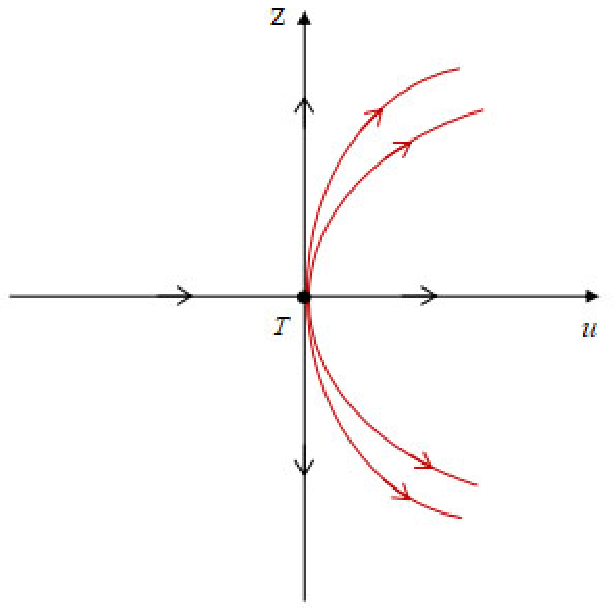}}
 		\quad	
 		\subfigure[{ $b=2\sqrt{ac}$, $a-\mu\sqrt{ac}+c>0$}]{
 			\includegraphics[width=0.3\textwidth]{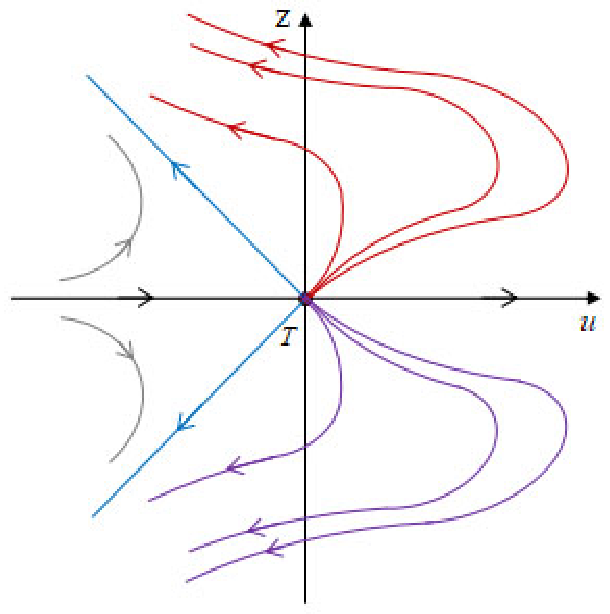}}
 		\caption{ The qualitative properties of $T$ of system \eqref{Y} for $a>0$, $c>0$ and $b^2-4ac=0$.}
 		\label{tu22}
 	\end{figure}
 	
 	\begin{proof}
 		As proved in Lemma \ref{lem18}, $G$ is a saddle of system \eqref{Y}.
 		
 		Consider $b^2-4ac>0$ (i.e. 
 		$b<-2\sqrt{ac}$ or $b>2\sqrt{ac}$	
 		).
 		Let $u_8:=\left(-b+\sqrt{b^2-4ac}\right)/2c$ and $u_9:=\left(-b-\sqrt{b^2-4ac}\right)/2c$.
 		Notice that $u_9<u_8<0$ for $b>0$ and $0<u_9<u_8$ for $b<0$.
 		The Jacobian matrices at $P_1$, $P_2$ are
 		$$
 		J_{P_1}:=	
 		\begin{pmatrix}
 		\frac{\sqrt{b^2-4ac}\left(b-\sqrt{b^2-4ac}\right)}{2c}&0\\
 		0&0\\
 		\end{pmatrix},
 		\quad
 		J_{P_2}:=	
 		\begin{pmatrix}
 		\frac{\sqrt{b^2-4ac}\left(-b-\sqrt{b^2-4ac}\right)}{2c}&0\\
 		0&0\\
 		\end{pmatrix}
 		$$
 		respectively.
 		With a transformation $(u,z)\to(u+u_8,z)$, system \eqref{Y} is changed into
 		\begin{equation}\label{Y3}
 		\left\{\begin{aligned}
 		\frac{du}{d\tau}=&-z^2\left(u^2+(\mu+2u_8)u+u_8^2+\mu u_8+1\right)-cu^3
 		\\
 		&-(3cu_8^2+b)u^2-(3cu_8^2+2bu_8+a)u=:P_{11}(u,z),\\
 		\frac{dz}{d\tau}=&u_8z^3-z^3u=:Q_{11}(u,z),
 		\end{aligned}
 		\right.
 		\end{equation}
 		where $-(3cu_8^2+2bu_8+a)=\left(\sqrt{b^2-4ac}\left(b-\sqrt{b^2-4ac}\right)\right)/2c$.
 		By the implicit function theorem, $P_{11}(u,z)=0$
 		has a unique root $u=\phi_{11}(z)=-(u_8^2+\mu u_8+1)/(3cu_8^2+2bu_8+a)z^2+o(z^2)$ for small $|z|$.
 		Thus,
 		$$
 		Q_{11}(\phi_{11}(z),z)=u_8z^3+o(z^3).
 		$$
 		By Theorem \ref{thm7.1} in Appendix B,  the origin of system \eqref{Y3} is  a saddle.  So is $P_1$. Similarly, $P_2$ is a stable node for $b>0$ and an unstable node for $b<0$.
 		
 		Consider $b^2-4ac=0$  (i.e. 
 		$b=-2\sqrt{ac}$ or $b=2\sqrt{ac}$	
 		). For $T$, considering a transformation $(u,z)\to(u-b/(2c),z)$, system \eqref{Y}
 		can be rewritten as \begin{equation}\label{Y2}
 		\left\{\begin{aligned}
 		\frac{du}{d\tau}&=-z^2\left(u^2+\left(\mu-\frac{b}{c}\right)u+\frac{b^2}{4c^2}-\frac{b\mu}{2c}+1\right)-cu^3+\frac{b}{2}u^2,\\
 		\frac{dz}{d\tau}&=\frac{b}{2c}z^3-z^3u.
 		\end{aligned}
 		\right.
 		\end{equation}
 		In the polar coordinate $(u,z)=(r\cos\theta,r\sin\theta)$,  system \eqref{Y2} is transformated into
 		\begin{equation}
 		\notag
 		\frac{1}{r}\frac{dr}{d\theta}=\frac{H_{11}(\theta)+\widetilde{H_{11}}(\theta,r)}{G_{11}(\theta)+\widetilde{G_{11}}(\theta,r)},
 		\end{equation}
 		where
 		$$
 		G_{11}(\theta)=\sin\theta\left(-\frac{b}{2}\cos^2\theta+\frac{b^2-2bc\mu+4c^2}{4c^2}\sin^2\theta\right)
 		$$
 		and
 		$$
 		H_{11}(\theta)=-\cos\theta\left(-\frac{b}{2}\cos^2\theta+\frac{b^2-2bc\mu+4c^2}{4c^2}\sin^2\theta\right).
 		$$
 		Obviously, the zeros of $G_{11}(\theta)$ is strongly related to the sign of  $b(b^2-2bc\mu+4c^2)$.
 		
 		Firstly, if $b(b^2-2bc\mu+4c^2)<0$ (i.e.
 		$b=-2\sqrt{ac}$, or $b=2\sqrt{ac}$ and $a-\mu\sqrt{ac}+c<0$	
 		), $G_{11}(\theta)=0$ has two roots $\theta=0$, $\pi$ in $\theta\in[0,2\pi)$.   It is clear that $G_{11}'(0)H_{11}(0)=G_{11}'(\pi)H_{11}(\pi)=-b^2/4<0$. Thus, by  \cite[Theorem 3.7 of Chapter 2]{ZDHD},  $H_{11}(0)=b/2$ and $H_{11}(0)=-b/2$, there is a unique orbit approaching  the orgin in the direction $\theta = \pi$ as $\tau\to-\infty$ (resp. $\tau\to+\infty$),  and a unique orbit approaching the orgin in the direction $\theta = 0$  as $\tau\to+\infty$ (resp. $\tau\to-\infty$) in system \eqref{Y2} for $b=-2\sqrt{ac}$ (resp. $b=2\sqrt{ac}$). $T$ of system \eqref{Y} has the same qualitative properties as the orgin of system \eqref{Y2}, see Figures \ref{tu22} (a) and (b).
 		
 		Secondly, if $b^2-2bc\mu+4c^2=0$  (i.e. $b=2\sqrt{ac}$ and $a-\mu\sqrt{ac}+c=0$),  $G_{11}(\theta)=0$ has four roots  $\theta=0$, $\pi/2$, $\pi$, $3\pi/2$ in $\theta\in[0,2\pi)$. Compute that $G_{11}'(0)H(0)=G_{11}'(\pi)H_{11}(\pi)=-b^2/4<0$
 		and $H_{11}(\pi/2)=H_{11}(3\pi/2)=0$. As studied the case $\gamma=0$ of Lemma
 		\ref{lem10},  we can obtain that	there are infinitely many orbits approaching $T$ in  respectively the directions $\pi/2$ and $3\pi/2$ respectively as $\tau\to-\infty$, a unique orbit approaching  $T$ in the direction $\theta = \pi$ as $\tau\to-\infty$, and a unique orbit approaching $T$ in the direction $\theta = 0$ as $\tau\to+\infty$ in system \eqref{Y}, see Figure \ref{tu22}(c).
 		
 		Thirdly, if $b(b^2-2bc\mu+4c^2)>0$ (i.e. $b=2\sqrt{ac}$ and $a-\mu\sqrt{ac}+c>0$), $G_{11}(\theta)=0$ has six roots  $\theta=0$, $\arctan\omega$, $\pi-\arctan\omega$, $\pi$, $\pi+\arctan\omega$,
 		$2\pi-\arctan\omega$ in $\theta\in[0,2\pi)$, where $\omega:= \sqrt{c\sqrt{ac}/(a-\mu\sqrt{ac}+c)}$. Compute that $G_{11}'(0)H_{11}(0)=G_{11}'(\pi)H_{11}(\pi)=-b^2/4<0$ and the other roots satisfy $H_{11}(\theta)=0$.  As studied the case $\gamma<0$ of Lemma
 		\ref{lem10},  we can obtain that the qualitative properties of $T$ of system \eqref{Y},  as shown in Figure \ref{tu22}(d).
 		The proof is finished.
 	\end{proof}
 	
 	By Lemma \ref{lem20},  when $a>0$ and $c>0$, the qualitative properties of equilibria $I_{G^\pm}$ for $b^2-4ac<0$,  $I_{G^\pm}$ and $I_{T^\pm}$ for $b^2-4ac=0$,  $I_{G^\pm}$, $I_{P_1^\pm}$ and $I_{P_2^\pm}$ for $b^2-4ac>0$ at infinity in the Poincar\'e disc of system \eqref{71}, which correspond the equilibria $G$, $T$, $P_1$, $P_2$ of system \eqref{Y},  are as shown in Figure \ref{tuGTP1P2}.
 	\begin{figure}[htp]
 		\centering
 		\subfigure[{ $-2\sqrt{ac}<b<2\sqrt{ac}$ }]{
 			\includegraphics[width=0.3\textwidth]{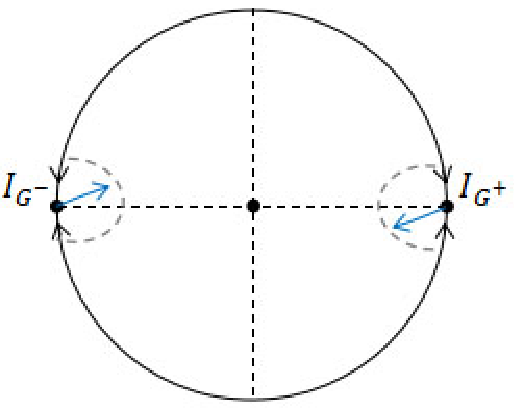}}
 		\quad
 		\subfigure[{  $b>2\sqrt{ac}$}]{
 			\includegraphics[width=0.3\textwidth]{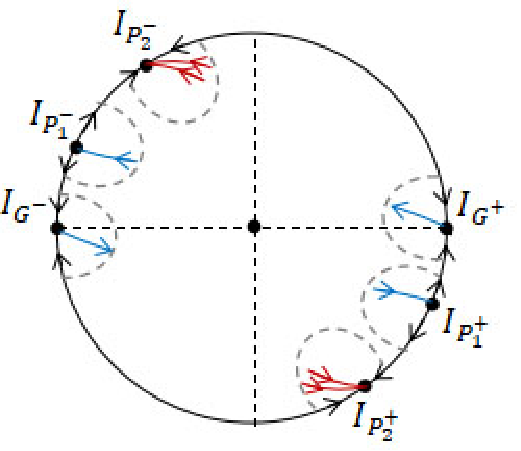}}
 		\quad
 		\subfigure[{  $b<-2\sqrt{ac}$}]{
 			\includegraphics[width=0.3\textwidth]{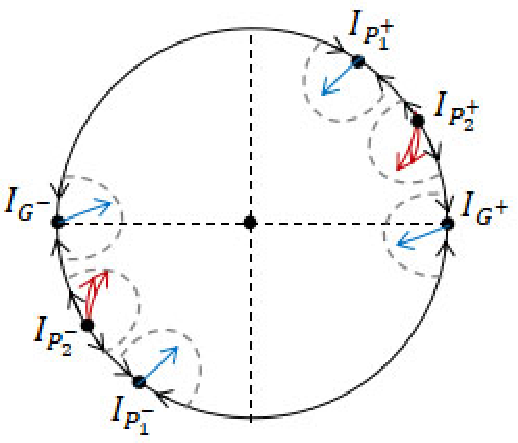}}
 		
 		\quad		
 		\subfigure[{$b=-2\sqrt{ac}$ }]{
 			\includegraphics[width=0.3\textwidth]{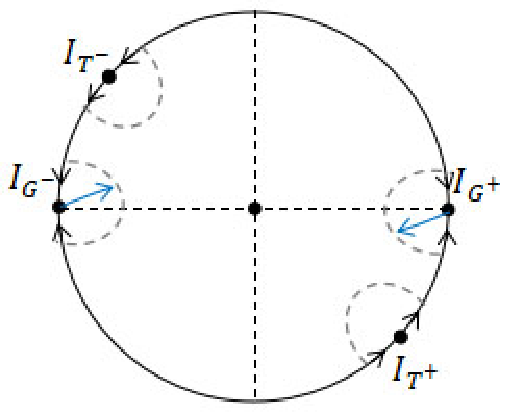}}
 		\quad
 		\subfigure[{ $b=2\sqrt{ac}$,  $a-\mu\sqrt{ac}+c<0$ }]{
 			\includegraphics[width=0.3\textwidth]{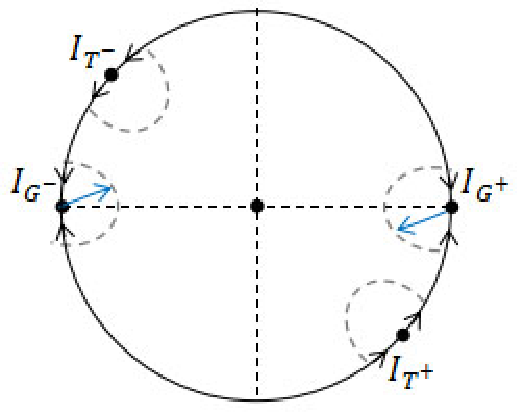}}

 		\quad
 		\subfigure[{$b=2\sqrt{ac}$,  $a-\mu\sqrt{ac}+c=0$}]{
 			\includegraphics[width=0.3\textwidth]{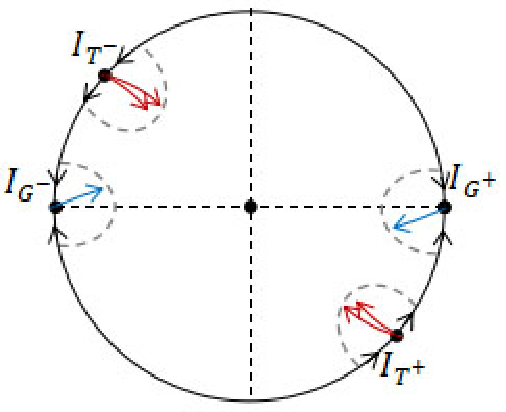}}
 		\quad
 		\subfigure[{ $b=2\sqrt{ac}$,  $a-\mu\sqrt{ac}+c>0$}]{
 			\includegraphics[width=0.3\textwidth]{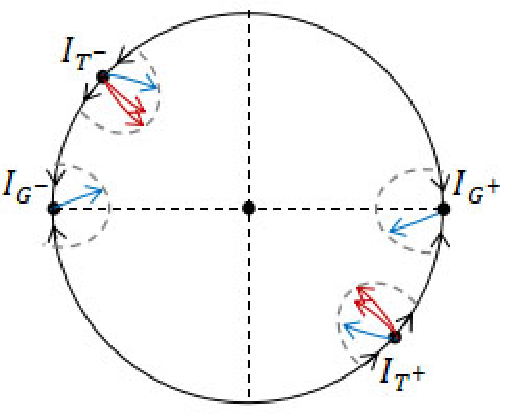}}	
 		\caption{{Locally qualitative property of} equilibria at infinity  $I_{G^\pm}$, $I_{T^\pm}$, $I_{P_1^\pm}$ and $I_{P_2^\pm}$  in the Poincar\'e disc when $a>0$ and $c>0$ {(some equilibria at infinity are not marked here)}.}\label{tuGTP1P2}
 	\end{figure}
 	
 	With  the other Poincar\'e transformation
 	$$
 	x=\frac{v}{z},\quad y=\frac{1}{z},
 	$$  system \eqref{71} is written as
 	\begin{equation}
 	\label{X}
 	\left\{\begin{aligned}
 	\frac{dv}{d\tau}&=z^2(v^2+\mu v+1)+v(av^2+bv+c),\\
 	\frac{dz}{d\tau}&=z^3( v+\mu)+z(av^2+bv+c),
 	\end{aligned}
 	\right.
 	\end{equation}
 	where $d\tau=dt/z^2$.  We only need to study the equilibrium $D=(0,0)$ of system \eqref{X}. Moreover, the qualitative properties of $D$ can be seen in Lemma \ref{lemx}.

	\section*{Acknowledgments}

 The first, second and third authors are supported by the  National Natural Science Foundation of China (Nos. 11801079, 12171485).
The fourth author is partially supported by  the  National Natural Science Foundation of China  (Nos. 11871334, 12071284), by Innovation Program of Shanghai Municipal Education Commission grant number 2021-01-07-00-02-E00087, and by Institute of Modern Analysis--A Frontier Research Center of Shanghai.

\end{document}